\tolerance=2000

\documentclass[12pt,reqno,oneside]{amsbook}
\usepackage{amssymb,url}
\usepackage{color}
\usepackage{graphicx}
\numberwithin{equation}{chapter}

\theoremstyle{plain}
\newtheorem{thm}{Theorem}[chapter]
\newtheorem{theorem}[thm]{Theorem}
\newtheorem{lem}[thm]{Lemma}
\newtheorem{rem}[thm]{Remark}
\newtheorem{exmpl}[thm]{Example}
\newtheorem{corollary}[thm]{Corollary}
\newtheorem{prop}[thm]{Proposition}

\newcommand\diag{\operatorname{diag}}
\newcommand\ran{\operatorname{ran}}
\newcommand\rank{\operatorname{rank}}
\newcommand\tr{\operatorname{tr}}
\newcommand\re{\operatorname{Re}}

\newcommand{\C}{\mathbb{C}}
\newcommand{\N}{\mathbb{N}}
\newcommand{\Z}{\mathbb{Z}}
\newcommand{\R}{\mathbb{R}}
\newcommand{\D}{\mathbb{D}}
\newcommand{\T}{\mathbb{T}}

\DeclareMathOperator*{\Llim}{L-lim}

\newcommand{\irB}{\mathcal{B}}
\newcommand{\irC}{\mathcal{C}}

\newcommand{\irE}{\mathcal{E}}
\newcommand{\irF}{\mathcal{F}}
\newcommand{\irG}{\mathcal{G}}
\newcommand{\irH}{\mathcal{H}}
\newcommand{\irI}{\mathcal{I}}
\newcommand{\irJ}{\mathcal{J}}
\newcommand{\irK}{\mathcal{K}}
\newcommand{\irL}{\mathcal{L}}
\newcommand{\irM}{\mathcal{M}}
\newcommand{\irN}{\mathcal{N}}

\newcommand{\irP}{\mathcal{P}}

\newcommand{\irR}{\mathcal{R}}

\newcommand{\irT}{\mathcal{T}}

\newcommand{\irX}{\mathcal{X}}
\newcommand{\irY}{\mathcal{Y}}

\newcommand{\Chi}{\mathrm{Chi}}
\newcommand{\Par}{\mathrm{par}}
\newcommand{\Des}{\mathrm{Des}}
\newcommand{\Gen}{\mathrm{Gen}}
\newcommand{\Lea}{\mathrm{Lea}}
\newcommand{\roo}{\mathrm{root}}
\newcommand{\Sl}{S_{\underline{\lambda}}}

\newcommand{\Br}{\mathrm{Br}}

\newcommand{\HRule}{\rule{\linewidth}{0.5mm}}

\usepackage[top=2.5 cm, bottom=2.5 cm, left=3 cm, right=2.5 cm]{geometry}

\frenchspacing

\linespread{1.5}

\begin{document}

\thispagestyle{empty}

\begin{titlepage}
\begin{center}

% Upper part of the page. The '~' is needed because \\
% only works if a paragraph has started.
%\includegraphics[width=0.15\textwidth]{./logo}~\\[1cm]

% Title
\bigskip

\HRule \\[0.4cm]

{ \huge \bfseries Asymptotic behaviour of Hilbert space operators with applications \\[0.4cm] }

\HRule \\[1.5cm]

\bigskip

\textbf{\LARGE A Dissertation Presented for the \\ Doctor of Philosophy Degree}\\[0.5cm]

\bigskip

{\Large Gy\"orgy P\'al Geh\'er}

\bigskip
\bigskip

{\Large Supervisor:}

{\Large L\'aszl\'o K\'erchy}

{\Large Professor}

\vfill

{\Large Doctoral School in Mathematics and Computer Science} 

\smallskip

{\Large University of Szeged, Bolyai Institute} 

\smallskip

{\Large Department of Analysis}

\smallskip
\smallskip

{\large Szeged, 2014}

\end{center}
\end{titlepage}

\tableofcontents

\thispagestyle{plain}

\newpage

\chapter{Introduction}
The concepts of Hilbert spaces and bounded operators on them are very important in most parts of mathematical analysis and also in the mathematical foundation of quantum physics. 
One of the first important steps in this topic was to develop the theory of normal operators, to which many famous mathematicians contributed including for instance D. Hilbert and J. von Neumann. 
The main result concerning this class of operators is the well-known spectral theorem which gives us a very useful model of normal operators.
This theorem was proven by von Neumann in the 20th century. 
This is in fact the right generalization of the finite dimensional spectral theorem which was proven first by A. L. Cauchy in the 19th century. 
In view of this theory we have a very powerful tool if we would like to deal with normal operators.

On the other hand, the theory of non-normal operators is not very well-developed. 
However, many results have been obtained in the second half of the 20th century.
One of the main methods of examining non-normal operators, acting on a complex Hilbert space, is the theory of contractions.
We say that a bounded, linear operator $T\in\irB(\irH)$ is a contraction if $\|T\|\leq 1$ is satisfied, where $\irB(\irH)$ denotes the set of bounded operators acting on the complex Hilbert space $\irH$.
This area of operator theory was developed by B. Sz.-Nagy and C. Foias from the dilatation theorem of Sz.-Nagy what we state now.

\begin{theorem}[Sz.-Nagy's dilation theorem, \cite{SzN_dil}]\label{SzNdil_thm}
Let us consider a contraction $T\in\irB(\irH)$. There exists a larger Hilbert space $\irK\supseteq\irH$ and a unitary operator acting on it such that
\[
T^n = P_\irH U^n|\irH
\]
holds for all $n\in\N$ where $P_\irH\in\irB(\irK)$ denotes the orthogonal projection onto $\irH$. In this case we call $U$ a unitary dilation of $T$.

Moreover, there is a minimal unitary dilation in the sense that
\[
\bigvee_{n\in\Z} U^n\irH = \irK
\]
is fulfilled and such a minimal dilation is unique up to isomorphism. This $U$ is called the minimal unitary dilation of $T$.
\end{theorem}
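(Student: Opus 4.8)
The plan is to reduce the theorem to two more manageable tasks: first build an \emph{isometric} dilation of $T$, and then extend that isometry to a unitary. Throughout I would use the defect operator $D_T=(I-T^*T)^{1/2}$, which is a well-defined positive operator because $\|T\|\le 1$ forces $T^*T\le I$, together with the defect space $\irD_T=\overline{\ran\,D_T}$. The one identity doing all the work is $\|Th\|^2+\|D_Th\|^2=\langle T^*Th,h\rangle+\langle(I-T^*T)h,h\rangle=\|h\|^2$, valid for every $h\in\irH$.

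For the isometric dilation I would set $\irH_+=\irH\oplus\irD_T\oplus\irD_T\oplus\cdots$ and define $V\in\irB(\irH_+)$ by $V(h,d_0,d_1,\dots)=(Th,\,D_Th,\,d_0,\,d_1,\dots)$. The identity above shows at once that $V$ is an isometry. Iterating, the $\irH$-component of $V^n(h,0,0,\dots)$ is exactly $T^nh$, because $V$ only shifts the auxiliary coordinates forward and never feeds them back into $\irH$; hence $P_\irH V^n|\irH=T^n$ for all $n\in\N$. A short induction also shows $\bigvee_{n\ge 0}V^n\irH=\irH_+$, so $V$ is in fact the minimal isometric dilation.

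Next I would extend $V$ to a unitary. Writing $\irN=\irH_+\ominus V\irH_+$ for the defect of the isometry, put $\irK=\irH_+\oplus\irN\oplus\irN\oplus\cdots$ and define $U(x,n_0,n_1,\dots)=(Vx+n_0,\,n_1,\,n_2,\dots)$, where $n_0\in\irN$ is regarded as an element of $\irH_+$. Since $Vx\perp\irN$, a one-line computation gives $\|U(\,\cdot\,)\|=\|(\,\cdot\,)\|$, and surjectivity follows by decomposing any vector of $\irH_+$ uniquely as $Vx\oplus n_0$; thus $U$ is unitary and restricts to $V$ on $\irH_+$. Because $U$ agrees with $V$ on $\irH_+\supseteq\irH$ we get $U^n|\irH=V^n|\irH$ for $n\ge 0$, so $P_\irH U^n|\irH=T^n$ survives the extension. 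Minimality $\bigvee_{n\in\Z}U^n\irH=\irK$ is then checked by noting that applying $U^{-1}$ to $\irH_+$ already produces the first copy of $\irN$, and iterating reaches all of them.

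Finally, for uniqueness I would argue by moments. For any two minimal unitary dilations $U_1,U_2$ and any $h,h'\in\irH$, $m,n\in\Z$, the inner product $\langle U_j^m h,U_j^n h'\rangle=\langle U_j^{m-n}h,h'\rangle$ equals $\langle T^{m-n}h,h'\rangle$ when $m\ge n$ and $\langle h,T^{n-m}h'\rangle$ otherwise --- in either case a quantity determined by $T$ alone. Hence $U_1^nh\mapsto U_2^nh$ extends to a well-defined isometry between the dense subspaces $\bigvee_n U_1^n\irH$ and $\bigvee_n U_2^n\irH$, i.e.\ to a unitary $\Phi:\irK_1\to\irK_2$ with $\Phi|\irH=I$ and $\Phi U_1=U_2\Phi$. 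I expect the main obstacle to be not any single estimate but the bookkeeping of the unitary extension step --- verifying simultaneously that $U$ is onto, that it preserves the dilation identity, and that minimality is retained --- since this is where the passage from isometry to unitary can silently destroy one of the three properties.
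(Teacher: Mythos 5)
Your proof is correct, but note that the dissertation itself offers no proof of this statement: it is quoted in the Introduction as a classical background result, with a citation to Sz.-Nagy's original paper, so there is no in-paper argument to compare against. What you have written is the standard Sz.-Nagy--Foias two-stage construction, and all three delicate points you flagged check out. The Sch\"affer-type isometric dilation is sound, since $\|Th\|^2+\|D_Th\|^2=\|h\|^2$ makes $V$ isometric and $V$ never feeds the auxiliary coordinates back into $\irH$, so $P_\irH V^n|\irH=T^n$; your induction for $\bigvee_{n\ge0}V^n\irH=\irH_+$ works because $V(h,0,\dots)-(Th,0,\dots)=(0,D_Th,0,\dots)$ yields a dense subset of the first $\irD_T$-slot and $V$ shifts the slots forward. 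In the unitary extension, surjectivity rests on the orthogonal decomposition $\irH_+=V\irH_+\oplus\irN$ (valid since an isometry has closed range), and your minimality argument is right: $U^{-k}\irH_+$ spans $\irH_+\oplus\irN^{\oplus k}$, and $\bigvee_{n\in\Z}U^n\irH$ is reducing for $U$ and contains $\irH_+$, hence all copies of $\irN$. For uniqueness, the moment identity $\langle U_j^m h,U_j^n h'\rangle=\langle T^{m-n}h,h'\rangle$ (or its adjoint form for $m<n$) does indeed determine the Gram matrix of the total set $\{U_j^n h\colon n\in\Z,\ h\in\irH\}$; the only step you gesture at rather than spell out is that this forces the map $\sum_i c_iU_1^{n_i}h_i\mapsto\sum_i c_iU_2^{n_i}h_i$ to be well defined on linear combinations, which follows because a combination vanishing in $\irK_1$ has image of zero norm in $\irK_2$ --- worth one explicit line, but no gap. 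An alternative route you could have taken is Sch\"affer's fully explicit block-matrix unitary on $\bigoplus_{n\in\Z}$ copies, which produces a (generally non-minimal) unitary dilation in one step and then cuts down to $\bigvee_{n\in\Z}U^n\irH$; your factorization through the minimal isometric dilation is the version that generalizes (e.g.\ to the isometric and unitary asymptotes used throughout this dissertation), so it is arguably the more instructive choice here.
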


Sz.-Nagy and Foias classified the contractions according to their asymptotic behaviour.
This classification can be done in a more general setting, namely when we consider the class of power bounded operators.
An operator $T\in\irB(\irH)$ is called power bounded if $\sup\{\|T^n\|\colon n\in\N\}<\infty$ holds.
We call a vector $x\in\irH$ stable for $T$ if $\lim_{n\to\infty}\|T^n x\| = 0$, and the set of all stable vectors is denoted by $\irH_0 = \irH_0(T)$.
It can be derived quite easily that $\irH_0$ is a hyperinvariant subspace for $T$ (see \cite{Ke_Hto}) which means that $\irH_0$ is a subspace that is invariant for every operator which commutes with $T$.
Therefore we will call it the stable subspace of $T$.
The natural classification of power bounded operators given by Sz.-Nagy and Foias is the following:
\begin{itemize}
\item $T$ is said to be of class $C_{1\cdot}$ or asymptotically non-vanishing if $\irH_0(T) = \{0\}$,
\item $T$ is said to be of class $C_{0\cdot}$ or stable if $\irH_0(T) = \irH$, i.e. when $T^n \to 0$ holds in the strong operator topology (SOT),
\item $T$ is said to be of class $C_{\cdot j}$ ($j\in\{0,1\}$) whenever $T^*$ is of class $C_{j\cdot}$,
\item the class $C_{jk}$ ($j,k\in\{0,1\}$) consists of those operators that are of class $C_{j \cdot}$ and $C_{\cdot k}$, simultaneously.
\end{itemize}
We will use the notation $T\in C_{jk}(\irH)$ ($j,k\in\{0,1,\cdot\}$). 

Trivially, if $T\notin C_{0\cdot}(\irH)\cup C_{1\cdot}(\irH)\cup C_{\cdot 0}(\irH)\cup C_{\cdot 1}(\irH)$, then $T$ has a non-trivial hyperinvariant subspace, namely $\irH_0(T)$ or $\irH_0(T^*)^\perp$.
Sz.-Nagy and Foias got strong structural results in the case when $T\in C_{11}(\irH)$ (see \cite{Ke_Hto,SzNF_C11}).
However, basic questions are still open (e.g. the hyperinvariant and the invariant subspace problems) in the remaining cases, i.e. when $T\in C_{00}(\irH)\cup C_{10}(\irH)\cup C_{01}(\irH)$.

Our aim in this dissertation is to explore the asymptotic behaviour of power bounded operators. 
Then we will present some applications, namely we will investigate similarity to normal operators, the commutant of non-stable contractions and cyclic properties of a recently introduced operator class, the weighted shift operators on directed trees.

Sz.-Nagy characterized those operators that are similar to unitary operators. 
This theorem belongs to the best known results concerning the study of Hilbert space operators that are similar to normal operators. 
The result is very elegant and, in its original form, reads as follows.

\begin{theorem}[Sz.-Nagy, \cite{SzN_unit}]\label{SzN_unit_thm}
An operator $T$ is similar to a unitary operator if and only if it is invertible and both $T$ and $T^{-1}$ are power bounded.
\end{theorem}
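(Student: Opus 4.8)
The plan is to treat the two implications separately, with necessity a short computation and sufficiency carrying all the weight. For necessity, suppose $T = S^{-1}US$ with $U$ unitary and $S$ invertible. Then $T$ is invertible with $T^{-1} = S^{-1}U^*S$, and since $T^n = S^{-1}U^nS$ for every $n\in\Z$, I would estimate $\|T^n\| \le \|S^{-1}\|\,\|U^n\|\,\|S\| = \|S^{-1}\|\,\|S\|$, using $\|U^n\| = 1$. The same bound applies to the negative powers, so both $T$ and $T^{-1}$ are power bounded.

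For sufficiency, set $M = \sup\{\|T^n\|\colon n\in\Z\} < \infty$, which is finite precisely because both $T$ and $T^{-1}$ are power bounded. The idea is to manufacture a new inner product on $\irH$, equivalent to the original one, with respect to which $T$ acts isometrically; an invertible operator that is isometric for an equivalent inner product will then be similar to a unitary. To build this inner product I would take a translation-invariant Banach limit $\Llim$ over $\Z$ and set
\[
[x,y] = \Llim_{n\in\Z}\langle T^n x, T^n y\rangle .
\]
The sequence inside is bounded by $M^2\|x\|\,\|y\|$, so the form is well defined, sesquilinear, and positive.

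The crucial estimates are the two-sided bounds. Power-boundedness of $T$ gives $[x,x] \le M^2\|x\|^2$, while the inequality $\|x\| = \|T^{-n}T^n x\| \le M\,\|T^n x\|$ yields $[x,x] \ge M^{-2}\|x\|^2$; hence $[\cdot,\cdot]$ is an inner product equivalent to the original. Shift-invariance of the Banach limit gives $[Tx,Ty] = [x,y]$, so $T$ is $[\cdot,\cdot]$-isometric. Next I would represent the form as $[x,y] = \langle Ax,y\rangle$ for a bounded, positive, invertible $A$ satisfying $M^{-2}I \le A \le M^2 I$ (via Riesz representation applied to the bounded conjugate-linear functional $[\,\cdot\,,y]$); the isometry property becomes $T^*AT = A$. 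Setting $S = A^{1/2}$, which is positive, invertible, and has bounded inverse, and $U = STS^{-1}$, a direct computation using $S^* = S$ gives $U^*U = S^{-1}(T^*AT)S^{-1} = S^{-1}AS^{-1} = I$, so $U$ is an isometry; since $T$ is invertible, so is $U$, and an invertible isometry satisfies $U^* = U^{-1}$, i.e. $U$ is unitary. Then $T = S^{-1}US$ exhibits the desired similarity.

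I expect the main obstacle to be the construction and control of the invariant inner product: one must check that the Banach-limit form is genuinely equivalent to the original norm, and this is exactly the point where power-boundedness of $T^{-1}$, not merely of $T$, becomes indispensable, as it supplies the lower bound $[x,x]\ge M^{-2}\|x\|^2$. Converting the abstract form into the operator identity $T^*AT = A$ is then routine, and extracting the unitary through $S = A^{1/2}$ is purely algebraic.
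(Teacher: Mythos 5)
Your proof is correct and takes essentially the same route as the paper: your invariant form $[x,y]$ is exactly $\langle A_{T,L}x,y\rangle$, the two-sided bound $M^{-2}I\le A\le M^2I$ is precisely where Sz.-Nagy uses power boundedness of $T^{-1}$ to make $A_{T,L}$ invertible, and your $U=A^{1/2}TA^{-1/2}$ is the unitary in the paper's identity $A_{T,L}^{1/2}T=UA_{T,L}^{1/2}$. The only cosmetic difference is that you index the Banach limit over $\Z$ (which requires an invariant mean on $\ell^\infty(\Z)$), whereas the ordinary shift-invariant Banach limit over $\N$, as defined in the paper, already suffices since $[Tx,Ty]=[x,y]$ follows from shift-invariance of one-sided limits.
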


The ''$\Longrightarrow$'' direction is a trivial assertion. 
In order to verify the other direction, Sz.-Nagy defined a positive operator using Banach limits in an ingenious way. 
In what follows we will give this definition. 
The Banach space of bounded complex sequences is denoted by $\ell^\infty(\N)$. 
We call the linear functional 
\[ 
L\colon \ell^\infty(\N) \to \C,\quad \underline{x} = \{x_n\}_{n=1}^\infty \mapsto \Llim_{n\to\infty} x_n
\] 
a Banach limit if the following four conditions are satisfied:
\begin{itemize}
\item $\|L\| = 1$,
\item we have $\Llim_{n\to\infty} x_n = \lim_{n\to\infty} x_n$ for convergent sequences,
\item $L$ is positive, i.e. if $x_n\geq 0$ for all $n\in\N$, then $\Llim_{n\to\infty} x_n \geq 0$, and
\item $L$ is shift-invariant, i.e. $\Llim_{n\to\infty} x_n = \Llim_{n\to\infty} x_{n+1}$.
\end{itemize}
Note that a Banach limit is never multiplicative (see \cite[Section III.7]{Co} for further details).
We would like to point out that Sz.-Nagy was the first who effectively applied the notion of Banach limits, before that it was considered only as a mathematical curiosity.

For a power bounded $T\in\irB(\irH)$ let us consider the sequence of self-adjoint iterates $\{T^{*n}T^{n}\}_{n=1}^\infty$ and fix a Banach limit $L$. We consider the sesqui-linear form
\[
w_{T,L}\colon\irH\times\irH\to\C, \quad w_{T,L}(x,y) := \Llim_{n\to\infty} \langle T^{*n}T^nx,y \rangle 
\]
which is bounded and positive. 
Hence there exists a unique representing positive operator $A_{T,L}\in\irB(\irH)$ such that 
\[
w_{T,L}(x,y) = \langle A_{T,L}x,y \rangle \quad (x,y\in\irH).
\] 
The operator $A_{T,L}$ is called the $L$-asymptotic limit of the power bounded operator $T$, which usually depends on the particular choice of $L$. 
It is quite straightforward to show that $\ker A_{T,L} = \irH_0(T)$ is satisfied for every Banach limit $L$.
Sz.-Nagy showed that if $T$ and $T^{-1}$ are both power bounded, then $A_{T,L}$ is invertible and there exists a unitary operator $U\in\irB(\irH)$ such that
\[
A_{T,L}^{1/2}T = U A_{T,L}^{1/2}.
\]

In the case when $T$ is a contraction, the sequence of self-adjoint iterates $\{T^{*n}T^n\}_{n=1}^\infty$ is evidently decreasing. 
Thus it has a unique limit $A_T = A \in\irB(\irH)$ in SOT, which is clearly positive. 
This positive operator is said to be the asymptotic limit of the contraction $T$ which of course coincides with the $L$-asymptotic limit of $T$ for every $L$. 

The $L$-asymptotic limit gives us information about the asymptotic behaviour of the orbits $\{T^n x\}_{n=1}^\infty$, namely 
\begin{equation}\label{A_TL_orbit_eq}
\Llim_{n\to\infty} \|T^n x\|^2 = \|A_{T,L}^{1/2}x\|^2 = \|A_{T,L}^{1/2} Tx\|^2 \quad (x\in\irH).
\end{equation}

There is a certain reformulation of Theorem \ref{SzN_unit_thm} which reads as follows (see \cite{Kubrusly}).

\begin{theorem}[Sz.-Nagy]\label{Sz-N_ref_thm}
Consider an arbitrary operator $T\in\irB(\irH)$ and fix a Banach limit $L$. 
The following six conditions are equivalent:
\begin{itemize}
\item[\textup{(i)}] $T$ is similar to a unitary operator,
\item[\textup{(ii)}] $T$ is onto and similar to an isometry,
\item[\textup{(iii)}] $T$ is power bounded and there exists a constant $c > 0$ for which the inequalities $\|T^n x \| \geq c\|x\|$ and $\|T^{*n} x \| \geq c\|x\|$ hold for every $n\in\N$ and $x\in\irH$,
\item[\textup{(iv)}] $T$ is onto, power bounded and there exists a constant $c > 0$ for which the inequality $\|T^n x \| \geq c\|x\|$ holds for every $n\in\N$ and $x\in\irH$,
\item[\textup{(v)}] $T$ is power bounded and the $L$-asymptotic limits $A_{T,L}$ and $A_{T^*,L}$ are invertible,
\item[\textup{(vi)}] $T$ has bounded inverse and both $T^{-1}$ and $T$ are power bounded.
\end{itemize} 
Moreover, if we have an arbitrary power bounded operator $T\in\irB(\irH)$, then the next three conditions are also equivalent:
\begin{itemize}
\item[\textup{(i')}] $T$ is similar to an isometry,
\item[\textup{(ii')}] there exists a constant $c > 0$ for which the inequality $\|T^n x \| \geq c\|x\|$ holds for every $n\in\N$ and $x\in\irH$.
\item[\textup{(iii')}] the $L$-asymptotic limit $A_{T,L}$ is invertible,
\end{itemize} 
\end{theorem}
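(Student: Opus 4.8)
The plan is to first establish the power bounded trichotomy (i'), (ii'), (iii'), which carries the analytic heart of the statement, and then to bootstrap the six main equivalences from it together with Theorem~\ref{SzN_unit_thm}. For the second part I would run the cycle (i')$\Rightarrow$(ii')$\Rightarrow$(iii')$\Rightarrow$(i'). The first implication is routine: writing $T=S^{-1}VS$ with $V$ an isometry and $S$ invertible, one gets $\|T^nx\|=\|S^{-1}V^nSx\|\geq\|S\|^{-1}\|V^nSx\|=\|S\|^{-1}\|Sx\|\geq(\|S\|\,\|S^{-1}\|)^{-1}\|x\|$, so (ii') holds with $c=(\|S\|\,\|S^{-1}\|)^{-1}$. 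For (ii')$\Rightarrow$(iii') I would pass the uniform estimate $\|T^nx\|^2\geq c^2\|x\|^2$ to the Banach limit: positivity and the resulting order-preservation of $L$ give $\langle A_{T,L}x,x\rangle=\Llim_{n\to\infty}\|T^nx\|^2\geq c^2\|x\|^2$, hence $A_{T,L}\geq c^2 I$ is invertible.

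The decisive step is (iii')$\Rightarrow$(i'). Here I would use the invariance relation \eqref{A_TL_orbit_eq}, which reads $\|A_{T,L}^{1/2}x\|=\|A_{T,L}^{1/2}Tx\|$ for every $x\in\irH$. Putting $X:=A_{T,L}^{1/2}$, which is invertible by hypothesis, I define $V:=XTX^{-1}$ and check it is an isometry: for $y\in\irH$ set $x:=X^{-1}y$, so that $\|Vy\|=\|XTx\|=\|Xx\|=\|y\|$. Then $T=X^{-1}VX$ exhibits $T$ as similar to the isometry $V$, closing the cycle. This construction is the only place where the orbit identity is genuinely exploited, and it is the part I expect to be the crux.

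For the six main conditions I would argue along two interlocking cycles, both meeting condition (vi). Theorem~\ref{SzN_unit_thm} gives (i)$\Leftrightarrow$(vi) at once. The implication (i)$\Rightarrow$(ii) is immediate, since a unitary is an onto isometry and similarity preserves surjectivity; (ii)$\Rightarrow$(iv) follows from the first part, because being similar to an isometry yields both power boundedness and, via (i')$\Rightarrow$(ii'), the lower bound, while surjectivity is assumed. For (iv)$\Rightarrow$(vi) I would note that the case $n=1$ of $\|T^nx\|\geq c\|x\|$ makes $T$ injective, whence $T$ is invertible by the bounded inverse theorem; rewriting the same bound as $\|T^{-n}\|\leq c^{-1}$ shows $T^{-1}$ is power bounded, and $T$ is power bounded by hypothesis. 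This closes (i)$\Leftrightarrow$(ii)$\Leftrightarrow$(iv)$\Leftrightarrow$(vi).

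It remains to fold in (iii) and (v). For (vi)$\Rightarrow$(v) I would invoke the result of Sz.-Nagy quoted above: if $T$ and $T^{-1}$ are power bounded then $A_{T,L}$ is invertible, and applying this to $T^*$, which is invertible with power bounded inverse, gives $A_{T^*,L}$ invertible. Then (v)$\Rightarrow$(iii) is the first part applied to both $T$ and $T^*$: invertibility of $A_{T,L}$ and $A_{T^*,L}$ produces constants $c_1,c_2>0$ with $\|T^nx\|\geq c_1\|x\|$ and $\|T^{*n}x\|\geq c_2\|x\|$, and $c:=\min\{c_1,c_2\}$ works. Finally (iii)$\Rightarrow$(vi): the bound on $T^{*n}$ means each $T^{*n}$ is bounded below, equivalently each $T^n=(T^{*n})^*$ is surjective, so in particular $T$ is onto; combined with the injectivity coming from the $T^n$ bound, $T$ is again invertible with $T^{-1}$ power bounded. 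The genuine obstacle throughout is the single construction in (iii')$\Rightarrow$(i'); everything else reduces to careful bookkeeping of the duality between being bounded below and having surjective adjoint, and of the stability of power boundedness under similarity, which I would verify directly.
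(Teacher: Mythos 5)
Your proposal is correct, and it follows essentially the route the paper relies on: the dissertation states this theorem without proof (citing Kubrusly), but your crucial step (iii')$\Rightarrow$(i') via $X:=A_{T,L}^{1/2}$ and the shift-invariance identity \eqref{A_TL_orbit_eq} is exactly Sz.-Nagy's intertwining construction $A_{T,L}^{1/2}T = U A_{T,L}^{1/2}$ sketched in the paper's introduction. The remaining implications (lower bounds passing through Banach limits, bounded-below/surjective-adjoint duality for (iii)$\Rightarrow$(vi), and the open mapping theorem for (iv)$\Rightarrow$(vi)) are all carried out correctly and close the logical cycles as claimed.
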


Sz.-Nagy's method naturally leads us to a little bit more general definition, the so-called isometric and unitary asymptote of a power bounded operator. We consider the operator $X^+_{T,L}\in\irB(\irH,\irH^+_T)$ where $\irH^+_T = (\ran A_{T,L})^- = (\ker A_{T,L})^\perp = \irH_0^\perp$ and $X^+_{T,L} x = A_{T,L}^{1/2} x$ holds for every $x\in\irH$. 
Since $\|X^+_{T,L} T x\| = \|X^+_{T,L} x\|$ is satisfied $(x\in\irH)$, there exists a unique isometry $V_{T,L} \in\irB(\irH^+_T)$ such that $X_{T,L}^+ T = V_{T,L} X_{T,L}^+$ holds. 
The operator $V_{T,L}$ (or sometimes the pair $(V_{T,L}, X^+_{T,L})$) is called the isometric asymptote of $T$. 
Let $W_{T,L}\in\irB(\irH_{T,L})$ be the minimal unitary dilation of $V_{T,L}$ and $X_{T,L}\in\irB(\irH,\irH_{T,L}), X_{T,L} x = X^+_{T,L} x$ for every $x\in\irH$. 
Obviously we have $X_{T,L} T = W_{T,L} X_{T,L}$. 
The operator $W_{T,L}$ (or sometimes the pair $(W_{T,L}, X_{T,L})$) is said to be the unitary asymptote of $T$. 
We note that a more general definition can be given, however, we will only need these canonical realizations (see \cite{BK}).
These asymptotes play an important role in the hyperinvariant subspace problem, similarity problems and operator models (see e.g. \cite{BK,Ca,Du,Ke_gen_Toep,Ke_isom_as,DuKu,Ku_A_proj,NFBK}).

When $T\notin C_{1\cdot}(\irH)\cup C_{0\cdot}(\irH)$, we have the following result which was first proven by Sz.-Nagy and Foias for contractions and by L. K\'erchy for power bounded operators.

\begin{lem}[K\'erchy \cite{Ke_isom_as}]\label{Ker_dec_lem}
Consider a power bounded operator $T\notin C_{1\cdot}(\irH)\cup C_{0\cdot}(\irH)$ and the orthogonal decomposition $\irH = \irH_0\oplus\irH_0^\perp$. 
The block-matrix form of $T$ in this decomposition is the following:
\begin{equation}\label{Ker_dec_eq}
T =
\left(\begin{matrix}
T_0 & R\\
0 & T_1
\end{matrix}\right) \in \irB(\irH_0\oplus\irH_0^\perp),
\end{equation}
where the elements $T_0$ and $T_1$ are of class $C_{0\cdot}$ and $C_{1\cdot}$, respectively.
\end{lem}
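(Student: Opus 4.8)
The plan is to read the block structure off the invariance of $\irH_0$, dispose of the $C_{0\cdot}$ assertion directly from the definition of the stable subspace, and then establish the $C_{1\cdot}$ assertion by exploiting the functional equation satisfied by the $L$-asymptotic limit rather than by a direct estimate on orbits.

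First I would record that $\irH_0$, being hyperinvariant, is in particular invariant for $T$; hence the $(2,1)$-block of $T$ relative to $\irH = \irH_0 \oplus \irH_0^\perp$ vanishes, which is exactly the claimed upper triangular form with $T_0 = T|_{\irH_0}$ and $T_1 = P_{\irH_0^\perp} T|_{\irH_0^\perp}$. The triangular shape forces $T_0^n = T^n|_{\irH_0}$ and $T_1^n = P_{\irH_0^\perp} T^n|_{\irH_0^\perp}$, so both compressions inherit the power bound of $T$. The statement $T_0 \in C_{0\cdot}(\irH_0)$ is then immediate: for $x \in \irH_0$ one has $\|T_0^n x\| = \|T^n x\| \to 0$ by the very definition of $\irH_0$, so every vector of $\irH_0$ is stable for $T_0$.

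The real content is the claim $T_1 \in C_{1\cdot}(\irH_0^\perp)$, that is, $\irH_0(T_1) = \{0\}$. The tempting route is to show that $\|T_1^n x\| \to 0$ forces $\|T^n x\| \to 0$, but this requires the off-diagonal term $\sum_{k=0}^{n-1} T_0^k R\, T_1^{n-1-k} x$ to vanish in norm, and this is precisely where a naive argument collapses, since a discrete convolution of two null sequences need not be null without a summability hypothesis. I expect this to be the main obstacle, and I would bypass it entirely. Instead I would use the identity $T^* A_{T,L} T = A_{T,L}$, which follows by applying the shift-invariance of the Banach limit to $\langle T^{*(n+1)}T^{n+1}x,y\rangle = \langle T^{*n}T^{n}x,y\rangle$ under $\Llim$. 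Since $A_{T,L}$ is self-adjoint with $\ker A_{T,L} = \irH_0$, the subspace $\irH_0$ reduces $A_{T,L}$, so in the given decomposition $A_{T,L}$ is block diagonal, vanishing on $\irH_0$ and equal to a positive operator $A_1 \in \irB(\irH_0^\perp)$ on $\irH_0^\perp$; moreover $A_1$ is injective because $\ker A_{T,L} = \irH_0$. Substituting the block forms of $T$, $T^*$ and $A_{T,L}$ into the identity and comparing the $(2,2)$-entries yields the reduced functional equation $T_1^* A_1 T_1 = A_1$.

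From here the $C_{1\cdot}$ property is a short computation. Iterating gives $T_1^{*n} A_1 T_1^n = A_1$ for every $n$, whence $\langle A_1 x, x\rangle = \langle A_1 T_1^n x, T_1^n x\rangle \le \|A_1\|\,\|T_1^n x\|^2$ for all $x \in \irH_0^\perp$ and all $n$. For $x \neq 0$ the injectivity of $A_1$ gives $\langle A_1 x, x\rangle = \|A_1^{1/2} x\|^2 > 0$, so $\|T_1^n x\|^2 \geq \|A_1^{1/2} x\|^2/\|A_1\|$ remains bounded away from zero and $x \notin \irH_0(T_1)$. Hence $\irH_0(T_1) = \{0\}$ and $T_1 \in C_{1\cdot}(\irH_0^\perp)$, which completes the argument. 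The essential idea is to trade the unmanageable orbit estimate for the algebraic identity $T_1^* A_1 T_1 = A_1$ furnished by the asymptotic limit.
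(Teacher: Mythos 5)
Your proof is correct. Note first that the dissertation itself gives no proof of this lemma: it is quoted from K\'erchy's paper \cite{Ke_isom_as}, so there is no in-text argument to match; what you have produced is a self-contained proof built from ingredients the introduction does supply, namely $\ker A_{T,L}=\irH_0(T)$ and the identity \eqref{A_TL_orbit_eq}, whose operator form is exactly your $T^*A_{T,L}T=A_{T,L}$. Every step checks out: invariance of $\irH_0$ gives the triangular form and $T_0\in C_{0\cdot}$ trivially; the kernel of the self-adjoint $A_{T,L}$ is reducing, so $A_{T,L}=0\oplus A_1$ with $A_1$ injective (it is nonzero since $T\notin C_{0\cdot}(\irH)$); the $(2,2)$-entry of the identity gives $T_1^*A_1T_1=A_1$, and iterating yields $\|T_1^nx\|^2\geq\|A_1^{1/2}x\|^2/\|A_1\|$, so $\irH_0(T_1)=\{0\}$. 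One remark: your claim that the direct orbit argument ``collapses'' on the convolution term $\sum_{k=0}^{n-1}T_0^kR\,T_1^{n-1-k}x$ is overstated. The classical proof does not estimate that sum termwise; it splits at a fixed time: if $x\in\irH_0^\perp$ and $\|T_1^mx\|<\varepsilon$, write $T^mx=y_0+T_1^mx$ with $y_0\in\irH_0$, so that $\|T^{m+k}x\|\leq\|T^ky_0\|+M\varepsilon$ and hence $\limsup_n\|T^nx\|\leq M\varepsilon$, where $M=\sup_n\|T^n\|$; letting $\varepsilon\to0$ gives $x\in\irH_0\cap\irH_0^\perp=\{0\}$, with no summability hypothesis needed. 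So the elementary route also succeeds; what your route buys instead is a purely algebraic argument that exhibits the structural reason for the $C_{1\cdot}$ property --- the invariance equation $T_1^*A_1T_1=A_1$, which is the same mechanism behind the isometric asymptote construction used throughout the paper (cf.\ Lemma \ref{Sz.-Nagy_gen_lem}).
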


This is a very important structural result which will be applied several times throughout this dissertation.

The outline of the dissertation is as follows: in Chapter \ref{A-s_chap} we characterize those positive operators $A\in\irB(\irH)$ that arise from a contraction asymptotically, i.e. there is such a contraction $T\in\irB(\irH)$ for which $A_T = A$. 
Then, in Chapter \ref{matrix_A_C-s_chap}, we give a description of those positive semi-definite matrices $A\in\irB(\C^d)$ which arise from a power bounded matrix asymptotically, i.e. there exists a power bounded matrix $T \in \irB(\C^d)$ and a Banach limit $L$ such that $A_{T,L} = A$ holds. 
In fact we show that the matrices $A_{T,L}$ coincide for every $L$, moreover we will describe this operator as the limit of the Ces\'aro-sequence of the self-adjoint iterates of $T$.
Chapter \ref{SzN_chap} is devoted to the generalization of Theorem \ref{SzN_unit_thm}, concerning power bounded operators that are similar to normal operators. 
This gives us a method in certain cases which helps us to decide whether a given operator is similar to a normal one. 
Then, as a strengthening of Sz.-Nagy's result for contractions, we will describe those positive operators $A$ that arise from a contraction $T$ that is similar to a unitary operator. 
In Chapter \ref{comm_chap} we will investigate the case when the so-called commutant mapping of a contraction, which is a transformation that maps the commutant of a contraction into the commutant of its unitary asymptote, is injective. 
Finally, we will prove some cyclicity results in Chapter \ref{tree_chap} concerning weighted shift operators on directed trees which were recently introduced by Z. J. Jab\l onski, I. B. Jung and J. Stochel in \cite{JJS}. 
In each chapter there is a separate section for the main statements of our results and motivation, and another which is devoted to the proofs.

%---------------------------------------------------------------------------------------------------------------------------------

\newpage

\chapter{Positive operators arising asymptotically from contractions} \label{A-s_chap}

\section{Statements of the main results}
This chapter contains our results published in \cite{Ge_contr}. 
Throughout the chapter, if we do not state otherwise, it will be assumed that $T\in\irB(\irH)$ is a contraction. 
The main aim is to characterize those positive operators on a Hilbert space of arbitrary dimension which arise asymptotically from contractions. 
The following information concerning the asymptotic limits were known before or are trivial (see e.g. \cite{Kubrusly,NFBK}):
\begin{itemize}
\item[\textup{(i)}] $0\leq A_T \leq I$,
\item[\textup{(ii)}] $\ker(A_T) = \irH_0(T) := \{x\in\irH\colon \lim_{n\to\infty}\|T^nx\|=0\}$ is a hyperinvariant subspace,
\item[\textup{(iii)}] $\ker(A_T-I) = \irH_1(T) := \{x\in\irH\colon \lim_{n\to\infty}\|T^nx\|=\|x\|\}$ is the largest invariant subspace where $T$ is an isometry,
\item[\textup{(iv)}] $A_T = 0$ if and only if $T \in C_{0\cdot}(\irH)$,
\item[\textup{(v)}] $0 \notin \sigma_p(A_T)$ if and only if $T\in C_{1\cdot}(\irH)$.
\end{itemize}
We note that $\irH_1(T)\cap\irH_1(T^*)$ is the subspace on which the unitary part of $T$ acts.

Moreover, we provide the following further information about $\|A_T\|$.

\begin{theorem}\label{A_TL_norm_thm}
Suppose $L$ is a Banach limit and $T$ is a power bounded operator for which $A_{T,L} \neq 0$ holds. 
Then we have
\begin{equation}\label{A_TL_norm_eq}
\|A_{T,L}\| \geq 1. 
\end{equation}
In particular, $\|A_T\| = 1$ holds whenever $T$ is a contraction.
\end{theorem}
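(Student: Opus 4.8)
The plan is to exploit the $T$-invariance of the asymptotic quadratic form recorded in \eqref{A_TL_orbit_eq}, together with the positivity and normalization of the Banach limit $L$. Since $A_{T,L}\neq 0$ is a positive operator, its numerical radius equals its norm, so I may fix a vector $x\in\irH$ with $\alpha:=\langle A_{T,L}x,x\rangle=\|A_{T,L}^{1/2}x\|^2>0$. The crucial feature is that $\alpha$ is reproduced along the entire orbit: iterating the identity $\|A_{T,L}^{1/2}x\|^2=\|A_{T,L}^{1/2}Tx\|^2$ from \eqref{A_TL_orbit_eq} gives $\|A_{T,L}^{1/2}T^kx\|^2=\alpha$ for every $k\in\N$, so the orbit $\{T^kx\}$ remains at a fixed ``asymptotic level'' even though the norms $\|T^kx\|$ themselves may fluctuate.

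Next I would convert this into a lower bound on $\|A_{T,L}\|$. For each $k$ the elementary estimate $\alpha=\langle A_{T,L}T^kx,T^kx\rangle\leq\|A_{T,L}\|\,\|T^kx\|^2$ yields the uniform bound $\|T^kx\|^2\geq\alpha/\|A_{T,L}\|$ (note $\|A_{T,L}\|>0$). This is the key maneuver: the invariance of $\alpha$ forces the orbit to stay bounded away from $0$ by a constant governed precisely by $\|A_{T,L}\|$.

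Finally I would feed this back through the Banach limit. Applying $L$ to the termwise inequality $\|T^kx\|^2\geq\alpha/\|A_{T,L}\|$, and using that $L$ is positive and reproduces constant sequences, gives $\Llim_{k\to\infty}\|T^kx\|^2\geq\alpha/\|A_{T,L}\|$. But by \eqref{A_TL_orbit_eq} the left-hand side equals $\|A_{T,L}^{1/2}x\|^2=\alpha$, so $\alpha\geq\alpha/\|A_{T,L}\|$, and dividing by $\alpha>0$ yields $\|A_{T,L}\|\geq 1$. For the concluding clause, when $T$ is a contraction we have $A_T=A_{T,L}$ and $0\leq A_T\leq I$, whence $\|A_T\|\leq 1$; combined with the bound just established this forces $\|A_T\|=1$ (here $A_T\neq 0$ is inherited from the standing hypothesis $A_{T,L}\neq 0$).

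I do not anticipate a serious obstacle: the argument is short once the right combination is spotted. The only point requiring care is the application of positivity of $L$ to a termwise inequality, which should be phrased as applying $L$ to the nonnegative difference $\|T^kx\|^2-\alpha/\|A_{T,L}\|$ and invoking linearity together with $L$'s reproduction of constants, rather than informally ``passing to the limit''. Shift-invariance of $L$ enters only implicitly, since it is exactly what validates \eqref{A_TL_orbit_eq} and hence what guarantees that the asymptotic level $\alpha$ is preserved along the orbit.
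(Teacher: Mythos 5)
Your proof is correct, and it is a cleaner execution than the paper's, although both pivot on the same identity \eqref{A_TL_orbit_eq}. The paper argues via norming vectors: it fixes a unit vector $v$ with $\|A_{T,L}^{1/2}v\|$ within $\varepsilon$ of $\|A_{T,L}^{1/2}\|$, observes through $\liminf_{k\to\infty}\|T^kv\|^2\leq \Llim_{k\to\infty}\|T^kv\|^2\leq\|A_{T,L}^{1/2}\|^2$ that some orbit norm $\|T^{k_0}v\|$ is at most $\|A_{T,L}^{1/2}\|+\eta$, and then renormalizes $T^{k_0}v$ to produce a unit vector on which $A_{T,L}^{1/2}$ is nearly maximal, finally sending the two auxiliary parameters $\varepsilon$ and $\eta$ to zero. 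You instead take an arbitrary $x$ with $\alpha=\langle A_{T,L}x,x\rangle>0$, use the orbit-invariance $\|A_{T,L}^{1/2}T^kx\|^2=\alpha$ (which, as you note, encodes the shift-invariance of $L$) together with the termwise estimate $\alpha\leq\|A_{T,L}\|\,\|T^kx\|^2$, and then apply positivity and normalization of $L$ to the bounded sequence $\{\|T^kx\|^2\}_k$ to obtain $\alpha=\Llim_{k\to\infty}\|T^kx\|^2\geq\alpha/\|A_{T,L}\|$, which gives \eqref{A_TL_norm_eq} at once. Your route avoids extracting a concrete index $k_0$ and the attendant $\varepsilon$--$\eta$ bookkeeping (including the implicit need to know $\|T^{k_0}v\|\neq 0$ before normalizing, which in the paper follows only tacitly from the constancy of $\|A_{T,L}^{1/2}T^kv\|$); what the paper's version buys in exchange is a little extra information, namely that suitably normalized orbit vectors of a near-norming $v$ asymptotically norm $A_{T,L}^{1/2}$. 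Your handling of the contraction clause — combining $0\leq A_T\leq I$ with the lower bound just proved, under the standing hypothesis $A_{T,L}\neq 0$ — is exactly the intended reading of the final sentence of the theorem.
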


In finite dimension the asymptotic limit is always idempotent. 
The verification of this can be done with the Jordan decomposition theorem as well, but we will use the Sz.-Nagy--Foias--Langer decomposition instead.

\begin{theorem}\label{finite_dim_A_T_thm}
Let $T\in\irB(\C^d)$ be a contraction. 
Then $A_T = A_T^2 = A_{T^*}$, i.e. $A_T$ is simply the orthogonal projection onto the subspace $\irH_0(T)^\perp$ and $\irH_0(T) = \irH_0(T^*)$ is satisfied.
\end{theorem}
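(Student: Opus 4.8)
The plan is to invoke the Sz.-Nagy--Foias--Langer decomposition, which provides a canonical orthogonal decomposition $\C^d = \irM \oplus \irN$ into subspaces reducing $T$ such that $T|_\irM$ is unitary and $T|_\irN$ is completely non-unitary. Writing $T = U \oplus T_1$ accordingly (so that $T^{*n}T^n = U^{*n}U^n \oplus T_1^{*n}T_1^n$ for every $n$), the computation of $A_T$ splits along the two summands, and it suffices to treat each block separately.

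On the unitary summand nothing needs to be done: since $U$ is unitary we have $U^{*n}U^n = I_\irM$ for all $n$, so the corresponding block of $A_T$ is $I_\irM$. The crucial step is the completely non-unitary summand, where I would show $T_1^n \to 0$, i.e.\ $T_1 \in C_{0\cdot}(\irN)$. In finite dimension this reduces to a spectral statement: I claim $T_1$ has no eigenvalue of modulus one. Indeed, if $T_1 x = \lambda x$ with $x \neq 0$ and $|\lambda| = 1$, then $\|T_1 x\| = \|x\|$ forces $\langle (I - T_1^*T_1)x, x\rangle = 0$; as $I - T_1^*T_1 \geq 0$ this yields $T_1^*T_1 x = x$, whence $T_1^* x = \bar\lambda x$. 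Thus $\C x$ is a one-dimensional reducing subspace on which $T_1$ acts unitarily, contradicting complete non-unitarity. Therefore $\sigma(T_1) \subseteq \D$, the spectral radius of $T_1$ is strictly less than one, and $\|T_1^n\| \to 0$.

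Combining the blocks, $T^{*n}T^n \to I_\irM \oplus 0$, so $A_T$ is exactly the orthogonal projection onto $\irM$; in particular $A_T = A_T^2$. The same argument applies verbatim to $T^*$: the decomposition $\C^d = \irM \oplus \irN$ reduces $T^*$ as well, $T^* = U^* \oplus T_1^*$ with $U^*$ unitary and $T_1^*$ again a completely non-unitary contraction in finite dimension, hence stable. Consequently $A_{T^*}$ is also the orthogonal projection onto $\irM$, so $A_{T^*} = A_T$. Finally, reading off kernels via the identity $\ker A_T = \irH_0(T)$ listed above, we get $\irH_0(T) = \ker A_T = \irN = \ker A_{T^*} = \irH_0(T^*)$, and $A_T$ is the orthogonal projection onto $\irN^\perp = \irH_0(T)^\perp$, as claimed.

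The main obstacle is the stability of the completely non-unitary part $T_1$; everything else is a routine block computation once the decomposition is in hand. The eigenvalue argument above is what makes this work, and it is genuinely special to finite dimension, since in infinite dimension a completely non-unitary contraction need not be stable.
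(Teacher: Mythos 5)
Your proposal is correct and follows essentially the same route as the paper: the Sz.-Nagy--Foias--Langer decomposition $T = U \oplus T_1$, ruling out unimodular eigenvalues of the c.n.u.\ summand to get $r(T_1)<1$, hence uniform stability of $T_1$ (and of $T_1^*$) and $A_T = I \oplus 0 = A_{T^*}$. The only cosmetic difference is at the eigenvector step: you use the positivity argument $\langle (I-T_1^*T_1)x,x\rangle = 0 \Rightarrow T_1^*T_1x = x \Rightarrow T_1^*x=\overline{\lambda}x$, making $\C x$ reducing, whereas the paper expands $\|Cv+\mu Cw\|^2 \leq \|v+\mu w\|^2$ over all $\mu\in\C$ to show the whole eigenspace is reducing --- both yield the same contradiction with complete non-unitarity.
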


We say that the positive operator $A\in\irB(\irH)$ arises asymptotically from a contraction in uniform convergence if $\lim_{n\to\infty}\|T^{*n}T^n-A\| = 0$. 
Of course in this case $A = A_T$. 
On the other hand, it is easy to see that usually for a contraction $T\in\irB(\irH)$ the equation $\lim_{n\to\infty} T^{*n}T^n = A_T$ holds only in SOT.
Whenever $\dim \irH < \infty$, $A_T$ arises from $T$ in uniform convergence, since the SOT is equivalent to the usual norm topology. 
The symbols $\sigma_e$ and $r_e$ denote the essential spectrum and the essential spectral radius (see \cite[Section~XI.2]{Co}). 
Our result concerning the separable and infinite dimensional case reads as follows. 

\begin{theorem}\label{main_contr_separable_thm}
Let $\dim\irH=\aleph_0$ and $A$ be a positive contraction acting on $\irH$. 
The following four conditions are equivalent:
\begin{itemize}
\item[\textup{(i)}] $A$ arises asymptotically from a contraction,
\item[\textup{(ii)}] $A$ arises asymptotically from a contraction in uniform convergence,
\item[\textup{(iii)}] $r_e(A)=1$ or $A$ is a projection of finite rank,
\item[\textup{(iv)}] $\dim\irH(]0,1]) = \dim\irH(]\delta,1])$ holds for every $0\leq\delta<1$ where $\irH(\omega)$ denotes the spectral subspace of $A$ associated with the Borel subset $\omega\subseteq \R$.
\end{itemize}
Moreover, if one of these conditions holds and $\dim\ker(A-I)\in\{0,\aleph_0\}$, then $T$ can be chosen to be a $C_{\cdot 0}$-contraction such that it satisfies \textup{(ii)}.
\end{theorem}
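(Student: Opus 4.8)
The plan is to run the cycle $\textup{(ii)}\Rightarrow\textup{(i)}\Rightarrow\textup{(iv)}\Leftrightarrow\textup{(iii)}\Rightarrow\textup{(ii)}$ and then to read off the ``moreover'' part from the construction. The implication $\textup{(ii)}\Rightarrow\textup{(i)}$ is immediate, since uniform convergence of $T^{*n}T^n$ forces SOT convergence and hence $A_T=A$. The equivalence $\textup{(iii)}\Leftrightarrow\textup{(iv)}$ is pure spectral bookkeeping: writing $d=\dim\irH(]0,1])=\dim(\ker A)^\perp$, I would separate the case $d<\infty$, in which $\textup{(iv)}$ forces $\irH(]0,\delta])=\{0\}$ for every $\delta<1$, hence $\sigma(A)\subseteq\{0,1\}$ and $A$ is a finite-rank projection, from the case $d=\aleph_0$, in which $\textup{(iv)}$ reads $\dim\irH(]\delta,1])=\aleph_0$ for all $\delta<1$; the latter says precisely that $1$ is a limit point of $\sigma(A)$ or an eigenvalue of infinite multiplicity, i.e. $1\in\sigma_e(A)$, i.e. $r_e(A)=1$. (In the separable setting an infinite-dimensional spectral subspace has dimension $\aleph_0$, so no finer cardinal bookkeeping is needed.)

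For the necessity $\textup{(i)}\Rightarrow\textup{(iv)}$ I would exploit the isometric asymptote. Since $T$ is a contraction, $T^*AT=A$; setting $B=A^{1/2}$ (viewed as a map onto $\irH^+=\overline{\ran A}$) and letting $\tilde V$ be the isometry on $\irH^+$ with $\tilde V B=BT$, a short computation gives the key inequality
\[
A_+-\tilde V A_+\tilde V^*=B(I-TT^*)B^*\ge 0,\qquad A_+:=A|_{\irH^+}.
\]
When $d=\dim\irH^+<\infty$ the isometry $\tilde V$ is unitary, so the left-hand side has vanishing trace; being positive it is $0$, whence $\tilde V$ commutes with $A_+$ and $B(I-TT^*)B^*=0$. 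Feeding an eigenvector of $A_+$ back through $\tilde V B=BT$ and using that $T$ is a contraction then forces every eigenvalue of $A_+$ to equal $1$, i.e. $A$ is a finite-rank projection. When $d=\aleph_0$ I would feed the Wold decomposition $\tilde V=S\oplus W$ (pure shift $\oplus$ unitary) into the telescoped identity $A_+-\tilde V^{n}A_+\tilde V^{*n}=\sum_{j=0}^{n-1}\tilde V^{j}B(I-TT^*)B^*\tilde V^{*j}$ and track how the defect of the shift deposits spectral mass of $A_+$ arbitrarily close to $\|A_+\|=1$, the conclusion being $1\in\sigma_e(A)$. I expect this essential-spectrum bookkeeping, rather than the inequality itself, to be the delicate point of the necessity direction.

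The substantive direction is the sufficiency $\textup{(iii)}\Rightarrow\textup{(ii)}$, where the construction is the main obstacle. After splitting off the reducing subspace $\ker A$ (on which I place a uniformly stable contraction, so that $\|T^{*n}T^n\|\to0$ there), I am left to realise, with uniform convergence, an injective positive contraction $A_+$ on $\irH^+$ with $1\in\sigma_e(A_+)$. The basic gadget is a weighted shift: a scalar one with weights $w_j\uparrow1$ produces $A=\diag_k(b_k)$ with $b_k=\prod_{j\ge k}w_j^2\uparrow1$ and obeys $\|T^{*n}T^n-A\|\le(1-b_{n+1})/b_{n+1}\to0$, so a single channel whose spectral data climb to $1$ already converges uniformly. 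The difficulty is that a general $A_+$ may have a spectral gap below $1$, essential spectrum strictly inside $]0,1[$, or even continuous spectrum; Theorem \ref{A_TL_norm_thm} forbids any nonzero direct summand of norm $<1$ from arising on its own, so no such piece can be realised in isolation and every spectral value must be coupled to the top of the spectrum. This is exactly what $r_e(A)=1$ makes possible: the unavoidable spectral mass at $1$ supplies channels climbing to weights near the identity within a uniformly bounded number of steps. Concretely I would take $T$ to be a forward operator-weighted shift on $\bigoplus_{k\ge0}\irN$ whose operator weights tend to $I$ in norm, the norm convergence of the weights being precisely what upgrades the SOT convergence of $T^{*n}T^n$ to uniform convergence. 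Choosing these weights so as to reproduce $A_+$ exactly, up to unitary equivalence, while keeping the rate uniform across all (infinitely many) channels at once, is the technical heart of the theorem.

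Finally, the ``moreover'' part should come almost for free from this construction. A forward operator-weighted shift is automatically of class $C_{\cdot0}$, since its adjoint powers shift towards the initial coordinate and tend to $0$ strongly; likewise the uniformly stable block on $\ker A$ can be chosen in $C_{\cdot0}$, and a countable direct sum of uniformly bounded $C_{\cdot0}$ contractions is again $C_{\cdot0}$. Such a shift can carry the eigenvalue $1$ of $A$ only through an isometric (pure-shift) sub-channel, whose multiplicity is $\dim\ker(A-I)$; a pure shift lives on a space of dimension $0$ or $\aleph_0$, so the construction stays inside $C_{\cdot0}$ exactly when $\dim\ker(A-I)\in\{0,\aleph_0\}$, which is the stated hypothesis. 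A finite nonzero multiplicity would instead force a reducing finite-dimensional subspace on which $A_T=I$, hence an isometry that is unitary and so never of class $C_{\cdot0}$. Thus under this hypothesis the very operator $T$ produced in $\textup{(iii)}\Rightarrow\textup{(ii)}$ can be arranged to be a $C_{\cdot0}$-contraction satisfying \textup{(ii)}, completing the proof.
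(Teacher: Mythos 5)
Your overall architecture is viable and several components are genuinely correct: the cycle (ii)$\Rightarrow$(i)$\Rightarrow$(iv)$\Leftrightarrow$(iii)$\Rightarrow$(ii), the spectral bookkeeping for (iii)$\Leftrightarrow$(iv), the identity $A_+-VA_+V^*=B(I-TT^*)B^*\ge 0$ together with the trace argument in the finite-rank case (a slick alternative to the paper's route, which instead goes through Lemma \ref{nec_contr_lem}), and the ``moreover'' discussion, which matches the paper's closing remark. However, at both substantive points you flag the difficulty and then do not resolve it, and in each case the mechanism you describe would not, as stated, close the gap. For the necessity when $\dim(\ker A)^\perp=\aleph_0$: the inequality $A_+\ge VA_+V^*$ plus Wold bookkeeping cannot by itself yield $1\in\sigma_e(A)$, since any positive contraction commuting with a unitary $V$ satisfies the inequality with equality, so nothing forces spectral mass near $1$; worse, your mechanism invokes ``the defect of the shift,'' but the isometric asymptote can be unitary with no shift part at all --- e.g.\ for the bilateral weighted shift \eqref{eq_shift_1_multiplic}, which is similar to a unitary --- and there the conclusion $1\in\sigma_e(A_T)$ still has to be proven. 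What actually closes this case is Theorem \ref{A_TL_norm_thm} plus a reducing-subspace argument, exactly as in Lemma \ref{nec_contr_lem}: since $\|A\|=1$, if $1\notin\sigma_e(A)$ then $1$ is an isolated eigenvalue of finite multiplicity; $\ker(A-I)=\irH_1(T)$ is a finite-dimensional invariant subspace on which the contraction $T$ is unitary, hence reducing, so $T=T'\oplus U$ and $A=A_{T'}\oplus I$ with $\|A_{T'}\|<1$, whence $A_{T'}=0$ by Theorem \ref{A_TL_norm_thm} and $A$ is a finite-rank projection, contradicting $\dim\irH(]0,1])=\aleph_0$. Your own computation $T^*e=V^*e$ on eigenvectors, which you use in the finite-rank case, adapts verbatim to $\ker(A-I)$ here; the Wold decomposition is a detour.

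The second gap is the sufficiency (iii)$\Rightarrow$(ii), which you rightly call the technical heart but then leave as one sentence. Your ansatz --- an operator-weighted forward shift on $\bigoplus_{k\ge0}\irN$ with weights tending to $I$ in norm --- presupposes that the spectral data of $A_+$ can be cut into slices of one fixed dimension climbing monotonically to $1$; that is precisely what fails in general and what the paper's proof is mostly about. The equal-dimension monotone case is Lemma \ref{block_diag_same_dim_lemma} (your gadget, with the estimate $\|T^{*n}T^n-A\|\le \underline{r}(A_n)^{-1}-1$). The hard configuration is, say, $\sigma(A)=[0,b]\cup\{\lambda_j\colon j\in\N\}$ with simple eigenvalues $\lambda_j\nearrow1$ above an infinite-dimensional (possibly continuous-spectrum) bottom: the channels climbing to $1$ are one-dimensional while the bottom slice is infinite-dimensional, so no single choice of $\irN$ works until the eigenvalues are redistributed. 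The paper does this via the zig-zag rearrangement of Lemma \ref{diag_clust_to_1_lemma}, which packs the finitely-multiple eigenvalues into infinitely many infinite-dimensional columns $\irX_m$ so that the monotonicity $\lambda_n\le\alpha_{l,m+n}$ keeps the error uniform across all channels, and in Case 2 of the proof it additionally threads $\irX_0=\irH([0,b[)$ in as the zeroth slot of the shift; Case 3 then handles $\ker(A-I)\neq\{0\}$ with an isometric summand, as you sketch. Without this rearrangement (or an equivalent), your construction does not cover all $A$ with $r_e(A)=1$. A minor omission in the same direction: the finite-rank-projection branch of (iii) needs its own (one-line, $T=A$) treatment, since there $1\notin\sigma_e(A_+)$ and your reduction to ``an injective $A_+$ with $1\in\sigma_e(A_+)$'' does not apply.
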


We will also give the characterization in non-separable spaces. 
But before that we need a generalization of the essential spectrum. 
If $\kappa$ is an infinite cardinal number, satisfying $\kappa\leq\dim\irH$, then the closure of the set $\irE_\kappa := \{S\in\irB(\irH)\colon \dim (\irR(S))^-<\kappa\}$, is a proper two-sided ideal, denoted by $\irC_\kappa$. 
Let $\irF_\kappa := \irB(\irH)/\irC_\kappa$ be the quotient algebra, the mapping $\pi_\kappa\colon\irB(\irH)\to\irF_\kappa$ be the quotient map and $\|\cdot\|_\kappa$ the quotient norm on $\irF_\kappa$. 
For an operator $A\in\irB(\irH)$ we use the notations $\|A\|_\kappa := \|\pi_\kappa(A)\|_\kappa$, $\sigma_\kappa(A) := \sigma(\pi_\kappa(A))$ and $r_\kappa(A) := r(\pi_\kappa(A))$. 
(For $\kappa=\aleph_0$ we get the ideal of compact operators, $\|A\|_{\aleph_0} = \|A\|_e$ is the essential norm, $\sigma_{\aleph_0}(A)=\sigma_e(A)$ and $r_{\aleph_0}(A)=r_e(A)$.)
For more details see \cite{terElst} or \cite{Luft}. 
Now we state our result in the non-separable case.

\begin{theorem}\label{main_contr_arbitrary_thm}
Let $\dim\irH>\aleph_0$ and $A\in\irB(\irH)$ be a positive contraction. 
Then the following four conditions are equivalent:
\begin{itemize}
\item[\textup{(i)}] $A$ arises asymptotically from a contraction,
\item[\textup{(ii)}] $A$ arises asymptotically from a contraction in uniform convergence,
\item[\textup{(iii)}] $A$ is a finite rank projection, or $\kappa=\dim\irH(]0,1])\geq\aleph_0$ and $r_\kappa(A)=1$ holds,
\item[\textup{(iv)}] $\dim{\irH\left(]0,1]\right)}=\dim{\irH\left(]\delta,1]\right)}$ for any $0\leq\delta<1$.
\end{itemize}
Moreover, when $\dim\ker(A-I) \in \{0,\infty\}$ and (i) holds, then we can choose a $C_{\cdot 0}$ contraction $T$ such that $A$ is the uniform asymptotic limit of $T$.
\end{theorem}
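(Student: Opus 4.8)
The plan is to establish the non-separable statement along the same cycle of implications as the separable Theorem~\ref{main_contr_separable_thm}, systematically replacing the ideal of compact operators, the essential norm and the essential spectrum by the ideal $\irC_\kappa$, the quotient norm $\|\cdot\|_\kappa$ on $\irF_\kappa=\irB(\irH)/\irC_\kappa$, and the quantities $\sigma_\kappa$, $r_\kappa$. The implication (ii)$\Rightarrow$(i) is immediate. I would first dispose of the purely spectral equivalence (iii)$\Leftrightarrow$(iv), then prove necessity (i)$\Rightarrow$(iv), then the construction (iv)$\Rightarrow$(ii), and finally the $C_{\cdot 0}$ refinement. For the spectral equivalence, put $\kappa=\dim\irH(]0,1])$ and use the orthogonal splitting $\irH(]0,1])=\irH(]0,\delta])\oplus\irH(]\delta,1])$. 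If $\kappa<\aleph_0$, then (iv) forces $\irH(]0,\delta])=\{0\}$ for every $\delta\in(0,1)$, hence the spectrum of $A$ in $]0,1]$ reduces to $\{1\}$ and $A$ is the finite rank projection onto $\irH(]0,1])$; the converse is clear. If $\kappa\ge\aleph_0$, cardinal arithmetic turns (iv) into the requirement $\dim\irH(]\delta,1])=\kappa$ for all $\delta<1$, which, since $0\le A\le I$ gives $r_\kappa(A)=\|A\|_\kappa=\max\sigma_\kappa(A)$, is exactly $1\in\sigma_\kappa(A)$, i.e. $r_\kappa(A)=1$.

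For necessity (i)$\Rightarrow$(iv) I would exploit $T^*AT=A$, which follows from the orbit identity~(\ref{A_TL_orbit_eq}) and yields the positivity $T^{*n}T^n-A=T^{*n}(I-A)T^n\ge0$, so that $A\le T^{*n}T^n$ for every $n$. Since $\dim\overline{\ran S}=\dim\overline{\ran S^*}$ by polar decomposition, $\irC_\kappa$ is a selfadjoint closed ideal and $\irF_\kappa$ is a $C^*$-algebra; applying the positive $*$-homomorphism $\pi_\kappa$ together with the $C^*$-identity gives $\|A\|_\kappa\le\|T^{*n}T^n\|_\kappa=\|T^n\|_\kappa^{2}$. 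The finite rank case is then settled by Theorem~\ref{finite_dim_A_T_thm} (idempotency) together with $\|A\|=1$ from Theorem~\ref{A_TL_norm_thm}. In the infinite rank case one must produce the matching lower bound $r_\kappa(A)=1$; here I would analyse the isometric asymptote $V$ on $\irH(]0,1])$ through $VA^{1/2}=A^{1/2}T$, paralleling the separable argument of Theorem~\ref{main_contr_separable_thm} but carried out inside $\irF_\kappa$. Extracting a lower bound at the top of $\sigma_\kappa(A)$ from the mere strong convergence $T^{*n}T^n\to A$ is the delicate point of the necessity part.

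The substantial step is the construction (iv)$\Rightarrow$(ii): from the dimension condition I must build a single contraction $T$ on $\irH$ with $\|T^{*n}T^n-A\|\to0$. My model is an operator-weighted unilateral shift that transports the spectral layers of $A$ towards the top of its spectrum. Concretely, choose $\delta_k\downarrow0$ and decompose $\irH(]0,1])$ along the annuli $]\delta_{k+1},\delta_k]$; condition (iv) guarantees $\dim\irH(]\delta_k,1])=\kappa$ for every $k$, i.e. there is always a $\kappa$-dimensional ``room'' above each level into which the shift can inject the lower layers. Reading the weights off $A^{1/2}$, I would arrange that the orbit norms $\|T^nx\|^2$ decrease to $\langle Ax,x\rangle$, so that $A_T=A$, while the remainder $(I-A)^{1/2}T^n$ tends to $0$ in operator norm, so that the convergence is uniform. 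The whole point of (iv) is that this uniformity is attainable: carrying full dimension $\kappa$ arbitrarily close to $1$ prevents a slow tail near $0$ that would otherwise destroy uniform convergence. The main obstacle is precisely to choose the layering and the operator weights so that the uniform estimate holds simultaneously across all (infinitely many) layers in a space of arbitrary dimension, with the $\irC_\kappa$-calculus used to keep the negligible remainders controlled.

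Finally, a genuinely pure operator-weighted shift is automatically of class $C_{\cdot 0}$, since its adjoint powers tend strongly to $0$; thus the construction already delivers a $C_{\cdot 0}$ contraction as soon as no nontrivial unitary summand is forced upon $T$. The extra hypothesis $\dim\ker(A-I)\in\{0,\infty\}$ is exactly what permits this: when $\dim\ker(A-I)=\infty$ the isometric top part $\irH(\{1\})$ can be absorbed into the infinite-multiplicity shift (which is itself $C_{\cdot 0}$), and when $\dim\ker(A-I)=0$ there is no such part to begin with, the finite rank projection case being trivial. A finite nonzero value would instead force a genuine unitary block, which is why it is excluded. I would close by verifying $T^{*n}\to0$ in the strong operator topology for the resulting operator, yielding the asserted uniform $C_{\cdot 0}$ realization.
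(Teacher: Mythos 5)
Your plan has a genuine gap at the necessity step (i)$\Rightarrow$(iv), and you flag it yourself without closing it. From $T^*AT=A$ you obtain $A\le T^{*n}T^n$ and hence only upper bounds such as $\|A\|_\kappa\le\|T^n\|_\kappa^2\le1$, which are vacuous here; and your hope of ``paralleling the separable argument inside $\irF_\kappa$'' does not go through, because the intertwining $V A^{1/2}=A^{1/2}T$ lives on $\irH$, not in the quotient algebra, and the merely strong convergence $T^{*n}T^n\to A$ transfers no norm-one information into $\irF_\kappa$. The paper avoids this difficulty entirely by the separable reduction it announces as its main tool just before the statement: every contraction on a non-separable space decomposes as $T=\sum_{\xi\in\Xi}\oplus T_\xi$ with each $T_\xi$ acting on a separable reducing subspace, whence $A_T=\sum_{\xi\in\Xi}\oplus A_{T_\xi}$; applying condition (iv) of Theorem \ref{main_contr_separable_thm} to each summand and adding the cardinals over $\xi$ gives $\dim\irH(]0,1])=\dim\irH(]\delta,1])$ at once. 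This reduction is the missing idea; without it your necessity argument is incomplete.

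The construction (iv)$\Rightarrow$(ii) also contains an obstacle you name but do not overcome. When $\alpha=\dim\irH(]0,1])>\aleph_0$, the annuli $\irX_j=\irH(]a_j,a_{j+1}])$ may all satisfy $\alpha_j=\dim\irX_j<\alpha$, and then a single operator-weighted shift ``injecting lower layers into the $\kappa$-dimensional room above'' cannot be run: Lemma \ref{block_diag_same_dim_lemma} requires all layers to have equal dimension, and the room above a given level is a union of infinitely many annuli each of possibly strictly smaller dimension, so a $\kappa$-dimensional layer admits no shift-type isometric embedding into any single one of them. The paper resolves this by joining subintervals so that $\aleph_0\le\alpha_j<\alpha_{j+1}$ with $\sup_j\alpha_j=\alpha$, splitting each annulus as $\irX_j=\sum_{k\le j}\oplus\irX_{j,k}$ with $\dim\irX_{j,k}=\alpha_k$, and running countably many shifts $T_k$ in parallel, each inside the constant-dimension column $\sum_{j\ge k}\oplus\irX_{j,k}$, the common bound $1/a_n-1$ making the convergence uniform simultaneously in $k$; the atom $\irH(\{1\})$ is handled by an isometric summand, giving the four cases of the paper's proof. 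Finally, your (iii)$\Leftrightarrow$(iv) sketch is right in spirit, but the passage between $r_\kappa(A)=1$ (via $\|A^n\|_\kappa=1$) and the dimension condition is not formal cardinal arithmetic: the paper invokes \cite[Lemma 5]{terElst} in both directions, and you should too. Your discussion of the $C_{\cdot0}$ refinement agrees with the paper's.
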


The main tool in the proof of the above theorem is the following well-known property: any vector $h\in\irH$ generates a separable reducing subspace $\vee\{T^{*j_1}T^{k_1}\dots T^{*j_l}T^{k_l}h\colon l\in\N_0, j_1,k_1\dots j_l,k_l\in\N_0\}$ for $T$. 
Therefore $\irH$ can be decomposed into the orthogonal sum of separable reducing subspaces $\irH = \sum_{\xi\in \Xi}\oplus\irH_\xi$ and so $T=\sum_{\xi\in \Xi}\oplus T_\xi$, where $T_\xi=T|\irH_\xi$. 
Hence $A_T$ is the orthogonal sum of asymptotic limits of contractions, all acting on a separable space: $A_T = \sum_{\xi\in \Xi}\oplus A_{T_\xi}$.

It is natural to ask what condition on two contractions $T_1,T_2 \in\irB(\irH)$ imply $A_{T_1} = A_{T_2}$, or reversely what connection between $T_1$ and $T_2$ follows from the equation $A_{T_1} = A_{T_2}$. 
Finally, we will investigate this problem.

\begin{theorem}\label{main_contr_coinc_thm}
Let $\irH$ be an arbitrary Hilbert space and $T,T_1,T_2 \in\irB(\irH)$ contractions. 
The following statements are satisfied:
\begin{itemize}
\item[\textup{(i)}] if $T_1, T_2$ commute, then $A_{T_1T_2}\leq A_{T_1}$ and $A_{T_1T_2}\leq A_{T_2}$,
\item[\textup{(ii)}] if $u\in H^\infty$ is a non-constant inner function and $T$ is a c.n.u. contraction, then $A_T = A_{u(T)}$,
\item[\textup{(iii)}] $A_{T_1} = A_{T_2} = A$ implies $A \leq A_{T_1T_2}$,
\item[\textup{(iv)}] if $T_1$ and $T_2$ commute and $A_{T_1} = A_{T_2}$, then we have $A_{T_1T_2} = A_{T_1} = A_{T_2}$.
\end{itemize}
\end{theorem}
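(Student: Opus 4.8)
The plan is to base everything on two elementary facts about the asymptotic limit of a contraction $S$: first, $\langle A_Sx,x\rangle=\lim_{n\to\infty}\|S^nx\|^2$ for every $x$, and second, the stationarity identity $S^*A_SS=A_S$, which is immediate from the orbit relation \eqref{A_TL_orbit_eq} since $\langle A_SSx,Sx\rangle=\lim_n\|S^{n+1}x\|^2=\langle A_Sx,x\rangle$. I shall also use repeatedly that $0\le A_S\le I$, so that $\|A_S^{1/2}y\|\le\|y\|$ for all $y$.

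For (i) I would simply exploit commutativity to write $(T_1T_2)^n=T_1^nT_2^n=T_2^nT_1^n$. Since $T_1,T_2$ are contractions, $\|(T_1T_2)^nx\|=\|T_1^n(T_2^nx)\|\le\|T_2^nx\|$, and likewise $\|(T_1T_2)^nx\|\le\|T_1^nx\|$; letting $n\to\infty$ in the associated quadratic forms yields $A_{T_1T_2}\le A_{T_2}$ and $A_{T_1T_2}\le A_{T_1}$.

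Part (iii) is where the stationarity identity does the work, and notably it needs no commutativity. Writing $A:=A_{T_1}=A_{T_2}$, the identity gives $T_1^*AT_1=A$ and $T_2^*AT_2=A$, hence
\[
(T_1T_2)^*A(T_1T_2)=T_2^*\big(T_1^*AT_1\big)T_2=T_2^*AT_2=A .
\]
Iterating, $A=(T_1T_2)^{*n}A(T_1T_2)^n$ for every $n$, so $\langle Ax,x\rangle=\|A^{1/2}(T_1T_2)^nx\|^2\le\|(T_1T_2)^nx\|^2$; letting $n\to\infty$ gives $\langle Ax,x\rangle\le\langle A_{T_1T_2}x,x\rangle$, that is $A\le A_{T_1T_2}$. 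Part (iv) is then immediate: under commutativity (i) gives $A_{T_1T_2}\le A_{T_1}=A_{T_2}=A$, while (iii) gives $A\le A_{T_1T_2}$, whence equality.

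The real difficulty is (ii). One inequality is easy: with $(X_T,W_T)$ the canonical unitary asymptote of $T$ we have $\langle A_Tx,x\rangle=\|X_Tx\|^2$ and $X_TT=W_TX_T$, and since the intertwining extends to the Sz.-Nagy--Foias functional calculus, $X_Tu(T)=u(W_T)X_T$; here $u(W_T)$ is unitary because $W_T$ is absolutely continuous and $u$ is inner (so $|u|=1$ a.e.\ on $\T$). As $X_T$ is a contraction, $\|X_Tx\|=\|u(W_T)^nX_Tx\|=\|X_Tu(T)^nx\|\le\|u(T)^nx\|$, and letting $n\to\infty$ yields $A_T\le A_{u(T)}$. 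The hard part is the reverse inequality $A_{u(T)}\le A_T$, and this is exactly where complete non-unitarity is indispensable. My plan is to pass to the minimal unitary dilation $U$ on $\irK\supseteq\irH$, which for a c.n.u.\ contraction is \emph{absolutely continuous}; since the functional calculus is multiplicative, $u(T)^n=P_\irH u(U)^n|\irH$ for every $n$, so $\lim_n\|u(T)^nx\|^2=\lim_n\|P_\irH u(U)^nx\|^2$, to be compared with $\langle A_Tx,x\rangle=\lim_n\|P_\irH U^nx\|^2$. The crux is to show that the reducing subspace of $U$ governing the large-$n$ behaviour of the compressed orbit is unchanged when $U$ is replaced by $u(U)$; this rests on the fact that a non-constant inner $u$ maps the (Lebesgue absolutely continuous) spectrum of $U$ onto $\T$ in a measure-class-preserving manner, so that the relevant residual and $*$-residual subspaces — and hence the asymptotic energy retained in $\irH$ — coincide for $U$ and $u(U)$. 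Establishing this identification of residual subspaces is the main obstacle; once it is in place, $A_{u(T)}\le A_T$ follows, and together with the easy inequality gives $A_{u(T)}=A_T$.
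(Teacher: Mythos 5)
Parts (i), (iii) and (iv) of your proposal are correct, and in (iii) you take a genuinely different route from the paper. The paper proves (iii) by noting that $A_{T_1}=A_{T_2}=A$ forces the canonical maps to coincide, $X^+:=X^+_{T_1}=X^+_{T_2}$, observing that $(X^+,V_1V_2)$ is a contractive intertwining pair for $T_1T_2$, and invoking the universal property of the isometric asymptote to produce a contraction $Z$ with $X^+=ZX^+_{T_1T_2}$. Your argument via the stationarity identity $T_i^*AT_i=A$, iterated to $(T_1T_2)^{*n}A(T_1T_2)^n=A$, avoids asymptotes entirely and is more elementary; like the paper's proof it needs no commutativity. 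Your easy half of (ii), $A_T\le A_{u(T)}$, is also sound and essentially parallel to the paper's (the paper routes it through the universality of the isometric asymptote, producing $Z\in\irI(V_{u(T)},u(V_T))$ with $X^+_T=ZX^+_{u(T)}$; you use the unitary asymptote intertwining directly, which requires — and you correctly flag — the absolute continuity of $W_T$ for c.n.u.\ $T$ so that $u(W_T)$ is a well-defined unitary).

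The genuine gap is the reverse inequality $A_{u(T)}\le A_T$ in (ii), which you yourself leave as an ``obstacle.'' Your dilation plan is unlikely to close cleanly as stated: although $u(T)^n=P_\irH u(U)^n|\irH$ does hold, $u(U)$ is in general \emph{not} the minimal unitary dilation of $u(T)$, so the residual and $*$-residual subspaces you want to compare are those of the restriction of $u(U)$ to $\bigvee_{n\in\Z}u(U)^n\irH$, and no measure-class property of $u$ on $\T$ by itself produces the pointwise comparison $\lim_n\|P_\irH u(U)^nx\|^2\le\lim_n\|P_\irH U^nx\|^2$ for $x\in\irH$; the sketch names the desired conclusion rather than a mechanism for it. The paper's mechanism is different and much lighter: first a M\"obius-invariance lemma — for a c.n.u.\ contraction $T$ and $a\in\D$ one has $A_{b_a(T)}=A_T$, proved by running exactly your easy-direction intertwining argument twice, once for $b_a$ and once for $b_{-a}$, and closing the loop with $b_{-a}(b_a(T))=T$. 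Then, setting $a:=u(0)$, the function $v=b_a\circ u$ is inner with $v(0)=0$, hence $v=\chi w$ with $w$ inner, so $v(T)=T\,w(T)$ is a product of \emph{commuting} contractions; part (i) gives $A_{v(T)}\le A_T$, while the M\"obius lemma applied to the c.n.u.\ contraction $u(T)$ gives $A_{u(T)}=A_{v(T)}$. Together with your correct inequality $A_T\le A_{u(T)}$ this yields $A_T=A_{u(T)}$. In short: the missing idea is the reduction modulo a M\"obius transform, which converts the ``hard'' direction of (ii) into an application of your own part (i).
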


We will conclude this chapter by providing some examples.

\section{Proofs}

We begin with the proof of Theorem \ref{A_TL_norm_thm}.

\begin{proof}[Proof of Theorem \ref{A_TL_norm_thm}]
Assume that $0 < \|A_{T,L}\|$ happens. 
Set a vector $v\in\irH, \|v\| = 1$ and an arbitrarily small number $\varepsilon > 0$ such that 
\[ 
\Llim_{n\to\infty} \|T^n v\|^2 = \big\|A_{T,L}^{1/2} v\big\|^2 > \big(\big\|A_{T,L}^{1/2}\big\|-\varepsilon\big)^2 
\]
is satisfied. 
By \eqref{A_TL_orbit_eq} we get
\[ 
\Bigg\|A_{T,L}^{1/2} \frac{T^k v}{\|T^k v\|}\Bigg\|^2 = \frac{\|A_{T,L}^{1/2} v\|^2}{\|T^k v\|^2} > \frac{\big(\big\|A_{T,L}^{1/2}\big\|-\varepsilon\big)^2}{\|T^k v\|^2} \quad (k\in\N). 
\]
Since $\liminf_{k\to\infty} \|T^k v\|^2 \leq \Llim_{k\to\infty} \|T^k v\|^2 \leq \|A_{T,L}^{1/2}\|^2$, for every $\eta > 0$ there exists a $k_0\in\N$ for which $\|T^{k_0} v\|^2 \leq (\|A_{T,L}^{1/2}\|+\eta)^2$ holds. 
This suggests that
\[ 
\Bigg\|A_{T,L}^{1/2} \frac{T^{k_0} v}{\|T^{k_0} v\|}\Bigg\|^2 > \frac{(\|A_{T,L}^{1/2}\|-\varepsilon)^2}{\|T^{k_0} v\|^2} \geq \frac{(\|A_{T,L}^{1/2}\|-\varepsilon)^2}{\|A_{T,L}^{1/2}+\eta\|^2}. 
\]
Since this holds for every $\eta > 0$, we infer that
\[ 
\|A_{T,L}\| = \big\|A_{T,L}^{1/2}\big\|^2 \geq \frac{\big(\big\|A_{T,L}^{1/2}\big\|-\varepsilon\big)^2}{\big\|A_{T,L}^{1/2}\big\|^2}. 
\]
In the beginning we could choose an arbitrarily small $\varepsilon > 0$, hence we obtain \eqref{A_TL_norm_eq}.
\end{proof}

We proceed with the verification of Theorem \ref{finite_dim_A_T_thm}.

\begin{proof}[Proof of Theorem \ref{finite_dim_A_T_thm}]
Let us consider the Sz.-Nagy--Foias--Langer decomposition $T = U\oplus C \in\irB(\irH = \irN\oplus\irN^\perp)$ where $U\in\irB(\irN)$ is unitary and $C\in\irB(\irN^\perp)$ is c.n.u. 
If $\irN = \irH$, then obviously $A_T = A_{T^*} = I$. 
Therefore without loss of generality we may assume that this is not the case. 
Suppose that $r(C) = 1$. 
Then there exists a vector $0\neq v\in \ker(C-\lambda I)$ with a $\lambda\in\T$. 
We can find a $0\neq w\in(\ker(C-\lambda I))^\perp$, because $C$ is not unitary. 
Using the contractivity of $C$, we obtain
\[ 
\|v\|^2 + 2\re(\lambda\overline{\mu}\langle v,Cw\rangle) + |\mu|^2\|Cw\|^2 = \|\lambda v+\mu Cw\|^2 = \|Cv+\mu Cw\|^2 \leq \|v+\mu w\|^2 
\]
\[ 
= \|v\|^2 + 2\re(\overline{\mu}\langle v,w\rangle) + |\mu|^2\|w\|^2 = \|v\|^2 + |\mu|^2\|w\|^2 \quad (\mu\in\C), 
\]
or equivalently
\[ 
0 \leq |\mu|^2 (\|w\|^2-\|Cw\|^2) - 2\re(\lambda\overline{\mu}\langle v,Cw\rangle) \quad (\mu\in\C). 
\]
In particular we derive the following inequality:
\[ 
0 \leq t^2 (\|w\|^2 - \|Cw\|^2) - 2t|\langle v,Cw\rangle| \quad (t\in\R), 
\]
which implies $\langle v,Cw\rangle = 0$. 
Therefore any eigenspace of $C$ associated with a complex number of unit modulus has to be a reducing subspace, which is impossible since $C$ was c.n.u. 
Thus $r(C)<1$ follows, and e.g. by Gelfand's spectral radius formula (\cite[Proposition VII.3.8]{Co}) we infer $\|C^n\|\to 0$.
Hence we get $\irN^\perp = \irH_0(T) = \irH_0(T^*)$.
This implies that $A_T = I \oplus 0 = A_{T^*} \in\irB(\irN\oplus\irN^\perp)$ is the orthogonal projection onto $\irN = \irH_0^\perp$.
\end{proof}

Our next goal is to prove Theorem \ref{main_contr_separable_thm}, but before that we need three auxiliary results. 
The next lemma gives us some necessary conditions on the asymptotic limit of $T$.

\begin{lem} \label{nec_contr_lem}
If the positive operator $0\leq A\leq I$ is the asymptotic limit of a contraction $T$, then one of the following three possibilities occurs:
\begin{itemize}
\item[\textup{(i)}] $A=0$,
\item[\textup{(ii)}] $A$ is a non-zero finite rank projection, $\irH_0(T)^\perp = \irH_1(T)$ and $\dim{\irH_1(T)}\in\N$,
\item[\textup{(iii)}] $\|A\| = r_e(A) = 1$.
\end{itemize}
\end{lem}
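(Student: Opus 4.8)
The plan is to dispose of the trivial alternative first and then show that the genuinely interesting dichotomy is forced by a single mechanism. If $A=0$ we are in case \textup{(i)}, so assume $A\neq 0$; Theorem \ref{A_TL_norm_thm} then gives $\|A\|=1$, and since $A\geq 0$ this means $1\in\sigma(A)$. The whole question is thus whether $r_e(A)=1$, which is case \textup{(iii)}, or $r_e(A)<1$, and I would prove that the latter forces case \textup{(ii)}. So the real target is: assuming $A=A_T\neq 0$ and $r_e(A)<1$, show that $A$ is the orthogonal projection onto $\irH_1(T)$ and that $\dim\irH_1(T)<\infty$.

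First I would record two consequences of $r_e(A)<1$. Since $1\in\sigma(A)$ but $1\notin\sigma_e(A)$, the point $1$ is an isolated eigenvalue of finite multiplicity, so $\irH_1(T)=\ker(A-I)$ is finite dimensional and there is a spectral gap: an $\varepsilon_0\in(0,1)$ with $\sigma(A)\cap(1-\varepsilon_0,1)=\emptyset$. Writing $E$ for the spectral measure of $A$ and $P_1=E(\{1\})$ for the finite rank projection onto $\irH_1(T)$, the gap gives $I-P_1=E([0,1-\varepsilon_0])$, hence $I-A\geq\varepsilon_0(I-P_1)$. Next, because $\dim\irH_1(T)<\infty$, the isometry $T|\irH_1(T)$ is in fact unitary, and from $T^*Tu=u$ together with $T\irH_1(T)=\irH_1(T)$ one checks $T^*\irH_1(T)\subseteq\irH_1(T)$; thus $\irH_1(T)$ reduces $T$, so $T=W\oplus T'$ with $W$ unitary, and consequently $\|P_1T^nx\|=\|P_1x\|$ for every $x\in\irH$ and every $n$.

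The key idea, and the part I expect to carry the proof, is that the invariance identity $T^{*n}AT^n=A$ (obtained by iterating $T^*AT=A$, itself a consequence of $T^{*n}T^n\to A$ in SOT) turns every orbit into an approximate eigenvector of $A$ at the eigenvalue $1$. For fixed $x$ with $\langle Ax,x\rangle>0$ I would set $\hat y_n=T^nx/\|T^nx\|$, which is defined for large $n$ since $\|T^nx\|^2\to\langle Ax,x\rangle>0$. Then $\langle A\hat y_n,\hat y_n\rangle=\langle Ax,x\rangle/\|T^nx\|^2\to 1$, so $\langle(I-A)\hat y_n,\hat y_n\rangle\to 0$; combined with the gap estimate $I-A\geq\varepsilon_0(I-P_1)$ this forces $\|(I-P_1)\hat y_n\|\to 0$, hence $\|P_1\hat y_n\|\to 1$ and therefore $\|P_1T^nx\|=\|T^nx\|\,\|P_1\hat y_n\|\to\sqrt{\langle Ax,x\rangle}$.

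Finally I would combine the two evaluations of $\|P_1T^nx\|$: on one hand it equals $\|P_1x\|$ for all $n$ by the reduction, on the other it tends to $\sqrt{\langle Ax,x\rangle}$, so $\|P_1x\|^2=\langle Ax,x\rangle$ whenever $\langle Ax,x\rangle>0$; the remaining case $x\in\ker A=\irH_0(T)$ is immediate, since $\irH_1(T)\subseteq\irH_0(T)^\perp$ gives $P_1x=0$. Thus $\langle P_1x,x\rangle=\langle Ax,x\rangle$ for all $x$, i.e. $A=P_1$, a projection of finite rank $\dim\irH_1(T)$ with $\irH_0(T)^\perp=(\ran A)^-=\irH_1(T)$, which is exactly case \textup{(ii)}. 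The main obstacle is really the middle step: seeing that normalising the orbit produces approximate eigenvectors at $1$ and that, because the finite dimensional $\irH_1(T)$ reduces $T$, their $P_1$-masses can be computed two different ways. I would take care to confirm that finite-dimensionality of $\irH_1(T)$ is genuinely needed, since without it $T|\irH_1(T)$ could be a proper shift and the reduction step would fail; this is precisely what the hypothesis $r_e(A)<1$ supplies.
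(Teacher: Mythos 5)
Your proof is correct, and its first half coincides with the paper's: both dispose of $A=0$, invoke Theorem \ref{A_TL_norm_thm} to get $\|A\|=1$ when $A\neq 0$, note that $r_e(A)<1$ makes $1$ an isolated eigenvalue of finite multiplicity with eigenspace $\irH_1(T)=\ker(A-I)$, and observe that $T$ restricted to this finite-dimensional invariant subspace is unitary, so that $\irH_1(T)$ reduces $T$. Where you genuinely diverge is the finishing step. The paper writes $T=T'\oplus U$ with $U=T|\irH_1(T)$, so that $A=A_{T'}\oplus I_{\irH_1(T)}$; since $1$ is isolated in $\sigma(A)$ with eigenspace exactly $\irH_1(T)$, it gets $\sigma(A_{T'})\subset[0,1[$, hence $\|A_{T'}\|<1$, and then a \emph{second} application of Theorem \ref{A_TL_norm_thm}, to the contraction $T'$, forces $A_{T'}=0$, after which $A=0\oplus I=P_1$ drops out. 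You instead prove $A=P_1$ directly: from $T^{*n}AT^n=A$ and the gap estimate $I-A\geq\varepsilon_0(I-P_1)$ you show the normalized orbits $T^nx/\|T^nx\|$ are approximate eigenvectors of $A$ at $1$, evaluate $\|P_1T^nx\|$ two ways (constant $=\|P_1x\|$ by the reduction, tending to $\sqrt{\langle Ax,x\rangle}$ by the gap), and use the positive-operator fact that $\langle Ax,x\rangle=0$ exactly on $\ker A=\irH_0(T)$, which is orthogonal to $\irH_1(T)$, to cover the remaining vectors; polarization then gives $A=P_1$. This is longer but self-contained: in effect you re-derive, in the presence of a spectral gap, precisely the special case of Theorem \ref{A_TL_norm_thm} that the paper cites — and, amusingly, by the same orbit-renormalization mechanism that the paper's proof of that theorem uses. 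The paper's route buys brevity and modularity (the norm theorem kills $A_{T'}$ for free); yours buys an explicit quantitative mechanism and makes transparent, as you note at the end, why $r_e(A)<1$ (through finite-dimensionality of $\irH_1(T)$ and the gap) is the decisive hypothesis.
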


\begin{proof}
As we have already seen $\|A_T\|\in\{0,1\}$.
Assume that (i) and (iii) do not hold.
Then 1 is an isolated point of $\sigma(A)$ and $\dim{\ker(A-I)}=\dim{\irH_1(T)}\in\N$. 
Since $T$ is an isometry on the finite dimensional invariant subspace $\irH_1(T)$, the contraction $T$ must be unitary on $\irH_1(T)$. 
Therefore $\irH_1(T)$ is reducing for $T$. 
Now consider the decomposition $T=T'\oplus U$ where $U=T|\irH_1(T)$. 
Evidently $A=A_{T'}\oplus I_{\irH_1(T)}$ and $\sigma(A_{T'})\subset [0,1[$ holds which implies $\|A_{T'}\|<1$, and thus $A_{T'}=0$. 
Therefore $\irH_0(T)=\irH_1(T)^{\perp}$, so $A$ is a projection onto $\irH_1$.
\end{proof}

Now, we would like to point out some remarks. 
It is easy to see that in a triangular decomposition
\[ 
T = \left(\begin{matrix}
T_1 & T_{12}\\
0 & T_2
\end{matrix}\right) \in \irB(\irH'\oplus\irH'').
\]
$A_T = A_{T_1}\oplus A_{T_2}$ does not hold in general. 
A counterexample can be given by the contractive bilateral weighted shift operator defined by
\begin{equation}\label{eq_shift_1_multiplic}
Te_k = \left\{\begin{matrix}
e_{k+1} & \text{ for } k>0\\
\frac{1}{2}e_{k+1} & \text{ for } k\leq 0\\
\end{matrix}\right. ,
\end{equation}
where $\{e_k\}_{k\in\Z}$ is an orthonormal basis in $\irH$. 
Indeed, an easy calculation shows that
\[ 
A_Te_k = \left\{\begin{matrix}
e_k & \text{ for } k>0\\
(\frac{1}{2})^{-2k+2}e_k & \text{ for } k\leq 0\\
\end{matrix}\right. . 
\]
On the other hand, $\irH_1(T)=\vee_{k>0}\{e_k\}$, and the matrix of $T$ is
\[ 
T = \left(\begin{matrix}
T_1 & T_{12}\\
0 & T_2
\end{matrix}\right)
\]
in the decomposition $\irH_1(T)\oplus\irH_1(T)^\perp$. 
Here $T_2\in C_{0\cdot}(\irH_1(T)^\perp)$, so $A_{T_1}\oplus A_{T_2} = I\oplus 0$ is a projection, but $A_T$ is not.

In fact $A_T$ is a projection if and only if $\irH = \irH_0(T)\oplus\irH_1(T)$ happens (\cite[Section~5.3]{Kubrusly}).
In this case $\irH_0(T)$ is a reducing subspace of $T$.

If $T$ has only trivial reducing subspaces, $\dim\irH\geq 2$ and $A_T$ is a projection, then either $T$ is stable or $T$ is a simple unilateral shift operator (i.e. all weights are 1 in \eqref{eq_shift_1_multiplic}). 
This follows directly from the previous remark, Theorem \ref{Sz-N_ref_thm} and the Neumann--Wold decomposition of isometries (\cite[p. 204]{Ke_Hto}).

Obviously for each power bounded operator $T\in\irB(\irH)$ and unitary operator $U\in\irB(\irH)$ we have
\begin{equation}\label{unitary_eqv_eq}
A_{UTU^*,L} = U A_{T,L} U^*.
\end{equation}

We proceed with the proof of the following lemma which provides a sufficient condition. 
For any positive operator $A\in\irB(\irH)$ we shall write $\underline{r}(A)$ for the minimal element of $\sigma(A)\subset[0,\infty[$.

\begin{lem}\label{block_diag_same_dim_lemma}
Suppose that the block-diagonal positive contraction $A = \sum_{j=0}^\infty\oplus A_j\in\irB(\irH = \sum_{j=0}^\infty\oplus\irX_j)$ has the following properties:
\begin{itemize}
\item[\textup{(i)}] $\dim{\irX_j}=\dim{\irX_0}>0$ for every $j$,
\item[\textup{(ii)}] $r(A_j) \leq \underline{r}(A_{j+1})$ for every $j$,
\item[\textup{(iii)}] $A_1$ is invertible, and
\item[\textup{(iv)}] $r(A_j)\nearrow 1$.
\end{itemize} 
Then $A$ arises asymptotically from a $C_{\cdot 0}$-contraction in uniform convergence. 
\end{lem}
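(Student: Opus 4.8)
The plan is to construct $T$ explicitly as a block weighted shift. Using property (i) I identify every $\irX_j$ with one fixed Hilbert space $\irX$, so that each $A_j$ becomes a positive contraction on $\irX$ (these operators need not commute with one another). First I would observe that invertibility propagates upward from $A_1$: since (ii) gives $\underline{r}(A_{j+1})\geq r(A_j)\geq\underline{r}(A_j)$, condition (iii) forces $A_j$ to be invertible for every $j\geq1$. This lets me define weight operators
\[
W_k := A_{k+1}^{-1/2}A_k^{1/2}\in\irB(\irX)\qquad(k\geq0),
\]
which are meaningful because only the inverse square roots of the invertible operators $A_{k+1}$ (with $k+1\geq1$) appear, while $A_k^{1/2}$ makes sense even if $A_0$ is not invertible.

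Next I would define $T\in\irB(\irH)$ by $(Tx)_0=0$ and $(Tx)_{k+1}=W_kx_k$, and record two facts. The identity $W_k^*A_{k+1}W_k=A_k$ holds by construction, and $\|W_k\|\leq1$: from $A_k^{1/2}A_{k+1}^{-1}A_k^{1/2}\leq\underline{r}(A_{k+1})^{-1}A_k\leq\big(r(A_k)/\underline{r}(A_{k+1})\big)I$, together with the spectral gap (ii), we get $\|W_k^*W_k\|\leq r(A_k)/\underline{r}(A_{k+1})\leq1$. Since for a block weighted shift $\|T\|=\sup_k\|W_k\|$, the operator $T$ is a contraction.

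The heart of the argument is a telescoping computation. The operator $T^{*n}T^n$ is block diagonal with $k$-th block $Q_{k,n}^*Q_{k,n}$, where $Q_{k,n}:=W_{k+n-1}\cdots W_k$; the consecutive factors $A_j^{1/2}A_j^{-1/2}$ cancel regardless of commutativity, so $Q_{k,n}=A_{k+n}^{-1/2}A_k^{1/2}$ and the $k$-th block of $T^{*n}T^n$ equals $A_k^{1/2}A_{k+n}^{-1}A_k^{1/2}$. For uniform convergence I then estimate
\[
\|T^{*n}T^n-A\|=\sup_{k\geq0}\big\|A_k^{1/2}\big(A_{k+n}^{-1}-I\big)A_k^{1/2}\big\|\leq\sup_{m\geq n}\big\|A_m^{-1}-I\big\|,
\]
using $\|A_k\|=r(A_k)\leq1$. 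By (ii) and (iv) the spectra of the $A_m$ collapse to $\{1\}$ (both $\underline{r}(A_m)\geq r(A_{m-1})\to1$ and $r(A_m)\to1$), whence $\|A_m^{-1}-I\|=\underline{r}(A_m)^{-1}-1\to0$; the supremum of this null sequence over $m\geq n$ tends to $0$, giving $\|T^{*n}T^n-A\|\to0$ and in particular $A_T=A$. Finally $T\in C_{\cdot0}$: on the dense set of finitely supported vectors $T^{*n}$ vanishes once $n$ exceeds the support length, because the backward shift $T^*$ pushes all support off the zeroth block, so $T^{*n}\to0$ in the strong operator topology.

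The main obstacle is upgrading strong to \emph{uniform} convergence, and this is exactly what dictates the choice of weights: the telescoping makes the error in the $k$-th block depend on the single index $k+n$, so the supremum over all blocks is governed by the tail $\sup_{m\geq n}\|A_m^{-1}-I\|$ of one null sequence. Producing that null sequence is where hypotheses (ii) and (iv) do the real work, while verifying $\|W_k\|\leq1$ through the spectral-gap inequality (ii) is the other place the hypotheses are used essentially.
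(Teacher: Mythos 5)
Your proposal is correct and follows essentially the same route as the paper's proof: both construct the operator-weighted unilateral shift with weights $A_{k+1}^{-1/2}(\cdot)A_k^{1/2}$ (your identification of all $\irX_j$ with one space $\irX$ is just the paper's unitaries $U_j$ in disguise), verify contractivity via the spectral-gap inequality $r(A_k)\leq\underline{r}(A_{k+1})$, and exploit the same telescoping identity to bound $\|T^{*n}T^n-A\|$ by $\underline{r}(A_n)^{-1}-1$. Your explicit remarks on the propagation of invertibility from (iii) and on the $C_{\cdot 0}$ property are details the paper leaves implicit, but they do not change the argument.
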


\begin{proof} 
Let us consider the following operator-weighted unilateral shift operator $S\in\irB(\irH)$ which is given by the equation
\[ 
S (x_0\oplus x_1 \oplus x_2 \oplus\dots) = 0\oplus U_0 x_0\oplus U_1 x_1 \oplus \dots \quad (x_j\in\irX_j), 
\]
where $U_j\colon \irX_j\to\irX_{j+1}$ are unitary transformations (i.e. bijective isometries) ($j\in\N_0$). 
Let $T$ be defined by
\[ 
T|\irX_j = A_{j+1}^{-1/2}SA_{j}^{1/2} \quad (j\in\N_0).
\]
Since 
\[ 
\|A_{j+1}^{-1/2}SA_{j}^{1/2}x_{j}\| \leq \sqrt{\frac{1}{\underline{r}(A_{j+1})}} \|A_{j}^{1/2}x_{j}\| \leq \sqrt{\frac{r(A_{j})}{\underline{r}(A_{j+1})}} \|x_{j}\| \leq \|x_{j}\|, 
\]
we obtain that $T$ is a contraction of class $C_{\cdot 0}$. 
An easy calculation shows that
\[ 
T^{*n}T^n = \sum_{j=0}^\infty\oplus A_j^{1/2}S^{*n}A_{j+n}^{-1}S^nA_j^{1/2}. 
\]
By the spectral mapping theorem, we get
\[ 
\|A_j-A_j^{1/2}S^{*n}A_{j+n}^{-1}S^nA_j^{1/2}\| \leq \|A_j^{1/2}\|\cdot\|I_{\irX_j}-S^{*n}A_{j+n}^{-1}S^n\|\cdot\|A_j^{1/2}\| 
\] 
\[ 
\leq r(A_{j+n}^{-1}-I_{\irX_{j+n}}) \leq \frac{1}{\underline{r}(A_{j+n})}-1 \leq \frac{1}{\underline{r}(A_{n})}-1. 
\]
This yields
\[ 
\|T^{*n}T^n-A\| = \sup\left\{\|A_j-A_j^{1/2}S^{*n}A_{j+n}^{-1}S^nA_j^{1/2}\|\colon j\in\N_0\right\} 
\] 
\[ 
\leq \frac{1}{\underline{r}(A_{n})}-1 \longrightarrow 0 \quad (n\to\infty),
\]
which ends the proof.
\end{proof}

The last lemma before verifying Theorem \ref{main_contr_separable_thm} reads as follows.

\begin{lem}\label{diag_clust_to_1_lemma}
Let $A$ be a positive diagonal contraction on a separable infinite dimensional Hilbert space $\irH$. 
Suppose that the eigenvalues of $A$ can be arranged into an increasing sequence $\{\lambda_j\}_{j=1}^\infty\subseteq ]0,1[$, each listed according to its multiplicity such that $\lambda_j\nearrow 1$. 
Then $A$ is the uniform asymptotic limit of a $C_{\cdot 0}$-contraction.
\end{lem}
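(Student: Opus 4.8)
The plan is to recognise that a positive diagonal contraction whose eigenvalues form a non-decreasing sequence tending to $1$ is, after a trivial relabelling, already presented in the block-diagonal shape demanded by Lemma \ref{block_diag_same_dim_lemma} with one-dimensional blocks; the whole proof then reduces to checking the four hypotheses of that lemma. Concretely, I would fix an orthonormal basis $\{e_j\}_{j=1}^\infty$ of eigenvectors with $Ae_j=\lambda_j e_j$, ordered so that $\lambda_1\le\lambda_2\le\dots$ with $\lambda_j\nearrow 1$ and $\lambda_j\in\,]0,1[$, and set $\irX_j:=\C e_{j+1}$ together with $A_j:=A|\irX_j=\lambda_{j+1}I_{\irX_j}$ for $j\in\N_0$, so that $A=\sum_{j=0}^\infty\oplus A_j\in\irB(\sum_{j=0}^\infty\oplus\irX_j)$.

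Then I would verify the hypotheses one by one. Hypothesis (i) is immediate, since every block is one-dimensional, hence $\dim\irX_j=\dim\irX_0=1>0$. Hypothesis (ii) follows from the monotonicity of the eigenvalue sequence, because $r(A_j)=\lambda_{j+1}\le\lambda_{j+2}=\underline{r}(A_{j+1})$. Hypothesis (iii) holds because $A_1=\lambda_2 I_{\irX_1}$ and $\lambda_2>0$, so $A_1$ is invertible; note that (ii) then forces $\underline{r}(A_{j+1})\ge r(A_j)\ge\underline{r}(A_1)>0$ for every $j$, so each block $A_j$ with $j\ge 1$ is automatically invertible, which is exactly what the construction in Lemma \ref{block_diag_same_dim_lemma} implicitly requires. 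Hypothesis (iv) is precisely the standing assumption $r(A_j)=\lambda_{j+1}\nearrow 1$.

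Having checked all four conditions, Lemma \ref{block_diag_same_dim_lemma} yields a $C_{\cdot 0}$-contraction $T$ with $\|T^{*n}T^n-A\|\to 0$, i.e. $A$ is the uniform asymptotic limit of a $C_{\cdot 0}$-contraction, which is the assertion. I do not expect a genuine obstacle here: the only points requiring care are the index shift between the eigenvalue labelling $\{\lambda_j\}_{j\ge 1}$ and the block labelling $\{A_j\}_{j\ge 0}$, and the observation that one-dimensional blocks trivially meet the equal-dimension requirement (i), so there is no need to group eigenvalues at all. The fact that the sequence is only non-decreasing rather than strictly increasing causes no difficulty, since (ii) is stated with $\le$ and (iv) merely demands convergence to $1$.
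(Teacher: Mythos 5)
Your reduction is correct: with $\irX_j=\C e_{j+1}$ and $A_j=\lambda_{j+1}I_{\irX_j}$ all four hypotheses of Lemma \ref{block_diag_same_dim_lemma} hold exactly as you check them, and the resulting operator is just the scalar weighted unilateral shift $Te_j=\sqrt{\lambda_j/\lambda_{j+1}}\,e_{j+1}$, which is visibly a $C_{\cdot 0}$-contraction with $T^{*n}T^n e_j=(\lambda_j/\lambda_{j+n})e_j$ and $\|T^{*n}T^n-A\|\leq 1/\lambda_n-1\to 0$. The paper, however, does not deduce the lemma from Lemma \ref{block_diag_same_dim_lemma}: it rearranges the eigenvalues into a triangular array $(\alpha_{l,m})$ and takes \emph{infinite-dimensional} column subspaces $\irX_m=\vee\{e_{l,m}\colon l\in\N\}$, then reruns the weighted-shift construction by hand, proving convergence from the row estimate $\lambda_n\leq\alpha_{l,m+n}$. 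Note that this triangular decomposition could not be fed into Lemma \ref{block_diag_same_dim_lemma} anyway, since typically $r(A_m)=1>\underline{r}(A_{m+1})$, so hypothesis (ii) fails there; the direct estimate replaces it. What the paper's heavier scheme buys is reusability: in Case 2 of the proof of Theorem \ref{main_contr_separable_thm} the same triangular arrangement is attached to an infinite-dimensional zeroth block $\irX_0=\irH([0,b[)$, and the unitaries $U_j\colon\irX_j\to\irX_{j+1}$ (equivalently the shift $S$) exist only because all columns have dimension $\aleph_0$ — your one-dimensional blocks would not match dimensions in that situation. So for the lemma as stated your route is a genuinely shorter and perfectly valid alternative, at the cost of not serving as the template the paper later modifies.
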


\begin{proof} First we form a matrix $(\alpha_{l,m})_{l,m\in\N}$ from the eigenvalues in the following way: $\alpha_{1,1}=\lambda_1$; $\alpha_{2,1}=\lambda_2$ and $\alpha_{1,2}=\lambda_3$; $\alpha_{3,1}=\lambda_4$, $\alpha_{2,2}=\lambda_5$ and $\alpha_{1,3}=\lambda_6$; \dots and so on.
\[ 
\left(
\begin{matrix}
\alpha_{1,1} & \alpha_{1,2} & \alpha_{1,3} & \alpha_{1,4} & \dots\\
\alpha_{2,1} & \alpha_{2,2} & \alpha_{2,3} \\
\alpha_{3,1} & \alpha_{3,2} \\
\alpha_{4,1} & & & \\
\vdots & & & & \ddots \\
\end{matrix}\right) = 
\left(\begin{matrix}
\lambda_1 & \lambda_3 & \lambda_6 & \lambda_{10} & \dots\\
\lambda_2 & \lambda_5 & \lambda_9 \\
\lambda_4 & \lambda_8 \\
\lambda_7 & & & \\
\vdots & & & & \ddots \\
\end{matrix}\right) 
\]
We can choose an orthonormal basis $\{e_{l,m}\colon l,m\in\N\}$ in $\irH$ such that $e_{l,m}$ is an eigenvector corresponding to the eigenvalue $\alpha_{l,m}$ of $A$. 
Now we form the subspaces:
\[ 
\irX_m := \vee\{e_{l,m}\colon l\in\N\} \quad (m\in\N), 
\]
which are clearly reducing for $A$. 
For any $m\in\N$, we set $A_m:=A|\irX_m$. 
Let us consider also the operator $S$ defined by $Se_{l,m}=e_{l,m+1}$ ($l,m\in\N$). 
We note that $S$ is the orthogonal sum of $\aleph_0$ many unilateral shift operators.
Now the operator $T\in\irB(\irH)$ is given by the following equality:
\[ 
T|\irX_m = A_{m+1}^{-1/2}SA_m^{1/2} \quad (m\in\N). 
\]
Since 
\[ 
T e_{l,m} = \sqrt{\frac{\alpha_{l,m}}{\alpha_{l,m+1}}}e_{l,m+1} \quad (l,m\in\N), 
\]
$T$ is a $C_{\cdot 0}$-contraction. 
Furthermore, for every $l,m,n\in\N$, we have $\lambda_n\leq \alpha_{l,m+n}$, and so
\[ 
\|T^{*n}T^{n} e_{l,m} - A e_{l,m} \| = \frac{\alpha_{l,m}}{\alpha_{l,m+n}}-\alpha_{l,m} \leq \frac{1}{\lambda_{n}}-1 \to 0 \quad (n\to\infty). 
\]
Since $e_{l,m}$ is an eigenvector for both $A$ and $T^{*n}T^{n}$, the sequence $T^{*n}T^{n}$ uniformly converges to $A$ on $\irH$. 
So $A$ arises asymptotically from a $C_{\cdot 0}$-contraction in uniform convergence.
\end{proof}

Now we are in a position to prove the characterization when $\dim\irH = \aleph_0$. 
This states that a positive contraction, which acts on a separable space, is an asymptotic limit of a contraction if and only if one of the conditions (i)--(iii) of Lemma \ref{nec_contr_lem} holds. 
In what follows, $E$ stands for the spectral measure of the positive operator $A$ and $\irH(\omega)=E(\omega)\irH$ for any Borel subset $\omega\subset\R$. 
Let us consider the orthogonal decomposition $\irH=\irH_{d}\oplus\irH_c$, reducing for $A$, where $A|\irH_{d}$ is diagonal and $A|\irH_{c}$ has no eigenvalue. 
Let us denote the spectral measure of $A|\irH_{d}$ and $A|\irH_{c}$ by $E_d$ and $E_c$, respectively. 
For any Borel set $\omega\subset\R$ we shall write $\irH_c(\omega)=E_c(\omega)\irH_c$ and $\irH_{d}(\omega)=E_{d}(\omega)\irH_d$.

\begin{proof}[Proof of Theorem \ref{main_contr_separable_thm}] 
The implication (i)$\Longrightarrow$(iii) follows from Lemma \ref{nec_contr_lem}, and (ii)$\Longrightarrow$(i) is trivial. 
First we prove the implication (iii)$\Longrightarrow$ (ii) in order to complete the implication circle (i)$\Longrightarrow$(iii)$\Longrightarrow$(ii)$\Longrightarrow$(i), and second we show the equivalence (iii)$\iff$(iv).

\smallskip

(iii)$\Longrightarrow$ (ii):
We suppose that $r_e(A)=1$. 
(If $A$ is a finite rank projection, then $T=A$ can be chosen.) 
If $\ker(A)\neq\{0\}$, then $A$ has the form $A=0\oplus A_1$ in the decomposition $\irH = \ker(A)\oplus\ker(A)^\perp$, where $r_e(A_1)=1$. 
If $A_1$ arises asymptotically from the contraction $T_1$ in uniform convergence, then $A$ arises asymptotically from $0\oplus T_1$ in uniform convergence. 
Hence we may assume that $\ker(A)=\{0\}$. 
Obviously, one of the next three cases occurs.

\smallskip

\textit{Case 1. There exists a strictly increasing sequence $0=a_0<a_1<a_2<\dots$ such that $a_n\nearrow 1$ and $\dim\irH([a_n,a_{n+1}[)=\aleph_0$ for every $n\in\N_0$.}
If 1 is not an eigenvalue of $A$, then Lemma \ref{block_diag_same_dim_lemma} can be applied. 
So we may suppose that $\dim\ker(A-I)\geq 1$. 
In this case we have the orthogonal decomposition: $A = A_0\oplus A_1$, where $A_0 = A|\ker(A-I)^\perp$ and $A_1 = A|\ker(A-I)$. 
Again using Lemma \ref{block_diag_same_dim_lemma} we obtain a contraction $T_0\in\irB(\ker(A-I)^\perp)$ such that the uniform asymptotic limit of $T_0$ is $A_0$. 
Choosing any isometry $T_1\in\irB(\ker(A-I))$, $A$ arises asymptotically from $T:=T_0\oplus T_1$ in uniform convergence.

\smallskip

\textit{Case 2. $\ker(A-I) = \{0\}$ and there is no strictly increasing sequence $0=a_0<a_1<a_2<\dots$ such that $a_n\nearrow 1$ and $\dim\irH([a_n,a_{n+1}[)=\aleph_0$ for every $n\in\N_0$.} 
If $\dim\irH([0,\beta[)<\aleph_0$ for each $0<\beta<1$, then $A$ is diagonal, all eigenvalues are in ]0,1[ and have finite multiplicities. Therefore Lemma \ref{diag_clust_to_1_lemma} can be applied. 
If this is not the case, then there is a $0<b<1$ which satisfies the following conditions: $\dim\irH([0,b[)=\aleph_0$ and $\dim\irH([b,\beta[)<\aleph_0$ for all $b<\beta<1$. 
We take the decomposition $\irH = \irH([0,b[)\oplus\irH([b,1[)$, where $\dim\irH([b,1[)=\aleph_0$ obviously holds, since $1\in\sigma_e(A)$. 
In order to handle this case, we have to modify the argument applied in Lemma \ref{diag_clust_to_1_lemma}.

Let us arrange the eigenvalues of $A$ in $[b,1[$ in an increasing sequence $\{\lambda_j\}_{j=1}^\infty$, each listed according to its multiplicity. 
We form the same matrix $(\alpha_{l,m})_{l,m\in\N}$ as in Lemma \ref{diag_clust_to_1_lemma}, and take an orthonormal basis $\{e_{l,m}\colon l,m\in\N\}$ in $\irH([b,1[)$ such that each $e_{l,m}$ is an eigenvector corresponding to the eigenvalue $\alpha_{l,m}$ of $A$. 
Let $\irX_0:=\irH([0,b[)$ and $\irX_m:=\vee\{e_{l,m}\colon l\in\N\}$ ($m\in\N$). 
Take an arbitrary orthonormal basis $\{e_{l,0}\}_{l=1}^\infty$ in the subspace $\irX_0$. 
We define the operator $T$ by the following equation:
\[ 
T|\irX_m = A_{m+1}^{-1/2}SA_{m}^{1/2}|\irX_m \quad (m\in\N_0),
\]
where $A_m:=A|\irX_m$ and $S\in\irB(\irH)$, $Se_{l,m} = e_{l,m+1}$ ($l\in\N, m\in\N_0$).

For a vector $x_0\in\irX_0$ we have
\[ 
\|Tx_0\| = \|A_1^{-1/2}SA_0^{1/2}x_0\| \leq \sqrt{\frac{1}{b}}\|A_0^{1/2}x_0\|\leq \|x_0\|, 
\]
therefore $T$ is a contraction on $\irX_0$. 
But it is also a contraction on $\irX_0^\perp$ (see the proof of Lemma~\ref{diag_clust_to_1_lemma}), and since 
$T\irX_0\perp T(\irX_0^\perp)$, it is a contraction on the whole $\irH$.

We have to show yet that $T^{*n}T^n$ converges uniformly to $A$ on $\irX_0$. 
For $x_0\in\irX_0, \|x_0\|=1$ we get
\[ 
\|T^{*n}T^{n} x_0 - A x_0\| = \|A_0^{1/2}S^{*n}(A_n^{-1}-I_{\irX_n})S^{n}A_0^{1/2} x_0\| 
\]
\[ 
\leq \|A_n^{-1} - I_{\irX_n}\| < \frac{1}{\lambda_{n}}-1\to 0. 
\]
So $A$ arises asymptotically from $T$ in uniform convergence.

\smallskip

\textit{Case 3. $\dim{\ker(A-I)}>0$.} 
If $\dim{\ker(A-I)}<\aleph_0$, then we take the orthogonal decomposition $\irH = \ker(A-I)^\perp\oplus\ker(A-I)$. 
Trivially $1\in\sigma_e(A|\ker(A-I)^\perp)$. 
By Cases 1 and 2, we can find a contraction $T_0\in\irB(\ker(A-I)^\perp)$ such that the uniform asymptotic limit of $T=T_0\oplus I_{\ker(A-I)}$ is $A$.

If $\dim{\ker(A-I)}=\aleph_0$ and $A\neq I$, then we consider an orthogonal decomposition $\ker(A-I) = \sum_{j=1}^\infty\oplus \irX_j$, where $\dim\irX_j=\dim\ker(A-I)^\perp$, and apply Lemma \ref{block_diag_same_dim_lemma}. 
If $A=I$, then just take an isometry for $T$.

\smallskip

(iii)$\iff$(iv): 
If $A$ is a projection, then $\dim(]\delta,1])$ is the rank of $A$ for every $0\leq\delta< 1$. 
If $1\in\sigma_e(A)$, then $\dim\irH(]\delta,1])=\aleph_0$ holds for all $0\leq\delta< 1$. 
Conversely, if the quantity $\dim\irH(]\delta,1])=\dim\irH(]0,1])$ is finite ($0\leq\delta< 1$), then obviously $A$ is a projection of finite rank. 
If this dimension is $\aleph_0$, then clearly $1\in\sigma_e(A)$.

Finally, from the previous discussions we can see that if the equivalent conditions (i)--(iv) hold, then the contraction $T$, inducing $A$, can be chosen from the class $C_{\cdot 0}$ provided $\dim\ker(A-I)\notin\N$.
\end{proof}

Now we turn to the case when $\dim\irH>\aleph_0$. 
As was mentioned earlier, for every contraction $T\in\irB(\irH)$ the space $\irH$ can be decomposed into the orthogonal sum of separable $T$-reducing subspaces.

\begin{proof}[Proof of Theorem \ref{main_contr_arbitrary_thm}] We may suppose that $A$ is not a projection of finite rank. 
Since $T=\sum_{\xi\in \Xi}\oplus T_\xi$, where every $T_\xi$ acts on a separable space, the (i)$\Longrightarrow$(iv) part can be proven by Theorem \ref{main_contr_separable_thm} straightforwardly. 
The implication (ii)$\Longrightarrow$(i) is obvious. First we will prove the direction (iv)$\Longrightarrow$(ii) and then the equivalence (iii)$\iff$(iv).

\smallskip

(iv)$\Longrightarrow$(ii): 
Set $\alpha=\dim\irH(]0,1])$, which is necessarily infinite. 
If $\alpha=\aleph_0$, then applying Theorem \ref{main_contr_separable_thm} we can get $A$ as the uniform asymptotic limit of a contraction (on $\ker A$ we take the zero operator). 
Therefore we may suppose that $\alpha>\aleph_0$. 
We may also assume that $A$ is injective. 
Now we take an arbitrary strictly increasing sequence $0 = a_0 < a_1 < a_2 < \dots$ such that $\lim_{j\to\infty}a_j=1$, and let $\alpha_j = \dim\irH(]a_j,a_{j+1}])$ for every $j\in\N_0$. 
Obviously $\beta := \sum_{j=0}^\infty \alpha_j =\dim\irH(]0,1[)\leq\dim\irH(]0,1])=\alpha$. 
Clearly one of the following four cases occurs.

\smallskip

\textit{Case 1: $\alpha_j=\alpha$ for infinitely many indices $j$.}
Then without loss of generality, we may suppose that this holds for every index $j$. 
By Lemma \ref{block_diag_same_dim_lemma} we can choose a contraction $T_0\in \irB(\irH(]0,1[))$ such that $\|T_0^{*n}T_0^{n}-A|\irH(]0,1[)\| \to 0$. 
Set the operator
\[
T := T_0\oplus V\in \irB\Big(\irH\big(]0,1[\big)\oplus\irH\big(\{1\}\big)\Big),
\] 
where $V\in\irB(\irH\big(\{1\}\big))$ is an arbitrary isometry. 
Trivially $T$ is a contraction with the uniform asymptotic limit $A$.

\smallskip

\textit{Case 2: $\dim\irH(\{1\}) = \alpha$.} 
Let us decompose $\irH(\{1\})$ into the orthogonal sum $\irH(\{1\}) = \big(\sum_{k=1}^\infty\oplus \irX_k\big) \oplus \irX$, where $\dim\irX_k=\beta$ for every $k\in\N$ and $\dim \irX = \alpha$. 
Setting $\irX_0 := \irH([0,1[)$, we may apply Lemma \ref{block_diag_same_dim_lemma} for the restriction of $A$ to $\sum_{k=0}^\infty\oplus \irX_k$. 
Taking any isometry on $\irX$, we obtain that (ii) holds.

\smallskip

\textit{Case 3: $\dim\irH(\{1\})<\alpha$ and $\alpha_j<\alpha$ for every $j$.} 
Then clearly $\dim\irH(]\delta,1[) = \dim\irH(]0,1[) = \alpha$ for any $\delta\in[0,1[$. 
Joining subintervals together, we may assume that $\aleph_0\leq \alpha_j<\alpha_{j+1}$ holds for every $j\in\N_0$ and $\sup_{j\geq 0}\alpha_j = \alpha$. 
Let $\irX_j := \irH(]a_{j},a_{j+1}])$ for every $j\in\N_0$. 
Obviously we can decompose every subspace $\irX_j$ into an orthogonal sum $\irX_j = \sum_{k=0}^j \oplus \irX_{j,k}$ such that $\dim\irX_{j,k} = \alpha_{k}$ for every $0 \leq k \leq j$. 
Then by Lemma \ref{block_diag_same_dim_lemma} we obtain a contraction $T_k \in \irB(\sum_{j=k}^\infty \oplus \irX_{j,k})$ such that the asymptotic limit of $T_k$ is $A\big|\sum_{j=k}^\infty \oplus \irX_{j,k}$ in uniform convergence. 
In fact, from the proof of Lemma \ref{block_diag_same_dim_lemma}, one can see that 
\[ 
\Bigg\|T_k^{*n}T_k^n - A\bigg|\sum_{j=k}^\infty\oplus \irX_{j,k}\Bigg\| \leq \frac{1}{a_{n+k}}-1 \leq \frac{1}{a_{n}}-1\to 0. 
\] 
Therefore, if we choose an isometry $V\in\irB(\irH(\{1\}))$, we get that (ii) is satisfied with the contraction $T := \big(\sum_{k=0}^\infty\oplus T_k\big)\oplus V \in\irB(\irH)$.

\smallskip

\textit{Case 4: $\dim\irH(\{1\})<\alpha$ and $\alpha_j = \alpha$ holds for finitely many $j$ (but at least for one).} 
We may assume $\alpha_0 = \alpha$, $\aleph_0\leq\alpha_j<\alpha_{j+1}$ for every $j\in\N$ and $\sup_{j\geq 1}\alpha_j = \alpha$. 
Take an orthogonal decomposition $\irH(]0,a_1[) = \sum_{k=1}^\infty \oplus \irL_k$, where $\dim\irL_k = \alpha_k$. 
Set also $\irX_j := \irH(]a_{j},a_{j+1}])$ for every $j\in\N$ and take a decomposition $\irX_j = \sum_{k=1}^j \oplus \irX_{j,k}$ such that $\dim\irX_{j,k} = \alpha_{k}$ for every $1 \leq k \leq j$. 
Thus by Lemma \ref{block_diag_same_dim_lemma} we obtain a contraction $T_k \in \irB(\irL_k\oplus\sum_{j=k}^\infty \oplus \irX_{j,k})$ such that the asymptotic limit of $T_k$ is the restriction of $A $ to the subspace $\irL_k\oplus\sum_{j=k}^\infty \oplus \irX_{j,k}$ in uniform convergence. 
As in Case 3, we get (ii).

\smallskip

(iii)$\Longrightarrow$(iv):
Since $1 = r_\kappa(A) \leq \|A\|_\kappa \leq \|A\|\leq 1$, we have $\|A\|_\kappa = 1$. 
An application of \cite[Lemma 5]{terElst} gives us $\dim\irH(]\delta,1]) = \kappa$ $(0\leq\delta<1)$.

\smallskip

(iv)$\Longrightarrow$(iii):
We may assume that $\dim\irH(]0,1])\geq\aleph_0$. 
Again applying \cite[Lemma 5]{terElst}, we get $\|A^n\|_\kappa=1$ for all $n\in\N$. 
This means that $r_\kappa(A)=1$.

\smallskip

Finally, we notice that if $\dim\ker(A-I)\notin\N$, then we can choose a $C_{\cdot 0}$-contraction.
\end{proof}

The following corollary is a straightforward consequence. 
In particular, this corollary gives that whenever $A$ is an asymptotic limit of a contraction, then $A^q$ is also an asymptotic limit of a contraction for every $0 < q$.

\begin{corollary}
Suppose that the function $g\colon[0,1]\to[0,1]$ is continuous, increasing, $g(0)=0$, $g(1)=1$ and $0<g(t)<1$ for $0<t<1$. 
If $A$ arises asymptotically from a contraction, then so does $g(A)$.
\end{corollary}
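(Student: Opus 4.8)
The plan is to reduce the assertion to the spectral characterization already obtained. By Theorem~\ref{main_contr_separable_thm} (for $\dim\irH=\aleph_0$) and Theorem~\ref{main_contr_arbitrary_thm} (for $\dim\irH>\aleph_0$), a positive contraction $B$ arises asymptotically from a contraction if and only if it satisfies condition~(iv), namely $\dim\irH_B(]0,1])=\dim\irH_B(]\delta,1])$ for every $0\le\delta<1$, where $\irH_B(\omega)$ denotes the spectral subspace of $B$ associated with the Borel set $\omega$. In finite dimensions there is nothing to do: by Theorem~\ref{finite_dim_A_T_thm} the operators arising asymptotically are exactly the orthogonal projections, and since $g(0)=0$ and $g(1)=1$ the continuous functional calculus gives $g(P)=P$ for every projection $P$. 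Since $\sigma(A)\subseteq[0,1]$ and $g$ maps $[0,1]$ into $[0,1]$, the operator $g(A)$ is again a positive contraction, so in the infinite-dimensional case it remains only to verify condition~(iv) for $g(A)$ under the assumption that it holds for $A$.

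First I would transport the spectral subspaces of $g(A)$ back to those of $A$ through the spectral mapping theorem. If $E$ is the spectral measure of $A$, then $\omega\mapsto E(g^{-1}(\omega))$ is the spectral measure of $g(A)$, so that $\irH_{g(A)}(\omega)=E(g^{-1}(\omega))\irH$ for every Borel set $\omega\subseteq\R$. The next step is a purely scalar computation of the relevant preimages. Because $g$ is continuous and increasing with $g(0)=0$, $g(1)=1$ and $0<g(t)<1$ on $]0,1[$, for each $\delta\in[0,1)$ the set $\{t\in[0,1]\colon g(t)>\delta\}$ is a half-open interval $]c_\delta,1]$, where $c_\delta=\inf\{t\colon g(t)>\delta\}$ satisfies $g(c_\delta)=\delta$ and hence $0\le c_\delta<1$; for $\delta=0$ this gives $c_0=0$ and $g^{-1}(]0,1])=]0,1]$. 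Consequently
\[
\irH_{g(A)}(]0,1])=\irH(]0,1]) \quad\text{and}\quad \irH_{g(A)}(]\delta,1])=\irH(]c_\delta,1]) \quad (0\le\delta<1).
\]

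Finally I would apply condition~(iv) for $A$. Since $A$ arises asymptotically from a contraction, $\dim\irH(]0,1])=\dim\irH(]\delta',1])$ holds for \emph{every} $\delta'\in[0,1)$; choosing $\delta'=c_\delta$ yields $\dim\irH_{g(A)}(]\delta,1])=\dim\irH(]c_\delta,1])=\dim\irH(]0,1])=\dim\irH_{g(A)}(]0,1])$ for all $0\le\delta<1$. Thus $g(A)$ satisfies condition~(iv), and the two theorems above give that $g(A)$ arises asymptotically from a contraction. The only delicate point is the endpoint bookkeeping in the preceding paragraph: one must use continuity together with $g(0)=0$ and the strict positivity of $g$ on $]0,1[$ to be sure that the preimage $g^{-1}(]\delta,1])$ is the \emph{half-open} interval $]c_\delta,1]$ (rather than $[c_\delta,1]$, which could enlarge the spectral subspace by an eigenspace) and that $c_\delta$ stays strictly below $1$; granting this, the result is a direct transport of the spectral condition through $g$.
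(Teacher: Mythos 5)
Your proof is correct and follows exactly the route the paper intends: the corollary is stated there as a ``straightforward consequence'' of Theorems \ref{main_contr_separable_thm} and \ref{main_contr_arbitrary_thm}, namely transporting the spectral-dimension condition (iv) through $g$ via the spectral mapping $\irH_{g(A)}(\omega)=E(g^{-1}(\omega))\irH$, with the finite-dimensional case reducing to $g(P)=P$ for projections. Your careful endpoint bookkeeping (that $g^{-1}(]\delta,1])=]c_\delta,1]$ with $0\le c_\delta<1$) is precisely the point that makes the ``straightforward'' claim rigorous.
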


Next we verify Theorem \ref{main_contr_coinc_thm}, but before that we need the following lemma. 
For every $a\in\D$ the function $b_a\colon \D\to\D,\; b_a(z) = \frac{z-a}{1-\overline{a}z}$ is the so called M\"obius transformation which is a special type of inner functions. 
Moreover, it is a Riemann mapping from $\D$ onto itself. 
We use the notation $T_a:=b_a(T)$ where we use the well-known Sz.-Nagy--Foias functional calculus, which is a contractive, weak-* continuous algebra homomorphism $\Phi_T\colon H^\infty \to \irB(\irH)$ such that $\Phi_T(1) = I$ and $\Phi_T(\chi) = T$ ($\chi(z) = z$) is satisfied.
For further details see \cite{NFBK}.
It is easy to see that $b_{-a}(T_a) = b_{-a}(b_{a}(T)) = (b_{-a}\circ b_{a})(T) = T$.

\begin{lem} \label{cnu_Mobius}
For every c.n.u. contraction $T$ we have $A_T=A_{T_a}$
\end{lem}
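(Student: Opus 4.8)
The plan is to connect $A_T$ with the unitary asymptote $(W_{T},X_{T})$ of the contraction $T$ and to exploit that $b_a$ is inner. The first step is to record the compatibility of the unitary asymptote with the Sz.-Nagy--Foias functional calculus: for every $u\in H^\infty$ one has
\[
X_{T}\,u(T) = u(W_{T})\,X_{T}.
\]
This holds for $u=\chi$ by the defining relation $X_T T = W_T X_T$, hence for all polynomials by iteration and linearity. Since $\Phi_T$ is weak-$*$ continuous, since $u\mapsto u(W_T)$ is weak-$*$ continuous on $H^\infty$ (as $W_T$ is unitary, this is a spectral integral against the boundary values of $u$), and since multiplication by the fixed operator $X_T$ is weak-$*$ continuous, the two weak-$*$ continuous maps $u\mapsto X_T u(T)$ and $u\mapsto u(W_T)X_T$ agree on the weak-$*$ dense set of polynomials and thus coincide on all of $H^\infty$. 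For inner $u$ the boundary values have modulus one, so $u(W_T)$ is unitary, and combining this with $\|X_T y\|^2=\langle A_T y,y\rangle$ we obtain the key identity $\langle A_T u(T)x,u(T)x\rangle = \|u(W_T)X_Tx\|^2 = \|X_Tx\|^2 = \langle A_Tx,x\rangle$.

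Applying this with the inner function $u=b_a^n$, so that $u(T)=T_a^n$, gives $\langle A_T T_a^n x,T_a^n x\rangle = \langle A_T x,x\rangle$ for all $n$ and $x$. Because $A_T\le I$ we have $\langle A_T y,y\rangle\le\|y\|^2$, and substituting $y=T_a^n x$ yields
\[
\langle A_T x,x\rangle = \langle A_T T_a^n x,T_a^n x\rangle \le \|T_a^n x\|^2 .
\]
As $T_a$ is a contraction, the sequence $\|T_a^n x\|^2$ decreases to $\langle A_{T_a}x,x\rangle$; letting $n\to\infty$ therefore gives $A_T\le A_{T_a}$. Note that this half uses only that $T$ is c.n.u., so that its $H^\infty$ functional calculus and the intertwining above are available.

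For the reverse inequality I would run the same argument with the roles of $T$ and $T_a$ interchanged, which requires that $T_a$ itself be c.n.u. To see this, suppose $\irM\neq\{0\}$ reduces $T_a$ with $T_a|\irM$ unitary; then the projection onto $\irM$ commutes with $T_a$, hence with the norm-limit of polynomials $b_{-a}(T_a)=T$, so $\irM$ also reduces $T$ and $T|\irM=b_{-a}(T_a|\irM)$ is unitary, contradicting that $T$ is c.n.u. Thus $T_a$ is a c.n.u. contraction, and applying the inequality already proved to $T_a$ together with the Möbius map $b_{-a}$ (using $b_{-a}(T_a)=T$) gives $A_{T_a}\le A_{b_{-a}(T_a)}=A_T$. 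Combining the two inequalities yields $A_T=A_{T_a}$.

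The main obstacle is the first step: justifying that the unitary asymptote intertwines the full $H^\infty$ functional calculus, that is, pushing the elementary polynomial identity $X_T T^k = W_T^k X_T$ up to arbitrary $u\in H^\infty$ through weak-$*$ continuity and the weak-$*$ density of polynomials. The c.n.u. preservation in the last paragraph is a secondary point, dispatched cleanly via the composition identity $b_{-a}\circ b_a=\chi$.
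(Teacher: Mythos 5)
Your proof is correct, but it takes a genuinely different route from the paper's. The paper never invokes the $H^\infty$ functional calculus for the asymptote: it observes that $(X^+_T, b_a(V_T))$ is a contractive intertwining pair for $T_a$ (here $b_a(V_T)$ is an isometry, defined without any weak-$*$ machinery because $b_a$ is analytic on a neighbourhood of $\overline{\D}$, so its Taylor series converges in norm), and then applies the universal property of the isometric asymptote to obtain a contraction $Z\in\irI(V_{T_a},b_a(V_T))$ with $X^+_T=ZX^+_{T_a}$; the inequality $\|X^+_Tx\|=\|ZX^+_{T_a}x\|\leq\|X^+_{T_a}x\|$ gives $A_T\leq A_{T_a}$ at once, and the proof concludes with the same $b_{-a}$ symmetry trick you use. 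Your argument replaces this factorization by the isometric identity $\langle A_T u(T)x,u(T)x\rangle=\langle A_Tx,x\rangle$ for inner $u$ combined with the monotone limit $\|T_a^nx\|^2\searrow\langle A_{T_a}x,x\rangle$; this is stronger than what the lemma needs (it essentially anticipates part (ii) of Theorem \ref{main_contr_coinc_thm}, which the paper proves by the same universality device), and you also make explicit a point the paper leaves implicit, namely that $T_a$ is again c.n.u., which both proofs need for the reverse inequality. Your verification of that point, via norm-convergent power series for $b_{-a}$, is clean and correct.

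One justification in your first step is too quick: for a general unitary $W$, the formula ``$u(W)$ is a spectral integral against the boundary values of $u$'' does not make sense, since the boundary values of $u\in H^\infty$ exist only almost everywhere with respect to Lebesgue measure; both the definition of $u(W_T)$ and the weak-$*$ continuity of $u\mapsto u(W_T)$ require the spectral measure of $W_T$ to be absolutely continuous. This is in fact true here — the unitary asymptote of a c.n.u. contraction is absolutely continuous, being a part of the minimal unitary dilation, which is a.c. for c.n.u. contractions (see \cite{NFBK}) — but it must be stated, as it is precisely where the c.n.u. hypothesis enters your first step. Alternatively, and more in the spirit of your own handling of $b_{-a}$, you can bypass $H^\infty$ entirely: the only inner functions you use are $u=b_a^n$, which are analytic on $|z|<1/|a|$, so $X_T\, b_a^n(T)=b_a^n(W_T)X_T$ follows from the polynomial intertwining by norm convergence of the Taylor series on $\overline{\D}$, and $b_a^n(W_T)$ is unitary by the continuous functional calculus since $|b_a|=1$ on $\T$. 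With that substitution, the ``main obstacle'' you flag disappears altogether.
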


\begin{proof}
Consider the isometric asymptotes $(X^+_T,V_T)$ and $(X^+_{T_a},V_{T_a})$ of $T$ and $T_a$, respectively.
Obviously $(X^+_T,b_a(V_T))$ is a contractive intertwining pair for $T_a$, hence we have a unique contractive transformation $Z\in\irI(V_{T_a}, b_a(V_T))$ and $X^+_T = ZX^+_{T_a}$. 
Since
\[ 
\big<A_Tx,x\big> = \big<X^+_Tx,X^+_Tx\big> = \|X^+_Tx\|^2 = \|ZX^+_{T_a}x\|^2 \leq \|X^+_{T_a}x\|^2 = \big<A_{T_a}x,x\big> \quad (x\in\irH),
\]
$A_{T} \leq A_{T_a}$ follows. 
Therefore
\[ 
A_T \leq A_{T_a} \leq A_{(T_a)_{-a}} = A_T.
\]
\end{proof}

Now we are ready to prove Theorem \ref{main_contr_coinc_thm}.

\begin{proof}[Proof of Theorem \ref{main_contr_coinc_thm}]
(i): For an arbitrary vector $x\in\irH$ and $i=1,2$, we have
\[ 
\big<A_{T_1T_2}x,x\big> = \lim_{n\to\infty}\big<(T_1T_2)^{*n}(T_1T_2)^{n}x,x\big> = \lim_{n\to\infty}\|(T_1T_2)^{n}x\|^2 
\]
\[
\leq \lim_{n\to\infty}\|T_i^{n}x\|^2 = \lim_{n\to\infty}\big<T_i^{*n}T_i^{n}x,x\big> = \big<A_{T_i}x,x\big>, 
\]
where we used the commuting property in the step $(T_1T_2)^n = T_1^nT_2^n = T_2^nT_1^n$.

(ii): Set $a := u(0)$ and $v = b_a\circ u$. 
Then obviously $v = \chi w$, where $\chi(z)=z$ and $w$ is an inner function. 
From (i) and Lemma \ref{cnu_Mobius} we get $A_{u(T)}=A_{v(T)}\leq A_T$. 
We consider isometric asymptotes $(X^+_T,V_T)$ and $(X^+_{u(T)},V_{u(T)})$ of $T$ and $u(T)$, respectively. 
The pair $(X^+_T,u(V_T))$ is a contractive intertwining pair of $u(T)$. 
Using the universal property of the isometric asymptotes, we get a unique contractive transformation $Z\in\irI(V_{u(T)},u(V_T))$ and $X^+_T = ZX^+_{u(T)}$. 
The last equality implies $A_T \leq A_{u(T)}$, and so $A_T = A_{u(T)}$.

(iii): Consider the isometric asymptotes
\[
(X^+, V_1), (X^+, V_2) \text{ and } (X^+_{T_1T_2},W)
\]
of $T_1$, $T_2$ and $T_1T_2$, respectively, where $X^+ = X^+_{T_1} = X^+_{T_2}$. 
Obviously the pair $(X^+,V_1V_2)$ is a contractive intertwining pair for $T_1T_2$. 
Hence we get, from the universality property of the isometric asymptote, that there exists a unique contractive $Z\in\irI(W,V_1V_2)$ and $X^+ = ZX^+_{T_1T_2}$. 
Therefore $A \leq A_{T_1T_2}$.

(iv): This is an immediate consequence of (i) and (iii).
\end{proof}

For an alternative proof of (ii) above see \cite[Lemma III.1]{CassierFack}. 
It can be also derived from \cite[Theorem 2.3]{KerchyDouglasAlg}.

Concluding this chapter we provide two examples. 
First we give two contractions $T_1,T_2\in C_{1\cdot}(\irH)$ such that $A_{T_1} = A_{T_2}$ and $A_{T_1T_2} \neq A_{T_1}$. 
This shows that (iii) of Theorem \ref{main_contr_coinc_thm} cannot be strengthened to equality even in the $C_{1\cdot}$ case. 
By (iv) of Theorem \ref{main_contr_coinc_thm} these contractions do not commute.

\begin{exmpl}
\textup{Take an orthonormal basis $\{e_{i,j}\colon i,j\in\N\}$ in $\irH$. 
The operators $T_1,T_2 \in\irB(\irH)$ are defined in the following way:}
\[ 
T_1 e_{i,j} := \left\{ \begin{matrix}
e_{i,j+1} & \text{if } j=1 \\
\frac{\sqrt{j^2-1}}{j} e_{i,j+1} & \text{if } j>1
\end{matrix}
\right., \quad T_2 e_{i,j} := \left\{ \begin{matrix}
e_{1,2} & \text{if } i=j=1 \\
e_{i+1,j-1} & \text{if } j=2 \\
\frac{\sqrt{3}}{2} e_{i-1,j+2} & \text{if } i>1, j=1 \\
\frac{\sqrt{j^2-1}}{j} e_{i,j+1} & \text{if } j>2
\end{matrix}
\right.. 
\]
\textup{$T_1$ and $T_2$ are orthogonal sums of infinitely many contractive, unilateral weighted shift operators, with different shifting schemes.
Straightforward calculations yield that}
\[ 
A_{T_1} e_{i,j} = A_{T_2} e_{i,j} = \left\{ \begin{matrix}
\frac{1}{2} e_{i,j} & \text{if } j = 1 \\
\frac{j-1}{j} e_{i,j} & \text{if } j > 1 
\end{matrix}
\right., 
\]
\textup{for every $i,j\in\N$, since $\Big(\prod_{l=j}^\infty \frac{\sqrt{l^2-1}}{l}\Big)^2 = \frac{j-1}{j}$ for $j>1$. On the other hand}
\[ 
T_2T_1 e_{i,j} = \left\{ \begin{matrix}
e_{i+1,1} & \text{if } j=1 \\
\frac{\sqrt{j^2-1}}{j}\frac{\sqrt{(j+1)^2-1}}{j+1}e_{i,j+2} & \text{if } j>1
\end{matrix}
\right., 
\]
\textup{hence}
\[ 
A_{T_2T_1} e_{i,j} = \left\{ \begin{matrix}
e_{i,1} & \text{if } j=1 \\
\frac{j-1}{j}e_{i,j} & \text{if } j>1
\end{matrix}
\right.,
\]
\textup{since} 
\[ 
\bigg(\prod_{m=0}^\infty \frac{\sqrt{(j+2m)^2-1}}{j+2m}\frac{\sqrt{(j+1+2m)^2-1}}{j+1+2m}\bigg)^2 = \bigg(\prod_{l=j}^\infty \frac{\sqrt{l^2-1}}{l}\bigg)^2 = \frac{j-1}{j}.
\] 
\textup{Therefore $A_{T_1}\leq A_{T_1T_2}$ and $A_{T_1}\neq A_{T_1T_2}$.}
\end{exmpl}

Finally, we give two contractions $T_1,T_2\in C_{0\cdot}(\irH)$ such that $T_1T_2\in\irC_{1\cdot}(\irH)$.

\begin{exmpl}
\textup{Take the same orthonormal basis in $\irH$ as in the previous example. 
The $C_{0\cdot}$-contractions $T_1,T_2\in\irB(\irH)$ are defined by}
\[ 
T_1 e_{i,j} := \frac{\sqrt{(i+1)^2-1}}{i+1} e_{i,j+1}, \qquad 
T_2 e_{i,j} := \left\{ \begin{matrix}
0 & \text{if } j=1 \\
e_{i-1,j+1} & \text{if } j>1
\end{matrix} \right.. 
\]
\textup{By a straightforward calculation we can check that}
\[ 
T_2T_1 e_{i,j} = \frac{\sqrt{(i+1)^2-1}}{i+1} e_{i+1,j}, 
\]
\textup{and so}
\[ 
A_{T_2T_1} e_{i,j} = \frac{i}{i+1} e_{i,j}. 
\]
\textup{Therefore we have $T_1T_2\in C_{1\cdot}(\irH)$.}
\end{exmpl}

%-------------------------------------------------------------------------------------------------------------------------------------------

\newpage

\chapter{Ces\`aro asymptotic limits of power bounded matrices} \label{matrix_A_C-s_chap}

\section{Statements of the main results}

This chapter is devoted to the characterization of all possible $L$-asymptotic limits of power bounded matrices which was done in \cite{Ge_matrix}. 
Throughout the chapter $T\in\irB(\C^d)$ ($2\leq d <\infty$) will always denote a power bounded matrix if we do not say otherwise. 
The first important step towards the desired characterization is to show that Banach limits can be replaced by usual limits if we consider Ces\'aro means of the self-adjoint iterates of $T$.

\begin{theorem}\label{finite_Ces_thm}
Let $T\in \irB(\C^d)$ be power bounded, then 
\[ 
A_{T,L} = A_{T,C} := \lim_{n\to\infty}\frac{1}{n}\sum_{j=1}^n T^{*j}T^j 
\]
holds for all Banach limits $L$.
\end{theorem}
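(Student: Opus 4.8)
The plan is to exploit the Jordan--Riesz spectral structure of a power bounded matrix, writing $T^n$ as a fixed ``peripheral'' part plus a vanishing remainder, and then reading off both the Cesàro limit and the Banach limit from this single expansion. Since $T$ is power bounded, its spectral radius is at most $1$, and every eigenvalue $\lambda$ with $|\lambda|=1$ must be semisimple (all its Jordan blocks are $1\times1$), for otherwise $\|T^n\|$ would grow polynomially and contradict power boundedness. Splitting the spectrum as $\sigma(T)=\sigma_1\cup\sigma_0$ with $\sigma_1=\sigma(T)\cap\T$ and $\sigma_0=\sigma(T)\cap\D$, and letting $P_\lambda$ ($\lambda\in\sigma_1$) and $P_0$ denote the associated Riesz projections, I would first record the identity
\[
T^n = \sum_{\lambda\in\sigma_1}\lambda^n P_\lambda + R_n, \qquad R_n := T^n P_0,
\]
where $R_n\to 0$ in norm because the spectral radius of $T|\ran P_0$ is strictly less than $1$ (Gelfand's formula).

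Next I would substitute this into $T^{*n}T^n$ and expand,
\[
T^{*n}T^n = \sum_{\lambda,\mu\in\sigma_1}(\overline{\lambda}\mu)^n\, P_\lambda^* P_\mu + E_n,
\]
where $E_n$ collects the three groups of terms containing a factor $R_n$ or $R_n^*$. Since each peripheral factor satisfies $|\lambda^n|=1$ and the projections are fixed, while $R_n\to0$, one gets $E_n\to0$, i.e. $E_n$ is a null sequence in norm. The whole problem is thereby reduced to analysing the scalar unimodular geometric sequences $\{(\overline{\lambda}\mu)^n\}_n$.

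For the Cesàro side I would then average: when $\lambda=\mu$ the summand is the constant $1$, while when $\lambda\neq\mu$ the number $\omega:=\overline{\lambda}\mu$ is unimodular with $\omega\neq1$, so $\frac{1}{n}\sum_{j=1}^n\omega^j\to0$ as a bounded geometric sum divided by $n$; the averaged remainder $\frac1n\sum_{j=1}^n E_j$ likewise tends to $0$. Hence the Cesàro means converge and $A_{T,C}=\sum_{\lambda\in\sigma_1}P_\lambda^*P_\lambda$. For the Banach limit, the null sequence $E_n$ contributes $0$, the diagonal terms ($\lambda=\mu$) contribute $1$, and for $\lambda\neq\mu$ shift-invariance yields the clean identity $\Llim_n\omega^n=\Llim_n\omega^{n+1}=\omega\,\Llim_n\omega^n$, which forces $\Llim_n\omega^n=0$ because $\omega\neq1$. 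Therefore $\langle A_{T,L}x,y\rangle=\sum_{\lambda\in\sigma_1}\langle P_\lambda^*P_\lambda x,y\rangle=\langle A_{T,C}x,y\rangle$ for every $x,y$ and every Banach limit $L$, which is the asserted equality.

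I expect the main obstacle to be the first step: justifying cleanly the semisimplicity of the peripheral eigenvalues together with the norm convergence $R_n\to0$, since this is precisely the place where genuine power boundedness (rather than mere finiteness of the spectral radius) is needed. Once the spectral expansion is established, the two evaluations are routine, the only genuinely Banach-limit-specific ingredient being the shift-invariance computation giving $\Llim_n\omega^n=0$.
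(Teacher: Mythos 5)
Your proof is correct and is essentially the paper's own argument in coordinate-free form: the paper's Jordan decomposition $T=S(U\oplus B)S^{-1}$ with $r(B)<1$ (Proposition \ref{J_dec_pwb_mx_prop}) is exactly your splitting into semisimple peripheral part plus norm-null remainder, your Riesz projections $P_\lambda$ being $S$-conjugates of the coordinate projections, and the explicit entrywise computation in \eqref{limit_eq} produces the same unimodular geometric sequences $(\overline{\lambda}\mu)^n$ that you handle. The two closing scalar evaluations — $\frac{1}{n}\sum_{j=1}^n\omega^j\to 0$ for the Cesàro side and $\Llim_{n}\omega^{n}=\omega\,\Llim_{n}\omega^{n}$, hence $\Llim_n\omega^n=0$ for $\omega\neq 1$, via shift-invariance — are precisely the ones used in the paper's proofs of Theorems \ref{C^d_Cer_mean_thm} and \ref{finite_Ces_thm}.
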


We note that the above limit holds in norm, since we are in finite dimensions.
Concerning arbitrary dimensions if for an operator $T\in\irB(\irH)$ the limit $A_{T,C} = \lim_{n\to\infty}\frac{1}{n}\sum_{j=1}^n T^{*j}T^j$ exists in SOT, it will be called the Ces\`aro asymptotic limit of $T$. 
In order to prove Theorem \ref{finite_Ces_thm} we will derive some properties of the Jordan decomposition of $T$. 
Then we will give the characterization of $C_{11}$ matrices. 
We would like to point out that any matrix is of class $C_{1\cdot}$ if and only if it is of class $C_{11}$ which happens exactly when it is similar to a unitary matrix (this will follow from Proposition \ref{J_dec_pwb_mx_prop}). 
The characterization reads as follows in that case.

\begin{theorem}\label{C_11_char_matrix_thm}
The following statements are equivalent for a positive definite $A\in\irB(\C^d)$:
\begin{itemize}
\item[\textup{(i)}] $A$ is the Ces\`aro asymptotic limit of a power bounded matrix $T \in C_{11}(\C^d)$,
\item[\textup{(ii)}] if the eigenvalues of $A$ are $t_1,\dots,t_d > 0$, each of them is counted according to their multiplicities, then
\begin{equation}\label{C11_eq}
\frac{1}{t_1}+\dots+\frac{1}{t_d} = d
\end{equation}
holds,
\item[\textup{(iii)}] there is an invertible $S\in \irB(\C^d)$ with unit column vectors such that 
\[ 
A = S^{*-1}S^{-1} = (SS^*)^{-1}. 
\]
\end{itemize}
\end{theorem}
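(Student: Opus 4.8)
The plan is to establish the cycle (i)$\Rightarrow$(ii)$\Rightarrow$(iii)$\Rightarrow$(i), all resting on an explicit formula for the Ces\`aro asymptotic limit of a $C_{11}$-matrix. By Proposition \ref{J_dec_pwb_mx_prop} such a $T$ is similar to a unitary, say $T=SUS^{-1}$ with $S$ invertible and $U$ unitary. Writing the spectral decomposition $U=\sum_k\lambda_kE_k$ with pairwise distinct $\lambda_k\in\T$ and putting $P:=S^*S$, I would average term by term: since $\frac1n\sum_{j=1}^n(\overline{\lambda_k}\lambda_l)^j\to\delta_{kl}$ (because $\overline{\lambda_k}\lambda_l=1$ forces $\lambda_k=\lambda_l$, i.e.\ $k=l$), one obtains
\[
A_{T,C}=\lim_{n\to\infty}\frac1n\sum_{j=1}^nT^{*j}T^j=S^{*-1}\Big(\sum_kE_kPE_k\Big)S^{-1}=S^{*-1}\bar PS^{-1},
\]
where $\bar P:=\sum_kE_kPE_k$ is the block-diagonal part of $P$ relative to the ranges of the $E_k$.

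For (i)$\Rightarrow$(ii) I would compute $\tr(A^{-1})$ for $A=A_{T,C}$. From $A^{-1}=S\bar P^{-1}S^*$ and similarity-invariance of the trace, $\tr(A^{-1})=\tr(\bar P^{-1}P)$. Here $\bar P^{-1}$ is block-diagonal while $P-\bar P$ has vanishing diagonal blocks, so the diagonal blocks of $\bar P^{-1}(P-\bar P)$ vanish and $\tr(\bar P^{-1}(P-\bar P))=0$; hence $\tr(A^{-1})=\tr(\bar P^{-1}\bar P)=\tr(I)=d$. Since the eigenvalues of $A$ are $t_1,\dots,t_d$, this reads $\sum_i 1/t_i=d$, which is \eqref{C11_eq}.

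For (iii)$\Rightarrow$(i) the formula dictates the construction. Given invertible $S$ with unit columns, set $A=(SS^*)^{-1}$ and let $U=\diag(\omega_1,\dots,\omega_d)$ with pairwise distinct $\omega_k\in\T$, and $T:=SUS^{-1}$. Then $T$ is similar to a unitary, so $T$ and $T^{-1}$ are power bounded and $T\in C_{11}$ by Theorem \ref{Sz-N_ref_thm}. As $E_k=e_ke_k^*$ in this basis, $\bar P=\sum_k\langle Pe_k,e_k\rangle e_ke_k^*=\sum_k\|Se_k\|^2e_ke_k^*=I$, the last equality because the columns $Se_k$ are unit vectors; therefore $A_{T,C}=S^{*-1}S^{-1}=(SS^*)^{-1}=A$.

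The remaining implication (ii)$\Rightarrow$(iii) carries the real difficulty. Putting $B:=A^{-1}$, every invertible $S$ with $SS^*=B$ has the form $S=B^{1/2}Q$ with $Q$ unitary, and its columns $B^{1/2}q_i$ (the $q_i$ being the columns of $Q$) are unit vectors exactly when $\langle Bq_i,q_i\rangle=1$ for all $i$. Thus (iii) amounts to finding an orthonormal basis on which the diagonal of $B$ is identically $1$; and by (ii) we have $\tr(B)=\sum_i 1/t_i=d$, so the target value $1$ is precisely the average of the eigenvalues of $B$. The existence of such a basis is the constant-diagonal instance of the Schur--Horn theorem, and I expect it to be the main obstacle. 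I would prove exactly what is needed by induction on $d$: since the smallest eigenvalue of $B$ is $\le 1$ and the largest is $\ge 1$, an intermediate-value argument along a path on the (connected) unit sphere yields a unit vector $q_1$ with $\langle Bq_1,q_1\rangle=1$; the compression of $B$ to $q_1^\perp$ is again positive definite, of dimension $d-1$ and trace $d-1$, so the hypothesis persists and the induction closes.
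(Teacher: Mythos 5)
Your proposal is correct, and while its computational engine is the same as the paper's (similarity $T=SUS^{-1}$ to a unitary via Proposition~\ref{J_dec_pwb_mx_prop}, with Ces\`aro averaging killing the cross terms $\overline{\lambda_k}\lambda_l$, $k\ne l$), the route through the equivalences genuinely differs. The paper proves (i)$\Leftrightarrow$(iii) directly and then (iii)$\Leftrightarrow$(ii): for (i)$\Rightarrow$(iii) it normalizes the eigenvector matrix --- unit columns, chosen orthonormal within each eigenspace --- so that the entrywise computation \eqref{limit_eq} yields $\frac1n\sum_{j=1}^n U^{*j}S^*SU^j\to I$ and hence $A_{T,C}=S^{*-1}S^{-1}$ outright, and (iii)$\Rightarrow$(ii) is the one-line trace identity $d=\tr(S^*S)=\tr(SS^*)=\sum_j 1/t_j$. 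Your basis-free pinching formula $A_{T,C}=S^{*-1}\overline{P}S^{-1}$ with $\overline{P}=\sum_k E_kPE_k$ subsumes that normalization and makes (i)$\Rightarrow$(ii) a clean trace computation $\tr(A^{-1})=\tr(\overline{P}^{-1}P)=\tr(\overline{P}^{-1}\overline{P})=d$ requiring no choice of eigenvectors. The substantive divergence is (ii)$\Rightarrow$(iii): the paper solves the underlying constant-diagonal problem \emph{explicitly}, taking $S=\diag(\sqrt{1/t_1},\dots,\sqrt{1/t_d})\cdot U$ with $U=\bigl(\varepsilon^{(j-1)(k-1)}/\sqrt d\bigr)_{j,k=1}^d$, $\varepsilon=e^{2i\pi/d}$, the normalized DFT matrix, whose entries of constant modulus $1/\sqrt d$ force every column norm to be $\bigl(\frac1d\sum_k 1/t_k\bigr)^{1/2}=1$, the general $A$ then following by \eqref{unitary_eqv_eq}; you instead prove the constant-diagonal instance of Schur--Horn abstractly, reducing via the polar decomposition $S=B^{1/2}Q$ and running an intermediate-value-plus-induction argument. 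Both are complete --- your IVT step is sound, since the unit sphere of $\C^d$ is connected, the numerical range of $B$ meets $1$ because $\tr B=d$, and the compression to $q_1^\perp$ stays positive definite with trace dropping by exactly $\langle Bq_1,q_1\rangle=1$. What each buys: the paper's construction is concrete and avoids any existence argument, while yours isolates the structural fact (any positive definite $B$ with $\tr B=d$ is unitarily equivalent to a matrix with unit diagonal) and would extend to arbitrary prescribed diagonals majorized by the spectrum.
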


The proof of the general case uses the $C_{11}$ case, Lemma \ref{Ker_dec_lem} and a block-diagonalization of a special type of block matrices. 
We shall call a power bounded matrix $T\in\irB(\C^d)$ $l$-stable ($0\leq l\leq d$) if $\dim\irH_0 = l$. 
We will see later from Proposition \ref{J_dec_pwb_mx_prop} that $T$ and $T^*$ are simultaneously $l$-stable, which is not true in infinite dimensional spaces (simply consider a non-unitary isometry).
In the forthcoming theorem the symbol $I_l$ stands for the identity matrix on $\C^l$ and $0_{k}\in\irB(\C^k)$ is the zero matrix.

\begin{theorem}\label{non-C_11_char_matrix_thm}
The following three conditions are equivalent for a non-invertible positive semidefinite $A\in\irB(\C^d)$ and a number $1\leq l< d$:
\begin{itemize}
\item[\textup{(i)}] there exists a power bounded, $l$-stable $T\in\irB(\C^d)$ such that $A_{T,C} = A$,
\item[\textup{(ii)}] let $k = d-l$, if $t_1, \dots t_k$ denote the non-zero eigenvalues of $A$ counted with their multiplicities, then 
\[
\frac{1}{t_1}+\dots+\frac{1}{t_k}\leq k,
\]
\item[\textup{(iii)}] there exists such an invertible $S\in \irB(\C^d)$ that has unit column vectors and 
\[ 
A = S^{*-1}(I_l\oplus 0_{k})S^{-1}. 
\] 
\end{itemize}
\end{theorem}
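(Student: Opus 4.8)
The plan is to prove the cycle (i) $\Rightarrow$ (ii), the equivalence (ii) $\Leftrightarrow$ (iii), and finally (ii) $\Rightarrow$ (i), the last being where the real work lies. Throughout I would use Theorem \ref{finite_Ces_thm} (so that $A_{T,C}=A_{T,L}$ exists in norm and $\ker A_{T,C}=\irH_0(T)$) together with the remark following Proposition \ref{J_dec_pwb_mx_prop}, namely that a $C_{1\cdot}$ matrix is automatically $C_{11}$ and similar to a unitary, so that the $C_{11}$ characterization of Theorem \ref{C_11_char_matrix_thm} is available on the non-stable part.

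\textbf{(i) $\Rightarrow$ (ii).} Applying Lemma \ref{Ker_dec_lem} to the $l$-stable matrix $T$ I would write
\[
T=\left(\begin{matrix} T_0 & R \\ 0 & T_1\end{matrix}\right)\in\irB(\irH_0\oplus\irH_0^\perp),
\]
with $T_0\in C_{0\cdot}$ on the $l$-dimensional $\irH_0$ and $T_1\in C_{1\cdot}=C_{11}$ on the $k$-dimensional $\irH_0^\perp$. Since $r(T_0)<1$ in finite dimensions, the powers $T_0^n$ and hence the coupling terms $R_j=\sum_{i=0}^{j-1}T_0^iRT_1^{j-1-i}$ are bounded and the $\irH_0$-row and $\irH_0$-column blocks of $T^{*n}T^n$ decay geometrically; a block computation then shows that their Cesàro means vanish, so $A_{T,C}=0\oplus B$ with $B=A_{T_1,C}+C$, where $C=\lim_n\frac1n\sum_{j=1}^nR_j^*R_j\ge 0$ exists because both $A_{T,C}$ and $A_{T_1,C}$ do. The non-zero eigenvalues $t_1,\dots,t_k$ of $A$ are exactly those of the positive definite $B$. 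Theorem \ref{C_11_char_matrix_thm} applied to $T_1$ gives $\tr(A_{T_1,C}^{-1})=k$, and since $B\ge A_{T_1,C}>0$ forces $B^{-1}\le A_{T_1,C}^{-1}$, monotonicity of the trace yields $\sum_{i=1}^k 1/t_i=\tr(B^{-1})\le k$.

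\textbf{(ii) $\Leftrightarrow$ (iii).} This is linear algebra about the Gram matrix $G=S^*S$, which has unit diagonal exactly when $S$ has unit columns, so $\tr(G_{11})=k$ on the $k\times k$ corner indexing the range of the rank-$k$ projection in (iii). The non-zero eigenvalues of $A=S^{*-1}PS^{-1}$ coincide with those of the compression of $G^{-1}$ to that range, whose inverse is the Schur complement $G_{11}-G_{12}G_{22}^{-1}G_{21}$; hence $\sum 1/t_i=\tr(G_{11})-\tr(G_{22}^{-1}G_{21}G_{12})\le\tr(G_{11})=k$, proving (iii) $\Rightarrow$ (ii). For the converse I would diagonalize $A=\diag(t_1,\dots,t_k)\oplus 0_l$ and seek $S$ with $S^*AS=P$; a block analysis reduces this to choosing a unitary $W$ making $W^*\diag(1/t_1,\dots,1/t_k)W$ have constant diagonal $(\sum 1/t_i)/k\le 1$, which is possible by the Schur–Horn theorem, after which the free lower block of $S$ normalizes the columns.

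\textbf{(ii) $\Rightarrow$ (i) and the main obstacle.} Set $s:=(\sum 1/t_i)/k\in(0,1]$ and $B:=\diag(t_1,\dots,t_k)$. Since $\tr((sB)^{-1})=k$, Theorem \ref{C_11_char_matrix_thm} furnishes a $C_{11}$ matrix $T_1$ on $\C^k$ with $A_{T_1,C}=sB$, and because $sB$ is a Cesàro limit it satisfies $T_1^*BT_1=B$. I would then assemble $T$ in the block form above with $T_0$ stable on $\C^l$; such a $T$ is automatically $l$-stable (any stable vector must have zero $\irH_0^\perp$-component, as $T_1\in C_{1\cdot}$), and its Cesàro limit is $0\oplus(sB+C)$ where $C=\Psi(R^*R)$ for the ergodic average $\Psi(Y)=\lim_n\frac1n\sum_{j}T_1^{*j}YT_1^j$. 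The goal is then to pick $T_0,R$ so that $\Psi(R^*R)=(1-s)B$, which is a legitimate target since $(1-s)B$ is a fixed point of $Y\mapsto T_1^*YT_1$. When $l\ge k$ one may take $T_0=0$ and $R^*R=(1-s)B$ outright. The delicate case is $l<k$: then $R^*R$ has rank at most $l<\rank B$, so the full-rank correction $(1-s)B$ must be produced genuinely by the averaging $\Psi$. This is exactly where the ``block-diagonalization of a special type of block matrices'' enters—one chooses $T_0$ to be a suitable nilpotent shift so that the low-rank coupling is spread across the unitary orbit of $T_1$ and averages to the required correction. I expect the bookkeeping of this spreading, and the verification that the resulting $T$ has stable subspace of dimension exactly $l$, to be the crux of the proof.
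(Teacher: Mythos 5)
Your proof of (i)$\Rightarrow$(ii) is correct and genuinely different from the paper's: the paper first normalizes the stable part to zero via Theorem \ref{C^d_Cer_mean_thm}, writes $T^*=\left(\begin{smallmatrix}0 & RE^*\\ 0 & E^*\end{smallmatrix}\right)$ with $E\in C_{11}(\C^k)$, computes $A_{T,C}=\left(\begin{smallmatrix}RA_{E,C}R^* & RA_{E,C}\\ A_{E,C}R^* & A_{E,C}\end{smallmatrix}\right)$ and block-diagonalizes it by an explicit unitary into $0_l\oplus(I_k+Q)A_{E,C}(I_k+Q)$ with $Q\geq 0$, $\rank Q\leq l$, before estimating the trace; your additive decomposition $A_{T,C}=0_l\oplus(A_{T_1,C}+C)$ with $C\geq 0$, combined with $\tr A_{T_1,C}^{-1}=k$ from Theorem \ref{C_11_char_matrix_thm} and anti-monotonicity of the inverse, is shorter and sound. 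Your direct Schur-complement and Schur--Horn treatment of (ii)$\Leftrightarrow$(iii) also works (Schur--Horn is even more than needed: the Vandermonde unitary from the proof of Theorem \ref{C_11_char_matrix_thm} already produces the constant diagonal), whereas the paper handles (iii) through (i) ``as in Theorem \ref{C_11_char_matrix_thm}''; note that you tacitly read the projection in (iii) as the rank-$k$ one, $I_k\oplus 0_l$, which is what $\dim\ker A=l$ and the paper's own computation force --- the printed $I_l\oplus 0_k$ has the wrong rank.

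The genuine gap is in (ii)$\Rightarrow$(i): in the case $l<k$ with $s<1$, which you yourself identify as the crux, you only describe a hoped-for mechanism (``spreading the low-rank coupling across the unitary orbit'' via a nilpotent $T_0$) and verify nothing, so the hardest case of the theorem is asserted rather than proved. The paper avoids producing a full-rank correction altogether by a rank-one trick: instead of scaling all eigenvalues by $s$, choose $0<c\leq 1$ with $\frac{1}{ct_1}+\frac{1}{t_2}+\dots+\frac{1}{t_k}=k$, realize $D=\diag(ct_1,t_2,\dots,t_k)=A_{E,C}$ by Theorem \ref{C_11_char_matrix_thm}, and apply the rank-one $Q=\diag(1/\sqrt{c}-1,0,\dots,0)$, so that $(I_k+Q)D(I_k+Q)=\diag(t_1,\dots,t_k)$; since the forward computation shows that every matrix $0_l\oplus(I_k+Q)A_{E,C}(I_k+Q)$ with $\rank Q\leq l$ is attained by an $l$-stable power bounded $T$, the hypothesis $l\geq 1$ suffices and no spreading is needed. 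For what it is worth, your own scheme can in fact be closed, even with $T_0=0$ and rank-one $R$: taking $T_1=S_1US_1^{-1}$ from the proof of Theorem \ref{C_11_char_matrix_thm} with $U$ having pairwise distinct unimodular eigenvalues and $S_1S_1^*=(sB)^{-1}$, your ergodic average becomes $\Psi(Y)=S_1^{*-1}\Delta(S_1^*YS_1)S_1^{-1}$, where $\Delta$ is the diagonal pinching in the eigenbasis of $U$, and since $S_1^*BS_1=\tfrac{1}{s}I_k$ the equation $\Psi(R^*R)=(1-s)B$ reduces to $\Delta(S_1^*R^*RS_1)=\tfrac{1-s}{s}I_k$, which the rank-one choice $S_1^*R^*RS_1=\tfrac{1-s}{s}ww^*$ with all $|w_i|=1$ solves; but this computation (or the paper's) must actually be carried out --- as written, the implication (ii)$\Rightarrow$(i) is established only for $l\geq k$.
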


One could ask whether there is any connection between the Ces\`aro asymptotic limit of a matrix and the Ces\`aro-asymptotic limit of its adjoint. 
If $T\in\irB(\C^d)$ is contractive, then $A_{T^*} = A_T$ is valid (see Theorem \ref{finite_dim_A_T_thm}). 
In the case of power bounded matrices usually the subspaces $\irH_0(T)$ and $\irH_0(T^*)$ are different and hence $A_{T^*}$ and $A_T$ are different, too. 
However, we provide the next connection for $C_{11}$ class $2\times 2$ power bounded matrices.

\begin{theorem}\label{2x2_regularity_matrix_thm}
For each $T\in C_{11}(\C^2)$ the harmonic mean of the Ces\`aro asymptotic limits $A_{T,C}$ and $A_{T^*,C}$ are exactly the identity $I$, i.e. 
\begin{equation}\label{2_dim_nice_eq}
A_{T,C}^{-1} + A_{T^*,C}^{-1} = 2I_2. 
\end{equation}
\end{theorem}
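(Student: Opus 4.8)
The plan is to diagonalise $T$ and read off both Cesàro limits from the Gram matrix of an eigenbasis. Since $T\in C_{11}(\C^2)$, it is similar to a unitary matrix (as recorded in the text preceding Proposition~\ref{J_dec_pwb_mx_prop}), hence diagonalisable with eigenvalues $\mu_1,\mu_2\in\T$ of unit modulus; write $T=S\Lambda S^{-1}$ with $\Lambda=\diag(\mu_1,\mu_2)$ and $S\in\irB(\C^2)$ invertible. If $\mu_1=\mu_2$ then $T=\mu_1 I$ is unitary, so $A_{T,C}=A_{T^*,C}=I$ and \eqref{2_dim_nice_eq} is immediate; thus the genuine case is $\mu_1\neq\mu_2$, on which I focus.

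Next I would compute the two limits explicitly. Put $G:=S^*S>0$ and $H:=G^{-1}=S^{-1}S^{-*}$ (where $S^{-*}:=(S^*)^{-1}$), and let $G_d,H_d$ be the diagonal parts of $G,H$ in the fixed basis. From $T^{*j}T^j=S^{-*}\Lambda^{*j}G\Lambda^{j}S^{-1}$ one reads off $(\Lambda^{*j}G\Lambda^{j})_{kl}=(\overline{\mu_k}\mu_l)^{j}G_{kl}$: because $|\mu_k|=1$ the diagonal entries stay fixed, while the off-diagonal factor $(\overline{\mu_1}\mu_2)^{j}$ has unit modulus but is $\neq1$ (this is exactly where $\mu_1\neq\mu_2$ enters), so its Cesàro mean tends to $0$. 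Hence $\frac1n\sum_{j=1}^n\Lambda^{*j}G\Lambda^{j}\to G_d$ and therefore $A_{T,C}=S^{-*}G_dS^{-1}$. The identical argument applied to $T^jT^{*j}=S\Lambda^{j}H\Lambda^{*j}S^*$ gives $A_{T^*,C}=S H_d S^*$.

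Then \eqref{2_dim_nice_eq} reduces to pure $2\times2$ algebra. Inverting yields $A_{T,C}^{-1}=SG_d^{-1}S^*$ and $A_{T^*,C}^{-1}=S^{-*}H_d^{-1}S^{-1}$, so conjugating the desired equality by $S^{-1}$ turns it into $G_d^{-1}+HH_d^{-1}H=2H$. Writing $G=\bigl(\begin{smallmatrix}g_1&w\\ \overline w&g_2\end{smallmatrix}\bigr)$ with $g_1,g_2>0$ and $\Delta:=\det G=g_1g_2-|w|^2$, one has $H=\tfrac1\Delta\bigl(\begin{smallmatrix}g_2&-w\\ -\overline w&g_1\end{smallmatrix}\bigr)$, and a direct multiplication shows the off-diagonal entries of $G_d^{-1}+HH_d^{-1}H$ and of $2H$ coincide, while the diagonal ones agree precisely because $\Delta+|w|^2=g_1g_2$. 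Equivalently, and more conceptually, the same computation gives the striking relation $A_{T^*,C}=\det(A_{T,C})\,A_{T,C}^{-1}=\operatorname{adj}(A_{T,C})$ (using $\det G=|\det S|^2$); then $A_{T,C}^{-1}+A_{T^*,C}^{-1}=\tfrac{1}{\det A_{T,C}}\bigl(\operatorname{adj}(A_{T,C})+A_{T,C}\bigr)=\tfrac{\tr(A_{T,C})}{\det(A_{T,C})}\,I$ by the Cayley--Hamilton identity $A+\operatorname{adj}A=(\tr A)I$, and $\tr(A_{T,C})/\det(A_{T,C})=\tr(A_{T,C}^{-1})=\tfrac1{t_1}+\tfrac1{t_2}=2$ by Theorem~\ref{C_11_char_matrix_thm}(ii).

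The main obstacle I anticipate is establishing the precise shape of the two Cesàro limits: justifying that exactly the off-diagonal part is averaged away and identifying the surviving quantities as the diagonal parts of $G$ and of $G^{-1}$. Once $A_{T,C}=S^{-*}G_dS^{-1}$ and $A_{T^*,C}=SH_dS^*$ are secured, the remainder is the short adjugate identity together with the trace normalisation $\tr(A_{T,C}^{-1})=2$ furnished by the $C_{11}$ hypothesis. The only extra bookkeeping is the coincident-eigenvalue case, but there $T$ is a scalar multiple of the identity and the statement is trivial.
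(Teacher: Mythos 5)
Your proof is correct, and while it shares its starting point with the paper, the endgame is genuinely different. The computation of the two Ces\`aro limits --- $A_{T,C}=S^{*-1}G_dS^{-1}$ and $A_{T^*,C}=SH_dS^*$ with $G=S^*S$, $H=G^{-1}$, the off-diagonal entries averaged away because $\overline{\mu_1}\mu_2$ is unimodular and $\neq 1$ --- is exactly the mechanism of equation \eqref{limit_eq}, and your separate treatment of the coincident-eigenvalue case (where $T=\mu_1 I$ is unitary and the averaging argument would not apply) is a correct disposal of the one degenerate situation, which the paper handles instead by its orthonormal choice of eigenvectors within a repeated eigenspace. Where you diverge is the final algebra: the paper normalises the columns of $S$ to unit vectors so that $A_{T,C}^{-1}=SS^*$ (from the proof of Theorem \ref{C_11_char_matrix_thm}), reduces phases, parametrises $S$ by $s,t\in[0,1]$ and two unimodular scalars, and computes $SS^*$ together with the cofactor form of $S^{*-1}$ entrywise, exploiting the lucky fact that the cofactor matrix of a $2\times 2$ matrix with unit columns again has unit columns, so that the two inverses visibly sum to $2I_2$. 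You avoid all of this bookkeeping: conjugating \eqref{2_dim_nice_eq} by $S^{-1}$ reduces it to the Gram-matrix identity $G_d^{-1}+HH_d^{-1}H=2H$, which I checked holds (the diagonal entries matching via $\Delta+|w|^2=g_1g_2$, the off-diagonal ones directly), and your conceptual reformulation $A_{T^*,C}=\operatorname{adj}(A_{T,C})$ --- valid because $|\det S|^2=\det G$ --- combined with the Cayley--Hamilton identity $A+\operatorname{adj}A=(\tr A)I$ and the normalisation $\tr(A_{T,C}^{-1})=1/t_1+1/t_2=2$ from Theorem \ref{C_11_char_matrix_thm}(ii) finishes the proof cleanly. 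What your route buys is an explanation of why the theorem is intrinsically two-dimensional: both the adjugate relation $A_{T^*,C}=\operatorname{adj}(A_{T,C})$ and the identity $A+\operatorname{adj}A=(\tr A)I$ fail for $d\geq 3$, which is consistent with the paper's remark that its $2\times 2$ inversion trick does not generalise and with the $3\times 3$ counterexample given after the proof.
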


The proof relies on the fact that the inverse of a $2\times 2$ matrix can be expressed with its elements rather easily. 
The same method cannot be applied in higher dimensions. 
Moreover, \eqref{2_dim_nice_eq} is not even true in three-dimensions any more. 
This will be justified by a concrete $3\times 3$ example.

\section{Proofs}

We begin with the description of the Jordan decomposition of power bounded matrices. 
We recall that a Jordan matrix is a matrix of the following form:
\[
J = 
\left( \begin{matrix}
J_{\lambda_1} & 0 & \dots & 0 \\
0 & J_{\lambda_2} & \dots & 0 \\
\vdots & \vdots & \ddots & \vdots \\
0 & 0 & \dots & J_{\lambda_k}
\end{matrix} \right)
\]
where $J$ has the following matrix entries:
\[
J_\lambda = 
\left( \begin{matrix}
\lambda & 1 & \dots & 0 \\
0 & \lambda & \dots & 0 \\
\vdots & \vdots & \ddots & \vdots \\
0 & 0 & \dots & \lambda
\end{matrix} \right)
\]

By the Jordan decomposition of a matrix $T\in\irB(\C^d)$ we mean the product $T = SJS^{-1}$ where $S\in \irB(\C^d)$ is invertible and $J\in\irB(\C^d)$ is a Jordan matrix (see \cite{Pr}). 
Recall that for any matrix $B\in\irB(\C^d)$ the equation $\lim_{n\to\infty}\|B^n\| = 0$ holds if and only if $r(B)<1$ (see e.g. \cite[Proposition 0.4]{Kubrusly}). 
The symbol $\diag(\dots)$ expresses a diagonal matrix (written in the standard orthonormal base).

\begin{prop}\label{J_dec_pwb_mx_prop}
Suppose $T\in\irB(\C^d)$ is power bounded and let us consider its Jordan decomposition: $SJS^{-1}$. 
Then we have
\[ 
J = U\oplus B 
\] 
with a unitary $U = \diag(\lambda_1,\dots\lambda_k)\in\irB(\C^{k})$ ($k\in\N_0$) and a $B\in\irB(\C^{d-k})$ for which $r(B) < 1$.

Conversely, if $J$ has the previous form, then necessarily $T$ is power bounded.
\end{prop}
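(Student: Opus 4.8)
The plan is to transfer the problem from $T$ to its Jordan matrix $J$ and then to decide power boundedness block by block. First I would note that in finite dimensions power boundedness is a similarity invariant: from $T^n=SJ^nS^{-1}$ one gets $\|J^n\|\le\|S^{-1}\|\,\|T^n\|\,\|S\|$ and $\|T^n\|\le\|S\|\,\|J^n\|\,\|S^{-1}\|$, so $T$ is power bounded exactly when $J$ is, and it therefore suffices to characterise the power bounded Jordan matrices.

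Next I would treat a single Jordan block $J_\lambda$ of size $m$, writing $J_\lambda=\lambda I+N$ with $N$ the nilpotent upper shift, so that $J_\lambda^n=\sum_{j=0}^{m-1}\binom{n}{j}\lambda^{n-j}N^j$ and the entry on the $j$-th superdiagonal is $\binom{n}{j}\lambda^{n-j}$. If $|\lambda|>1$ the diagonal entries grow like $|\lambda|^n$, so $J_\lambda$ is not power bounded; if $|\lambda|<1$ then $r(J_\lambda)=|\lambda|<1$ and the recalled fact gives $\|J_\lambda^n\|\to0$; if $|\lambda|=1$ and $m\ge2$ the first superdiagonal entry $n\lambda^{n-1}$ has modulus $n\to\infty$, so again $J_\lambda$ is not power bounded; and if $|\lambda|=1$ with $m=1$ then $J_\lambda=(\lambda)$ is unitary with $\|J_\lambda^n\|=1$. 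Hence a block is power bounded precisely when $|\lambda|<1$, or when $|\lambda|=1$ and $m=1$.

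Then, passing to $J=\bigoplus_iJ_{\lambda_i}$, I would use that the blocks are mutually orthogonal in the standard basis, so that $\|J^n\|=\max_i\|J_{\lambda_i}^n\|$ and $J$ is power bounded if and only if every block is. Reordering the blocks by a permutation similarity — which is unitary and can be absorbed into $S$ — I would collect the $1\times1$ blocks with $|\lambda|=1$ into a diagonal unitary $U=\diag(\lambda_1,\dots,\lambda_k)$ and the remaining blocks, all having modulus strictly below $1$, into $B$; since $r(B)$ equals the largest of these moduli, $r(B)<1$ and $J=U\oplus B$ as claimed.

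For the converse, if $J=U\oplus B$ with $U$ diagonal unitary and $r(B)<1$, then $\|U^n\|=1$ for all $n$ while $\|B^n\|\to0$ by the recalled fact, so $\|J^n\|=\max(\|U^n\|,\|B^n\|)$ stays bounded and $T=SJS^{-1}$ is power bounded. The only genuinely computational point is the block analysis of the second step, in particular exhibiting the linear growth of the superdiagonal entry for a nontrivial block on the unit circle; the block-reordering in the grouping step is the one small bookkeeping subtlety to keep track of.
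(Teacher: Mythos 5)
Your proposal is correct and follows essentially the same route as the paper: reduce to $J$ by similarity invariance, compute $J_\lambda^n$ via the binomial expansion to see that unimodular eigenvalues force $1\times 1$ blocks while $r(B)<1$ gives $\|B^n\|\to 0$, and conclude the converse from the obvious power boundedness of $U\oplus B$. Your extra care with the block reordering and the explicit $|\lambda|>1$ case are fine refinements of details the paper leaves implicit, but they do not change the argument.
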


\begin{proof} For the first assertion, let us consider the block-decomposition of $J$: 
\[ 
J = J_{1}\oplus \dots \oplus J_k 
\] 
where $J_j$ is a $\lambda_j$-Jordan block. 
It is clear that power boundedness is preserved by similarity. 
Since
\begin{equation}\label{Jordan-block_power_eq}
J_j^n = \left( \begin{matrix}
\lambda_j & 1 & 0 & \dots & 0 \\
0 & \lambda_j & 1 & \dots & 0 \\
0 & 0 & \lambda_j & \ddots & 0 \\
\vdots & \vdots & \ddots & \ddots & \vdots \\
0 & 0 & 0 & \dots & \lambda_j
\end{matrix} \right)^n 
=
\left( \begin{matrix}
\lambda_j^n & n\lambda_j^{n-1} & \binom{n}{2}\lambda_j^{n-2} & \ddots & \ddots \\
0 & \lambda_j^n & n\lambda_j^{n+1} & \ddots & \ddots \\
0 & 0 & \lambda_j^n & \ddots & \ddots \\
\vdots & \vdots & \ddots & \ddots & \ddots \\
0 & 0 & 0 & \dots & \lambda_j^n
\end{matrix} \right)
\end{equation}
holds, we obtain that if $\lambda$ is an eigenvalue of $T$, then either $|\lambda| < 1$, or $|\lambda| = 1$ and the size of any corresponding Jordan-block is exactly $1\times 1$. 

On the other hand, if $J = U\oplus B$, then it is obviously power bounded, and hence $T$ is power bounded as well.
\end{proof}

Next we prove that the Ces\`aro asymptotic limit of $T$ always exists. 
Moreover, it can be expressed as the Ces\`aro asymptotic limit of another matrix $T'$ which is obtained by replacing the above matrix $B$ with $0$.

\begin{theorem}\label{C^d_Cer_mean_thm}
Let $T\in\irB(\C^d)$ be power bounded and by using the notations of Proposition \ref{J_dec_pwb_mx_prop} let us consider the matrices $J' = U\oplus 0$ and $T' = SJ'S^{-1}$. 
Then the Cesa\`aro asymptotic limit of $T$ and $T'$ exist and they coincide:
\begin{equation}\label{TT'_C_eq}
A_{T,C} = A_{T',C}.
\end{equation} 
Conversely, if the sequence $\{\frac{1}{n}\sum_{j=1}^n T^{*j}T^j\}_{n=1}^\infty$ converges for a matrix $T$, then it is necessarily power bounded.
\end{theorem}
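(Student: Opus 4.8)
The plan is to pull the whole problem back through the Jordan decomposition and reduce the existence and the identity \eqref{TT'_C_eq} to a single computation with the positive definite matrix $P := S^*S$. Since $T^j = SJ^jS^{-1}$, I would first record
\[
T^{*j}T^j = S^{*-1}J^{*j}PJ^jS^{-1}, \qquad \frac{1}{n}\sum_{j=1}^n T^{*j}T^j = S^{*-1}\left(\frac{1}{n}\sum_{j=1}^n J^{*j}PJ^j\right)S^{-1}.
\]
Because $S$ is a fixed invertible matrix, convergence of the bracketed Ces\`aro average (and its limit) transfers through the conjugation $X\mapsto S^{*-1}XS^{-1}$, so it suffices to understand $\frac{1}{n}\sum_{j=1}^n J^{*j}PJ^j$. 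Here norm convergence and SOT convergence agree, the space being finite dimensional.

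Next I would exploit the splitting $J = U\oplus B$ from Proposition \ref{J_dec_pwb_mx_prop}, with $U = \diag(\lambda_1,\dots,\lambda_k)$ unitary and $r(B)<1$. Writing $P$ in the corresponding block form gives
\[
P = \left(\begin{matrix} P_{11} & P_{12} \\ P_{21} & P_{22} \end{matrix}\right), \qquad J^{*j}PJ^j = \left(\begin{matrix} U^{*j}P_{11}U^j & U^{*j}P_{12}B^j \\ B^{*j}P_{21}U^j & B^{*j}P_{22}B^j \end{matrix}\right).
\]
Because $r(B)<1$ we have $\|B^j\|\to 0$ geometrically (by the fact recalled before the proposition), so the off-diagonal blocks and the $(2,2)$ block are summable in norm, and their Ces\`aro averages tend to $0$. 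For the $(1,1)$ block I would compute entrywise: the $(p,q)$ entry of $U^{*j}P_{11}U^j$ equals $(P_{11})_{pq}(\overline{\lambda_p}\lambda_q)^j$, and since $\frac1n\sum_{j=1}^n\mu^j\to 0$ for every $\mu$ on the unit circle with $\mu\neq 1$, this average converges to the matrix $Q$ with $Q_{pq}=(P_{11})_{pq}$ when $\lambda_p=\lambda_q$ and $Q_{pq}=0$ otherwise. Hence $\frac1n\sum_{j=1}^n J^{*j}PJ^j\to\left(\begin{matrix}Q&0\\0&0\end{matrix}\right)$, which proves $A_{T,C}$ exists. Repeating the same block computation for $J' = U\oplus 0$ gives $J'^{*j}PJ'^j = \left(\begin{matrix}U^{*j}P_{11}U^j&0\\0&0\end{matrix}\right)$, whose average has the identical limit $\left(\begin{matrix}Q&0\\0&0\end{matrix}\right)$; conjugating back by $S^{*-1}(\cdot)S^{-1}$ yields $A_{T,C}=A_{T',C}$.

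For the converse I would argue by contraposition using the dichotomy established in (the proof of) Proposition \ref{J_dec_pwb_mx_prop}: if $T$ is \emph{not} power bounded, then its Jordan form $J$ possesses either an eigenvalue $\lambda$ with $|\lambda|>1$, or an eigenvalue with $|\lambda|=1$ sitting in a Jordan block of size at least $2$. In the first case pick a $J$-eigenvector $y$ for $\lambda$; in the second pick the second basis vector $y$ of such a block, so that by \eqref{Jordan-block_power_eq} the vector $J^j y$ grows like $j$. Setting $x := Sy$ and using $\|T^j x\|^2=\|SJ^jy\|^2\geq \|S^{-1}\|^{-2}\|J^jy\|^2$, the orbit norms grow at least like $|\lambda|^{2j}$ respectively like $j^2$. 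Consequently $\langle C_n x,x\rangle=\frac1n\sum_{j=1}^n\|T^j x\|^2$ diverges, so the Ces\`aro sequence cannot converge; this is the desired contradiction.

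The part requiring the most care is the middle step: verifying that the unitary $(1,1)$ block average genuinely converges and that all remaining blocks vanish in the limit. Both points hinge on the two structural facts already available, namely the diagonal form of $U$ (which reduces the ergodic average to a scalar computation) and $r(B)<1$ (which makes the cross terms $U^{*j}P_{12}B^j$ and the block $B^{*j}P_{22}B^j$ decay geometrically). Once these are in place, the equality $A_{T,C}=A_{T',C}$ is immediate, and the converse is a short Jordan-theoretic growth estimate.
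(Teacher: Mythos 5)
Your proposal is correct and follows essentially the same route as the paper: conjugating by $S$ to reduce to $\frac{1}{n}\sum_{j=1}^n J^{*j}(S^*S)J^j$, killing the $B$-blocks via $r(B)<1$, and evaluating the unitary block entrywise through $\frac{1}{n}\sum_{j=1}^n \mu^j \to 0$ for unimodular $\mu\neq 1$, which is exactly the computation behind the paper's equations \eqref{A_TC_and_AT'C_eq} and \eqref{limit_eq}. The only differences are organizational — you compute the limit for $J$ directly rather than first reducing to $T'$ by a difference estimate, and your converse uses explicit witness vectors (eigenvector for $|\lambda|>1$, second block vector for a unimodular Jordan block of size $\geq 2$) in place of the paper's uniform bound on $\frac{1}{n}\sum_{j=1}^n\|J^j h\|^2$, but the underlying dichotomy from Proposition \ref{J_dec_pwb_mx_prop} is identical.
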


\begin{proof}
For the first part, let us consider the following:
\begin{equation}\label{A_TC_and_AT'C_eq}
\begin{gathered}
 \frac{1}{n}\sum_{j=1}^n T^{*j}T^j - \frac{1}{n}\sum_{j=1}^n T'^{*j}T'^j \\
 = \frac{1}{n}S^{*-1}\Big(\sum_{j=1}^n J^{*j}S^*SJ^j - \sum_{j=1}^n J'^{*j}S^*SJ'^j\Big)S^{-1} \\
 = S^{*-1}\Big(\frac{1}{n}\sum_{j=1}^n (0\oplus B)^{*j}S^*S(0\oplus B)^j + \frac{1}{n}\sum_{j=1}^n (U\oplus 0)^{*j}S^*S(0\oplus B)^j \\
 + \frac{1}{n}\sum_{j=1}^n (B\oplus 0)^{*j}S^*S(U\oplus 0)^j \Big)S^{-1}.
\end{gathered}
\end{equation}
Since $\lim_{j\to\infty} (0\oplus B)^{j} = 0$, we obtain that \eqref{TT'_C_eq} holds whenever at least one of the limits exists.

Now we consider the partial sums: $\frac{1}{n}\sum_{j=1}^n T'^{*j}T'^j$. 
It is easy to see that multiplying from the right by a diagonal matrix acts as multiplication of the columns by the corresponding diagonal elements. 
Similarly, for the multiplication from the left action this holds with the rows. 
We have the next equality:
\begin{equation}\label{limit_eq}
\begin{gathered}
J'^{*j}S^*SJ'^j = \\
\left(\begin{matrix}
\|S e_1\|^2 & \lambda_2^n\overline{\lambda_1}^n \langle S e_1, S e_2\rangle & \dots & \lambda_m^n\overline{\lambda_1}^n \langle S e_1, S e_m\rangle & 0 & \dots & 0\\
\lambda_1^n\overline{\lambda_2}^n \langle S e_2, S e_1\rangle & \|S e_2\|^2 & \dots & \lambda_m^n\overline{\lambda_2}^n \langle S e_2, S e_m\rangle & 0 & \dots & 0 \\
\vdots & \vdots & \ddots & \vdots & \vdots & & \vdots \\
\lambda_1^n\overline{\lambda_m}^n \langle S e_m, S e_1\rangle & \lambda_2^n\overline{\lambda_m}^n \langle S e_m, S e_2\rangle & \dots & \|S e_m\|^2 & 0 & \dots & 0 \\
0 & 0 & \dots & 0 & 0 & \dots & 0\\
\vdots & \vdots & & \vdots & \vdots & \ddots & \vdots \\
0 & 0 & \dots & 0 & 0 & \dots & 0\\
\end{matrix}\right).
\end{gathered}
\end{equation}
Since 
\[ 
\lim_{n\to\infty} \Bigg|\frac{1}{n}\sum_{j=1}^n \lambda^j\Bigg| = \lim_{n\to\infty} \frac{|\lambda^n-1|}{n|\lambda-1|} = 0 
\]
if $|\lambda|\leq 1, \lambda \neq 1$ (for $\lambda = 1$ the above limit is 1) and multiplying by a fix matrix does not have an effect on the fact of convergence, we can easily infer that the right-hand side of \eqref{TT'_C_eq} exists and hence \eqref{TT'_C_eq} is verified.

For the reverse implication, let us assume that $\{\frac{1}{n}\sum_{j=1}^n T^{*j}T^j\}_{n=1}^\infty$ converges and as in the statement of Theorem \ref{finite_Ces_thm} let us denote its limit by $A_{T,C}$. 
According to the following equation
\[ 
\|A_{T,C}^{1/2}h\| = \langle A_{T,C} h,h\rangle^{1/2} = \lim_{n\to\infty} \Big(\frac{1}{n}\sum_{j=1}^n \|T^j h\|^2\Big)^{1/2},
\] 
and because of uniform convergence, there exists a large enough $\widetilde{M}>0$ such that
\[ 
\frac{1}{n}\sum_{j=1}^n \|T^j h\|^2 = \frac{1}{n}\sum_{j=1}^n \|SJ^jS^{-1} h\|^2 \leq \widetilde{M} 
\]
holds for each unit vector $h\in\C^d$. 
Since $S$ is bounded from below, the above inequality holds exactly when 
\[ 
\frac{1}{n}\sum_{j=1}^n \|J^j h\|^2 \leq M 
\]
holds for every unit vector $h\in\C^d$ with a large enough bound $M>0$. 
On the one hand, this implies that $r(J) = r(T) \leq 1$. 
On the other hand, if there is an at least $2\times 2$ $\lambda$-Jordan block in $J$ where $|\lambda| = 1$, then this above inequality obviously cannot hold for any unit vector (see \eqref{Jordan-block_power_eq}). 
This ensures the power boundedness of $T$.
\end{proof}

Let us point out that if a sequence of matrices $\{S_n\}_{n=1}^\infty$ is entry-wise $L$-convergent, then 
\[ 
\Llim_{n\to\infty} XS_n = X\cdot\Llim_{n\to\infty} S_n 
\] 
and 
\[ 
\Llim_{n\to\infty} S_nX = (\Llim_{n\to\infty} S_n)\cdot X 
\] 
hold. 
This can be easily verified from the linearity of $L$. 

Now we are ready to prove Theorem \ref{finite_Ces_thm}. 

\begin{proof}[Proof of Theorem \ref{finite_Ces_thm}]
The equality $\Llim_{n\to\infty} \lambda^n = \lambda\Llim_{n\to\infty} \lambda^n$ holds for every $|\lambda|\leq 1$ which gives us $\Llim_{n\to\infty} \lambda^n = 0$ for all $\lambda\neq 1, |\lambda|\leq 1$ (the Banach limit is trivially 1 if $\lambda = 1$). 
Now, if we take a look at equation \eqref{limit_eq}, we can see that $A_{T',L} = A_{T',C}$ holds for every Banach limit. 
Since 
\[ 
(0\oplus B)^{*j}S^*S(0\oplus B)^j + (0\oplus U)^{*j}S^*S(0\oplus B)^j + (B\oplus 0)^{*j}S^*S(0\oplus U)^j \to 0, 
\]
the equation-chain
\[ 
A_{T,L} = A_{T',L} = A_{T',C} = A_{T,C} 
\] 
is yielded.
\end{proof}

A natural question arises here. 
When does the sequence $\{T^{*n}T^n\}_{n=1}^\infty$ converge? The forthcoming theorem is dealing with this question where we will use a theorem of G. Corach and A. Maestripieri.

\begin{theorem}\label{AT_char_thm}
The following statements are equivalent for a power bounded $T\in\irB(\C^d)$:
\begin{itemize}
\item[\textup{(i)}] the sequence $\{T^{*n}T^n\}_{n=1}^\infty\subseteq\irB(\C^d)$ is convergent,
\item[\textup{(ii)}] the eigenspaces of $T$ corresponding to eigenvalues with modulus 1 are mutually orthogonal to each other.
\end{itemize}
Moreover, the following three sets coincide:
\begin{equation}\label{sets_eq}
\begin{gathered}
\left\{ A\in\irB(\C^d) \colon \exists\; T\in\irB(\C^d) \text{ power bounded such that } A = \lim_{n\to\infty} T^{*n}T^n \right\}, \\
\left\{ P^*P\in\irB(\C^d) \colon P\in\irB(\C^d), P^2 = P \right\}, \\
\left\{ A\in\irB(\C^d) \text{ positive semidefinite} \colon \sigma(A)\subseteq \{0\}\cup[1,\infty[, \dim\ker(A)\geq \rank E\big(]1,\infty[\big) \right\},
\end{gathered}
\end{equation}
where $E$ denotes the spectral measure of $A$.
\end{theorem}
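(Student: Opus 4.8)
The plan is to read everything off the Jordan decomposition $T = S(U\oplus B)S^{-1}$ of Proposition \ref{J_dec_pwb_mx_prop}, where $U = \diag(\lambda_1,\dots,\lambda_k)$ is unitary, $r(B)<1$, and I set $G:=S^*S$. Splitting $G$ into blocks according to $\C^k\oplus\C^{d-k}$ with upper-left corner $G_{11}$, one has $T^{*n}T^n = S^{*-1}J^{*n}GJ^nS^{-1}$, and because $\|B^n\|\to 0$ every block of $J^{*n}GJ^n$ other than the upper-left one tends to $0$. Hence convergence of $\{T^{*n}T^n\}$ is equivalent to convergence of $U^{*n}G_{11}U^n$, whose $(i,j)$-entry is $(\lambda_j\overline{\lambda_i})^n\langle Se_i,Se_j\rangle$ (cf. \eqref{limit_eq}). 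The diagonal entries are the constants $\|Se_i\|^2$, while for $i\neq j$ the factor $(\lambda_j\overline{\lambda_i})^n$ converges exactly when $\lambda_i=\lambda_j$; when $\lambda_i\neq\lambda_j$ it ranges over a nontrivial subset of $\T$ and fails to converge unless $\langle Se_i,Se_j\rangle=0$. Since the vectors $Se_i$ with $\lambda_i=\mu$ span the eigenspace of $T$ at $\mu$ (recall $r(B)<1$, so $B$ contributes no unimodular eigenvalue), this gives (i)$\iff$(ii): the sequence converges precisely when the eigenspaces at distinct unimodular eigenvalues are mutually orthogonal.

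For the coincidence of the three sets I would first identify the limit. When (ii) holds, the $(i,j)$-entry of $U^{*n}G_{11}U^n$ vanishes identically whenever $\lambda_i\neq\lambda_j$ and equals the constant $\langle Se_i,Se_j\rangle$ whenever $\lambda_i=\lambda_j$; thus $U^{*n}G_{11}U^n=G_{11}$ for every $n$, and $A = S^{*-1}(G_{11}\oplus 0)S^{-1}$. Setting $P := S(I_k\oplus 0)S^{-1}$, which satisfies $P^2=P$, a direct computation yields $P^*P = S^{*-1}(I_k\oplus 0)G(I_k\oplus 0)S^{-1} = S^{*-1}(G_{11}\oplus 0)S^{-1} = A$, so the first set lies in the second. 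The reverse inclusion is immediate: for any idempotent $P$ the matrix $T:=P$ is power bounded (as $P^n=P$ for $n\geq 1$) and $T^{*n}T^n = P^*P$ for all $n$, whence $\lim_n T^{*n}T^n = P^*P$.

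It remains to match $\{P^*P\colon P^2=P\}$ with the spectral set in the third line. Here I would invoke the canonical form of a (generally oblique) idempotent: every $P$ with $P^2=P$ is unitarily equivalent to $\bigoplus_i\left(\begin{smallmatrix}1 & s_i\\ 0 & 0\end{smallmatrix}\right)\oplus I_p\oplus 0_q$ with $s_i>0$, so that $P^*P$ is unitarily equivalent to $\bigoplus_i\left(\begin{smallmatrix}1 & s_i\\ s_i & s_i^2\end{smallmatrix}\right)\oplus I_p\oplus 0_q$. Each $2\times 2$ block has eigenvalues $0$ and $1+s_i^2>1$; consequently $\sigma(P^*P)\subseteq\{0\}\cup[1,\infty[$, the eigenvalues strictly above $1$ number exactly the blocks, and each such block simultaneously contributes a zero eigenvalue, so $\dim\ker(P^*P)\geq \rank E(]1,\infty[)$. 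Conversely, given positive semidefinite $A$ obeying these spectral constraints, I would diagonalise $A$ and, using $\dim\ker A\geq \rank E(]1,\infty[)$, pair each eigenvalue $\mu>1$ with a distinct zero eigenvalue to form a block $\left(\begin{smallmatrix}1 & \sqrt{\mu-1}\\ 0 & 0\end{smallmatrix}\right)$, place $[1]$ on the eigenvalue-$1$ part and $[0]$ on the surplus kernel; the resulting $P$ is idempotent with $P^*P=A$.

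The routine parts are the entry-wise limit in the first step and the $2\times 2$ eigenvalue arithmetic in the third. The delicate point is that last equivalence: showing $\{P^*P\}$ coincides with the spectral set rests on the canonical (CS-type) form of an oblique idempotent, and one must track multiplicities with care to see that the count of eigenvalues exceeding $1$ never outstrips the kernel dimension — equivalently, that each tilted two-dimensional block donates one eigenvalue above $1$ and one zero eigenvalue at the same time.
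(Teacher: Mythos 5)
Your proof is correct, and its first half is essentially the paper's argument: both read the equivalence (i)$\iff$(ii) off the entry formula \eqref{limit_eq} (constant diagonal entries, entries $(\lambda_j\overline{\lambda_i})^n\langle Se_i,Se_j\rangle$ failing to converge unless the coefficient vanishes), and both identify the limit as $P^*P$ for the oblique idempotent $P=S(I_k\oplus 0)S^{-1}$ — indeed your version is slightly cleaner, since by keeping $G_{11}$ you avoid the paper's step of renormalizing the columns of $S$ to be orthonormal within each eigenspace. Where you genuinely diverge is the last equality: the paper disposes of the identity between $\{P^*P\colon P^2=P\}$ and the spectral set by citing Corach--Maestripieri \cite[Theorem 6.1]{CM}, whereas you prove it self-contained from the canonical form of an idempotent, $P$ unitarily equivalent to $\bigoplus_i\left(\begin{smallmatrix}1&s_i\\0&0\end{smallmatrix}\right)\oplus I_p\oplus 0_q$ (which follows from writing $P$ in the decomposition $\ran P\oplus(\ran P)^\perp$ and applying an SVD to the corner). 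What your route buys is an elementary, fully explicit proof with transparent multiplicity bookkeeping — each tilted block visibly contributes one eigenvalue $1+s_i^2>1$ and one $0$, so the inequality $\dim\ker(P^*P)\geq\rank E(]1,\infty[)$ is immediate; what the paper's route buys is brevity and a pointer to a general theory of oblique projections valid beyond matrices.

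One small repair in your converse construction: in the eigenbasis of $A$ the block $\left(\begin{smallmatrix}1&\sqrt{\mu-1}\\0&0\end{smallmatrix}\right)$ yields
\[
P^*P=\begin{pmatrix}1&\sqrt{\mu-1}\\ \sqrt{\mu-1}&\mu-1\end{pmatrix},
\]
which is unitarily equivalent to $\diag(\mu,0)$ but not equal to it, so as written you only get $P^*P$ unitarily equivalent to $A$. The fix is one line: if $W^*(P^*P)W=A$ with $W$ unitary, then $Q:=W^*PW$ is again idempotent and $Q^*Q=A$; equivalently, note that the set $\{P^*P\colon P^2=P\}$ is invariant under unitary conjugation, so it suffices to realize $A$ up to unitary equivalence. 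With that adjustment the argument is complete.
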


\begin{proof}
The (ii)$\Longrightarrow$(i) implication is quite straightforward from \eqref{limit_eq}.

For the reverse direction let us consider \eqref{limit_eq} again. 
This tells us that if $\lambda_l \neq \lambda_k$, then $\langle S e_l, S e_k\rangle$ has to be 0 which means exactly the orthogonality.

In order to prove the further statement, we just have to take such a $T$ that satisfies (ii). 
Since the limit of $\{T^{*n}T^n\}_{n=1}^\infty$ exists if and only if the sequence $\{T'^{*n}T'^{n}\}_{n=1}^\infty$ converges (by a similar reasoning to the beginning of the proof of Theorem \ref{C^d_Cer_mean_thm}), we consider 
\[ 
T'^{*n}T'^n = S^{*-1}(J'^{*n}S^*SJ'^n)S^{-1}. 
\] 
Trivially, we can take an orthonormal basis in every eigenspace as column vectors of $S$ and if we do so, then the column vectors of $S$ corresponding to modulus 1 eigenvalues will form an orthonormal sequence, and the other column vectors (i.e. the zero eigenvectors) will form an orthonormal sequence as well. 
But the whole system may fail to be an orthonormal basis. 
This implies that
\[ 
\lim_{n\to\infty} T'^{*n}T'^n = S^{*-1}(I\oplus 0)S^{-1} = S^{*-1}(I\oplus 0)S^*S(I\oplus 0)S^{-1}. 
\]
By \cite[Theorem 6.1]{CM} we get \eqref{sets_eq}.
\end{proof}

Next we prove Theorem \ref{C_11_char_matrix_thm}.

\begin{proof}[Proof of Theorem \ref{C_11_char_matrix_thm}] 
(i)$\Longrightarrow$(iii): 
Let us suppose that $T = SUS^{-1}$ holds with an invertible $S\in\irB(\C^d)$ and a unitary $U = \diag(\lambda_1,\dots\lambda_d)$. 
Of course, it can be supposed without loss of generality that $S$ has unit column vectors (this is just a right choice of eigenvectors).
Moreover, if an eigenvalue $\lambda$ has multiplicity more than one, then the corresponding unit eigenvectors (as column vectors in $S$) can be chosen to form an orthonormal base in that eigenspace. 
Trivially, this does not change $T$. 
Now considering \eqref{limit_eq}, we get 
\[ 
\frac{1}{n} \sum_{j=1}^n U^{*j}S^*SU^j \to I 
\] 
and therefore 
\[ 
A_{T,C} = S^{*-1}S^{-1}. 
\]

\smallskip

(iii)$\Longrightarrow$(i):
Take an $S$ such that it has unit column vectors. 
If we put $\lambda_j$-s to be pairwise different in $U$ and $T = SUS^{-1}$, then we obviously get $A_{T,C} = S^{*-1}S^{-1}$ from equation \eqref{limit_eq}.

\smallskip

(iii)$\Longrightarrow$(ii):
By the spectral mapping theorem we have 
\[
d = \tr(S^*S) = \tr(SS^*) = \sum_{j=1}^d \frac{1}{t_j}.
\]

\smallskip

(ii)$\Longrightarrow$(iii):
It would be enough to find such a unitary matrix $U\in\irB(\C^d)$ which satisfies 
\[ 
\big\|\diag(\sqrt{1/t_1},\dots,\sqrt{1/t_d})\cdot U e_j\big\| = 1. 
\] 
Indeed, if we chose 
\[
S := \diag(\sqrt{1/t_1},\dots,\sqrt{1/t_d})\cdot U,
\] 
$SS^*$ would become $\diag(1/t_1,\dots,1/t_d)$ and \eqref{unitary_eqv_eq} would give what we wanted. 

The idea is that we put such complex numbers in the entries of $U$ that have modulus $1/\sqrt{d}$, because then the column vectors of $S$ will be unit ones and we only have to be careful with the orthogonality of the column vectors of $U$. 
In fact, the right choice is to consider a constant multiple of a Vandermonde matrix:
\[ 
U := \Big(\varepsilon^{(j-1)(k-1)}/\sqrt{d}\Big)_{j,k=1}^d 
\] 
where $\varepsilon = e^{2i\pi/d}$. 
We show that its column vectors are orthogonal to each other which will complete the proof. 
For this we consider $j_1\neq j_2, j_1,j_2\in\{1,2,\dots d\}$ and the inner product
\[ 
\langle Ue_{j_1}, Ue_{j_1} \rangle = \sum_{k=1}^d \frac{\varepsilon^{(j_1-1)(k-1)}}{\sqrt{d}} \frac{\overline{\varepsilon^{(j_2-1)(k-1)}}}{\sqrt{d}} = \frac{1}{d} \sum_{k=1}^d \varepsilon^{(j_1-j_2)(k-1)} = 0. 
\]
This shows that $U$ is indeed unitary.
\end{proof}

In order to handle the non-$C_{11}$ case we will use a reversal of Lemma \ref{Ker_dec_lem}. 
This reversal statement reads as follows.

\begin{lem}\label{structure_lem}
Suppose that the matrix $T\in\irB(\C^d)$ has the block-matrix upper triangular representation as in Lemma \ref{Ker_dec_lem}, with respect to an orthogonal decomposition $\C^d = \irH'\oplus\irH''$ such that $T_0\in C_{0\cdot}(\irH')$ ($R\in\irB(\irH'',\irH')$ is arbitrary) and $T_1\in C_{1\cdot}(\irH'')$. 
Then necessarily $T$ is power bounded and its stable subspace is precisely $\irH'$.
\end{lem}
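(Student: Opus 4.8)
The plan is to compute the powers of the block upper-triangular matrix explicitly, estimate each block separately, and then read off both the power boundedness and the identity $\irH_0(T)=\irH'$ directly from these estimates. First I would record the standard formula for the powers: an easy induction (from $T^n=T\cdot T^{n-1}$, giving the recursion $R_n=T_0R_{n-1}+RT_1^{n-1}$ with $R_1=R$) shows that
\[
T^n = \left(\begin{matrix} T_0^n & R_n \\ 0 & T_1^n \end{matrix}\right), \qquad R_n = \sum_{k=0}^{n-1} T_0^k\, R\, T_1^{n-1-k}.
\]

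Next I would handle the two diagonal blocks. Since $T_0\in C_{0\cdot}(\irH')$ and we work in finite dimension, the SOT convergence $T_0^n\to 0$ coincides with norm convergence, so the spectral radius criterion recalled before Proposition \ref{J_dec_pwb_mx_prop} gives $r(T_0)<1$. Fixing any $\rho$ with $r(T_0)<\rho<1$, Gelfand's formula yields a constant $C$ with $\|T_0^k\|\le C\rho^k$; in particular $\sum_{k=0}^\infty\|T_0^k\|<\infty$. On the other hand $T_1\in C_{1\cdot}(\irH'')$ is by definition power bounded, say $\|T_1^n\|\le M_1$ for all $n$. Power boundedness of $T$ then follows from the uniform bound on the off-diagonal block,
\[
\|R_n\| \le \sum_{k=0}^{n-1}\|T_0^k\|\,\|R\|\,\|T_1^{n-1-k}\| \le \|R\|\,M_1\sum_{k=0}^{\infty}\|T_0^k\| =: M_R,
\]
which together with $\|T_0^n\|\le C$ and $\|T_1^n\|\le M_1$ bounds $\|T^n\|$ uniformly.

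For the stable subspace I would argue both inclusions from the action $T^n(x\oplus y)=(T_0^n x + R_n y)\oplus T_1^n y$. The inclusion $\irH'\subseteq\irH_0(T)$ is immediate by taking $y=0$, since then $T^n(x\oplus 0)=T_0^n x\oplus 0\to 0$. For the reverse inclusion, if $x\oplus y$ is stable then in particular its second coordinate $T_1^n y$ tends to $0$, so $y\in\irH_0(T_1)=\{0\}$ because $T_1\in C_{1\cdot}$; hence $x\oplus y=x\oplus 0\in\irH'$. This establishes $\irH_0(T)=\irH'$.

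The only point requiring care is the uniform bound on $R_n$, and this is where finite-dimensionality is genuinely used: being $C_{0\cdot}$ forces true geometric norm decay of $T_0^k$ (not merely strong convergence), so the convolution sum defining $R_n$ is dominated by a convergent geometric series paired against the merely bounded powers of $T_1$. This is exactly what prevents the off-diagonal entries from growing and destroying power boundedness, and it is the step I expect to be the crux of the argument.
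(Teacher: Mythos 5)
Your proof is correct and follows essentially the same route as the paper's: compute $T^n$ in block upper-triangular form, use that the class $C_{0\cdot}$ in finite dimension forces geometric norm decay $\|T_0^k\|\le C\rho^k$ (the paper phrases this as $\|T_0^n\|\le r^n$ for large $n$ with $r<1$) to bound the convolution sum $R_n$ against the merely bounded powers of $T_1$, and then read off $\irH_0(T)=\irH'$ from the block action of $T^n$ exactly as you do. As a minor aside, your index convention $R_n=\sum_{k=0}^{n-1}T_0^k R\,T_1^{n-1-k}$ is the correct one, since the paper's displayed $R_n=\sum_{j=1}^{n}T_0^{n-j}RT_1^{j}$ contains an off-by-one slip (it would give $R_1=RT_1$ instead of $R$), which does not affect the estimates.
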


\begin{proof}
By an easy calculation we get
\[ 
T^n = \left(\begin{matrix}
T_0^n & R_n \\
0 & T_1^n
\end{matrix}\right) 
\]
where 
\[ 
R_n = \sum_{j=1}^{n} T_0^{n-j}RT_1^{j}. 
\] 
Let us assume that $\|T_0^n\|,\|T_1^n\|,\|R\| < M$ is satisfied for each $n\in\N$. 
In order to see the power boundedness of $T$, one just has to use that $\|T_0^n\|\leq r^n$ holds for large $n$-s with a number $r<1$, since we are in a finite dimensional space. 
It is quite straightforward that $\irH' \subseteq \irH_0(T)$ and since for any vector $x\in\irH''$ the sequence $\{T_1^n x\}_{n=1}^\infty$ does not converge to 0, then neither does $\{T^n x\}_{n=1}^\infty$. 
Consequently we obtain that $\irH' = \irH_0(T)$.
\end{proof}

The above proof works for those kind of infinite dimensional operators as well for which $r(T_0) < 1$ is satisfied. 
However we note that in general it is not valid.

Now, we are in a position to show the characterization in the non-$C_{11}$ case.

\begin{proof}[Proof of Theorem \ref{non-C_11_char_matrix_thm}] The equivalence of (i) and (iii) can be handled very similarly as in Theorem \ref{C_11_char_matrix_thm}.

\smallskip

(i)$\Longrightarrow$(ii): 
Let us write
\[
T^* = \left(\begin{matrix}
0 & \tilde{R} \\
0 & E^*
\end{matrix}\right) = \left(\begin{matrix}
0 & RE^* \\
0 & E^*
\end{matrix}\right) \quad (E\in C_{11}(\C^k)) 
\]
for the adjoint of a power bounded matrix (up to unitarily equivalence) which is general enough for our purposes (see Theorem \ref{C^d_Cer_mean_thm} and Lemma \ref{structure_lem}). 
Since $E^*$ is invertible it is equivalent to investigate the two forms above. 
From the equation
\[ 
T^{*n}T^n = \left(\begin{matrix}
0 & RE^{*n} \\
0 & E^{*n}
\end{matrix}\right)
\left(\begin{matrix}
0 & 0 \\
E^nR^* & E^n
\end{matrix}\right) =
\left(\begin{matrix}
RE^{*n}E^nR^* & RE^{*n}E^n \\
E^{*n}E^nR^* & E^{*n}E^n
\end{matrix}\right),
\]
we get
\[ 
A_{T,C} = \left(\begin{matrix}
RA_{E,C}R^* & RA_{E,C} \\
A_{E,C}R^* & A_{E,C}
\end{matrix}\right). 
\]

Calculating the null-space of $A_{T,C}$ suggests that for the block-diagonalization we have to take the following invertible matrix:
\[ 
X = 
\left(\begin{matrix}
I_l & R \\
-R^* & I_k
\end{matrix}\right)
\left(\begin{matrix}
(I_l + RR^*)^{-1/2} & 0 \\
0 & (I_k + R^*R)^{-1/2}
\end{matrix}\right). 
\]
The inverse of $X$ is precisely
\[ 
X^{-1} = \left(\begin{matrix}
(I_l + RR^*)^{-1/2} & 0 \\
0 & (I_k + R^*R)^{-1/2}
\end{matrix}\right)
\left(\begin{matrix}
I_l & -R \\
R^* & I_k
\end{matrix}\right) 
\]
which implies that $X$ is unitary. 
With a straightforward calculation we derive
\[ 
X^{-1}A_{T,C}X = 0_l \oplus \big[(I_k + R^*R)^{1/2}A_{E,C}(I_k + R^*R)^{1/2}\big]. 
\]
Here $(I_k + R^*R)^{1/2} = I_k + Q$ holds with a positive semi-definite $Q$ for which $\rank(Q)\leq l$ holds and conversely, every such $I_k+Q$ can be given in the form $(I_k + R^*R)^{1/2}$. 
Therefore the set of all Ces\`aro-asymptotic limits of $l$-stable power bounded matrices (again, up to unitarily equivalence) are given by
\[ 
\begin{gathered}
\Big\{
0_l \oplus \left[(I_k + Q)A_{E,C}(I_k + Q)\right]\colon Q\in\irB(\C^k), \\
\text{ positive semidefinite } \rank(Q) < l, E\in C_{11}(\C^k) \Big\} 
\end{gathered}
\]
and every positive operator of the above form is the Ces\`aro-asymptotic limit of an $l$-stable power bounded matrix.

Finally, let us write
\[ 
(I_k+Q)^{-2} = U\cdot\diag(q_1,\dots q_k)\cdot U^* 
\]
and 
\[ 
U^*A_{E,C}^{-1}U = (\alpha_{i,j})_{i,j=1}^k, 
\]
where $U$ is unitary and $q_j\leq 1$ for each $1\leq j\leq k$. 
Therefore
\[ 
\frac{1}{t_1} + \dots + \frac{1}{t_k} = \tr\big((I_k+Q)^{-1}A_{E,C}^{-1}(I_k+Q)^{-1}\big) = \tr\big(A_{E,C}^{-1}(I_k+Q)^{-2}\big) 
\]
\[ 
= \tr\big(U^*A_{E,C}^{-1}U\cdot\diag(q_1,\dots q_k)\big) = \sum_{j=1}^k q_j \alpha_{j,j} \leq \tr(A_{E,C}^{-1}) = k
\]
is fulfilled for each $l$-stable power bounded matrix $T$. 

\smallskip

(ii)$\Longrightarrow$(i): 
Set some positive numbers $t_1,\dots, t_k$ such that 
\[ 
\frac{1}{t_1} + \dots + \frac{1}{t_k} \leq k 
\]
is valid. 
If we take such a $0<c<1$ for which
\[ 
\frac{1}{c\cdot t_1} + \frac{1}{t_2} + \dots + \frac{1}{t_k} = k, 
\] 
then obviously $A = \diag(c\cdot t_1, t_2\dots t_k)$ arises as the Ces\`aro asymptotic limit of a power bounded operator from $C_{11}(\C^k)$.
Therefore if we take the rank-one $Q = \diag(1/\sqrt{c}-1,0,\dots 0)$ positive semi-definite matrix, we get $(I_k+Q)A(I_k+Q) = \diag(t_1, t_2\dots t_k)$. 
This ends the proof.
\end{proof}

Next we prove our formula concerning 2$\times$2 matrices.

\begin{proof}[Proof of Theorem \ref{2x2_regularity_matrix_thm}]
Let $T = S\diag(\lambda_1,\lambda_2)S^{-1}$ with $|\lambda_1| = |\lambda_2| = 1$ and $S\in \irB(\C^2)$ be invertible for which the column vectors are unit ones. 
We have learned from the proof of Theorem \ref{C_11_char_matrix_thm} that in this case $A_{T,C}^{-1} = SS^*$ happens. 
The matrix $S$ can be written in the following form:
\[ 
S = \left(\begin{matrix}
\mu_{1,1}s & \mu_{1,2}t \\
\mu_{2,1}\sqrt{1-s^2} & \mu_{2,2}\sqrt{1-t^2}
\end{matrix}\right) 
\]
where $|\mu_{j,l}| = 1$ ($j,l\in\{1,2\}$) and $s,t\in[0,1]$ provided that the above matrix is invertible.

By taking $\diag(1/\mu_{1,1},1/\mu_{2,1})\cdot S$ instead of $S$ and using \eqref{unitary_eqv_eq}, we can see that without loss of generality $\mu_{1,1} = \mu_{2,1} = 1$ can be assumed, so we have
\[ 
S = \left(\begin{matrix}
s & \mu_{1,2}t \\
\sqrt{1-s^2} & \mu_{2,2}\sqrt{1-t^2}
\end{matrix}\right) 
\]
and thus
\[ 
A_{T,C}^{-1} = SS^* = \left(\begin{matrix}
s^2+t^2 & s\sqrt{1-s^2} + t\sqrt{1-t^2}\overline{\mu_{22}}\mu_{12} \\
s\sqrt{1-s^2} + t\sqrt{1-t^2}\mu_{22}\overline{\mu_{12}} & 2-s^2-t^2
\end{matrix}\right). 
\]

Finally, since $T^* = S^{*-1}\diag(\overline{\lambda_1},\overline{\lambda_2})S^*$ and we have 
\[ 
S^{*-1} = \frac{1}{s\sqrt{1-t^2}\overline{\mu_{22}} - t\sqrt{1-s^2}\overline{\mu_{12}}} \left(\begin{matrix}
\sqrt{1-t^2}\overline{\mu_{22}} & -\sqrt{1-s^2} \\
-t\overline{\mu_{12}} & s
\end{matrix}\right),
\]
we immediately obtain
\[ 
A_{T^*,C}^{-1} = 
\left(\begin{matrix}
\sqrt{1-t^2}\overline{\mu_{22}} & -\sqrt{1-s^2} \\
-t\overline{\mu_{12}} & s
\end{matrix}\right)
\left(\begin{matrix}
\sqrt{1-t^2}\mu_{22} & -t\mu_{12} \\
-\sqrt{1-s^2} & s
\end{matrix}\right) 
\]
\[ 
= \left(\begin{matrix}
2-s^2-t^2 & -s\sqrt{1-s^2} - t\sqrt{1-t^2}\overline{\mu_{22}}\mu_{12} \\
-s\sqrt{1-s^2} - t\sqrt{1-t^2}\mu_{22}\overline{\mu_{12}} & s^2+t^2
\end{matrix}\right) 
\]
which implies \eqref{2_dim_nice_eq}.
\end{proof}

In the proof we used that the inverse of a $2\times 2$ matrix can be expressed quite nicely. 
As we mentioned earlier, the above theorem cannot be generalized in higher dimensions. 
In fact when $\dim\irH = \aleph_0$, then we can get counterexamples quite easily. 
Let us consider a weighted bilateral shift operator which has weights 1 everywhere except for one weight which is 1/2. 
This trivially defines a contraction.
Moreover, $A_T$ and $A_{T^*}$ are invertible which implies that $T$ is similar to a unitary. 
However $A_T, A_{T^*} \leq I$, $A_T \neq I$ and $A_{T^*} \neq I$ which implies $A_{T}^{-1} + A_{T^*}^{-1} \geq 2I$ but $A_{T}^{-1} + A_{T^*}^{-1} \neq 2I$. 

In the matrix case we do not have such an easy counterexample. 
However, simple computations tell us that usually \eqref{2_dim_nice_eq} does not hold even for $3\times 3$ matrices. 
For example if we consider 
\[ 
T = \left(\begin{matrix}
i & 2 & 1 \\
0 & 1 & i \\
1 & 0 & 4
\end{matrix}\right)
\left(\begin{matrix}
1 & 0 & 0 \\
0 & -1 & 0 \\
0 & 0 & i
\end{matrix}\right)
\left(\begin{matrix}
i & 2 & 1 \\
0 & 1 & i \\
1 & 0 & 4
\end{matrix}\right)^{-1} 
\]
then the eigenvalues of $A_{T,C}^{-1} + A_{T^*,C}^{-1}$ are approximately $1.27178, 2.1285$ and $2.59972$.

Finally we give some examples. 
First, we provide such a power bounded weighted shift operator $T$ for which the $L$-asymptotic limit really depends on the choice of the particular Banach limit $L$.
Moreover, the Ces\`aro asymptotic limit of $T$ exists. 
We will use a characterization from \cite{Lo} of all the possible Banach limits of a bounded real sequence. 
Namely, G. G. Lorentz proved that for every $\underline{x}\in\ell^\infty$ real sequence the equality
\[ \big\{\Llim_{n\to\infty} x_n \colon L\in\irL\big\} = [q'(\underline{x}),q(\underline{x})] \subseteq \R \]
holds where $\irL$ stands for the set of all Banach limits and
\[ 
q(\underline{x}) = \inf\Big\{\limsup_{k\to\infty} \frac{1}{p}\sum_{j=1}^{p} x_{n_j+k} \colon p\in\N, n_1,\dots n_p \in\N \Big\}, 
\]
\[ 
q'(\underline{x}) = \sup\Big\{\liminf_{k\to\infty} \frac{1}{p}\sum_{j=1}^{p} x_{n_j+k} \colon p\in\N, n_1,\dots n_p \in\N \Big\}. 
\]

\begin{exmpl}
\textup{Let us fix an orthonormal basis $\{e_j\}_{j\in\N}$ and define $T$ in the following way:}
\[ 
T e_j = \left\{ \begin{matrix}
\sqrt{2}\cdot e_{j+1} & j = 3^l \textup{ for some } l\in\N \\
\sqrt{1/2}\cdot e_{j+1} & j = 3^l+l \textup{ for some } l\in\N \\
e_{j+1} & \textup{otherwise}
\end{matrix}\right.. 
\]
\textup{Since $\|T^n\| = \sqrt{2}$ holds for each $n\in\N$, $T$ is power bounded. 
A rather easy calculation shows that the equation}
\[ T^{*n}T^n e_1 = \left\{ \begin{matrix}
2 e_1 & 3^l \leq n < 3^l+l \textup{ for some } l\in\N \\
e_1 & \textup{otherwise}
\end{matrix}\right. \]
\textup{is valid. 
Therefore by Lorentz's characterization we get}
\[ 
\big\{\langle A_{T,L} e_1, e_1\rangle \colon L\in\irL\big\} = [1,2]. 
\]

\textup{It is quite easy to see that the sequence $\{T^{*n}T^n\}_{n=1}^\infty$ consists of diagonal operators (with respect to our fixed orthonormal basis). 
A straightforward computation gives us $A_{T,C} = I$.}
\end{exmpl}

\smallskip

The last two examples concern the existence of the Ces\`aro asymptotic limit and power boundedness. 
From these examples we will see that none of the mentioned properties concerning $T$ implies the other one.

\begin{exmpl}
\textup{We define $T$ with the equation}
\[ 
T e_j = \left\{ \begin{matrix}
\sqrt{2}\cdot e_{j+1} & j = 3^l \textup{ for some } l\in\N \\
\sqrt{1/2}\cdot e_{j+1} & j = 2\cdot 3^l \textup{ for some } l\in\N \\
e_{j+1} & \textup{otherwise}
\end{matrix}\right.. 
\]
\textup{Clearly}
\[ 
T^{*n}T^n e_1 = \left\{ \begin{matrix}
2 \cdot e_1 & 3^l \leq n < 2\cdot 3^l \textup{ for some } l\in\N \\
e_1 & \textup{otherwise}
\end{matrix}\right. 
\]
\textup{holds. 
The Ces\`aro means of the sequence $\{\langle T^{*n}T^n e_1,e_1\rangle\}_{n\in\N}$ does not converge, hence the the Ces\`aro asymptotic limit of $T$ cannot exist. 
On the other hand, $\|T^n\| = \sqrt{2}$ is satisfied for every $n\in\N$, thus $T$ is power bounded.}
\end{exmpl}

\begin{exmpl}
\textup{Our shift operator is defined as follows:}
\[ 
T e_j = \left\{ \begin{matrix}
\sqrt{2}\cdot e_{j+1} & 3^l \leq j < 3^l + l \textup{ for some } l\in\N \\
e_{j+1} & \textup{otherwise}
\end{matrix}\right.. 
\]
\textup{On the one hand, it is not power bounded because $\|T^n\| = \sqrt{2^n}$. 
On the other hand, it is not hard to see that $A_{T,C}$ exists and it is precisely the identity operator.}
\end{exmpl}

%----------------------------------------------------------------------------------------------------------------------------------------------

\newpage

\chapter{Asymptotic limits of operators similar to normal operators} \label{SzN_chap}

\section{Statements of the results}

The present chapter contains our results from \cite{Ge_SzN}. 
First we will give a generalization of the necessity part in Sz.-Nagy's similarity result (Theorem \ref{SzN_unit_thm} and \ref{Sz-N_ref_thm}).
Let us consider a normal power bounded operator $N\in\irB(\irH)$. 
Since $r(N)^n = r(N^n) = \|N^n\|$ holds ($n\in\N$), we obtain that $N$ is a contraction. 
A quite straightforward application of the functional model of normal operators (see \cite[Chapter IX]{Co}) gives us that $A_N = I_{\irH_0(N)^\perp}\oplus 0_{\irH_0(N)}$ holds where $N|(\ran A_N)^-$ is the unitary part of $N$. 
In view of Theorem \ref{Sz-N_ref_thm} it is reasonable to conjecture that when a power bounded operator $T$ is similar to a normal operator then $A_{T,L}|(\ran A_{T,L})^-$ should be invertible.
Indeed this is the case as we shall see from the forthcoming theorem.
But before stating it we need some definitions. 
We recall that if the operator $A\in\irB(\irH)$ is not zero, then the reduced minimum modulus of $A$ is the quantity $\gamma(A) := \inf\{\|Ax\|\colon x\in(\ker A)^\perp, \|x\|=1\}$. 
In particular if $A$ is a positive operator, then $\gamma(A)>0$ holds exactly when $A|(\ran A)^-$ is invertible. 
Our first result concerns two similar power bounded operators and it reads as follows. 

\begin{theorem}\label{similar_thm}
Let us consider two power bounded operators $T,S\notin C_{0\cdot}(\irH)$ which are similar to each other. Then $\gamma(A_{T,L})>0$ holds for some (and then for all) Banach limits $L$ if and only if $\gamma(A_{S,L})>0$ is valid. 

Moreover, $\gamma(A_{T,L})>0$ holds if and only if the powers of $T$ are bounded from below uniformly on $\irH_0(T)^\perp$, i.e. there exists a constant $c > 0$ such that 
\[ 
c\|x\| \leq \|T^n x\| \quad (x\in\irH_0(T)^\perp, n\in\N).
\]

In particular, if $T$ is similar to a normal operator, then $\gamma(A_{T,L})>0$ and $\gamma(A_{T^*,L})>0$ are satisfied.
\end{theorem}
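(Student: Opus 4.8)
The plan is to organize the argument around the middle assertion, since it gives an $L$-independent reformulation of $\gamma(A_{T,L})>0$ and thereby settles for free the clause ``for some (and then for all) Banach limits''. The engine here is \eqref{A_TL_orbit_eq}, which iterates to $\|A_{T,L}^{1/2}x\| = \|A_{T,L}^{1/2}T^k x\|$ for every $k\in\N$. For the implication $(\Leftarrow)$, if $\|T^n x\|\ge c\|x\|$ on $\irH_0(T)^\perp$ then taking the $L$-limit gives $\|A_{T,L}^{1/2}x\|^2 = \Llim_{n\to\infty}\|T^n x\|^2 \ge c^2\|x\|^2$, so $A_{T,L}^{1/2}$ (hence $A_{T,L}$) is bounded below on $(\ker A_{T,L})^\perp = \irH_0(T)^\perp$, i.e. $\gamma(A_{T,L})>0$. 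For $(\Rightarrow)$, if $\gamma(A_{T,L})>0$ there is $\delta>0$ with $\|A_{T,L}^{1/2}x\|\ge\delta\|x\|$ on $\irH_0(T)^\perp$; combining the iterated identity with $\|A_{T,L}^{1/2}T^k x\|\le\|A_{T,L}^{1/2}\|\cdot\|T^k x\|$ and $A_{T,L}\neq 0$ (valid since $T\notin C_{0\cdot}$) yields $\|T^k x\|\ge (\delta/\|A_{T,L}^{1/2}\|)\|x\|$. As $\irH_0(T)=\ker A_{T,L}$ is the same for every $L$, this characterization is $L$-free, so $\gamma(A_{T,L})>0$ holds for one $L$ exactly when it holds for all of them.

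For the similarity invariance I would not argue through the uniform bounds, since $\irH_0(T)^\perp$ is not $T$-invariant and gets distorted by a non-unitary similarity; instead I would compare the asymptotic limits directly. Writing $S=Z^{-1}TZ$ with $Z$ invertible and $W=ZZ^*$, from $S^{*n}S^n = Z^*T^{*n}W^{-1}T^n Z$ together with $\frac{1}{\|W\|}I\le W^{-1}\le \frac{1}{\underline{r}(W)}I$ one obtains, after passing to $L$-limits,
\[
\frac{1}{\|W\|}\,Z^*A_{T,L}Z \ \le\ A_{S,L}\ \le\ \frac{1}{\underline{r}(W)}\,Z^*A_{T,L}Z .
\]
Two positive operators sandwiching one another up to positive constants share the same kernel and satisfy $\gamma(\cdot)>0$ simultaneously (checked on quadratic forms over the orthocomplement of the kernel), so $\gamma(A_{S,L})>0$ if and only if $\gamma(Z^*A_{T,L}Z)>0$. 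It then remains to see that $\gamma(Z^*A_{T,L}Z)>0$ is equivalent to $\gamma(A_{T,L})>0$. Here lies the crux: the operator $Z^*A_{T,L}Z$ has kernel $\mathcal{N}:=Z^{-1}\irH_0(T)$, and for $x\perp\mathcal{N}$ a short computation using $\irH_0(T)=\ker A_{T,L}$ gives $\langle A_{T,L}Zx,Zx\rangle = \|A_{T,L}^{1/2}P_{\irH_0(T)^\perp}Zx\|^2$, so the whole matter reduces to showing that $P_{\irH_0(T)^\perp}Z$ is bounded below on $\mathcal{N}^\perp$.

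\textbf{This positive-angle estimate is the main obstacle.} Since $Z$ is invertible and $Z\mathcal{N}=\irH_0(T)$, the closed subspaces $\irH_0(T)$ and $Z(\mathcal{N}^\perp)$ form a topological direct sum; hence the projection onto $Z(\mathcal{N}^\perp)$ along $\irH_0(T)$ is bounded, which forces $\mathrm{dist}(Zx,\irH_0(T))\ge\mathrm{const}\cdot\|x\|$ on $\mathcal{N}^\perp$ and gives the required lower bound on $P_{\irH_0(T)^\perp}Z$. Running the same reasoning with $Z^{-1}$ in place of $Z$ supplies the reverse implication, so $\gamma(A_{S,L})>0\iff\gamma(A_{T,L})>0$. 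Finally, for the normal case I would recall from the discussion opening this chapter that a power bounded normal $N$ is a contraction with $A_N=I_{\irH_0(N)^\perp}\oplus 0_{\irH_0(N)}$, a projection, so $\gamma(A_{N,L})=1>0$ whenever $N\notin C_{0\cdot}$; by normality $N^{*n}N^n=N^nN^{*n}$ gives $A_{N^*}=A_N$, whence $\gamma(A_{N^*,L})>0$ too. If $T=ZNZ^{-1}$, then $T^*$ is similar to $N^*$, and because $\irH_0$ is carried across similarities, $T\notin C_{0\cdot}$ forces $T,T^*\notin C_{0\cdot}$; applying the similarity invariance just established to the pairs $(T,N)$ and $(T^*,N^*)$ delivers $\gamma(A_{T,L})>0$ and $\gamma(A_{T^*,L})>0$.
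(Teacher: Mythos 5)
Your proposal is correct, and it takes a genuinely different route from the paper's. For the middle equivalence, the paper (Lemma \ref{Sz.-Nagy_gen_lem}) passes through the intermediate condition that the compression $T_1 = P_1T|\irH_0^\perp$ is similar to an isometry, using the isometric asymptote for the implication from $\gamma(A_{T,L})>0$ and then Sz.-Nagy's Theorem \ref{SzN_unit_thm} together with the triangulation of Lemma \ref{Ker_dec_lem} to get the uniform lower bound; your iterated identity $\|A_{T,L}^{1/2}x\| = \|A_{T,L}^{1/2}T^kx\|$ from \eqref{A_TL_orbit_eq} short-circuits all of this and proves the equivalence of $\gamma(A_{T,L})>0$ with the lower bound on $\irH_0^\perp$ directly and more elementarily (your use of $A_{T,L}\neq 0$, guaranteed by $T\notin C_{0\cdot}$, is exactly what makes the $(\Rightarrow)$ step legitimate). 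For the similarity invariance, the paper first replaces $S$ by $USU^*$ with a unitary $U$ chosen so that $\irH_0(USU^*)=\irH_0(T)$, then exploits the resulting upper block-triangularity of $UX$ via Lemma \ref{inv_uper_block-triang_lem} to see that the $C_{1\cdot}$-compressions are similar, and concludes through the similar-to-isometry characterization; you instead establish the sandwich $\|W\|^{-1}Z^*A_{T,L}Z \le A_{S,L}\le \underline{r}(W)^{-1}Z^*A_{T,L}Z$ (valid since Banach limits are positive) and a general congruence principle: for positive $B$ and invertible $Z$ one has $\gamma(B)>0$ if and only if $\gamma(Z^*BZ)>0$. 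The crux of that principle — the lower bound for $P_{\irH_0^\perp}Z$ on $\mathcal{N}^\perp$ with $\mathcal{N}=Z^{-1}\irH_0(T)$ — is correctly supplied by the boundedness of the oblique projection attached to the topological direct sum $\irH=\irH_0(T)\dotplus Z(\mathcal{N}^\perp)$ (closed graph theorem), and applying it once with $Z$ and once with $Z^{-1}$ gives both directions; the remaining reductions (equality of kernels under the sandwich, and the fact that for positive operators $\gamma(\cdot)>0$ amounts to a quadratic-form lower bound on the orthocomplement of the kernel) are sound. The trade-off: your argument is more self-contained — it never invokes the isometric asymptote, Sz.-Nagy's theorem, or the block-triangular inverse lemma — and yields as a by-product a quantitative two-sided comparison of $A_{S,L}$ with $Z^*A_{T,L}Z$; the paper's route buys the structural fact, recorded as condition (ii) of Lemma \ref{Sz.-Nagy_gen_lem}, that $\gamma(A_{T,L})>0$ precisely when $T_1$ is similar to an isometry, which the paper reuses later (e.g. in the proof of Corollary \ref{shift_cor}). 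Your treatment of the normal case agrees in substance with the paper's one-line appeal to idempotency of $A_N$, and you are right to spell out the details it leaves implicit: $A_{N^*}=A_N$ for a power bounded normal $N$, and the chain showing that $T\notin C_{0\cdot}$ forces $T^*\notin C_{0\cdot}$ under similarity to a normal operator, so that both reduced minimum moduli in the last assertion are genuinely defined.
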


The proof of Theorem \ref{similar_thm} will use Lemma \ref{Ker_dec_lem}. 
This theorem can be considered as a generalization of the necessity part in Sz.-Nagy's theorem and it can help in deciding whether an operator is similar to e.g. a normal operator in some cases. 
However, the last point of Theorem \ref{similar_thm} is clearly not reversible which can be seen by easy counterexamples. 
For instance set an arbitrary operator $B\in\irB(\irH)$ with $r(B)<1$ and consider $T := B\oplus I \in\irB(\irH\oplus\irH)$. 
On the one hand, $T$ is usually not similar to any normal operators, but on the other hand, $A_T = 0\oplus I$ which implies $\gamma(A_T)=1$.

As we have seen, Sz.-Nagy's theorem says that $A_{T,L}$ is invertible when $T$ is similar to a unitary. 
If $T$ is a contraction, then we can state more than simply the invertibility of $A_T$. 
Namely we will prove the following statement where (i) is only an implications statement, but (ii) is an ''if and only if'' one.
We recall that the minimal element of the spectrum of a self-adjoint operator $A$ is denoted by $\underline{r}(A)$.

\begin{theorem}\label{similar_to_unitary_thm}\noindent
\begin{itemize}
\item[\textup{(i)}] Let $\dim\irH = \infty$ and $T\in\irB(\irH)$ be a contraction which is similar to a unitary operator. Then $\dim\ker(A_T-\underline{r}(A_T) I) \in \{0, \infty\}$. 
Consequently, the condition $\underline{r}(A_T)\in \sigma_e(A_T)$ holds.
\item[\textup{(ii)}] Let $\dim\irH = \aleph_0$, $A\in\irB(\irH)$ be a positive, invertible contraction and suppose that the conditions $1\in\sigma_e(A)$ and $\dim\ker(A-\underline{r}(A)I) \in \{0,\aleph_0\}$ are fulfilled. 
Then there exists a contraction $T\in\irB(\irH)$ which is similar to a unitary operator and the asymptotic limit of $T$ is exactly $A$.
\end{itemize}
\end{theorem}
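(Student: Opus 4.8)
The plan is to treat the two parts separately but to base both on the single structural fact that, for a contraction $T$ similar to a unitary, Theorem~\ref{Sz-N_ref_thm} makes $A:=A_T$ a positive invertible contraction with $\|A\|=1$ and $r:=\underline{r}(A)>0$, while the isometric asymptote $V:=V_{T}$ is in fact unitary and satisfies $A^{1/2}T=VA^{1/2}$. From this I read off $V=A^{1/2}TA^{-1/2}$, hence $V^{*n}A^{-1}V^{n}=A^{-1/2}(T^{*n}T^{n})A^{-1/2}\to I$ in SOT, together with the companion formula $A_{T^*}=\lim_{n}A^{-1/2}V^{n}AV^{*n}A^{-1/2}$. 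These two limits are the engine of the argument.

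For part (i) I would first dispose of the easy alternatives: if $r\notin\sigma_{p}(A)$ then $\dim\ker(A-rI)=0$, and since $r$ is the least point of $\sigma(A)$ but not an eigenvalue it automatically lies in $\sigma_{e}(A)$; and if $r=1$ then $A=I$ and $\ker(A-I)=\irH$. So I assume, toward a contradiction, that $r$ is an isolated eigenvalue with $0<\dim\irM<\infty$, where $\irM:=\ker(A-rI)$ and $0<r<1$. Writing $P$ for the finite--rank spectral projection onto $\irM$ and splitting $A^{-1}=r^{-1}P+A^{-1}(I-P)$ with a genuine spectral gap, the relation $V^{*n}A^{-1}V^{n}\to I$ forces $\|PV^{n}x\|\to0$ for every $x$, that is $PV^{n}\to0$ in SOT. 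Independently, from $A_{T^*}\geq rA^{-1}$ (which drops out of the companion formula together with $A\geq rI$) every unit $f\in\irM$ satisfies $\langle A_{T^*}f,f\rangle\geq\|f\|^{2}$, whence $A_{T^*}f=f$; thus $\irM\subseteq\ker(A_{T^*}-I)=\irH_{1}(T^{*})$, so $T^{*}$ acts isometrically on $\irM$.

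The hard part is to convert these two facts into a contradiction with $\dim\irM<\infty$. Translating $PV^{n}\to0$ through $V=A^{1/2}TA^{-1/2}$ yields $\langle T^{n}f,g\rangle\to0$ for all $f,g\in\irM$, equivalently $PT^{n}P\to0$ and $PT^{*n}P\to0$ on the finite--dimensional space $\irM$. Combined with $\irM\subseteq\irH_{1}(T^{*})$, where the isometry $W:=T^{*}|\irH_{1}(T^{*})$ satisfies $\langle W^{n}f,W^{k}f\rangle=\langle W^{\,n-k}f,f\rangle\to0$, this shows that $\{T^{*n}f\}_{n}$ is an asymptotically orthogonal sequence of constant norm which leaves $\irM$; the remaining task, which I expect to be the main obstacle, is to upgrade this ``escape'' into the statement that the eigenvalue $r$ cannot have finite positive multiplicity. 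The route I would pursue is to extract from the asymptotic orthogonality a finite--dimensional $T$--reducing subspace on which $A$ restricts to $rI$ with $0<r<1$, which is impossible by Theorem~\ref{finite_dim_A_T_thm} (in finite dimensions the asymptotic limit is a projection); securing such a reducing subspace is the delicate point.

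For part (ii) the plan is constructive, and pleasantly the relations of part (i) can be run backwards. Given the prescribed positive invertible contraction $A$ I set $B:=A^{-1}\geq I$ and look for a \emph{unitary} $V$ on $\irH$ with $V^{*}BV\leq B$ and $V^{*n}BV^{n}\searrow I$ in SOT; then $T:=A^{-1/2}VA^{1/2}=B^{1/2}VB^{-1/2}$ is automatically similar to the unitary $V$, is a contraction precisely because $V^{*}BV\leq B$, and has $A_{T}=A^{1/2}\big(\lim_{n}V^{*n}BV^{n}\big)A^{1/2}=A$. To build $V$ I would mimic the block constructions of Chapter~\ref{A-s_chap} (Lemma~\ref{block_diag_same_dim_lemma} and Theorem~\ref{main_contr_separable_thm}), but with a \emph{bilateral} operator--weighted shift $\irH=\sum_{j\in\Z}\oplus\,\irX_{j}$, $V\irX_{j}=\irX_{j+1}$, arranging the diagonal blocks of $B$ to decrease to $I$ as $j\to+\infty$ (this is where $1\in\sigma_{e}(A)$, equivalently $1=\min\sigma(B)\in\sigma_{e}(B)$, provides the room, exactly as in condition (iv) of Theorem~\ref{main_contr_separable_thm}) and to fill out the top eigenspace $\ker(B-r^{-1}I)=\ker(A-rI)$ as $j\to-\infty$; here the hypothesis $\dim\ker(A-rI)\in\{0,\aleph_{0}\}$ is what guarantees that the two ends can be matched dimensionally. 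Injectivity of $A$ and the $\ker(A-I)$ (isometric) part are split off exactly as in Theorem~\ref{main_contr_separable_thm}, so that $V^{*}BV\leq B$ and $V^{*n}BV^{n}\searrow I$ can be verified block by block.
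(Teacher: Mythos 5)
Your part (i) stalls exactly where you say it does, and the gap is genuine. The intermediate facts you derive are all correct ($V$ is unitary, $V^{*n}A^{-1}V^n\to I$ in SOT, $PV^n\to 0$ in SOT, $A_{T^*}\ge \underline{r}A_T^{-1}$ hence $\irM\subseteq\irH_1(T^*)$, and asymptotic orthogonality of the isometric orbit $\{T^{*n}f\}$), but they cannot by themselves force a contradiction: an isometric orbit of constant norm that escapes every finite-dimensional subspace weakly is precisely shift-like behaviour and is perfectly consistent in infinite dimensions, so nothing in this package encodes the assumed \emph{finiteness} of $\dim\irM$, and there is no mechanism to ``extract a finite-dimensional $T$-reducing subspace'' from weak escape — asymptotics of this kind do not produce reducing subspaces. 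The paper closes the argument by an elementary, non-asymptotic step that your route misses entirely: by \eqref{A_TL_orbit_eq}, for $h\in\irM$ one has $\underline{r}^{1/2}\|h\|=\|A_T^{1/2}h\|=\|A_T^{1/2}T^{-1}h\|\ge \underline{r}^{1/2}\|T^{-1}h\|$, so $\|T^{-1}h\|\le\|h\|$; contractivity of $T$ forces $\|T^{-1}h\|=\|h\|$, and then the resulting equality $\|A_T^{1/2}T^{-1}h\|=\underline{r}^{1/2}\|T^{-1}h\|$ together with $A_T\ge\underline{r}I$ puts $T^{-1}h$ back into $\irM$. Thus $\irM$ is $T^{-1}$-invariant, $T^{-1}\irM=\irM$ by finite-dimensionality, $T|\irM$ is unitary, hence $\irM$ reduces $T$ (a contraction acting unitarily on a finite-dimensional invariant subspace reduces there), and so $A_T|\irM=I|\irM$, contradicting $A_T|\irM=\underline{r}I|\irM$ with $\underline{r}<1$. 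Note, incidentally, that this conclusion ($T\irM=\irM$ with $T|\irM$ unitary) is flatly incompatible with your own $PT^nP\to 0$ — a sign that your facts were sound but aimed in a direction from which the contradiction cannot be reached without the invariance argument above.

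Your part (ii) is essentially the paper's construction viewed through the substitution $B=A^{-1}$: the operator $T=A^{-1/2}VA^{1/2}$ built from a bilateral operator-weighted unitary shift is exactly the paper's Lemma~\ref{rev_lem}, whose hypothesis $\|A^{1/2}y_k\|\le\|A^{1/2}Uy_k\|$ is equivalent (by conjugating and inverting) to your $V^{*}BV\le B$, and whose conclusion is your $V^{*n}BV^{n}\searrow I$. What your sketch leaves out is the part where the paper spends all its effort: the case analysis (five cases when $\ker(A-I)=\{0\}$, two more otherwise) on the dimensions of the spectral pieces $\irX_k=\irH([a_k,a_{k+1}[)$, including the rearrangement of eigenvector bases so that eigenvalues increase along the shift when these pieces are finite-dimensional, and the threading of an $\aleph_0$-dimensional $\ker(A-\underline{r}I)$ through the negative-index blocks one line per block via $Ue_{k-1}=e_k$ (which is exactly where $\dim\ker(A-\underline{r}I)\in\{0,\aleph_0\}$ is used — a finite nonzero eigenspace cannot be shifted bilaterally, matching part (i)). So regard part (ii) as the correct strategy, identical in substance to the paper's, but still a plan rather than a proof; and part (i) as containing a genuine unresolved gap that the paper's short $T^{-1}$-invariance argument fills.
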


This theorem can be considered as a strengthening of the necessity part in Sz.-Nagy's similarity theorem. 
In particular, in the separable case it provides a characterization of asymptotic limits for those contractions which are similar to unitary operators. 
We note that it is an open problem to describe the $L$ asymptotic limits of those operators that are similar to unitary operators and act on infinite dimensional spaces.

\section{Proofs}

In order to prove Theorem \ref{similar_thm} we need two lemmas. 
The first one reads as follows.

\begin{lem}\label{Sz.-Nagy_gen_lem}
Consider a power bounded operator $T\notin C_{0\cdot}(\irH)$. 
Then the following conditions are equivalent:
\begin{itemize}
\item[\textup{(i)}] the inequality $\gamma(A_{T,L}) > 0$ is satisfied for some and then for all Banach limits $L$,
\item[\textup{(ii)}] the compression $T_1 := P_1 T|\irH_0^\perp$ is similar to an isometry, where $P_1$ denotes the orthogonal projection onto the subspace $\irH_0^\perp$,
\item[\textup{(iii)}] the powers of $T$ are bounded from below uniformly on $\irH_0^\perp$, i.e. there exists a constant $c > 0$ such that $c\|x\| \leq \|T^n x\|$ is satisfied on $\irH_0^\perp$ for all $n\in\N$.
\end{itemize}
\end{lem}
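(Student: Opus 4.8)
The plan is to read all three conditions off the block structure of $T$ with respect to $\irH=\irH_0\oplus\irH_0^\perp$, the guiding object being a single intertwining identity for the restriction $\tilde A:=A_{T,L}|\irH_0^\perp$ of the $L$-asymptotic limit. First I would invoke Lemma \ref{Ker_dec_lem} (if $T\in C_{1\cdot}(\irH)$ the statement is trivial, with $\irH_0=\{0\}$ and $T_1=T$) to write
\[
T=\left(\begin{matrix} T_0 & R\\ 0 & T_1\end{matrix}\right)\in\irB(\irH_0\oplus\irH_0^\perp),\qquad T_0\in C_{0\cdot}(\irH_0),\quad T_1\in C_{1\cdot}(\irH_0^\perp).
\]
Here $T_1=P_1T|\irH_0^\perp$ is exactly the compression from (ii), and a computation of $T^n$ gives $T_1^n=P_1T^n|\irH_0^\perp$, so $T_1$ is power bounded and $\|T_1^n y\|\leq\|T^n y\|$ for every $y\in\irH_0^\perp$. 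Since $\ker A_{T,L}=\irH_0$ and $A_{T,L}\geq 0$, the kernel reduces $A_{T,L}$, so $A_{T,L}=0\oplus\tilde A$ with $\tilde A$ injective and positive on $\irH_0^\perp$; crucially, $\gamma(A_{T,L})>0$ is equivalent to $\tilde A$ being invertible.

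The heart of the argument is the identity $T_1^*\tilde A T_1=\tilde A$, which I would extract from the orbit formula \eqref{A_TL_orbit_eq}. For $x\in\irH_0^\perp$ one has $Tx=Rx\oplus T_1x$ with $Rx\in\irH_0=\ker A_{T,L}^{1/2}$, hence $A_{T,L}^{1/2}Tx=\tilde A^{1/2}T_1x$, and \eqref{A_TL_orbit_eq} becomes
\[
\langle \tilde A x,x\rangle=\big\|A_{T,L}^{1/2}x\big\|^2=\big\|A_{T,L}^{1/2}Tx\big\|^2=\langle \tilde A T_1x,T_1x\rangle\quad(x\in\irH_0^\perp).
\]
Polarization upgrades this equality of quadratic forms on $\irH_0^\perp$ to the operator identity $T_1^*\tilde A T_1=\tilde A$.

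Then I would close the cycle (i)$\Rightarrow$(ii)$\Rightarrow$(iii)$\Rightarrow$(i). For (i)$\Rightarrow$(ii), with $\tilde A$ invertible I set $W:=\tilde A^{1/2}T_1\tilde A^{-1/2}$; the identity yields $W^*W=\tilde A^{-1/2}(T_1^*\tilde A T_1)\tilde A^{-1/2}=I$, so $W$ is an isometry and $T_1=\tilde A^{-1/2}W\tilde A^{1/2}$ is similar to it. For (ii)$\Rightarrow$(iii) I would apply the equivalence (i')$\iff$(ii') of Theorem \ref{Sz-N_ref_thm} to the power bounded $T_1$ to obtain $c\|y\|\leq\|T_1^n y\|\leq\|T^n y\|$ on $\irH_0^\perp$. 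For (iii)$\Rightarrow$(i) I would take the Banach limit in $c^2\|x\|^2\leq\|T^n x\|^2$ to get $\langle\tilde A x,x\rangle=\Llim_{n\to\infty}\|T^n x\|^2\geq c^2\|x\|^2$, i.e.\ $\tilde A\geq c^2 I$. Since (ii) and (iii) do not mention $L$, running the cycle starting from a single Banach limit for which (i) holds forces (i) for all $L$, which is the ``for some and then for all'' clause.

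The main obstacle is precisely the step (i)$\Rightarrow$(ii): passing from invertibility of the compressed limit $\tilde A$ to similarity of the compression $T_1$ to an isometry. A direct form comparison is useless here, because $\|T^n y\|^2$ equals $\|(\text{the }\irH_0\text{-component of }T^ny)\|^2+\|T_1^n y\|^2$, so the off-diagonal block $R$ inflates the orbit of $T$ and one only gets $A_{T_1,L}\leq\tilde A$ — the wrong direction for transferring invertibility. The identity $T_1^*\tilde A T_1=\tilde A$ sidesteps this entirely by producing the similarity explicitly; locating and exploiting the exact cancellation $Rx\in\ker A_{T,L}^{1/2}$ that makes this identity hold is the real content of the proof.
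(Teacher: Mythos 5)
Your proof is correct and takes essentially the same approach as the paper: the same decomposition $A_{T,L}=0\oplus\tilde A\in\irB(\irH_0\oplus\irH_0^\perp)$, the same implication cycle (i)$\Rightarrow$(ii)$\Rightarrow$(iii)$\Rightarrow$(i), with (ii)$\Rightarrow$(iii) via Sz.-Nagy's theorem applied to the power bounded compression $T_1$ and (iii)$\Rightarrow$(i) by taking Banach limits, both exactly as in the paper. The only cosmetic difference is in (i)$\Rightarrow$(ii): where the paper restricts the intertwining relation $V_{T,L}X^+_{T,L}=X^+_{T,L}T$ of the isometric asymptote to $\irH_0^\perp$ (exploiting the same cancellation $Rx\in\ker A_{T,L}^{1/2}$ you identify), you rederive that isometry by hand via the identity $T_1^*\tilde A T_1=\tilde A$ and polarization, and your $W=\tilde A^{1/2}T_1\tilde A^{-1/2}$ is precisely $V_{T,L}$.
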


\begin{proof} (i)$\Longrightarrow$(ii). 
We will use the decomposition $A_{T,L} = 0\oplus A_1 \in\irB(\irH_0\oplus\irH_0^\perp)$, where $A_1$ is obviously invertible. 
Consider the isometric asymptote $(X^+_{T,L},V_{T,L})$. 
If we restrict the equation
\begin{equation}\label{isom_as_eq}
V_{T,L} X^+_{T,L} = X^+_{T,L} T
\end{equation}
to the subspace $\irH_0^\perp$, we get the following:
\[ 
V_{T,L} A_1^{1/2} = V_{T,L}X_{T,L}^+|\irH_0^\perp = X_{T,L}^+ T|\irH_0^\perp = A_1^{1/2}T_1, 
\]
which verifies that the operator $T_1$ is indeed similar to the isometry $V_{T,L}$.

(ii)$\Longrightarrow$(iii). 
By Lemma \ref{Ker_dec_lem}, we have
\[ 
T^n = \left(\begin{matrix}
T_0^n & * \\
0 & T_1^n
\end{matrix}\right). 
\]
Therefore, by Sz.-Nagy's theorem, there exists a constant $c > 0$ for which
\[ 
\|T^n x\| \geq \|T_1^n x\| \geq c \|x\| \quad (n\in\N, x\in\irH_0^\perp).
\]

(iii)$\Longrightarrow$(i). 
Let $x\in\irH_0^\perp$ be arbitrary, then we have
\[ 
\|A_{T,L}^{1/2} x\|^2 = \Llim_{n\to\infty} \|T^n x\|^2 \geq c^2 \|x\|^2, 
\]
which means exactly that $\gamma(A_{T,L}^{1/2}) \geq c$ and hence $\gamma(A_{T,L}) \geq c^2 > 0$ is satisfied.
\end{proof}

We proceed with the following technical lemma.

\begin{lem}\label{inv_uper_block-triang_lem}
Consider an orthogonal decomposition $\irH = \irK\oplus\irL$, and an invertible operator $X\in\irB(\irH)$. 
Suppose that the block-matrix of $X$ is
\[ 
X = \left(\begin{matrix}
X_{11} & X_{12}\\
0 & X_{22}
\end{matrix}\right) \in \irB(\irK\oplus\irL), 
\]
and the element $X_{11}\in\irB(\irK)$ is surjective. 
Then the elements $X_{11}\in\irB(\irK)$ and $X_{22}\in\irB(\irL)$ are invertible and the block-matrix form of $X^{-1}$ is the following:
\[ 
X^{-1} = \left(\begin{matrix}
X_{11}^{-1} & -X_{11}^{-1}X_{12}X_{22}^{-1}\\
0 & X_{22}^{-1}
\end{matrix}\right) \in \irB(\irK\oplus\irL). 
\]
\end{lem}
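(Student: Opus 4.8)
The plan is to first secure the invertibility of the two diagonal blocks and only afterwards read off the formula for $X^{-1}$ from the identities $XX^{-1}=X^{-1}X=I$. I would begin with $X_{11}$: since $X$ is invertible it is in particular injective, and the upper-triangular shape means $X(k\oplus 0)=X_{11}k\oplus 0$ for $k\in\irK$, so any $k\in\ker X_{11}$ produces $k\oplus 0\in\ker X$ and hence $k=0$. Thus $X_{11}$ is an injective bounded operator on $\irK$ which is surjective by hypothesis; being a bounded bijection of the Hilbert space $\irK$ onto itself, the inverse mapping theorem guarantees that $X_{11}^{-1}\in\irB(\irK)$.

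Next I would write $Y:=X^{-1}$ as a full $2\times 2$ block matrix with unknown entries $Y_{11},Y_{12},Y_{21},Y_{22}$ relative to $\irK\oplus\irL$ and expand both $YX=I$ and $XY=I$ blockwise. The decisive relation is the $(2,1)$-entry of $YX$, namely $Y_{21}X_{11}=0$; multiplying on the right by the now-available $X_{11}^{-1}$ forces $Y_{21}=0$. With this in hand the two $(2,2)$-entries collapse to $Y_{22}X_{22}=I_\irL$ and $X_{22}Y_{22}=I_\irL$, so $X_{22}$ has a two-sided bounded inverse, namely $X_{22}^{-1}=Y_{22}$.

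Finally, with $Y_{21}=0$ and $X_{22}$ invertible, the remaining blocks are determined: the $(1,1)$-entries give $Y_{11}=X_{11}^{-1}$, and the $(1,2)$-entry of $XY$, i.e.\ $X_{11}Y_{12}+X_{12}Y_{22}=0$, yields $Y_{12}=-X_{11}^{-1}X_{12}X_{22}^{-1}$, which is exactly the asserted form of $X^{-1}$.

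I expect the main obstacle to be the invertibility of $X_{22}$: everything downstream is bookkeeping, but that step is where the hypotheses genuinely enter, since it is precisely the surjectivity of $X_{11}$ (converted into the existence of $X_{11}^{-1}$) that allows one to conclude $Y_{21}=0$ and thereby decouple the lower-right corner. An alternative route to the invertibility of $X_{22}$, which I would keep in reserve, is to argue directly on vectors: injectivity of $X_{22}$ follows by solving $X_{11}k=X_{12}l$ for a given $l\in\ker X_{22}$ using the surjectivity of $X_{11}$ and then invoking the injectivity of $X$, while the surjectivity of $X_{22}$ follows from that of $X$ by reading off second coordinates; boundedness of $X_{22}^{-1}$ then comes again from the inverse mapping theorem.
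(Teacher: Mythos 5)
Your proof is correct and follows essentially the same route as the paper's: write $X^{-1}$ with unknown blocks $Y_{ij}$, deduce injectivity (hence, with the surjectivity hypothesis, bijectivity) of $X_{11}$ from that of $X$, kill $Y_{21}$ via the $(2,1)$-entry of $X^{-1}X=I$, obtain two-sided invertibility of $X_{22}$ from the two $(2,2)$-entries, and read off the remaining blocks. The only differences are cosmetic: you make explicit the inverse mapping theorem step and the final bookkeeping that the paper dismisses as ``an easy calculation,'' and you correctly state the conclusion $Y_{21}=0$ where the paper's text contains a harmless typo ($Y_{12}=0$).
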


\begin{proof}
Let $X^{-1} = \left(\begin{matrix}
Y_{11} & Y_{12}\\
Y_{21} & Y_{22}
\end{matrix}\right) \in \irB(\irK\oplus\irL)$. 
Since $X$ is invertible, $X_{11}$ has to be injective, thus bijective. 
The (2,1)-element of the block-matrix decomposition of $X^{-1}X = I$ is $Y_{21}X_{11} = 0 \in\irB(\irK,\irL)$ which gives us $Y_{12} = 0$. 
The (2,2)-elements of $XX^{-1} = I$ and $X^{-1}X = I$ are $X_{22}Y_{22} = I_\irL$ and $Y_{22}X_{22} = I_\irL$, respectively, which imply the invertibility of $X_{22} \in \irB(\irL)$. 
Finally, an easy calculation verifies the block-matrix form of $X^{-1}$.
\end{proof}

Now we are in a position to present our generalization of Sz.-Nagy's theorem.

\begin{proof}[Proof of Theorem \ref{similar_thm}] We begin with the first part. Let $X\in\irB(\irH)$ be an invertible operator for which $S = XTX^{-1}$ holds. It is easy to see that $\irH_0(S) = X\irH_0(T)$, which gives us $\dim\irH_0(T) = \dim\irH_0(S)$ and $\dim\irH_0(T)^\perp = \dim\irH_0(S)^\perp$. Therefore we can choose a unitary operator $U\in\irB(\irH)$ such that the equation
\begin{equation}\label{sim_pwb_stabel_spc_eq}
\irH_0(T) = U \irH_0(S) = U X \irH_0(T) = \irH_0(USU^*)
\end{equation}
is valid. By \eqref{unitary_eqv_eq}, it is enough to prove the inequality 
\[\gamma(A_{USU^*,L}) > 0.\]

Now, consider the block-matrix decompositions (\ref{Ker_dec_eq}) and
\[ UX = \left(\begin{matrix}
Y_{11} & Y_{12}\\
0 & Y_{22}
\end{matrix}\right) \in \irB(\irH_0(T)\oplus(\irH_0(T))^\perp). \]
The latter one is indeed upper block-triangular and moreover, the element $Y_{11}$ is surjective, because of (\ref{sim_pwb_stabel_spc_eq}). Therefore by Lemma \ref{inv_uper_block-triang_lem} we obtain the equation
\[ (UX)^{-1} = \left(\begin{matrix}
Y_{11}^{-1} & -Y_{11}^{-1}Y_{12}Y_{22}^{-1}\\
0 & Y_{22}^{-1}
\end{matrix}\right) \in \irB(\irH_0(T)\oplus(\irH_0(T))^\perp). \]
An easy calculation gives the following:
\[ P_1 USU^*|(\irH_0(T))^\perp = P_1(UX)T(UX)^{-1}|(\irH_0(T))^\perp = Y_{22}T_1Y_{22}^{-1}, \]
where $P_1$ denotes the orthogonal projection onto the subspace $(\irH_0(T))^\perp$. Now, if the inequality $\gamma(A_{T,L}) > 0$ holds, then by Lemma \ref{Sz.-Nagy_gen_lem} the operator $T_1$ is similar to an isometry. But this gives that the compression $P_1 USU^*|(\irH_0(T))^\perp$ is also similar to an isometry, and hence by Lemma \ref{Sz.-Nagy_gen_lem} and (\ref{sim_pwb_stabel_spc_eq}) we get that $\gamma(A_{S,L}) > 0$ holds.

The second part was proven in Lemma \ref{Sz.-Nagy_gen_lem}. 

The third part is an easy consequence of the fact that the asymp\-totic limit of a power bounded normal operator $N$ is always idempotent.
\end{proof}

Next we prove a consequence of Theorem \ref{similar_thm}. We recall definitions of some special classes of operators to which the similarity will be investigated in the forthcoming corollary. The operator $T\in\irB(\irH)$ is said to be 
\begin{itemize}
\item \emph{of class $Q$} if $\|Tx\|^2 \leq \frac{1}{2}(\|T^2x\|^2+\|x\|^2)$ holds for every $x\in\irH$,
\item \emph{log-hyponormal} if $\log(T^*T) \geq \log(TT^*)$ is satisfied.
\end{itemize}
An operator $T$ is called \emph{paranormal} if $\|T x\|^2 \leq \|T^2 x\|\|x\|$ is valid for all $x\in\irH$. It is quite easy to verify from the arithmetic-geometric mean inequality that every paranormal operator is of class $Q$ as well. 

We say that the operator $T$ has the \emph{Putnam--Fuglede property} (or \emph{PF property} for short) if for any operator $X\in\irB(\irH,\irK)$ and isometry $V\in\irB(\irK)$ for which $TX = XV^*$ holds, the equation $T^*X = XV$ is satisfied as well.

\begin{corollary}\label{exotic_cor}
For a power bounded operator $T\in\irB(\irH)$ and a Banach limit $L$ the following implications are valid:
\begin{itemize}
\item[\textup{(i)}] if $T\notin C_{\cdot 0}(\irH)$ is similar to a power bounded operator that has the PF property, then the condition $\gamma(A_{T^*,L}) > 0$ is fulfilled.
\item[\textup{(ii)}] if $T\notin C_{\cdot 0}(\irH)$ is similar to an operator that is either log-hyponormal or of class $Q$ or paranormal, then the inequality $\gamma(A_{T^*,L}) > 0$ is satisfied.
\end{itemize}
\end{corollary}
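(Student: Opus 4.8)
The plan is to push everything onto the operator that $T$ is similar to, and then to feed the defining intertwining of the isometric asymptote of its \emph{adjoint} into the PF property. For part (i), write $T = X^{-1}SX$ with $S$ power bounded and possessing the PF property. Then $S^*$ is similar to $T^*$ (conjugate by $X^*$), and since $\irH_0(S^*) = (X^*)^{-1}\irH_0(T^*)$ the class $C_{0\cdot}$ is preserved, so $T^*,S^*\notin C_{0\cdot}(\irH)$ because $T\notin C_{\cdot 0}(\irH)$. Hence, by the equivalence in Theorem \ref{similar_thm} applied to the similar pair $T^*,S^*$, it suffices to prove $\gamma(A_{S^*,L})>0$. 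From here I work only with $S$.

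The core step is to look at the isometric asymptote $(X^+_{S^*,L},V_{S^*,L})$ of $S^*$, i.e. $X^+_{S^*,L}S^* = V_{S^*,L}X^+_{S^*,L}$, and to take adjoints. Setting $Z := (X^+_{S^*,L})^*$, this becomes $SZ = ZV_{S^*,L}^*$, which is exactly the shape $SX = XV^*$ demanded by the PF property, with the isometry $V = V_{S^*,L}$. Thus the PF property of $S$ supplies the companion relation $S^*Z = ZV_{S^*,L}$.

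Now I combine the two relations $SZ = ZV_{S^*,L}^*$ and $S^*Z = ZV_{S^*,L}$. Together they show that $\ran Z$ is invariant under both $S$ and $S^*$, so $\overline{\ran Z} = \irH_0(S^*)^\perp$ reduces $S$; moreover $Z$ is, under the canonical identification, the positive self-adjoint operator $A_{S^*,L}^{1/2}$ acting on $\irH_0(S^*)^\perp$, injective with dense range. Multiplying the relations appropriately yields $Z^2V_{S^*,L} = V_{S^*,L}Z^2$, whence $V_{S^*,L}$ commutes with $Z = (Z^2)^{1/2}$ by the functional calculus. Substituting into $(S^*|\irH_0(S^*)^\perp)Z = ZV_{S^*,L} = V_{S^*,L}Z$ and cancelling $Z$ on its dense range forces $S^*|\irH_0(S^*)^\perp = V_{S^*,L}$, an isometry; consequently $A_{S^*,L}$ is the orthogonal projection onto $\irH_0(S^*)^\perp$, so $\gamma(A_{S^*,L}) = 1 > 0$, and transporting back through Theorem \ref{similar_thm} gives $\gamma(A_{T^*,L})>0$. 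Part (ii) then follows from (i): the operator to which $T$ is similar inherits power boundedness from $T$, class $Q$ contains the paranormal operators, and paranormal, class $Q$ and log-hyponormal operators are known to satisfy this form of the PF property.

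The only genuinely delicate point is recognizing that the PF property must be applied to the asymptote of the adjoint $S^*$ rather than of $S$, and keeping the domains, codomains and the self-adjointness of $Z$ straight so that the commutation $V_{S^*,L}Z = ZV_{S^*,L}$ and the cancellation on $\overline{\ran Z}$ are legitimate; once these are arranged the conclusion is in fact stronger than needed, namely that $S^*$ is an isometry on $\irH_0(S^*)^\perp$. A secondary gap, used only in (ii), is the external verification that the three listed operator classes do enjoy the PF property.
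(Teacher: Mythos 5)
Your part (i) is correct, but it takes a genuinely different route from the paper. The paper disposes of (i) in one stroke by citing Pagacz's structure theorem (\cite{Pa_PF}, Theorem 3.2): a power bounded operator has the PF property if and only if it is the orthogonal sum of a unitary and a power bounded $C_{\cdot 0}$ operator; for such an $S$ the limit $A_{S^*,L}$ is visibly the projection onto the unitary summand, and Theorem \ref{similar_thm} transfers $\gamma>0$ to $T^*$. You instead feed the adjointed asymptote intertwining $SZ = ZV_{S^*,L}^*$, with $Z = (X^+_{S^*,L})^*$, directly into the PF property to get $S^*Z = ZV_{S^*,L}$. Your bookkeeping checks out: $Zy = A_{S^*,L}^{1/2}y$ for $y\in\irK := \irH_0(S^*)^\perp$, so $Z$ is injective with range dense in $\irK$; the two intertwinings make $\irK$ invariant under both $S$ and $S^*$; applying $X^+_{S^*,L}$ to $S^*Z = ZV_{S^*,L}$ and using $X^+_{S^*,L}S^* = V_{S^*,L}X^+_{S^*,L}$ together with $X^+_{S^*,L}Z = A_{S^*,L}|\irK$ gives $V_{S^*,L}(A_{S^*,L}|\irK) = (A_{S^*,L}|\irK)V_{S^*,L}$, hence $V_{S^*,L}$ commutes with $Z = (A_{S^*,L}|\irK)^{1/2}$, and density of $\ran Z$ in $\irK$ forces $S^*|\irK = V_{S^*,L}$, so that $A_{S^*,L}$ is the orthogonal projection onto $\irK$ and $\gamma(A_{S^*,L})=1$. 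In effect you re-derive, from scratch, exactly the half of Pagacz's theorem the paper needs; your argument is self-contained where the paper's is citation-based, and it is sharper (it pins down $A_{S^*,L}$ as a projection rather than merely $\gamma>0$). The transfer step through Theorem \ref{similar_thm}, with $\irH_0(S^*) = (X^*)^{-1}\irH_0(T^*)$ and preservation of the non-$C_{0\cdot}$ property under similarity, is the same in both proofs.

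Part (ii), however, has a genuine gap at precisely the point you label ``external verification''. For log-hyponormal operators, Mecheri's theorem \cite{Me} does give the PF property outright, so that case is fine. But for class $Q$ (and hence for paranormal operators, which class $Q$ contains), the results the paper relies on --- Pagacz \cite{Pa_Wold} and Okubo \cite{Ok} --- establish the PF property only for \emph{contractions} in these classes, not for arbitrary power bounded members, so your blanket assertion that these classes ``are known to satisfy this form of the PF property'' is not available as stated. The paper supplies the missing bridge with a short telescoping argument: if $T$ is of class $Q$ and $\|Tx\|^2-\|x\|^2 > a > 0$ for some $x$, then the inequality $\|Tx\|^2\leq\frac{1}{2}(\|T^2x\|^2+\|x\|^2)$ yields $\|T^{n+1}x\|^2-\|T^nx\|^2>a$ for all $n$ by induction, so $\|T^{n+1}x\|^2>\|x\|^2+na$, contradicting power boundedness; hence a power bounded class-$Q$ operator is automatically a contraction, and only then may the contraction results of \cite{Pa_Wold} and \cite{Ok} be invoked. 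You should insert this step (the operator similar to $T$ inherits power boundedness, is therefore a contraction, and therefore has the PF property) before reducing (ii) to (i).
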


\begin{proof}
Theorem 3.2 of \cite{Pa_PF} tells us that the PF property for a power bounded operator $T$ is equivalent to the condition that $T$ is the orthogonal sum of a unitary and a power bounded operator of class $C_{\cdot 0}$. Therefore (i) is an easy consequence of Pagacz's result and Theorem \ref{similar_thm}.

If $T$ is log-hyponormal, then Mecheri's result (see \cite{Me}) implies that $T$ has the PF property, and thus $\gamma(A_{T*,L}) > 0$ holds. 

Finally, let us assume that $T$ is a power bounded operator which also belongs to the class $Q$. We prove that then it is a contraction as well. If $\|Tx\|^2-\|x\|^2 > a > 0$ held for a vector $x\in\irH$, then we would obtain $\|T^2x\|^2-\|Tx\|^2 \geq \|Tx\|^2-\|x\|^2 > a$. By induction we could prove that $\|T^{n+1}x\|^2-\|T^nx\|^2 > a$ would hold for every $n\in\N$. Therefore the inequality $\|T^{n+1}x\|^2-\|x\|^2 > n\cdot a$ would be true, which would imply that $T$ could not be power bounded. Consequently, $T$ has to be a contraction. P. Pagacz showed that a contraction which belongs to the class $Q$, shares the PF property (see \cite{Pa_Wold} and \cite{Ok} for the paranormal case). This gives us that $\gamma(A_{T*,L}) > 0$ is valid, which completes our proof.
\end{proof}

Next an application of the above results will be presented.
It is easy to see that if the weighted bilateral shift operator $T$ with weights $\{w_k\}_{k\in\Z} \subseteq \C$ is power bounded, then the $L$-asymptotic limit satisfies the equation $A_{T,L} e_k = \big(\Llim_{n\to\infty} \prod_{j=0}^{n} |w_{k+j}|^2\big)\cdot e_k$ for every $k\in\Z$.
A weighted bilateral shift operator $T$ is power bounded if and only if the inequality 
\[ 
\sup \Big\{ \prod_{j=0}^{n} |w_{k+j}| \colon k\in\Z, n\in\N\cup\{0\} \Big\} < \infty 
\]
is fulfilled (see \cite[Proposition 2]{Shields}). 
By Sz.-Nagy's theorem $T$ is similar to a unitary operator exactly when it is power bounded and, in addition, the following holds:
\[ 
\inf \Big\{ \prod_{j=0}^{n} |w_{k+j}| \colon k\in\Z, n\in\N\cup\{0\} \Big\} > 0. 
\]

\begin{corollary}\label{shift_cor}
Let $I$ be an arbitrary set of indices. 
Consider the orthogonal sum $W = \oplus_{i\in I} W_i \in \irB(\oplus_{i\in I}\irH_i)$ which is power bounded, and each summand $W_i$ is a weighted bilateral shift operator that is similar to a unitary operator. 
If $W$ is similar to a normal operator, then necessarily it is similar to a unitary operator.
\end{corollary}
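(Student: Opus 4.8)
The plan is to deduce the conclusion from the reformulation of Sz.-Nagy's theorem: by condition (v) of Theorem \ref{Sz-N_ref_thm}, since $W$ is already assumed power bounded, it suffices to show that both $L$-asymptotic limits $A_{W,L}$ and $A_{W^*,L}$ are invertible (for one, and then for every, Banach limit $L$). I would obtain invertibility by combining two facts: that these positive operators have trivial kernel, and that their reduced minimum moduli are strictly positive.

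First I would pin down the kernels. Each summand $W_i$ is similar to a unitary operator, hence $W_i \in C_{11}(\irH_i)$; in particular $\irH_0(W_i) = \{0\}$. If $x = \oplus_{i\in I} x_i$ is a stable vector for $W$, then $\sum_{i\in I}\|W_i^n x_i\|^2 = \|W^n x\|^2 \to 0$, so for each fixed $i$ we have $\|W_i^n x_i\| \to 0$, forcing $x_i \in \irH_0(W_i) = \{0\}$. Thus $\irH_0(W) = \{0\}$, i.e. $W \in C_{1\cdot}(\irH)$ and $\ker A_{W,L} = \{0\}$. Since the adjoint of an operator similar to a unitary is again similar to a unitary, the operator $W^* = \oplus_{i\in I} W_i^*$ has summands similar to unitaries as well, and the same argument yields $\irH_0(W^*) = \{0\}$, whence $\ker A_{W^*,L} = \{0\}$.

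Next I would bring in Theorem \ref{similar_thm}. Because $W \in C_{1\cdot}(\irH)$ we have $W \notin C_{0\cdot}(\irH)$, so the theorem applies; since $W$ is similar to a normal operator, its last assertion gives $\gamma(A_{W,L}) > 0$ and $\gamma(A_{W^*,L}) > 0$. Now $\gamma(A_{W,L}) > 0$ means that $A_{W,L}$ is bounded below on $(\ker A_{W,L})^\perp$; combined with $\ker A_{W,L} = \{0\}$ this says that $A_{W,L}$ is bounded below on all of $\irH$. A positive operator that is bounded below is injective with closed range, and its range is dense (being the orthogonal complement of its kernel), hence it is invertible. The same reasoning applies to $A_{W^*,L}$. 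Therefore both asymptotic limits are invertible, and Theorem \ref{Sz-N_ref_thm}(v) concludes that $W$ is similar to a unitary operator.

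The only genuinely load-bearing step is the passage from $\gamma(A_{W,L}) > 0$ to full invertibility, and this is exactly where the hypothesis that \emph{each} summand is similar to a unitary enters: Theorem \ref{similar_thm} by itself only controls $A_{W,L}$ modulo its kernel $\irH_0(W)$, and it is the summand-wise similarity to unitaries that forces this kernel to be trivial. Everything else is routine, the one small point to check being the componentwise decay of a stable vector of an orthogonal sum, which holds since each term of a nonnegative series that tends to $0$ must itself tend to $0$.
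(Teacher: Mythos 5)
Your proof is correct, but it takes a genuinely different route from the paper's. The paper first records $A_{W,L} = \oplus_{i\in I} A_i$ (the summands $\irH_i$ being reducing for $W^{*n}W^n$) and then splits into two cases according to whether $\sup\{\|A_i^{-1}\|\colon i\in I\}$ is finite. If it is, the similarity is constructed explicitly: $W = (\oplus_{i} A_i)^{-1/2}(\oplus_{i} S_i)(\oplus_{i} A_i)^{1/2}$, where $S_i$ is the simple bilateral shift intertwined with $W_i$ by $A_i^{1/2}$ --- this is the one place where the weighted-bilateral-shift hypothesis is genuinely used, and the normality assumption plays no role in this branch. If the supremum is infinite, then $A_{W,L}$ is injective (each $A_i$ invertible) but not bounded below, so $\gamma(A_{W,L})=0$, and Theorem \ref{similar_thm} --- applied only to $W$, never to $W^*$ --- rules out similarity to any normal operator. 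You avoid the dichotomy entirely: you prove $\irH_0(W)=\irH_0(W^*)=\{0\}$ componentwise from the $C_{11}$-ness of the summands, extract both $\gamma(A_{W,L})>0$ and $\gamma(A_{W^*,L})>0$ from the normality clause of Theorem \ref{similar_thm} (whose hypothesis $W\notin C_{0\cdot}(\irH)$ your kernel computation supplies), upgrade each to full invertibility via the trivial kernel, and close with the equivalence (v)$\Rightarrow$(i) of Theorem \ref{Sz-N_ref_thm}. Notably, your argument never uses that the $W_i$ are shifts, so it actually establishes the stronger statement for an orthogonal sum of arbitrary operators similar to unitaries; what the paper's argument buys in exchange is explicitness --- in the favorable case it identifies the unitary concretely as $\oplus_i S_i$, a direct sum of simple bilateral shifts --- while your route stays abstract and must carry the adjoint-side data (the $C_{\cdot 1}$ condition and $\gamma(A_{W^*,L})>0$) that the paper never has to touch.
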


\begin{proof} Let us denote the $L$-asymptotic limit of $W_i$ by $A_i$. 
Since the subspaces $\irH_i$ are invariant for the operators $W^{*n}W^n$ ($i\in I, n\in\N$), we obtain the equation $A_{W,L} = \oplus_{i\in I} A_i$. 
Since each summand $W_i$ is similar to a unitary operator, the operator $A_i$ is invertible and $A_i^{1/2}W_i = S_iA_i^{1/2}$ holds for every $i\in I$ where $S_i$ denotes a simple (i.e. unweighted) bilateral shift operator. 
From the power boundedness of $W$, $\sup\{\|A_i\|\colon i\in I\}<\infty$ follows.

On the one hand, if $\sup\{ \|A_i^{-1}\| \colon i\in I\} < \infty$ is satisfied, then
\[ 
W = (\oplus_{i\in I} A_i)^{-1/2}(\oplus_{i\in I} S_i) (\oplus_{i\in I} A_i)^{1/2}
\]
gives that $W$ is similar to a unitary operator. 
On the other hand, if the inequality fails, then $A_{W,L} = \oplus_{i\in I} A_i$ is not invertible, but injective. 
By Theorem \ref{similar_thm}, we obtain that in this case $W$ cannot be similar to any normal operator.
Our proof is complete.
\end{proof}

We continue with the verification of (i) in Theorem \ref{similar_to_unitary_thm}. Point (ii) will be proven later.

\begin{proof}[Proof of (i) in Theorem \ref{similar_to_unitary_thm}]
Since $A_T$ is invertible, the inequality $\underline{r} := \underline{r}(A_T) > 0$ is satisfied. 
It is trivial that if $\underline{r} = 1$, then $A_T = I$, and in this case the statement of the theorem is obviously true. 
Therefore we may suppose that $\underline{r} < 1$.

We will use the notation $\irM = \ker(A_T-\underline{r} I)$. 
Assume that the condition $0 < \dim\irM < \infty$ holds. 
If we set an arbitrary vector $h\in\irM$, then we have
\begin{equation}\label{T_inv_inv_eq}
\underline{r}^{1/2} \|h\| = \|A_T^{1/2} h\| = \|A_T^{1/2} T^{-1} h\| \geq \underline{r}^{1/2} \|T^{-1} h\|
\end{equation}
which implies that the inequality $\|T^{-1} h\| \leq \|h\|$ is fulfilled for any $h\in\irM$. 
But $T$ is a contraction, therefore $\|T^{-1} h\| = \|h\|$ for every $h\in\irM$. 
Because of the latter equation and \eqref{T_inv_inv_eq} we deduce $\|A_T^{1/2} T^{-1} h\| = \underline{r}^{1/2} \|T^{-1} h\| \; (h\in\irM)$ which implies that the finite-dimensional subspace $\irM$ is invariant for the operator $T^{-1}$. 
Since $T$ is bijective, we get that $T^{-1}\irM = \irM$ is satisfied and the restriction $T|\irM$ is unitary. 
Since $T$ is a contraction and $\dim\irM < \infty$, these imply that $\irM$ is reducing for $T$. 
On the other hand, $\underline{r} I|\irM = A_T|\irM = I|\irM$ follows which is a contradiction.
\end{proof}

Before proving (ii) of Theorem \ref{similar_to_unitary_thm}, we need the following lemma. 
We note that the method which will be used here is similar to the one which was used in Section 2. 
There operator-weighted unilateral shift operators were used and here we use operator-weighted bilateral shift operators. 
This will result in some further complications.

\begin{lem}\label{rev_lem}
Suppose we have a positive, invertible contraction $A\in\irB(\irH)$, an orthogonal decomposition $\irH = \oplus_{k=-\infty}^\infty \irY_k$ where the subspaces $\irY_k$ are reducing for $A$, and a unitary operator $U\in\irB(\irH)$ such that the following conditions hold:
\begin{itemize}
\item[\textup{(i)}] the equation $U\irY_k = \irY_{k+1}$ is satisfied for all $k\in\Z$,
\item[\textup{(ii)}] we have $\lim_{k\to\infty} \underline{r}(A|\irY_k) = 1$, and
\item[\textup{(iii)}] the inequality $\|A^{1/2} y_k\| \leq \|A^{1/2} U y_k\|$ is fulfilled for every $k\in\Z$ and $y_k\in\irY_k$.
\end{itemize}
Then $T := A^{-1/2}UA^{1/2} \in \irB(\irH)$ is a contraction for which $A_T = A$ holds.
\end{lem}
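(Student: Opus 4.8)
The plan is to treat $T=A^{-1/2}UA^{1/2}$ as an operator-weighted bilateral shift adapted to the decomposition $\irH=\oplus_{k\in\Z}\irY_k$, and to exploit the telescoping identity $T^n=A^{-1/2}U^nA^{1/2}$, which follows by induction from $A^{1/2}A^{-1/2}=I$ (recall $A$, hence $A^{1/2}$, is invertible). Taking adjoints gives $T^{*n}T^n=A^{1/2}U^{*n}A^{-1}U^nA^{1/2}$, so the whole statement reduces to two facts: that $U^*A^{-1}U\le A^{-1}$ (contractivity) and that $U^{*n}(A^{-1}-I)U^n\to 0$ in the strong operator topology (identification of the asymptotic limit).

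First I would settle contractivity. Conjugating the desired inequality $T^*T\le I$ by the invertible operator $A^{-1/2}$ shows that $T$ is a contraction precisely when $U^*A^{-1}U\le A^{-1}$. To obtain this I observe that both $A$ and $U^*AU$ are block-diagonal with respect to $\oplus_{k\in\Z}\irY_k$: indeed $U$ maps $\irY_k$ onto $\irY_{k+1}$ and $A$ reduces each $\irY_k$, whence $U^*AU$ reduces each $\irY_k$ as well. Consequently the operator inequality $A\le U^*AU$ holds globally if and only if it holds on each $\irY_k$, and the latter is exactly the restatement of hypothesis (iii), namely $\langle Ay_k,y_k\rangle\le\langle AUy_k,Uy_k\rangle$. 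Since inversion reverses the order on positive invertible operators, $A\le U^*AU$ yields $U^*A^{-1}U=(U^*AU)^{-1}\le A^{-1}$, as required.

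For the identification $A_T=A$, I write $T^{*n}T^n-A=A^{1/2}(U^{*n}BU^n)A^{1/2}$, where $B:=A^{-1}-I\ge 0$ (note $A\le I$ forces $A^{-1}\ge I$). Because $A^{1/2}$ is bounded, it suffices to prove $U^{*n}BU^n\to 0$ in the strong operator topology. Here I use that $B$ inherits the block-diagonal structure, $B=\oplus_k B_k$ with $B_k=(A|\irY_k)^{-1}-I$, and that the spectral bound $\sigma(A|\irY_k)\subseteq[\underline{r}(A|\irY_k),1]$ gives $\|B_k\|\le \underline{r}(A|\irY_k)^{-1}-1$, which tends to $0$ as $k\to+\infty$ by hypothesis (ii) while remaining uniformly bounded by $\underline{r}(A)^{-1}-1$ over all $k$.

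The step I expect to require the most care is this strong convergence for a general vector rather than a single homogeneous component. Fixing $y=\sum_k y_k$ with $y_k\in\irY_k$ and using that $U^{*n}$ is unitary, I compute $\|U^{*n}BU^n y\|^2=\|BU^n y\|^2=\sum_k\|B_{k+n}U^n y_k\|^2\le\sum_k\|B_{k+n}\|^2\|y_k\|^2$, since $U^n y_k\in\irY_{k+n}$ and $U^n$ is isometric. For each fixed $k$ the summand tends to $0$ as $n\to\infty$ (because $k+n\to+\infty$), and it is dominated by the summable sequence $\|B\|^2\|y_k\|^2$; dominated convergence then forces the whole sum to $0$. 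This establishes $U^{*n}BU^n\to 0$ strongly, hence $T^{*n}T^n\to A$ strongly, i.e. $A_T=A$, completing the argument.
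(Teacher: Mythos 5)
Your proposal is correct and follows essentially the same route as the paper's proof: the telescoping identity $T^n=A^{-1/2}U^nA^{1/2}$, the representation $T^{*n}T^n-A=A^{1/2}U^{*n}(A^{-1}-I)U^nA^{1/2}$, and the bound $\|(A^{-1}-I)|\irY_{k+n}\|\le \underline{r}(A|\irY_{k+n})^{-1}-1\to 0$ supplied by hypothesis (ii). The only cosmetic differences are that you package contractivity as the operator inequality $A\le U^*AU$ (blockwise from (iii)) followed by anti-monotonicity of inversion, where the paper directly computes $\|T^*y_k\|\le\|y_k\|$ from (iii), and that you close the strong convergence for arbitrary vectors by a dominated-convergence estimate, where the paper notes that each $\irY_k$ reduces $T^{*n}T^n$ and concludes from convergence on the homogeneous components.
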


\begin{proof}
By (iii) we obtain 
\[ 
\|T^* y_k\| = \|A^{1/2}U^*A^{-1/2} y_k\| \leq \|A^{1/2}UU^*A^{-1/2} y_k\| = \|y_k\|, 
\]
which gives that $T$ is indeed a contraction.

Consider an arbitrary vector $y_k\in\irY_k$ ($k\in\Z$).
The following inequality holds for any $\varepsilon > 0$ choosing $n$ large enough:
\[ 
\| T^{*n}T^n y_k - A y_k \| = \| A^{1/2}U^{-n}(A^{-1}-I)U^nA^{1/2} y_k \| 
\]
\[ 
\leq \| A^{1/2}\| \cdot \|(A^{-1}-I)|\irY_{k+n}\| \cdot \|A^{1/2} y_k\| \leq \varepsilon \cdot \|y_k\|. 
\]
This shows that $T^{*n}T^n y_k \to A y_k$ holds for every vector $y_k\in\irY_k$ and number $k\in\Z$. 
But $\irY_k$ is reducing for the operator $T^{*n}T^n$ ($k\in\Z, n\in\N$) which implies $A_T = A$.
\end{proof}

Finally, we present our proof concerning (ii) in Theorem \ref{similar_to_unitary_thm}.

\begin{proof}[Proof of (ii) in Theorem] \ref{similar_to_unitary_thm}.

\smallskip

PART I.

\smallskip
 
We will assume in this part that $\ker (A-I) = \{0\}$ holds. 
First, we choose an arbitrary two-sided sequence $\{a_k\}_{k\in\Z} \subseteq ]\underline{r},1[$ for which the conditions $a_k < a_{k+1}$ $(k\in\Z)$, $\lim_{k\to -\infty} a_k = \underline{r}$ and $\lim_{k\to \infty} a_k = 1$ are valid. 
We will use the notations $\irX_k := \irH([a_k,a_{k+1}[)$ $(k\in\Z)$ and $\irM = \ker(A-\underline{r}I)$ where $\irH(\omega)$ denotes the spectral subspace of $A$ associated with the Borel subset $\omega\subset\R$. 
There are five different cases which will be considered separately.

\smallskip

Case I.1. 
\emph{When $\#\{ k < 0 \colon \dim \irX_k = \aleph_0 \} = \#\{ k \geq 0 \colon \dim \irX_k = \aleph_0 \} = \aleph_0$ holds.} 
Choosing a subsequence if necessary we may suppose that $\dim \irX_k = \aleph_0$ is valid for every $k\in\Z$. 
On the one hand, if additionally $\irM = \{0\}$ is satisfied, then using Lemma \ref{rev_lem} we get what we wanted. 
On the other hand, if $\dim\irM = \aleph_0$, then we choose an orthonormal base $\{e_k\}_{k=-\infty}^{-1}$ in $\irM$, set
\[
\irY_k = 
\left\{ \begin{matrix}
\irX_k\oplus(\C\cdot e_k) & \text{if } k < 0 \\
\irX_k & \text{elsewhere}
\end{matrix}\right.,
\]
and choose an arbitrary unitary operator $U$ such that $U\irY_k = \irY_{k+1}$ ($k\in\Z$) and $U e_{k-1} = e_k$ ($k<0$) hold. Then again, we use Lemma \ref{rev_lem}.

\smallskip

Case I.2. 
\emph{When $\#\{ k < 0 \colon \dim \irX_k = \aleph_0 \} = \aleph_0 > \#\{ k \geq 0 \colon \dim \irX_k = \aleph_0 \}$ is valid.} 
We can assume without loss of generality that the condition $\dim \irX_k = \aleph_0$ is satisfied exactly when $k < 0$. 
We choose an orthonormal base $\{ e_{k,l} \colon k\geq 0, l\geq 0 \}$ in $\irH([a_0,1[)$ such that every vector $e_{k,l}$ is an eigenvector of $A$ associated with the eigenvalue $\alpha_{k,l}$, and the equation $\alpha_{k,l}\leq \alpha_{k+1,l}$ holds for every $k\geq 0, l\geq 0$.
This can be obviously done since $1\in\sigma_e(A)$. 
Set $\irY_k := \vee\{e_{k,l}\colon l\geq 0\}$ if $k\geq 0$ and for $k < 0$ in the same way as in the previous case (bearing in mind whether $\irM = \{0\}$ or not). 
Now, if we choose such a unitary operator $U\in\irB(\irH)$ for which point (i) of Lemma \ref{rev_lem} and the equation $U e_{k-1} = e_k$ ($k<0$) hold, we can easily complete this case.

\smallskip

Case I.3. 
\emph{When $\#\{ k < 0 \colon \dim \irX_k = \aleph_0 \} < \aleph_0 = \#\{ k \geq 0 \colon \dim \irX_k = \aleph_0 \}$ is true.} 
We may suppose that the equation $\dim \irX_k = \aleph_0$ holds if and only if $k \geq 0$. 
The restriction $A|\irH([\underline{r},a_0[)$ is trivially diagonal and trivially acts on an infinite-dimensional subspace, since $\underline{r}\in\sigma_e(A)$. 
Therefore we can choose an orthonormal base $\{e_{k,l} \colon k,l < 0\}$ which consists of $A$-eigenvectors: $A e_{k,l} = \alpha_{k,l} e_{k,l}$, such that the inequality $\alpha_{k-1,l}\leq \alpha_{k,l}$ is fulfilled for all $k, l < 0$. 
If we have
\[ 
\irY_k = \left\{ 
\begin{matrix}
\vee\{e_{k,l} \colon l < 0\} & \text{if } k < 0\\
\irX_k & \text{elsewhere}
\end{matrix}\right. 
\]
and a unitary operator $U\in\irB(\irH)$ such that the equations $U\irY_k = \irY_{k+1}$ ($k\in\Z$) and $U e_{k-1,l} = e_{k,l}$ ($k<0$) are satisfied, then again by Lemma \ref{rev_lem} we easily complete this case.

\smallskip

Case I.4. 
\emph{When $0 < \#\{ k \in \Z \colon \dim \irX_k = \aleph_0 \} < \aleph_0$ is fulfilled.} 
Without loss of generality we may suppose that the equation $\dim \irX_k = \aleph_0$ is true exactly when $k = 0$. 
Obviously, both of the restrictions $A|\irH([\underline{r},a_0[)$ and $A|\irH([a_1,1[)$ are diagonal operators and are acting on infinite-dimensional subspaces. 
We choose orthonormal bases $\{e_{k,l} \colon k<0, l\in\Z\}$ and $\{e_{k,l} \colon k>0, l\in\Z\}$ in the subspaces $\irH([\underline{r},a_0[)$ and $\irH([a_1,1[)$, respectively, such that $A e_{k,l} = \alpha_{k,l}e_{k,l}$ holds for some positive numbers $\{\alpha_{k,l} \colon k,l\in\Z, k\neq 0\}$, and $\alpha_{k,l} \leq \alpha_{k',l}$ is valid for arbitrary integers $k,k',l$ for which $k\neq 0, k'\neq 0, k < k'$. 
Now, if 
\[ 
\irY_k = \left\{ \begin{matrix}
\irX_0 & \text{if } k = 0\\
\vee\{e_{k,l} \colon l\in\Z \} & \text{if } k \neq 0
\end{matrix}\right. 
\]
and $U\in\irB(\irH)$ is a unitary operator such that $U\irY_k = \irY_{k+1}$ ($k\in\Z$) and $U e_{k-1,l} = e_{k,l}$ ($k<0, k > 1$) are satisfied, then again, we can use Lemma \ref{rev_lem} in order to complete this case.

\smallskip

Case I.5. 
\emph{When $\dim \irX_k < \aleph_0$ is fulfilled for all $k\in\Z$.} 
We can use a very similar argument as in the previous case, therefore we omit the details.

\smallskip

PART II.

\smallskip
 
In this part we assume that $\dim\ker (A-I) > 0$. 
It is easy to see that exactly one of the equations $\ker (A-I) = \irH$ and $\dim \ker (A-I)^\perp = \aleph_0$ holds, because $\underline{r}\in\sigma_e(A)$ is valid. 
It is also quite obvious that if $T'\in\irB(\irH')$ and $T''\in\irB(\irH'')$ are two contractions such that both of them are similar to a unitary operator, then $T'\oplus T''\in\irB(\irH'\oplus\irH'')$ is also a contraction that is similar to a unitary operator and we have $A_{T'\oplus T"} = A_{T'}\oplus A_{T"}$.

\smallskip

Case II.1. 
\emph{When either $A = I$ or $1\in\sigma_e(A|\ker (A-I)^\perp)$.} 
If $A = I$, then we simply set $T = I$. 
If the relation $1\in\sigma_e(A|\ker (A-I)^\perp)$ holds, then by PART I we can choose a contraction $T'\in\irB(\ker (A-I)^\perp)$ that is similar to a unitary operator and for which $A_{T'} = A|\ker (A-I)^\perp$ holds. 
If $T = T'\oplus I \in\irB(\ker (A-I)^\perp \oplus \ker (A-I))$, we get $A_T = A$.

\smallskip

Case II.2. 
\emph{When $\dim \ker (A-I)^\perp = \aleph_0$ and $1\notin\sigma_e(A|\ker (A-I)^\perp)$.} 
Obviously, the conditions $\dim\ker (A-I) = \aleph_0$ and $r(A|\ker (A-I)^\perp) < 1$ hold. 
Consider a sequence $\{a_k\}_{k=-\infty}^0 \subseteq [\underline{r},r(A|\ker (A-I)^\perp)]$ such that $a_{k-1} < a_k$ ($k \leq 0$), $a_0 = r(A|\ker (A-I)^\perp)$ and $\lim_{k\to -\infty} a_k = \underline{r}$ are satisfied. 
Set $\irX_k := \irH([a_k,a_{k+1}[)$ ($k < 0$). 
If the equation $\dim \irX_k = \aleph_0$ holds for infinitely many $k < 0$, then we may assume that it holds for every $k<0$. 
Assume first that this happens. 
Then we simply choose $\irY_k = \irX_k$ ($k\in\Z$) where $\ker (A-I) = \oplus_{k=0}^\infty \irX_k, \dim\irX_k = \aleph_0$ $(k\geq 0)$, and we use Lemma \ref{rev_lem}. 
In fact, this is the case when the scalar valued spectral measure of $A|\ker (A-I)^\perp$ is continuous (i.e. there are no atoms).

Second, we suppose that the equation $\dim \irX_k = \aleph_0$ holds only for finitely many numbers $k < 0$. 
We consider the decomposition $\ker (A-I)^\perp = \irK_a \oplus \irK_c$ where both of the subspaces $\irK_a$ and $\irK_c$ are $A$-invariant and the scalar valued spectral measure of the restrictions $A|\irK_a$ and $A|\irK_c$ are purely atomic and continuous, respectively.
If $\irK_c \neq \{0\}$, then we choose a splitting $\ker (A-I) = \irL_1 \oplus \irL_2$ where $\dim \irL_1 = \dim \irL_2 = \aleph_0$ holds. 
By the previous paragraph there is a contraction $T' \in\irB(\irL_1\oplus \irK_c)$ which is similar to a unitary operator and for which $A_{T'} = A|(\irL_1\oplus \irK_c)$ holds. 
This shows that it is enough to consider the case when $A$ is diagonal.

If $A$ is diagonal, then the eigenvalues (counting with their multiplicities) $\{\alpha_{k,l}\colon k,l\in\Z\}$ can be easily ordered in a way such that $\alpha_{k,l} \leq \alpha_{k+1,l}$ ($l,k \in\Z$) and $\alpha_{k,l} = 1$ ($k,l \in\Z, k > 0$) hold. 
Now, if we choose $\irY_k := \vee\{e_{k,l}\colon l\in\Z\}$ and define the unitary operator $U\in\irB(\irH)$ by the equation $U e_{k,l} = e_{k+1,l}$ ($k,l\in\Z$), then by a straightforward application of Lemma \ref{rev_lem} our proof is completed.
\end{proof}

%---------------------------------------------------------------------------------------------------------------------------------------------

\newpage

\chapter{Injectivity of the commutant mapping} \label{comm_chap}

\section{Statements of the results}
Throughout this chapter $T$ always denotes a contraction. 
Given any $C\in\{T\}'$ there exists exactly one $D\in\{W_T\}'$ such that $X_TC = DX_T$.
This enables us to define the commutant mapping $\gamma_T$ of $T$ in the following way:
\[ 
\gamma = \gamma_T\colon \{T\}'\to\{W_T\}', \; C\mapsto D \text{ such that } X_TC = DX_T. 
\]
It can be shown that $\gamma$ is a contractive algebra-homomorphism (see \cite[Section IX.1]{NFBK} for further details). 
This commutant mapping is among the few links which relate the contraction to a well-understood operator. 
It can be exploited to get structure theorems or stability results, see e.g. \cite{Ba,Ke_PAMS,Ke_cycl,Ke_shifttype,KV}. 
Hence it is of interest to study its properties. 
Our purpose in \cite{GeKe} was to examine the injectivity of $\gamma$, and in this chapter we give the results obtained in that publication. 
If $T$ is asymptotically non-vanishing, then $X_T$ and hence $\gamma_T$ are clearly injective. 
A natural and non-trivial question is that if the commutant mapping is injective, is necessarily $T$ of class $C_{1\cdot}$? 
However, this is not true. 
A counterexample will be given which will justify it. 
But before that we will prove the next two results. 
The first one provides four necessary conditions for injectivity. 
For a contraction $T$ let $P_0$ denote the orthogonal projection onto the stable subspace. 
The compressions $P_0 T|\irH_0$ and $(I-P_0) T|\irH_0^\perp$ are denoted by $T_{00}$ and $T_{11}$, respectively.

\begin{theorem}\label{comm_nec_thm}
If the commutant mapping $\gamma$ of the contraction $T\in\irB(\irH)$ is injective, then
\begin{itemize}
\item[\textup{(i)}] $\irI(T_{11},T_{00})=\{0\}$,
\item[\textup{(ii)}] $\overline{\sigma_{ap}(T_{00}^*)} \cap \sigma_{ap}(T_{11})\neq \emptyset$,
\item[\textup{(iii)}] $\sigma_p(T)\cap \overline{\sigma_p(T^*)} \cap\D = \emptyset$, and
\item[\textup{(iv)}] there is no direct decomposition $\irH = \irM_0 \dotplus \irM_1$ such that $\irM_0, \irM_1$ are invariant subspaces 
of $T$ and $\{0\}\ne \irM_0\subset\irH_0$.
\end{itemize}
\end{theorem}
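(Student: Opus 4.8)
The plan is to reduce all four assertions to a single reformulation of injectivity. Since $\gamma_T(C)$ is by definition the unique operator satisfying $X_TC=\gamma_T(C)X_T$, and since $\ker X_T=\ker A_T^{1/2}=\ker A_T=\irH_0$, one has $\gamma_T(C)=0$ exactly when $X_TC=0$, i.e. when $\ran C\subseteq\irH_0$. Thus
\[
\ker\gamma_T=\{\,C\in\{T\}' : \ran C\subseteq\irH_0\,\},
\]
so $\gamma$ is injective if and only if the zero operator is the only element of $\{T\}'$ whose range lies in $\irH_0$. Each of (i)--(iv) will then be proved by contraposition: assuming the stated condition fails, I exhibit a nonzero $C\in\{T\}'$ with $\ran C\subseteq\irH_0$. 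Throughout I use the upper-triangular form $T=\left(\begin{smallmatrix} T_{00} & R\\ 0 & T_{11}\end{smallmatrix}\right)$ on $\irH_0\oplus\irH_0^\perp$ supplied by Lemma \ref{Ker_dec_lem}.

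For (i), (iii), (iv) the witnessing operator is written down explicitly. If $0\neq Y\in\irI(T_{11},T_{00})$, then the strictly upper-triangular $C=\left(\begin{smallmatrix} 0 & Y\\ 0 & 0\end{smallmatrix}\right)$ satisfies $TC-CT=\left(\begin{smallmatrix} 0 & T_{00}Y-YT_{11}\\ 0 & 0\end{smallmatrix}\right)=0$ and has $\ran C\subseteq\irH_0$, proving (i). If some $\lambda\in\D$ has $Tx=\lambda x$ and $T^*y=\overline\lambda y$ with $x,y\neq 0$, the rank-one $Cz=\langle z,y\rangle x$ lies in $\{T\}'$, and $x\in\irH_0$ because $\|T^nx\|=|\lambda|^n\|x\|\to 0$; this gives (iii). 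Finally, if $\irH=\irM_0\dotplus\irM_1$ with both summands $T$-invariant and $\{0\}\neq\irM_0\subseteq\irH_0$, the (generally oblique) idempotent $C$ with range $\irM_0$ and kernel $\irM_1$ commutes with $T$ precisely because both subspaces are invariant, and $\ran C=\irM_0\subseteq\irH_0$; this gives (iv).

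The substantive case is (ii), where the real work lies. Assume $\overline{\sigma_{ap}(T_{00}^*)}\cap\sigma_{ap}(T_{11})=\emptyset$. Rewriting $\overline{\sigma_{ap}(T_{00}^*)}$ as the surjectivity (defect) spectrum $\sigma_{su}(T_{00})$, the hypothesis becomes $\sigma_{su}(T_{00})\cap\sigma_{ap}(T_{11})=\emptyset$, which is exactly the Rosenblum-type criterion guaranteeing that the Sylvester operator $Z\mapsto T_{00}Z-ZT_{11}$ on $\irB(\irH_0^\perp,\irH_0)$ is surjective. Solving $T_{00}Z-ZT_{11}=-R$ and conjugating by the invertible $S=\left(\begin{smallmatrix} I & Z\\ 0 & I\end{smallmatrix}\right)$ turns $T$ into $T_{00}\oplus T_{11}$; since $S$ is upper-triangular it fixes $\irH_0$. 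The projection $P_0$ commutes with $T_{00}\oplus T_{11}$, so $C:=SP_0S^{-1}$ is a nonzero element of $\{T\}'$ with $\ran C\subseteq S\irH_0=\irH_0$, contradicting injectivity and proving (ii).

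The main obstacle is pinning down the correct spectral form of the solvability theorem: one must justify that $\sigma_{su}(T_{00})\cap\sigma_{ap}(T_{11})=\emptyset$ forces the map $Z\mapsto T_{00}Z-ZT_{11}$ to be onto, together with the identity $\overline{\sigma_{ap}(T_{00}^*)}=\sigma_{su}(T_{00})$ that translates the hypothesis into this form. Both are standard facts about the approximate point and surjectivity spectra of $L_{T_{00}}-R_{T_{11}}$, but they must be cited or verified carefully; once they are in hand, and once the reformulation $\ker\gamma_T=\{C\in\{T\}':\ran C\subseteq\irH_0\}$ is established, the remaining constructions are routine.
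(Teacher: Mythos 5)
Your proposal is correct and takes essentially the same approach as the paper: the identical reformulation of injectivity as $\ker\gamma_T=\{C\in\{T\}'\colon \ran C\subseteq\irH_0\}$, the same witnesses for (i), (iii) and (iv) (intertwiner in the corner, rank-one $u\otimes v$, oblique projection), and for (ii) the same Davis--Rosenthal surjectivity criterion for the Sylvester map $Y\mapsto T_{00}Y-YT_{11}$, your conjugated idempotent $SP_0S^{-1}$ being exactly the paper's witness with $C_{00}=I$ and $C_{01}=\irT^{-1}(T_{01})$. The only remark worth recording is that, like the paper, you work implicitly in the mixed case $\{0\}\neq\irH_0\neq\irH$ (so that the witnesses are nonzero), which is the standing assumption under which Lemma \ref{Ker_dec_lem} applies.
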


In view of condition (iii) of the previous theorem it is of interest to express the point spectra of $T$ and $T^*$ in terms of the matrix entries. 
We will give such a result as Proposition \ref{point_sp_prop}.

For an operator $A$ on a Hilbert space $\irF$, and for a complex number $\lambda$, the root subspace of $A$ corresponding to $\lambda$ is defined by $\widetilde\ker(A-\lambda I):= \vee_{j=1}^\infty \ker(A-\lambda I)^j$. 
We say that the operator $A$ has a generating root subspace system if $\irF= \vee\left\{\widetilde\ker(A-\lambda I): \lambda\in\sigma_p(A)\right\}$. 

Our second result is about sufficiency. 
Namely, it shows that under certain circumstances (iii) of Theorem \ref{comm_nec_thm} is sufficient for the injectivity of $\gamma$.

\begin{theorem}\label{comm_suff_thm}
Let us assume that the stable component $T_{00}$ of the contraction $T$ satisfies the following conditions:
\begin{itemize}
\item[\textup{(i)}] $\sigma_p(T_{00}^*)\subset \overline{\sigma_p(T_{00})}$,
\item[\textup{(ii)}] $T_{00}^*$ has a generating root subspace system.
\end{itemize}
Then $\gamma$ is injective if and only if $\sigma_p(T)\cap \overline{\sigma_p(T^*)}\cap\D=\emptyset$.
\end{theorem}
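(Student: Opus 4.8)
The plan is to translate injectivity of $\gamma$ into an operator equation and then exploit the root subspaces of $T_{00}^*$. Since $X_T x = A_T^{1/2}x$ (followed by the embedding into the unitary asymptote space), we have $\ker X_T = \ker A_T = \irH_0$, and $\gamma(C)=0$ is equivalent to $X_T C = 0$, i.e.\ to $\ran C \subseteq \irH_0$. Hence $\gamma$ is injective if and only if $C=0$ is the only operator in $\{T\}'$ whose range lies in $\irH_0$. With this reformulation the implication ``$\gamma$ injective $\Longrightarrow \sigma_p(T)\cap\overline{\sigma_p(T^*)}\cap\D=\emptyset$'' is precisely part (iii) of Theorem \ref{comm_nec_thm}, so the hypotheses (i)--(ii) are needed only for the converse.

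For the converse I would take an arbitrary $C\in\{T\}'$ with $\ran C\subseteq\irH_0$ and show $C=0$. In the decomposition $\irH=\irH_0\oplus\irH_0^\perp$ of Lemma \ref{Ker_dec_lem} the operator has the form $C=\left(\begin{smallmatrix}C_{00}&C_{01}\\0&0\end{smallmatrix}\right)$, and the relation $CT=TC$ is equivalent to the two identities $C_{00}\in\{T_{00}\}'$ and $T_{00}C_{01}-C_{01}T_{11}=C_{00}R$, where $R=P_0T|\irH_0^\perp$. Passing to adjoints (and recalling $T^*=\left(\begin{smallmatrix}T_{00}^*&0\\R^*&T_{11}^*\end{smallmatrix}\right)$) these become $T_{00}^*C_{00}^*=C_{00}^*T_{00}^*$ and $C_{01}^*T_{00}^*-T_{11}^*C_{01}^*=R^*C_{00}^*$.

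The core of the argument is the claim that both $C_{00}^*$ and $C_{01}^*$ annihilate every root vector of $T_{00}^*$, which I would establish by induction on the rank $j$ of the root vector. Fixing $\mu\in\sigma_p(T_{00}^*)$ and $f$ with $(T_{00}^*-\mu I)^j f=0$, and setting $g:=C_{00}^*f$, $h:=C_{01}^*f$, the inductive hypothesis applied to $(T_{00}^*-\mu I)f$ reduces the two adjoint identities to $T_{00}^*g=\mu g$ and $(\mu I-T_{11}^*)h=R^*g$; a direct substitution then yields $T^*\binom{g}{h}=\mu\binom{g}{h}$. If $\binom{g}{h}\neq 0$ this forces $\mu\in\sigma_p(T^*)$, while hypothesis (i) together with $T_{00}\in C_{0\cdot}$ gives $\overline\mu\in\sigma_p(T_{00})\subseteq\sigma_p(T)\cap\D$ (cf.\ Proposition \ref{point_sp_prop}); thus $\overline\mu\in\sigma_p(T)\cap\overline{\sigma_p(T^*)}\cap\D$, contradicting our assumption. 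Therefore $g=h=0$, the induction closes, and since by hypothesis (ii) the subspaces $\widetilde\ker(T_{00}^*-\mu I)$ span $\irH_0$, continuity gives $C_{00}^*=C_{01}^*=0$, i.e.\ $C=0$.

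The step I expect to be the main obstacle is the inductive passage from eigenvectors ($j=1$, where the computation is transparent) to higher-rank root vectors: for $j>1$ one must use the inductive hypothesis to cancel the contributions of $(T_{00}^*-\mu I)f$ in \emph{both} adjoint identities, which is exactly why $C_{00}^*$ and $C_{01}^*$ have to be carried through the induction simultaneously rather than one after the other. A secondary point to handle with care is the bookkeeping at the end, namely checking that $\sigma_p(T_{00})\subseteq\sigma_p(T)\cap\D$ (immediate from the triangular form and the stability of $T_{00}$) and that the spanning property in (ii), combined with boundedness of $C_{00}^*$ and $C_{01}^*$, really forces them to vanish on all of $\irH_0$.
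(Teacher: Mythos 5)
Your proof is correct, and while it rests on the same circle of ideas as the paper's argument (the reformulation of injectivity via Lemma \ref{basic_lem}, passage to the adjoint identities $T_{00}^*C_{00}^*=C_{00}^*T_{00}^*$ and $C_{01}^*T_{00}^*-T_{11}^*C_{01}^*=T_{01}^*C_{00}^*$, and an induction over root vectors of $T_{00}^*$), it is organized genuinely differently. The paper factors the sufficiency through the intermediate Proposition \ref{suff_before_prop} and argues by contradiction: Lemma \ref{FG_lem} is invoked to force $C_{00}\ne 0$, a minimal rank $r$ with $C_{00}^*\left(\ker(T_{00}^*-\lambda I)^r\right)\ne\{0\}$ is selected, and a secondary induction --- which requires the injectivity of $T_{11}^*-\lambda I$, i.e.\ $\lambda\notin\sigma_p(T_{11}^*)$ --- shows that $C_{01}^*$ vanishes on $\ker(T_{00}^*-\lambda I)^{r-1}$ before the contradiction with condition (ii) of that proposition is reached; that condition is in turn extracted from the spectral hypothesis via part (ii) of Proposition \ref{point_sp_prop}. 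You instead run a single simultaneous induction carrying $g=C_{00}^*f$ and $h=C_{01}^*f$ together and at each step exhibit the explicit eigenvector $g\oplus h$ of $T^*$ (your computation is sound: $T_{01}^*g+T_{11}^*h=(\mu I-T_{11}^*)h+T_{11}^*h=\mu h$), contradicting $\sigma_p(T)\cap\overline{\sigma_p(T^*)}\cap\D=\emptyset$ directly, since $\overline\mu\in\sigma_p(T_{00})\subseteq\sigma_p(T)\cap\D$ by hypothesis (i) and the stability of $T_{00}$. This buys you three simplifications: Lemma \ref{FG_lem} becomes unnecessary (you prove $C_{00}^*=C_{01}^*=0$ outright instead of refuting a nonzero $C$ at minimal rank), the separate injectivity of $T_{11}^*-\mu I$ is not needed (the conclusion $g\oplus h=0$ kills both components at once, even when $\mu\in\sigma_p(T_{11}^*)$, a case which under your assumptions is anyway vacuous), and only part (i) of Proposition \ref{point_sp_prop} is used, the relevant direction of part (ii) being re-derived inline by the eigenvector construction. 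What the paper's longer route buys in exchange is the standalone Proposition \ref{suff_before_prop}, a sufficient condition for injectivity of $\gamma$ under weaker, purely local hypotheses, which is of independent use; your argument is shorter and more self-contained but yields only the theorem as stated.
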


We recall that a c.n.u. contraction $T$ is called a $C_0$-contraction, if there exists a function $\vartheta\in H^\infty$ such that $\vartheta(T) = 0$.
If $\vartheta = \vartheta_i\vartheta_e$ where $\vartheta_i$ and $\vartheta_e$ are the inner and outer part of $\vartheta$, respectively, then $\vartheta_i(T) = 0$.
It can be shown that here exits an inner function $\vartheta_T\in H^\infty$ such that $\vartheta_T(T) = 0$, and whenever $\vartheta\in H^\infty, \vartheta(T) = 0$ holds, the function $\vartheta$ is a multiple of $\vartheta_T$.
This $\vartheta_T$ is called the minimal function of $T$.
The conditions of the previous theorem are satisfied in a large class of $C_0$-contractions.

\begin{corollary}\label{Blaschke_cor}
Let us assume that the stable component $T_{00}$ of the contraction $T$ is a $C_0$-contraction, and the 
minimal function $\vartheta$ of $T_{00}$ is a Blaschke product.
Then $\gamma$ is injective if and only if $\sigma_p(T)\cap \overline{\sigma_p(T^*)}\cap\D=\emptyset$.
\end{corollary}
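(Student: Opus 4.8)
The plan is to obtain the corollary as a direct consequence of Theorem \ref{comm_suff_thm}: I would show that when the stable component $T_{00}$ is a $C_0$-contraction whose minimal function $\vartheta$ is a Blaschke product, the two hypotheses (i) and (ii) of that theorem are automatically satisfied, after which the theorem applies verbatim. All the required input is furnished by the Sz.-Nagy--Foias functional model of $C_0$-contractions, which I would cite from \cite{NFBK}.

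First I would recall the description of the point spectrum in the $C_0$ model. A $C_0$-contraction is c.n.u., hence it has no eigenvalues of modulus $1$ (if $T_{00}h=\lambda h$ with $|\lambda|=1$, then $\|T_{00}h\|=\|h\|$ forces $T_{00}^*T_{00}h=h$ and therefore $T_{00}^*h=\overline\lambda h$, so $\C h$ would reduce $T_{00}$ to a unitary, contradicting complete non-unitarity). Thus all eigenvalues lie in $\D$, and a point $\lambda\in\D$ is an eigenvalue exactly when $\vartheta(\lambda)=0$; consequently $\sigma_p(T_{00})$ is the zero set $\{a_k\}$ of $\vartheta$. The adjoint $T_{00}^*$ is again a $C_0$-contraction, and its minimal function is the reflected function $\vartheta^\sim(z)=\overline{\vartheta(\overline z)}$, which is again a Blaschke product, now with zeros $\{\overline{a_k}\}$. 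Hence $\sigma_p(T_{00}^*)=\{\overline{a_k}\}=\overline{\sigma_p(T_{00})}$, which yields hypothesis (i) of Theorem \ref{comm_suff_thm} (in fact with equality).

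Next I would settle hypothesis (ii). The key classical fact is that a $C_0$-contraction possesses a complete, i.e. generating, system of root vectors precisely when its minimal function is a Blaschke product. Since $\vartheta^\sim$ is a Blaschke product, $T_{00}^*$ has a generating root subspace system, that is $\irH_0=\vee\{\widetilde\ker(T_{00}^*-\lambda I):\lambda\in\sigma_p(T_{00}^*)\}$, which is exactly hypothesis (ii). With (i) and (ii) verified, Theorem \ref{comm_suff_thm} gives that $\gamma$ is injective if and only if $\sigma_p(T)\cap\overline{\sigma_p(T^*)}\cap\D=\emptyset$, which is the assertion of the corollary.

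The main obstacle is not the logical deduction, which is short, but pinning down the two model-theoretic inputs: the identification of $\sigma_p(T_{00}^*)$ through the reflected minimal function $\vartheta^\sim$, and the completeness of the root-vector system associated with a Blaschke minimal function. Both are standard in the theory of $C_0$-contractions and would simply be cited; the only point demanding care is confirming that the absence of unimodular eigenvalues confines the point-spectrum computation to $\D$, so that the equality $\sigma_p(T_{00}^*)=\overline{\sigma_p(T_{00})}$ holds globally rather than merely inside the disc, thereby legitimately supplying hypothesis (i).
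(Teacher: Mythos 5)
Your proposal is correct and follows essentially the same route as the paper: both verify hypotheses (i) and (ii) of Theorem \ref{comm_suff_thm} by identifying $\sigma_p(T_{00})$ with the zero set of $\vartheta$, noting the minimal function of $T_{00}^*$ is $\widetilde\vartheta(z)=\overline{\vartheta(\overline z)}$ so that $\sigma_p(T_{00}^*)=\overline{\sigma_p(T_{00})}$, and invoking the completeness of the root subspace system when the minimal function is a Blaschke product (the paper cites \cite[Proposition III.4.7, Theorems III.5.1 and III.6.3]{NFBK}). Your additional observation that a c.n.u.\ contraction has no unimodular eigenvalues, so the spectral identification holds globally and not just in $\D$, is a sound small refinement of a point the paper leaves implicit.
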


After that we will be able to present our example for a non-$C_{1\cdot}$ contraction which has injective commutant mapping. 
This will be followed by investigating the injectivity of commutant mappings for quasisimilar contractions and orthogonal sum of contractions.
Two operators $A,B\in\irB(\irH)$ are quasisimilar if the quasiaffinities (i.e. operators with trivial kernel and dense range) $X\in\irI(A,B)$ and $Y\in\irI(A,B)$ exist.
We say that $T\in\irB(\irH)$ is in stable relation with $T'\in\irB(\irH')$, if $C\in\irI(T,T')$ and $\ran C\subset \irH'_0(T')$ imply $C=0$, and if $C'\in\irI(T',T)$ and $\ran C'\subset\irH_0(T)$ imply $C'=0$. 
Our result concerning quasisimilar contractions and orthogonal sum of contractions reads as follows.

\begin{theorem}\label{comm_quasi_oplus_thm}
Let $T\in\irB(\irH)$ and $T'\in\irB(\irH')$ be contractions.
\begin{itemize}
\item[\textup{(i)}] If $T$ and $T'$ are quasisimilar, then $\gamma_T$ is injective if and only if $\gamma_{T'}$ is injective.
\item[\textup{(ii)}] The commutant mapping $\widetilde\gamma := \gamma_{T\oplus T'}$ is injective if and only if $\gamma_T$ and $\gamma_{T'}$ are injective, and $T$ is in stable relation with $T'$.
\item[\textup{(iii)}] Assuming $T = T'$, the commutant mapping $\widetilde\gamma = \gamma_{T\oplus T}$ is injective if and only if $\gamma_T$ is injective.
\end{itemize}
\end{theorem}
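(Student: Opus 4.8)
The plan is to reduce all three parts to a single clean description of $\ker\gamma_T$. Since $X_T C=\gamma_T(C)X_T$ and $\ker X_T=\irH_0(T)$ (because $X_T x=A_T^{1/2}x$ and $\ker A_T=\irH_0$), one has $\gamma_T(C)=0$ exactly when $X_T C=0$, i.e. when $\ran C\subseteq\ker X_T=\irH_0(T)$. Thus
\[
\ker\gamma_T=\{\,C\in\{T\}'\colon \ran C\subseteq\irH_0(T)\,\},
\]
so $\gamma_T$ is injective if and only if the only $C\in\{T\}'$ with $\ran C\subseteq\irH_0(T)$ is $C=0$. I would first record the elementary fact that any intertwiner carries stable vectors to stable vectors: if $Z\in\irI(S,S')$, so $ZS=S'Z$, then $\|S'^nZx\|=\|ZS^nx\|\le\|Z\|\,\|S^nx\|$, whence $Z\,\irH_0(S)\subseteq\irH_0(S')$. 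These two observations will drive everything.

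For part (i), let $X\in\irI(T,T')$ and $Y\in\irI(T',T)$ be the quasiaffinities witnessing quasisimilarity. Assuming $\gamma_{T'}$ injective, I would take any $C\in\{T\}'$ with $\ran C\subseteq\irH_0(T)$ and form $XCY$. A direct computation using $XT=T'X$, $CT=TC$ and $YT'=TY$ will show $XCY\in\{T'\}'$, while $\ran(XCY)\subseteq X\,\irH_0(T)\subseteq\irH'_0(T')$ by the intertwiner fact. Injectivity of $\gamma_{T'}$ then forces $XCY=0$; since $X$ is injective this gives $CY=0$, and since $\ran Y$ is dense and $C$ is continuous we conclude $C=0$, so $\gamma_T$ is injective. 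The reverse implication is obtained by exchanging the roles of $T,T'$ and $X,Y$.

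For part (ii), I would write an arbitrary element of $\{T\oplus T'\}'$ in block form $\widetilde C=\left(\begin{smallmatrix}C_{11}&C_{12}\\C_{21}&C_{22}\end{smallmatrix}\right)$; the commutation relation is equivalent to $C_{11}\in\{T\}'$, $C_{22}\in\{T'\}'$, $C_{21}\in\irI(T,T')$ and $C_{12}\in\irI(T',T)$. Since $\irH_0(T\oplus T')=\irH_0(T)\oplus\irH'_0(T')$, feeding in vectors supported in a single summand shows that $\ran\widetilde C\subseteq\irH_0(T\oplus T')$ decouples into the four independent conditions $\ran C_{11}\subseteq\irH_0(T)$, $\ran C_{22}\subseteq\irH'_0(T')$, $\ran C_{21}\subseteq\irH'_0(T')$ and $\ran C_{12}\subseteq\irH_0(T)$. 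Hence $\widetilde\gamma$ is injective iff each block is forced to vanish under its condition: the first two say exactly that $\gamma_T$ and $\gamma_{T'}$ are injective, and the last two are precisely the definition of $T$ being in stable relation with $T'$.

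Part (iii) is the specialization $T'=T$ of part (ii): because $\irI(T,T)=\{T\}'$, the two stable-relation conditions on the off-diagonal blocks become identical to the injectivity of $\gamma_T$ already demanded on the diagonal blocks, hence they hold automatically once $\gamma_T$ is injective. Thus $\widetilde\gamma$ is injective iff $\gamma_T$ is. I expect the only point needing genuine care to be the bookkeeping in part (i)—verifying $XCY\in\{T'\}'$ and then correctly exploiting the injectivity of $X$ together with the density of $\ran Y$—while the conceptual heart, and the step that renders all three parts routine, is the kernel description recorded at the outset.
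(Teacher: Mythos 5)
Your proposal is correct and takes essentially the same route as the paper: your kernel description $\ker\gamma_T=\{C\in\{T\}'\colon \ran C\subseteq\irH_0(T)\}$ is precisely the observation the paper records at the start of its proofs section, and your quasiaffinity sandwich $XCY$ in (i) is the paper's $YCZ$ argument stated in direct rather than contrapositive form. Parts (ii) and (iii) likewise match the paper: the same block-matrix decoupling of $\{T\oplus T'\}'$ into $C_{11}\in\{T\}'$, $C_{22}\in\{T'\}'$, $C_{21}\in\irI(T,T')$, $C_{12}\in\irI(T',T)$, and the same closing remark that $T$ is in stable relation with itself exactly when $\gamma_T$ is injective.
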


In particular if the contraction $T\in\irB(\irH)$ is quasisimilar to a normal operator $N\in\irB(\irG)$, then $\gamma_T$ is injective if and only if $T$ is of class $C_{11}$. 
We would like to point out that (ii)-(iii) of the above theorem can be extended quite easily for orthogonal sum of countably many contractions.
We also note that the injectivity of $\gamma_T$ and $\gamma_{T'}$ does not imply the injectivity of $\widetilde\gamma$.
This will be verified by a concrete example.

Finally, using certain properties of shift operators, we will provide an example for which (i)-(iv) of Theorem \ref{comm_nec_thm} are satisfied but still $\gamma_T$ is not injective. 
This shows that these four conditions together are not enough to provide a characterization of injectivity of $\gamma_T$.

\section{Proofs}
Suppose that $\gamma$ is injective and consider a $C\in\{T\}'$ such that $\ran C\subset\irH_0$.
Then $0X=0=XC=\gamma(C)X$ implies $\gamma(C)=0$, whence $C=0$ follows.
Second, if $\gamma$ is not injective, then $\gamma(C)=0$ for some non-zero $C\in\{T\}'$, and $XC=\gamma(C)X=0X=0$ yields $\ran C\subset\irH_0$.
Therefore $\gamma$ is injective if and only if the only operator in $\{T\}'$, 
whose range is included in the stable subspace $\irH_0$, is the zero operator.
In particular, if $T\in C_{0\cdot}$ then $\gamma\equiv 0$ is highly non-injective,
while $T\in C_{1\cdot}$ evidently yields that $\gamma$ is injective.
Therefore we are interested in the mixed case $0\ne\irH_0\ne\irH$.

Throughout this chapter we will use Lemma \ref{Ker_dec_lem} many times. 
But here we will use the following more convenient notation:
\[ 
T = \left(\begin{matrix}
T_{00} & T_{01} \\
0 & T_{11}
\end{matrix}\right) \in\irB(\irH_0\oplus\irH_0^\perp).
\]
Now we prove the following lemma.

\begin{lem}\label{basic_lem}
The commutant mapping $\gamma$ is injective if and only if the conditions
\begin{equation}\label{basic_inj_eq}
T_{00}C_{01}-C_{01}T_{11}=C_{00}T_{01} \quad \hbox{ and } \quad C_{00}\in\{T_{00}\}'
\end{equation}
imply $C_{00}=0$ and $C_{01}=0$.
\end{lem}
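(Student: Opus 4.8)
The plan is to translate the abstract injectivity criterion already established in the paragraph preceding the lemma---namely that $\gamma$ is injective precisely when the only $C\in\{T\}'$ with $\ran C\subset\irH_0$ is $C=0$---into the concrete block-matrix conditions \eqref{basic_inj_eq}. Everything reduces to a single block computation relative to the decomposition $\irH=\irH_0\oplus\irH_0^\perp$, in which $T$ has the upper-triangular form supplied by Lemma \ref{Ker_dec_lem}.

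First I would observe that, since $\irH_0$ is the first summand of the decomposition, the requirement $\ran C\subset\irH_0$ is equivalent to the vanishing of the bottom block-row of $C$, i.e.
\[
C=\left(\begin{matrix} C_{00} & C_{01} \\ 0 & 0 \end{matrix}\right)\in\irB(\irH_0\oplus\irH_0^\perp),
\]
with $C_{00}\in\irB(\irH_0)$ and $C_{01}\in\irB(\irH_0^\perp,\irH_0)$. In particular $C=0$ holds if and only if $C_{00}=0$ and $C_{01}=0$, which already matches the desired conclusion; so it remains only to check that, for such a $C$, the commutation $C\in\{T\}'$ is equivalent to \eqref{basic_inj_eq}.

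Next I would compute both products $TC$ and $CT$ in block form and compare entries. Using the form of $T$ from Lemma \ref{Ker_dec_lem}, the equation $TC=CT$ becomes
\[
\left(\begin{matrix} T_{00}C_{00} & T_{00}C_{01} \\ 0 & 0 \end{matrix}\right)
=
\left(\begin{matrix} C_{00}T_{00} & C_{00}T_{01}+C_{01}T_{11} \\ 0 & 0 \end{matrix}\right).
\]
The lower row is automatically zero on both sides; the $(0,0)$-entries give $C_{00}T_{00}=T_{00}C_{00}$, i.e. $C_{00}\in\{T_{00}\}'$; and the $(0,1)$-entries give $T_{00}C_{01}=C_{00}T_{01}+C_{01}T_{11}$, that is, $T_{00}C_{01}-C_{01}T_{11}=C_{00}T_{01}$. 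These are exactly the two relations appearing in \eqref{basic_inj_eq}.

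Combining the two observations, the operators $C\in\{T\}'$ with $\ran C\subset\irH_0$ are precisely those of the above triangular shape whose entries satisfy \eqref{basic_inj_eq}, and such a $C$ vanishes if and only if $C_{00}=C_{01}=0$. Feeding this equivalence into the injectivity criterion yields the statement of the lemma. I do not anticipate a genuine obstacle here beyond correct block bookkeeping; the only step requiring slight care is the identification $\ran C\subset\irH_0 \Leftrightarrow$ (bottom block-row of $C$ is zero), which is immediate, together with the routine verification that the two off-diagonal block equations coincide with \eqref{basic_inj_eq}.
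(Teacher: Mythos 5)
Your proposal is correct and follows essentially the same route as the paper's own proof: both identify the operators $C\in\{T\}'$ with $\ran C\subset\irH_0$ as exactly the block operators $\left(\begin{smallmatrix} C_{00} & C_{01} \\ 0 & 0 \end{smallmatrix}\right)$ whose entries satisfy \eqref{basic_inj_eq}, and then feed this into the injectivity criterion established in the paragraph preceding the lemma. The block computation and the bookkeeping are exactly as in the paper, so there is nothing to add.
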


\begin{proof}
The range of any operator $C\in\irB(\irH)$ is included in $\irH_0$ precisely when its matrix is of the form
\[ 
C = \left(\begin{matrix}
C_{00} & C_{01} \\
0 & 0
\end{matrix}\right)\in\irB(\irH_0\oplus\irH_0^\perp).
\]
Furthermore, the equation $CT=TC$ reduces to the equations $C_{00}T_{00}= T_{00}C_{00}$ and $C_{00}T_{01}+C_{01}T_{11}= T_{00}C_{01}$.
Thus, by the observations of the first paragraph, the proof is complete.
\end{proof}

Now we are in a position to give the verification of Theorem \ref{comm_nec_thm}.

\begin{proof}[Proof of Theorem \ref{comm_nec_thm}]
Taking any $C_{01}\in\irI(T_{11},T_{00})$, the condition \eqref{basic_inj_eq} is fulfilled with $C_{00}=0$.
Hence (i) must hold, if $\gamma$ is injective. 
If (ii) fails, then the mapping $\irT\in\irB(\irB(\irH_1,\irH_0))$, defined by
$\irT\colon Y\mapsto T_{00}Y-YT_{11}$, is surjective (see \cite{DaRo}). 
Thus \eqref{basic_inj_eq} can be satisfied with $C_{00}=I$ and $C_{01}= \irT^{-1}(T_{01})$, and so
$\gamma$ is not injective.
If (iii) fails, then there exist $\lambda\in\D$ and unit vectors $u,v\in\irH$ such that $Tu=\lambda u$ and $T^*v= \overline{\lambda} v$.
Then $\ran(u\otimes v)= \C u\subset\irH_0$, and
\[
(u\otimes v)T= u\otimes T^* v= u \otimes(\overline{\lambda} v)= (\lambda u)\otimes v= T(u\otimes v),
\]
hence $\gamma$ cannot be injective.
Finally, if (iv) fails, then the projection $P$ onto $\irM_0$ parallel to $\irM_1$, commutes with $T$ and is transformed to zero by $\gamma$.
\end{proof}

As we mentioned before by (iii) of Theorem \ref{comm_nec_thm} it is of interest to express the point spectra of $T$ and $T^*$ in terms of the matrix entries. 
The next proposition is about that problem.

\begin{prop}\label{point_sp_prop}
For any $\lambda\in\D$, we have:
\begin{itemize}
\item[\textup{(i)}] $\lambda\in\sigma_p(T)$ if and only if $\lambda\in\sigma_p(T_{00})$,
\item[\textup{(ii)}] $\lambda\in\sigma_p(T^*)$ if and only if $\lambda\in\sigma_p(T_{11}^*)$ or
\[ 
T_{01}^*\left(\ker(T_{00}^*-\lambda I)\setminus\{0\}\right)\cap (T_{11}^*-\lambda I)\irH_0^\perp\ne\emptyset. 
\]
\end{itemize}
\end{prop}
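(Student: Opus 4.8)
The plan is to read both statements off the block-triangular forms of $T$ and $T^*$ by a direct component-wise computation, the only genuinely structural input being the defining property $\irH_0(T_{11})=\{0\}$ of the $C_{1\cdot}$-part supplied by Lemma \ref{Ker_dec_lem}. Throughout I write vectors as $x=x_0\oplus x_1\in\irH_0\oplus\irH_0^\perp$ and use that the adjoint has the lower-triangular form
\[
T^* = \left(\begin{matrix} T_{00}^* & 0 \\ T_{01}^* & T_{11}^* \end{matrix}\right) \in\irB(\irH_0\oplus\irH_0^\perp).
\]

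For (i), I would start from $Tx=\lambda x$ with $x\neq 0$ and extract the two scalar equations $T_{11}x_1=\lambda x_1$ and $T_{00}x_0+T_{01}x_1=\lambda x_0$. The key observation is that since $|\lambda|<1$, an eigenvector relation $T_{11}x_1=\lambda x_1$ forces $\|T_{11}^n x_1\|=|\lambda|^n\|x_1\|\to 0$, so $x_1\in\irH_0(T_{11})=\{0\}$ because $T_{11}$ is of class $C_{1\cdot}$. Hence $x_1=0$, the first equation collapses to $T_{00}x_0=\lambda x_0$ with $x_0\neq 0$, and $\lambda\in\sigma_p(T_{00})$ follows. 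The converse is immediate: any $x_0\in\ker(T_{00}-\lambda I)\setminus\{0\}$ produces the $T$-eigenvector $x_0\oplus 0$.

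For (ii), I would write $T^*y=\lambda y$ with $y=y_0\oplus y_1\neq 0$, obtaining $T_{00}^*y_0=\lambda y_0$ and $T_{01}^*y_0+T_{11}^*y_1=\lambda y_1$, and then split into the cases $y_0=0$ and $y_0\neq 0$. If $y_0=0$, then $y_1\neq 0$ and $T_{11}^*y_1=\lambda y_1$, i.e.\ $\lambda\in\sigma_p(T_{11}^*)$. If $y_0\neq 0$, then $y_0\in\ker(T_{00}^*-\lambda I)\setminus\{0\}$, and rewriting the second equation as $T_{01}^*y_0=(T_{11}^*-\lambda I)(-y_1)$ exhibits the vector $T_{01}^*y_0$ simultaneously in $T_{01}^*\!\left(\ker(T_{00}^*-\lambda I)\setminus\{0\}\right)$ and in $(T_{11}^*-\lambda I)\irH_0^\perp$, so the stated intersection is non-empty. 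Both implications reverse by reading the constructions backwards: an eigenvector of $T_{11}^*$ gives $0\oplus y_1$, and a common vector $T_{01}^*y_0=(T_{11}^*-\lambda I)z$ in the intersection gives the eigenvector $y_0\oplus(-z)$.

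The argument is almost entirely bookkeeping, so I do not expect a serious obstacle; the only non-mechanical step is the use of $\irH_0(T_{11})=\{0\}$ together with the hypothesis $\lambda\in\D$ to annihilate the $\irH_0^\perp$-component in (i) — without restricting to the open disk, a boundary eigenvalue of $T_{11}$ could survive and break the equivalence. In (ii) the only point requiring slight care is the sign when matching $T_{01}^*y_0$ against the range of $T_{11}^*-\lambda I$, which causes no difficulty since that range is a subspace and hence invariant under multiplication by $-1$.
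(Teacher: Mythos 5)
Your proof is correct and takes essentially the same route as the paper: a component-wise reading of the eigenvalue equations in the triangular decomposition of Lemma \ref{Ker_dec_lem}, with the $\irH_0^\perp$-component in (i) annihilated by the fact that the $C_{1\cdot}$-operator $T_{11}$ has no eigenvalues in $\D$ (which you re-derive from stability of eigenvectors, where the paper simply invokes $\sigma_p(T_{11})\cap\D=\emptyset$). The only cosmetic difference is that in (ii) you split on $y_0=0$ versus $y_0\neq 0$ rather than on whether $\lambda\in\sigma_p(T_{11}^*)$, and both case splits yield the same equivalence.
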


\begin{proof}
(i): Let $h=h_0\oplus h_1\in \irH_0\oplus\irH_0^\perp$ be a non-zero vector. 
The equation $Th=\lambda h$ holds exactly when $T_{00}h_0+T_{01}h_1= \lambda h_0$ and $T_{11}h_1= \lambda h_1$. 
Since $\sigma_p(T_{11})\cap\D=\emptyset$, it follows that $h_1=0$, and so the latter equations are equivalent to $T_{00}h_0=\lambda h_0$ with a non-zero $h_0$.

(ii): the equation $T^*h=\lambda h$ is equivalent to $T_{00}^*h_0= \lambda h_0$ and $T_{01}^*h_0+ T_{11}^* h_1= \lambda h_1$. 
If $\lambda\in \sigma_p(T_{11}^*)$, then these equations hold with $h_0=0$ and with a non-zero $h_1$. 
If $\lambda\not\in \sigma_p(T_{11}^*)$, then the previous two equations hold precisely when $T_{00}^*h_0=\lambda h_0$ is fulfilled with a non-zero $h_0$ and $T_{01}^*h_0= (T_{11}^*-\lambda I)(-h_1)$.
\end{proof}

Now we turn to the sufficiency result of this chapter, but before we prove Theorem \ref{comm_suff_thm}, we need some auxiliary results. 
Given two operators $A\in\irB(\irF)$ and $B\in\irB(\irG)$, let us consider the transformation $\irT_{A,B}\in\irB(\irB(\irG,\irF))$ defined by $\irT_{A,B}\colon Y\mapsto AY-YB$. 
The following lemma is crucial in our proof.

\begin{lem}\label{FG_lem}
If
\[ 
\irF = \vee\left\{\widetilde\ker(A^*-\lambda I): \lambda\in\sigma_p(A^*)\setminus\sigma_p(B^*)\right\} 
\]
or
\[ 
\irG = \vee\left\{\widetilde\ker(B-\lambda I): \lambda \in\sigma_p(B)\setminus\sigma_p(A)\right\}, 
\]
then the mapping $\irT_{A,B}$ is injective.
\end{lem}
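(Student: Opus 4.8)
The plan is to show that the kernel of $\irT_{A,B}$ is trivial, i.e.\ that $AY=YB$ forces $Y=0$, under either spanning hypothesis. First I would observe that the two hypotheses are dual to one another under taking adjoints: if $AY=YB$, then passing to adjoints gives $B^*Y^*=Y^*A^*$, which says exactly that $Y^*$ lies in the kernel of $\irT_{B^*,A^*}$. The first hypothesis, concerning the root subspaces of $A^*$ associated with $\sigma_p(A^*)\setminus\sigma_p(B^*)$, is precisely the second hypothesis written for the pair $(B^*,A^*)$. Hence it suffices to carry out the argument under the second hypothesis and then apply it verbatim to $(B^*,A^*)$ and $Y^*$ in order to dispose of the first.

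The core of the argument is that an intertwining relation propagates through polynomials. From $AY=YB$ one obtains $(A-\lambda I)^jY=Y(B-\lambda I)^j$ for every $\lambda\in\C$ and $j\in\N$ by a trivial induction, and consequently $Y$ carries each root subspace of $B$ into the root subspace of $A$ at the same point: $Y\,\widetilde\ker(B-\lambda I)\subseteq\widetilde\ker(A-\lambda I)$. Now if $\lambda\notin\sigma_p(A)$, then $A-\lambda I$ is injective, so each power $(A-\lambda I)^j$ is injective and $\ker(A-\lambda I)^j=\{0\}$; therefore $\widetilde\ker(A-\lambda I)=\{0\}$. Applying this with $\lambda\in\sigma_p(B)\setminus\sigma_p(A)$ shows that $Y$ annihilates $\widetilde\ker(B-\lambda I)$ for each such $\lambda$.

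It remains to combine these vanishings into $Y=0$, and this is the only point requiring a little care. The second hypothesis states that the subspaces $\widetilde\ker(B-\lambda I)$ with $\lambda\in\sigma_p(B)\setminus\sigma_p(A)$ have closed linear span equal to all of $\irG$. Since $Y$ is bounded and vanishes on each of these subspaces, it vanishes on their algebraic span and, by continuity, on the closure, which is $\irG$; hence $Y=0$. Dualizing as in the first paragraph then settles the case of the first hypothesis. I do not anticipate a serious obstacle: the whole difficulty, such as it is, is the bookkeeping that converts the pointwise root-subspace statements into the global conclusion via boundedness of $Y$, together with the observation that $\widetilde\ker(A-\lambda I)$ collapses to $\{0\}$ whenever $\lambda$ fails to be an eigenvalue of $A$.
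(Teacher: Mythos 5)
Your proof is correct and follows essentially the same route as the paper: the intertwining relation $AY=YB$ propagates to $(A-\lambda I)^jY=Y(B-\lambda I)^j$, injectivity of $A-\lambda I$ for $\lambda\in\sigma_p(B)\setminus\sigma_p(A)$ kills $Y$ on each $\ker(B-\lambda I)^j$, the spanning hypothesis plus boundedness forces $Y=0$, and the first hypothesis is handled by passing to adjoints. Your explicit identification of the first hypothesis as the second one for the pair $(B^*,A^*)$ merely spells out what the paper compresses into ``the other case can be treated similarly turning to the adjoints.''
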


\begin{proof}
Let us assume that the second condition holds, and $\irT_{A,B}(Y)=0$ is true for some $Y\in\irB(\irG,\irF)$.
Then for every $\lambda\in\sigma_p(B)\setminus\sigma_p(A),\; j\in\N$ and $g\in\ker(B-\lambda I)^j$ we have $(A-\lambda I)^j Yg=Y(B-\lambda I)^j g=0$, whence $Yg=0$ follows,
since $A-\lambda I$ is injective.
Our assumption yields now that $Y=0$.
The other case can be treated similarly turning to the adjoints.
\end{proof}

Now we give a technical sufficient condition for the injectivity of the commutant mapping.

\begin{prop}\label{suff_before_prop}
Let us assume that the contraction $T\in\irB(\irH)$ satisfies the conditions:
\begin{itemize}
\item[\textup{(i)}] $\irH_0=\vee\left\{\widetilde\ker(T_{00}^*-\lambda I): \lambda\in\sigma_p(T_{00}^*)\setminus\sigma_p(T_{11}^*)\right\}$,
\item[\textup{(ii)}] $T_{01}^*\left(\ker(T_{00}^*-\lambda I)\setminus\{0\}\right)\cap (T_{11}^*-\lambda I)\irH_1=\emptyset\;$ {\it for all} $\;\lambda\in\sigma_p(T_{00}^*)$.
\end{itemize}
Then the commutant mapping $\gamma$ of $T$ is injective.
\end{prop}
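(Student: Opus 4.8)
The plan is to work through the criterion of Lemma \ref{basic_lem}: it is enough to show that if $C_{00}\in\{T_{00}\}'$ and $C_{01}\in\irB(\irH_1,\irH_0)$ satisfy
\[
T_{00}C_{01}-C_{01}T_{11}=C_{00}T_{01},
\]
then $C_{00}=0$ and $C_{01}=0$. I would first prove $C_{00}=0$ by testing the adjoint of this identity against the generalized eigenvectors of $T_{00}^*$, exploiting hypothesis (ii); once $C_{00}=0$ is known, the identity collapses to a homogeneous Sylvester equation, and hypothesis (i) is exactly the form needed to invoke Lemma \ref{FG_lem} and conclude $C_{01}=0$.

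Concretely, I would pass to adjoints to obtain
\[
C_{01}^*T_{00}^*-T_{11}^*C_{01}^*=T_{01}^*C_{00}^*,
\]
and note that $C_{00}^*$ commutes with $T_{00}^*$, hence preserves each subspace $\ker(T_{00}^*-\lambda I)^j$. Fix $\lambda\in\sigma_p(T_{00}^*)\setminus\sigma_p(T_{11}^*)$; the exclusion of $\sigma_p(T_{11}^*)$ guarantees that $T_{11}^*-\lambda I$ is injective. I would then prove by induction on $j$ that both $C_{00}^*$ and $C_{01}^*$ vanish on $\ker(T_{00}^*-\lambda I)^j$. For $j=1$ and $h\in\ker(T_{00}^*-\lambda I)$, the vector $g:=C_{00}^*h$ again lies in $\ker(T_{00}^*-\lambda I)$, and evaluating the adjoint identity at $h$ gives $T_{01}^*g=-(T_{11}^*-\lambda I)C_{01}^*h\in(T_{11}^*-\lambda I)\irH_1$; if $g\neq 0$, this exhibits a point of $T_{01}^*(\ker(T_{00}^*-\lambda I)\setminus\{0\})\cap(T_{11}^*-\lambda I)\irH_1$, contradicting (ii), so $g=0$, and then injectivity of $T_{11}^*-\lambda I$ forces $C_{01}^*h=0$. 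For the step from $j$ to $j+1$, I would set $k:=(T_{00}^*-\lambda I)h$ for $h\in\ker(T_{00}^*-\lambda I)^{j+1}$; the inductive hypothesis annihilates $C_{00}^*k$ and $C_{01}^*k$, so $C_{00}^*h\in\ker(T_{00}^*-\lambda I)$ and the adjoint identity at $h$ reduces to $T_{01}^*C_{00}^*h=-(T_{11}^*-\lambda I)C_{01}^*h$, exactly as in the base case, giving $C_{00}^*h=C_{01}^*h=0$.

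Taking closed spans over $j$ shows that $C_{00}^*$ annihilates every root subspace $\widetilde\ker(T_{00}^*-\lambda I)$ with $\lambda\in\sigma_p(T_{00}^*)\setminus\sigma_p(T_{11}^*)$; by hypothesis (i) these span $\irH_0$, so $C_{00}^*=0$ and hence $C_{00}=0$. The identity then becomes $\irT_{T_{00},T_{11}}(C_{01})=T_{00}C_{01}-C_{01}T_{11}=0$, and since hypothesis (i) is precisely the first alternative of Lemma \ref{FG_lem} with $A=T_{00}$ and $B=T_{11}$, the map $\irT_{T_{00},T_{11}}$ is injective, forcing $C_{01}=0$. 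By Lemma \ref{basic_lem} this proves that $\gamma$ is injective.

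I expect the inductive step to be the main obstacle. The subtlety is the extra term $C_{01}^*k$ that appears because $T_{00}^*h\neq\lambda h$ for a genuine generalized eigenvector: an induction tracking only $C_{00}^*$ would fail, since an operator commuting with the locally nilpotent action of $T_{00}^*-\lambda I$ on $\widetilde\ker(T_{00}^*-\lambda I)$ and vanishing merely on its ordinary kernel need not vanish. Carrying $C_{01}^*$ along in the induction removes this term, and it is the injectivity of $T_{11}^*-\lambda I$—available exactly because $\lambda$ is excluded from $\sigma_p(T_{11}^*)$ in the spanning hypothesis (i)—that upgrades $C_{00}^*h=0$ to $C_{01}^*h=0$ at each stage and keeps the induction going.
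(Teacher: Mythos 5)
Your proof is correct and takes essentially the same route as the paper's: the same adjoint identity $C_{01}^*T_{00}^*-T_{11}^*C_{01}^*=T_{01}^*C_{00}^*$, the same use of the commutation of $C_{00}^*$ with $T_{00}^*$ to land in $\ker(T_{00}^*-\lambda I)$, the same interplay of hypothesis (ii) with the injectivity of $T_{11}^*-\lambda I$, and the same appeals to Lemma \ref{basic_lem} and Lemma \ref{FG_lem}. The only difference is organizational: you run a direct simultaneous induction on $C_{00}^*$ and $C_{01}^*$ over the kernels $\ker(T_{00}^*-\lambda I)^j$, whereas the paper argues by contradiction from a minimal $r$ with $C_{00}^*\left(\ker(T_{00}^*-\lambda I)^r\right)\ne\{0\}$; your variant even renders the closing invocation of Lemma \ref{FG_lem} redundant, since the induction already gives $C_{01}^*=0$ on the spanning root subspaces.
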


\begin{proof}
Assuming that $\gamma$ is not injective, by Lemma \ref{basic_lem} we can find transformations $C_{00}\in\{T_{00}\}'$ and
$C_{01}\in\irB(\irH_1,\irH_0)$ satisfying the equation
\begin{equation}\label{**_eq}
T_{00}C_{01}-C_{01}T_{11}= C_{00}T_{01}
\end{equation}
such that $C_{00}\ne0$ or $C_{01}\ne0$.
Our condition (i) implies by Lemma \ref{FG_lem} that $C_{00}\ne0$.
Another application of (i) yields that we can give $\lambda\in\sigma_p(T_{00}^*)\setminus \sigma_p(T_{11}^*)$ and $r\in\N$ so that
$$C_{00}^*\left(\ker(T_{00}^*-\lambda I)^r\right)\ne\{0\}\quad \hbox{ and } \quad C_{00}^*\left(\ker(T_{00}^*-\lambda I)^{r-1}\right)=\{0\}.$$
Assuming $r=1$, there exists $v\in\ker(T_{00}^*-\lambda I)$ such that $C_{00}^*v\ne0$.
The equations $(T_{00}^*-\lambda I)C_{00}^* v= C_{00}^*(T_{00}^*-\lambda I)v=0$ imply that $C_{00}^*v\in \ker(T_{00}^*-\lambda I)$.
Applying \eqref{**_eq} we obtain
\begin{equation}\label{***_eq}
(T_{11}^*-\lambda I)(-C_{01}^* v)= C_{01}^*(T_{00}^*-\lambda I)v - (T_{11}^*-\lambda I)C_{01}^*v= T_{01}^* C_{00}^*v,
\end{equation}
which contradicts the condition (ii).

Let us assume now that $r>1$.
For every $v\in \ker(T_{00}^*-\lambda I)$ we have \eqref{***_eq} with $T_{01}^*C_{00}^*v=0$.
Since $T_{11}^*-\lambda I$ is injective, we get $C_{01}^*\ker(T^*_{00}-\lambda I)= \{0\}$.
Suppose that 
$$C_{01}^*\left(\ker(T_{00}^*-\lambda I)^{s-1}\right)=\{0\}$$
 holds for an integer $1<s<r$.
Taking any $v\in\ker(T_{00}^*-\lambda I)^s$, we obtain \eqref{***_eq} with $T_{01}^*C_{00}^*v=0$,
whence $C_{01}^*\left(\ker(T_{00}^*-\lambda I)^s\right)= \{0\}$ follows.
By induction we arrive at the relation 
$$C_{01}^*\left(\ker(T_{00}^*-\lambda I)^{r-1}\right)= \{0\}.$$
Setting any $v\in\ker(T_{00}^*-\lambda I)^r$ such that $C_{00}^*v\ne0$, we deduce \eqref{***_eq},
where $0\ne C_{00}^*v\in \ker(T_{00}^*-\lambda I)$, contradicting to (ii).
\end{proof}

We proceed with proving Theorem \ref{comm_suff_thm}

\begin{proof}[Proof of Theorem \ref{comm_suff_thm}]
The necessity of the condition was stated in Theorem \ref{comm_nec_thm}.
Let us assume now that $\sigma_p(T)\cap \overline{\sigma_p(T^*)}\cap\D=\emptyset$.
By Proposition \ref{point_sp_prop} we have $\overline{\sigma_p(T_{00})} \cap \sigma_p(T^*_{11})=\emptyset$, and that
$T_{01}^*\left(\ker(T_{00}^*-\lambda I)\setminus\{0\}\right)\cap (T_{11}^*-\lambda I)\irH_1=\emptyset$
holds for every $\lambda\in \overline{\sigma_p(T_{00})}$.
Since $\sigma_p(T_{00}^*)\subset \overline{\sigma_p(T_{00})}$,
we conclude that the conditions of Proposition \ref{suff_before_prop} are satisfied, and so $\gamma$ is injective.
\end{proof}

We continue with the verification of Corollary \ref{Blaschke_cor}.

\begin{proof}[Proof of Corollary \ref{Blaschke_cor}]
We have to verify the conditions of Theorem \ref{comm_suff_thm}.
Since $\sigma_p(T_{00})\cap\D = \{z\in\D: \vartheta(z)=0\}$ and the minimal function of $T_{00}^*$ is $\widetilde\vartheta(z)= \overline{\vartheta(\overline z)}$, we obtain
that $\sigma_p(T_{00}^*) = \overline{\sigma_p(T_{00})}$ (see \cite[Proposition III.4.7 and Theorem III.5.1]{NFBK}).
The completeness of the root subspace system of $T_{00}^*$ readily follows from \cite[Theorem III.6.3]{NFBK}.
\end{proof}

In what follows we provide an example for a non-$C_{1\cdot}$ contraction which has injective commutant mapping.
This answers a question mentioned before. 
In order to present the desired example, we need the following technical lemma.

\begin{lem}\label{techn_lem}
Let $\irK_i\; (i=1,2,3,4)$ be non-zero Hilbert spaces, 
and $A\in\irB(\irK_1,\irK_2), B\in\irB(\irK_1,\irK_3), Y\in\irB(\irK_2,\irK_4), Z\in\irB(\irK_3,\irK_4)$.
Fixing $A$ and $B$, the equation $YA=ZB$ has only the trivial solution $Y=0, Z=0$ if and only if $A$ and $B$ have dense ranges and
$\ran A^*\cap \ran B^* =\{0\}$.
\end{lem}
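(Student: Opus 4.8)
The plan is to pass to adjoints. The equation $YA=ZB$ (an identity in $\irB(\irK_1,\irK_4)$) is equivalent to $A^*Y^*=B^*Z^*$, where now $Y^*\in\irB(\irK_4,\irK_2)$ and $Z^*\in\irB(\irK_4,\irK_3)$. The crucial observation is that for each fixed $w\in\irK_4$ the common value $A^*Y^*w=B^*Z^*w$ lies simultaneously in $\ran A^*$ and in $\ran B^*$; this is precisely the point at which the hypothesis $\ran A^*\cap\ran B^*=\{0\}$ can be exploited.

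First I would establish the sufficiency direction, that the two conditions force $Y=Z=0$. Assuming $A^*Y^*=B^*Z^*$, the observation above gives $A^*Y^*w=B^*Z^*w\in\ran A^*\cap\ran B^*=\{0\}$ for every $w\in\irK_4$, whence $A^*Y^*=0$ and $B^*Z^*=0$. Since $\ran A$ is dense, $\ker A^*=(\ran A)^\perp=\{0\}$, so $A^*$ is injective; from $A^*Y^*=0$ we get $\ran Y^*\subseteq\ker A^*=\{0\}$, that is $Y=0$. The density of $\ran B$ yields $Z=0$ in exactly the same way.

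Next I would prove necessity by contraposition, exhibiting a nonzero solution whenever one of the conditions fails, using rank-one operators with the convention $(u\otimes v)x=\langle x,v\rangle u$ already fixed in the proof of Theorem \ref{comm_nec_thm}. If $\ran A$ is not dense, choose $0\ne u\in\ker A^*$ and any $0\ne w\in\irK_4$ (possible since $\irK_4\ne\{0\}$); then $Y:=w\otimes u$ satisfies $(YA)x=\langle x,A^*u\rangle w=0$, so $(Y,Z)=(w\otimes u,0)$ is a nontrivial solution. The case where $\ran B$ fails to be dense is symmetric, with $(0,w\otimes u')$ for $0\ne u'\in\ker B^*$. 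Finally, if $\ran A^*\cap\ran B^*\ne\{0\}$, pick a common nonzero vector $\xi=A^*u=B^*u'$ (so both $u$ and $u'$ are nonzero) and any $0\ne w\in\irK_4$; setting $Y:=w\otimes u$ and $Z:=w\otimes u'$ gives $(YA)x=\langle x,\xi\rangle w=(ZB)x$, a nontrivial solution again.

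The individual computations are routine; the one genuinely delicate step, which I would present most carefully, is the opening move of the sufficiency part — recognizing that after taking adjoints the two sides $A^*Y^*w$ and $B^*Z^*w$ are trapped in the intersection $\ran A^*\cap\ran B^*$, so that the apparently weak hypothesis (2) annihilates them. Once that is in place, everything reduces to the standard identity $\ker A^*=(\ran A)^\perp$ and to the elementary behaviour of rank-one operators under composition.
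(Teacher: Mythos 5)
Your proof is correct and takes essentially the same route as the paper: both pass to the adjoint equation $A^*Y^* = B^*Z^*$, observe that the common values are trapped in $\ran A^* \cap \ran B^* = \{0\}$ and then use injectivity of $A^*$ and $B^*$ (equivalently, density of $\ran A$ and $\ran B$) to kill $Y$ and $Z$, and both exhibit the same rank-one counterexamples ($Y = w\otimes u$, $Z=0$ for $u\in\ker A^*$, and $Y=w\otimes u$, $Z=w\otimes u'$ when $A^*u = B^*u' \neq 0$) for the converse. The only cosmetic difference is that you deduce $Y=0$ directly from $A^*Y^*=0$, while the paper phrases the same step as a contradiction starting from $Y^*k_4\neq 0$.
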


\begin{proof}
If $A$ does not have dense range, then taking non-zero vectors $v\in\ker A^*$ and $k_4\in\irK_4$, 
the transformations $Y= k_4\otimes v$ and $Z=0$ provide a non-trivial solution of the equation $YA=ZB$.
The case when $B$ does not have dense range can be discussed similarly.
If $A^*k_2= B^*k_3=u\ne0$ holds with some $k_i\in\irK_i\; (i=2,3)$, 
then setting a non-zero $k_4\in\irK_4$, the transformations $Y=k_4\otimes k_2$ and $Z=k_4\otimes k_3$ provide a non-trivial solution.

Assuming that $A^*, B^*$ are injective and $\ran A^* \cap \ran B^*= \{0\}$, let us consider the adjoint equation $A^*Y^*=B^*Z^*$.
If $Y\ne 0$, then $Y^*k_4\ne 0$ for some $k_4\in \irK_4$, and so the non-zero vector $u= A^*Y^*k_4= B^*Z^* k_4$ belongs to
$\ran A^*\cap \ran B^*$.
Hence $Y=0$, and a similar reasoning shows that $Z=0$.
\end{proof}

Next we present our example.

\begin{exmpl}
\textup{Let $\irH_0$ be a 1-dimensional Hilbert space and $T_{00} = \lambda I\in\irB(\irH_0)$ with some $\lambda\in\D\setminus\{0\}$.
Let us fix a unit vector $e\in\irH_0$.
Let $\irH_1$ be an $\aleph_0$-dimensional Hilbert space, and let $T_{11}\in\irB(\irH_1)$ be a $C_{1\cdot}$-contraction satisfying the conditions
$1\not\in \sigma_p(T_{11}^*T_{11}),\; \overline\lambda\not\in \sigma_p(T_{11}^*)$ and $\ran(T_{11}^*-\overline\lambda I)\ne\irH_1$.
We construct a contraction $T$ on the Hilbert space $\irH= \irH_0\oplus\irH_1$ such that its matrix in this decomposition is of the form}
\begin{equation}
T = 
\left(\begin{matrix}
\lambda I & T_{01}\\
0 & T_{11}
\end{matrix}\right) \in\irB(\irH_0\oplus\irH_0^\perp)
\end{equation}
\textup{It remains to identify $T_{01}$.}

\textup{Since the operator $I- \overline\lambda T_{11}= (I-T_{11}^*T_{11}) + (T_{11}^*-\overline\lambda I) T_{11}$ is invertible, we can find a unit vector $u\in \irH_1$ such that
$u-T_{11}^*T_{11}u$ does not belong to the range of $T_{11}^*-\overline\lambda I$.
We recall that the defect operator of a contraction $T$ is $D_T = \sqrt{I-T^*T}$.
Clearly $D_{T_{00}}= D_{T_{00}^*}= (1-|\lambda|^2)^{1/2}I$.
Now we define $T_{01}= D_{T_{00}}(e\otimes D_{T_{11}}u)D_{T_{11}}$.
By \cite[Lemma IV.2.1]{FF} the operator $T$ is a contraction, whose stable subspace obviously coincides with the non-zero space $\irH_0$.
Let us assume that $T_{00}C_{01}-C_{01}T_{11}= C_{00}T_{01}$ holds 
for some transformations $C_{01}\in\irB(\irH_1,\irH_0)$ and $C_{00}\in \{T_{00}\}'= \irB(\irH_0)$.
It follows that $C_{01}(\lambda I-T_{11})= C_{00}T_{01}$.
Applying Lemma \ref{techn_lem} with $A=\lambda I-T_{11}$ and $B=T_{01}$, we infer that $C_{01}=0$ and $C_{00}=0$.
Thus the commutant mapping $\gamma$ of $T$ is injective by Lemma \ref{basic_lem}.}
\end{exmpl}

We note that a contraction $T_{11}$ with the requested properties can be obtained as a bilateral weighted shift operator with weight sequence
$\{w_i\}_{i=-\i}^\infty$ satisfying the conditions $w_i=\lambda$ for $i\le 0,\; w_i\in ]0,1[$ for $i>0$, and $\prod_{i=1}^\infty w_i>0$.

Next we prove Theorem \ref{comm_quasi_oplus_thm}

\begin{proof}[Proof of Theorem \ref{comm_quasi_oplus_thm}]
(i): Let $Y\in\irI(T,T')$ and $Z\in\irI(T',T)$ be quasiaffinities.
It can be easily seen that $Y\irH_0(T)\subset \irH'_0(T')$ and $Z\irH'_0(T')\subset \irH_0(T)$ hold.
If $\gamma_T$ is not injective, then there exists a non-zero $C\in\{T\}'$ such that $C\irH\subset \irH_0(T)$.
But $C'=YCZ\in\{T'\}'$ is also non-zero, and $C'\irH'= YCZ\irH'\subset YC\irH\subset Y\irH_0(T)\subset \irH'_0(T')$ is satisfied.
Thus $\gamma_{T'}$ is not injective.

(ii): We know that $\widetilde\gamma$ is injective exactly when 
$\widetilde C\in\{\widetilde T\}',\; \widetilde C\widetilde\irH\subset \widetilde\irH_0(\widetilde T) = \irH_0(T)\oplus \irH'_0(T')$
imply $\widetilde C =0$.
Taking the matrix of $\widetilde C$ in the decomposition $\widetilde\irH = \irH\oplus \irH'$, the statements of the theorem
can be easily verified.

(iii): It is clear that $T$ is in stable relation with itself if and only if $\gamma_T$ is injective.
\end{proof}

After that we give example for the case when $\gamma_T, \gamma_{T'}$ are injective but $\widetilde\gamma$ is not injective.

\medskip

\begin{exmpl}
\textup{Let us assume that $\irH=\irH' =\irH_0\oplus\irH_0^\perp$, where $\dim \irH_0=1$ and $\dim \irH_0^\perp =\aleph_0$.
Let $\lambda, \lambda'$ be complex numbers such that $0< |\lambda|< |\lambda'|<1$.
The operators $T_{00}, T_{00}'\in \irB(\irH_0)$ are defined by $T_{00}= \lambda I$ and $T_{00}'= \lambda' I$.
Choosing an orthonormal basis $\{e_n\}_{n=-\infty}^{\infty}$ in $\irH_0^\perp$, let us define the operators
$T_{11}, T_{11}'\in \irB(\irH_0^\perp)$ by the equations}
\[
T_{11}e_n =
\left\{
\begin{matrix}
\lambda e_{n+1} & \text{if } n\le 0\\
(1-n^{-2})e_{n+1} & \text{if } n> 0\\
\end{matrix}
\right.,
\qquad
T_{11}'e_n =
\left\{
\begin{matrix}
\lambda' e_{n+1} & \text{if } n\le 0\\
(1-n^{-2})e_{n+1} & \text{if } n> 0\\
\end{matrix}
\right..
\]
\textup{It is easy to check that 1 is not an eigenvalue of $T_{11}^*T_{11}$ and $T_{11}'^* T_{11}'$, and that the operators
$T_{11}^*-\overline\lambda I$ and $T_{11}'^*- \overline{\lambda'} I$ are injective, but not surjective.
As in Example 9, we can define the transformations $T_{01}, T_{01}'\in \irB(\irH_0^\perp,\irH_0)$ so that the operators}
\[
T = \left(
\begin{matrix}
T_{00} & T_{01}\\
0 & T_{11}
\end{matrix}\right)
\quad\text{and}\quad
T' = \left(
\begin{matrix}
T'_{00} & T'_{01}\\
0 & T'_{11}
\end{matrix}\right)
\]
\textup{are contractions on $\irH$, their stable subspaces coincide with $\irH_0$,
and the corresponding commutant mappings $\gamma_T$ and $\gamma_{T'}$ are injective.}

\textup{The operator $C = \left(
\begin{matrix}
C_{00} & c_{01}\\
0 & 0
\end{matrix}\right)$ intertwines $T$ with $T'$ precisely when
$C_{00}T_{00}= T_{00}'C_{00}$ and $C_{00}T_{01}+C_{01}T_{11}= T_{00}'C_{01}$.
The first equation holds only with $C_{00}=0$, and then the second equation takes the form $C_{01}(T_{11}-\lambda'I)=0$.
Applying the condition $\lambda'\in\D$ it is easy to verify that $\overline{\lambda'}\in \sigma_p(T_{11}^*)$.
Since the range of $T_{11}-\lambda'I$ is not dense in $\irH_0^\perp$, we can find a non-zero transformation $C_{01}$ satisfying $C_{01}(T_{11}-\lambda'I)=0$.
Thus the contraction $T$ is not in stable relation with $T'$, and so
the commutant mapping $\widetilde\gamma$ of the orthogonal sum $\widetilde T= T\oplus T'$ is not injective.}
\end{exmpl}

Our final aim in this chapter is to provide such a concrete contraction $T\in\irB(\irH)$ which satisfies (i)--(iv) of Theorem \ref{comm_nec_thm} but still $\gamma_T$ is not injective. 
Before that we give a reformulation of condition (iv) of Theorem \ref{comm_nec_thm}. 
We say that an operator $A\in\irB(\irF)$ is strongly irreducible if the only decomposition $\irF = \irF_1 \dotplus\irF_2$, where $\irF_1$ and $\irF_2$ are invariant subspaces of $A$, is the trivial decomposition.

\begin{prop}
Let us assume that the stable component $T_{00}$ of the contraction $T\in\irB(\irH)$ is strongly irreducible. 
Then condition (iv) of Theorem \ref{comm_nec_thm} is equivalent to $T_{01}\not\in \ran \irT_{T_{00},T_{11}}$.
\end{prop}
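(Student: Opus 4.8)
The plan is to unpack both sides of the claimed equivalence using the characterization of condition (iv) of Theorem \ref{comm_nec_thm} together with the definition of strong irreducibility. Recall that condition (iv) states there is \emph{no} direct decomposition $\irH = \irM_0\dotplus\irM_1$ with $\irM_0,\irM_1$ invariant for $T$ and $\{0\}\ne\irM_0\subset\irH_0$. So the negation of (iv) asserts the existence of such a nontrivial $T$-invariant direct summand sitting inside the stable subspace. I would show that the negation of (iv) is equivalent to the failure of $T_{01}\not\in\ran\irT_{T_{00},T_{11}}$, i.e.\ to $T_{01}\in\ran\irT_{T_{00},T_{11}}$, and that the strong irreducibility of $T_{00}$ is exactly what forces such an $\irM_0$ (when it exists) to be all of $\irH_0$.

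First I would translate the existence of the pair $(\irM_0,\irM_1)$ into the language of the projection $P$ onto $\irM_0$ parallel to $\irM_1$; as noted in the proof of Theorem \ref{comm_nec_thm}, such a decomposition corresponds to an idempotent $P\in\{T\}'$ with $\ran P=\irM_0\subset\irH_0$. Writing $P$ as a block matrix in the decomposition $\irH=\irH_0\oplus\irH_0^\perp$, the condition $\ran P\subset\irH_0$ forces the bottom row to vanish, so $P$ has the form appearing in Lemma \ref{basic_lem}, namely $P=\left(\begin{smallmatrix}P_{00}&P_{01}\\0&0\end{smallmatrix}\right)$. The commutation $PT=TP$ then reduces (exactly as in Lemma \ref{basic_lem}) to $P_{00}\in\{T_{00}\}'$ and the Sylvester-type relation $T_{00}P_{01}-P_{01}T_{11}=P_{00}T_{01}$, while idempotency $P^2=P$ gives $P_{00}^2=P_{00}$ and $P_{00}P_{01}=P_{01}$.

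Now strong irreducibility of $T_{00}$ enters: an idempotent $P_{00}$ in the commutant $\{T_{00}\}'$ yields a direct decomposition of $\irH_0$ into two $T_{00}$-invariant subspaces, so strong irreducibility forces $P_{00}\in\{0,I_{\irH_0}\}$. The case $P_{00}=0$ gives $P_{01}=0$ by the relation $P_{00}P_{01}=P_{01}$, hence $P=0$ and $\irM_0=\{0\}$, which is the trivial case excluded in (iv). Thus a genuine counterexample to (iv) must have $P_{00}=I_{\irH_0}$, whence $\irM_0=\ran P=\irH_0$, and the Sylvester relation collapses to $T_{00}P_{01}-P_{01}T_{11}=T_{01}$, i.e.\ $T_{01}=\irT_{T_{00},T_{11}}(P_{01})\in\ran\irT_{T_{00},T_{11}}$. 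Conversely, if $T_{01}=\irT_{T_{00},T_{11}}(Y)$ for some $Y\in\irB(\irH_0^\perp,\irH_0)$, then $P:=\left(\begin{smallmatrix}I_{\irH_0}&Y\\0&0\end{smallmatrix}\right)$ is easily checked to be an idempotent in $\{T\}'$ with $\ran P=\irH_0\subset\irH_0$ nonzero, so (iv) fails. This establishes that (iv) fails if and only if $T_{01}\in\ran\irT_{T_{00},T_{11}}$, which is the desired equivalence.

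The main obstacle I anticipate is ensuring the reduction to the case $P_{00}=I_{\irH_0}$ is airtight: one must be careful that strong irreducibility genuinely rules out nontrivial idempotents in $\{T_{00}\}'$ (since a nontrivial idempotent need not be an orthogonal projection, but it still produces a direct-sum decomposition of $\irH_0$ into its range and kernel, both $T_{00}$-invariant, which is precisely what strong irreducibility forbids). A secondary subtlety is verifying that the $\irM_0=\irH_0$ case is not already excluded by the constraint $\irM_0\subset\irH_0$ in condition (iv)---it is not, since the constraint is $\subset$, not $\subsetneq$, so $\irM_0=\irH_0$ is permitted provided a complementary invariant $\irM_1$ exists, and the block $Y$ supplies exactly that complement.
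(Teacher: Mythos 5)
Your proof is correct, and while it follows the paper's overall strategy (commuting idempotent $\leftrightarrow$ invariant direct decomposition, block matrices in $\irH_0\oplus\irH_0^\perp$, the Sylvester operator $\irT_{T_{00},T_{11}}$ — your converse direction is in fact identical to the paper's), the forward direction applies strong irreducibility at a different spot. The paper first pins down $\irM_0=\irH_0$ at the subspace level: it introduces the stable subspace $\irN_0$ of the restriction $T|\irM_1$, checks that $\irM_0\dotplus\irN_0=\irH_0$ (stable vectors split along the direct decomposition, since $T^nx_0+T^nx_1\to0$ and $T^nx_0\to0$ force $T^nx_1\to0$), and invokes strong irreducibility of $T_{00}$ on this decomposition; only then does it write the matrix of the projection, whose upper-left entry is automatically $I$. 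You instead work with the idempotent from the outset: the relations $P_{00}^2=P_{00}$, $P_{00}T_{00}=T_{00}P_{00}$ and $P_{00}P_{01}=P_{01}$ let you apply strong irreducibility to $\irH_0=\ran P_{00}\dotplus\ker P_{00}$ and eliminate the degenerate case $P_{00}=0$ algebraically. Your version is purely algebraic — it avoids introducing $\irN_0$ and the small analytic verification that stability passes to components, and it reuses the block computation of Lemma \ref{basic_lem} verbatim — while the paper's version makes the geometric content ($\irM_0$ must be all of $\irH_0$) explicit before any matrix manipulation. Both are complete, and your closing remarks correctly address the two spots where a gap could arise: a nontrivial idempotent in $\{T_{00}\}'$ need not be an orthogonal projection but still yields a forbidden invariant direct decomposition, and $\irM_0=\irH_0$ is permitted because condition (iv) of Theorem \ref{comm_nec_thm} requires only $\irM_0\subset\irH_0$.
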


\begin{proof}
If (iv) fails, then the invariant subspaces $\irM_0, \irM_1$ of $T$ exist such that $\{0\}\ne \irM_0\subset\irH_0$ and $\irM_0\dotplus\irM_1=\irH$.
Let $\irN_0$ denote the stable subspace of the restriction $T|\irM_1$.
Since $\irM_0\dotplus\irN_0=\irH_0$ and $T_{00}$ is strongly irreducible, it follows that $\irM_0=\irH_0$.
Let $P\in\irB(\irH)$ be the projection onto $\irH_0$ parallel to $\irM_1$, and let us consider its matrix 
$P = \left(\begin{matrix}
I & P_{01}\\
0 & 0
\end{matrix}\right)$ 
in the decomposition $\irH=\irH_0\oplus\irH_0^\perp$.
Since $P$ commutes with $T$, we infer that $T_{01}+P_{01}T_{11}= T_{00}P_{01}$, whence
$T_{01}= T_{00}P_{01}-P_{01}T_{11}\in \ran \irT_{T_{00},T_{11}}$ follows.

Conversely, assuming $T_{01}\in\ran\irT_{T_{00},T_{11}}$ there exists a transformation 
$P_{01}\in \irB(\irH_0^\perp,\irH_0)$ such that $T_{00}P_{01}-P_{01}T_{11}= T_{01}$ is valid.
Then the operator 
$P = \left(\begin{matrix}
I & P_{01}\\
0 & 0
\end{matrix}\right)$ 
is a projection, commuting with $T$.
Hence condition (iv) fails with the subspaces $\irM_0=\irH_0$ and $\irM_1=\ker P$.
\end{proof}

We finish this chapter with providing the desired example as follows.

\begin{theorem}
There exists a contraction $T\in\irB(\irH)$ such that the conditions (i)--(iv) of Theorem \ref{comm_nec_thm} hold, but the commutant mapping of $T$ is not injective.
\end{theorem}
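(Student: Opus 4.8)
The plan is to exploit Lemma~\ref{basic_lem} to turn the problem into the construction of a single block upper-triangular contraction. Recall that $\gamma$ is non-injective precisely when the system
\[
T_{00}C_{01}-C_{01}T_{11}=C_{00}T_{01},\qquad C_{00}\in\{T_{00}\}',
\]
has a solution with $(C_{00},C_{01})\neq(0,0)$. Condition (i) of Theorem~\ref{comm_nec_thm} is exactly the statement that the homogeneous equation $T_{00}C_{01}=C_{01}T_{11}$ has no nonzero solution, so in every witness of non-injectivity the entry $C_{00}$ must be nonzero. Hence the whole task is to manufacture $T_{00}\in C_{0\cdot}(\irH_0)$, $T_{11}\in C_{1\cdot}(\irH_0^\perp)$ and a coupling $T_{01}$ for which (i)--(iv) hold, and yet some nonzero $C_{00}\in\{T_{00}\}'$ satisfies $C_{00}T_{01}\in\ran\irT_{T_{00},T_{11}}$, where $\irT_{T_{00},T_{11}}(Y)=T_{00}Y-YT_{11}$.

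I would take $T_{00}$ to be strongly irreducible, so that by the preceding Proposition condition (iv) reduces to the single, checkable requirement $T_{01}\notin\ran\irT_{T_{00},T_{11}}$. The governing tension is between (i) and (ii). On the one hand (ii) forces the relevant spectra of $T_{00}$ and $T_{11}$ to touch, which by the Davis--Rosenthal theorem \cite{DaRo} is exactly what permits $\irT_{T_{00},T_{11}}$ to have proper range (its failure would make $\irT_{T_{00},T_{11}}$ surjective, trivially yielding non-injectivity through $C_{00}=I$). On the other hand (i) must forbid a genuine intertwiner from $T_{11}$ to $T_{00}$. I would realise both by choosing $T_{00}$ and $T_{11}$ as weighted shift operators whose spectra meet only on the unit circle: the overlap on $\T$ gives (ii), while the separation of their interior spectra, together with the constraint that an intertwiner would have to carry the $C_{1\cdot}$-behaviour of $T_{11}$ into the $C_{0\cdot}$-behaviour of $T_{00}$, forces $\irI(T_{11},T_{00})=\{0\}$ and so (i). The weights would be tuned so that $\sigma_p(T_{00})\cap\D$ and $\overline{\sigma_p(T_{11}^*)}\cap\D$ are disjoint, which through Proposition~\ref{point_sp_prop} delivers (iii).

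The core of the argument is the simultaneous placement of $T_{01}$: it must lie just outside $\ran\irT_{T_{00},T_{11}}$ (securing (iv) via the strong irreducibility of $T_{00}$), while a suitable \emph{non-invertible} $C_{00}\in\{T_{00}\}'$ --- for a shift-type $T_{00}$, a function $C_{00}=f(T_{00})$ that annihilates the obstructing spectral direction --- pulls $C_{00}T_{01}$ back into the range, so that $C_{00}T_{01}=\irT_{T_{00},T_{11}}(C_{01})$ for an explicit $C_{01}$. The resulting witness $C=\left(\begin{smallmatrix}C_{00}&C_{01}\\0&0\end{smallmatrix}\right)$ has nonzero, non-invertible $C_{00}$ and is therefore not an idempotent inducing a complementing invariant decomposition, consistently with (iv). The main obstacle I anticipate is precisely this balance: proving the non-solvability $T_{01}\notin\ran\irT_{T_{00},T_{11}}$ of a Sylvester equation between a stable shift and a $C_{1\cdot}$-shift, while at the same time exhibiting the perturbed element $C_{00}T_{01}$ inside the range with $C_{00}\neq0$. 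The spectral verifications (i)--(iii) are comparatively routine once the shifts are fixed; keeping them compatible with this delicate range behaviour is what has to be engineered.

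Finally I would carry out the verification in the following order. First, confirm that $T$ is a contraction by presenting $T_{01}$ through a defect-operator factorisation and invoking \cite[Lemma IV.2.1]{FF}, exactly as in the earlier example of this chapter. Next, read off (iii) from Proposition~\ref{point_sp_prop} and the chosen point spectra, establish (i) from the absence of nonzero intertwiners and (ii) from the contact of the spectra on $\T$, and deduce (iv) from $T_{01}\notin\ran\irT_{T_{00},T_{11}}$ by the strong-irreducibility criterion. At last, display the explicit pair $(C_{00},C_{01})$ solving the system of Lemma~\ref{basic_lem} with $C_{00}\neq0$, which by that lemma shows that $\gamma$ is not injective and completes the counterexample.
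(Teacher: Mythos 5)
Your blueprint reproduces the paper's architecture exactly: reduction via Lemma \ref{basic_lem}, strong irreducibility of $T_{00}$ so that condition (iv) becomes $T_{01}\notin\ran\irT_{T_{00},T_{11}}$, the Davis--Rosenthal theorem to extract (ii) from non-surjectivity of $\irT_{T_{00},T_{11}}$, contractivity of $T$ via the defect factorisation and \cite[Lemma IV.2.1]{FF}, Proposition \ref{point_sp_prop} for (iii), and a nonzero non-invertible $C_{00}\in\{T_{00}\}'$ pulling $C_{00}T_{01}$ into the range (the paper takes $C_{00}=-T_{00}$, which is indeed of your form $f(T_{00})$). But this is an existence theorem, and the construction is the proof: you never exhibit $T_{00}$, $T_{11}$, $T_{01}$, and you explicitly defer the two assertions that carry the entire content, namely (a) $T_{01}\notin\ran\irT_{T_{00},T_{11}}$ and (b) $C_{00}T_{01}\in\ran\irT_{T_{00},T_{11}}$ with $C_{00}\neq 0$. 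Nothing in your spectral setup produces these: properness of the range (which, as you note, already follows from (ii)) does not locate the specific $T_{01}$ outside it, and no functional-calculus generality makes $C_{00}T_{01}$ land inside. In the paper both facts are quantitative, not spectral: (a) is Fialkow's method, the estimate $\|T_{00}Z-ZT_{11}-V\|\geq 1$ for all $Z$, driven by $\|T_{11}f_{j,0}\|\to 0$ and $\|T_{00}^*e_j\|\to 0$; (b) is an explicit operator $Y$ defined on basis vectors, whose boundedness rests on summability of products of the weights $(j+2)^{-1}$, solving $T_{00}Y-YT_{11}=-T_{00}V$. Until such a pair of arguments is supplied, the proposal is a correct plan with its central step missing.

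A secondary point: your guiding picture, weighted shifts ``whose spectra meet only on the unit circle,'' does not match how the tension between (i) and (ii) is actually resolved, and may steer the construction wrong. In the working example $T_{00}$ is quasinilpotent ($\sigma(T_{00})=\{0\}$, so $\sigma_p(T_{00})=\emptyset$, which is what makes (iii) immediate), and the contact of $\overline{\sigma_{ap}(T_{00}^*)}$ with $\sigma_{ap}(T_{11})$ occurs at the point $0$, because the $C_{11}$ part is a sum of bilateral shifts with a single weight $(j+2)^{-1}$ accumulating at $0$, forcing $0\in\sigma_{ap}(T_{11})$ even though each summand has spectrum $\T$. Likewise (i) does not come from separating interior spectra: it is the soft fact that $T_{00}\in C_{00}$ and $T_{11}\in C_{11}$ force $\irI(T_{11},T_{00})=\{0\}$. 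So the classes $C_{00}$/$C_{11}$ do the qualitative work, while the delicate interplay you correctly identify as ``the main obstacle'' is settled by the two norm computations above, which your proposal still owes.
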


\begin{proof}
Let $\irH_0$ and $\irH_1$ be $\aleph_0$-dimensional Hilbert spaces, and let us form the orthogonal sum $\irH= \irH_0\oplus\irH_1$.
Let us fix an orthonormal basis $\{e_j: j\in\N\}$ in $\irH_0$, and let $\{f_{j,i}: j\in\N, i\in\Z\}$ be
an orthonormal basis in $\irH_1$.
The operator $T_{00}\in\irB(\irH_0)$ is defined by $T_{00}e_j= (j+2)^{-1}e_{j+1}$ for $j\in\N$.
It is easy to see that $T_{00}$ is a strongly irreducible $C_{00}$-contraction,
furthermore $D_{T_{00}^*}e_1=e_1$ and $D_{T_{00}^*}e_j= (1-(j+1)^{-2})^{1/2}e_j$ for $j>1$.
The operator $T_{11}\in\irB(\irH_1)$ is defined by 
\[
T_{11}f_{j,i} =
\left\{
\begin{matrix}
(j+2)^{-1}f_{j,1} & \text{if } i=0, j\in\N\\
f_{j,i+1} & \text{if } i\in\Z\setminus\{0\}, j\in\N\\
\end{matrix}
\right..
\]
It is clear that $T_{11}$ is a $C_{11}$-contraction, where 
\[
D_{T_{11}}f_{j,i} = 
\left\{
\begin{matrix}
(1-(j+2)^{-2})^{1/2}f_{j,0} & \text{if } i=0, j\in\N\\
0 & \text{if } i\in\Z\setminus\{0\}, j\in\N\\
\end{matrix}
\right..
\]
The partial isometry $V\in\irB(\irH_1,\irH_0)$ is defined by 
\[
Vf_{j,i} =
\left\{
\begin{matrix}
e_j & \text{if } i=0, j\in\N\\
0 & \text{if } i\in\Z\setminus\{0\}, j\in\N\\
\end{matrix}
\right..
\]
Setting $T_{01}= \tfrac{1}{2}V\in\irB(\irH_1,\irH_0)$, let us consider the operator
\[ 
T = \left(\begin{matrix}
T_{00} & T_{01} \\
0 & T_{11}
\end{matrix}\right) \in\irB(\irH).
\]
Applying \cite[Lemma IV.2.1]{FF}, it is easy to verify that $T$ is a contraction, whose stable subspace obviously coincides with $\irH_0$.

Since $T_{00}$ is of class $C_{00}$ and $T_{11}$ is of class $C_{11}$, we infer that $\irI(T_{00}^*,T_{11}^*)= \{0\}$, whence $\irI(T_{11},T_{00})= \{0\}$ follows. 
Hence condition (i) of Theorem \ref{comm_nec_thm} holds.
Since $\sigma_p(T_{00})=\emptyset$, it follows by Proposition \ref{point_sp_prop} that $\sigma_p(T)\cap\D=\emptyset$. 
Thus condition (iii) is also fulfilled.

Following the method of \cite{Fi} we show that $V$ is not in the range of the transformation $\irT_{T_{00},T_{11}}$.
For any $Z\in\irB(\irH_1,\irH_0)$ and $j\in\N$, we have
\[ 
\|T_{00}Z-ZT_{11}-V\| \ge \|(T_{00}Z-ZT_{11}-V)f_{j,0}\| \ge \|T_{00}Zf_{j,0}-e_j\|- \|Z\| \|T_{11}f_{j,0}\|. 
\]
Furthermore, for any $j\in\N$ we have
\[ 
\|T_{00}Zf_{j,0}-e_j\|^2 = \|T_{00}Zf_{j,0}\|^2 - 2 \re\langle Zf_{j,0}, T_{00}^*e_j\rangle + 1 \ge 1- 2\|Z\| \|T_{00}^*e_j\|. 
\]
Since $\lim_{j\to\infty}\|T_{11}f_{j,0}\|=0$ and $\lim_{j\to\infty}\|T_{00}^*e_j\|=0$, we obtain that
$\|T_{00}Z-ZT_{11}-V\|\ge 1$, and so $V\not\in \ran\irT_{T_{00},T_{11}}$.
Since $\irT_{T_{00},T_{11}}$ is not surjective, we infer by \cite{DaRo} that the condition (ii) must hold.
Applying the previous proposition we also conclude that condition (iv) is valid.

Therefore, all conditions of Theorem \ref{comm_nec_thm} hold.
It remains to show that the commutant mapping $\gamma$ is not injective.
By Lemma \ref{basic_lem} it is enough to verify that $-T_{00}T_{01} \in \ran\irT_{T_{00},T_{11}}$.
Then an appropriate $C_{01}$ can be found for $C_{00}=-T_{00}\ne 0$.

Let $\widehat{\irH_1}$ denote the linear span of the basis vectors $f_{j,i}$ in $\irH_1$.
Let us consider the linear transformation $Y_0\colon \widehat{\irH_1}\to \irH_0$ defined by
\[
Y_0f_{j,i} =
\left\{
\begin{matrix}
0 & \text{if } i\le 0\\
e_{j+1} & \text{if } i=1\\
\left(\prod_{k=j+3}^{j+i+1} k^{-1}\right)e_{j+i} & \text{if } i\ge2\\
\end{matrix}
\right..
\]
To show that $Y_0$ is bounded, we note first that $e_1$ is orthogonal to $Y_0\widehat{\irH_1}$, and
$Y_0^{-1}(\C e_2)=\C f_{1,1}$.
For any $j\ge3$, the inverse image $Y_0^{-1}(\C e_j)$ is the linear span of the vectors $f_{j-1,1}, f_{j-2,2}, \dots, f_{1,j-1}$.
For any choice of complex numbers $\{\xi_i\}_{i=1}^{j-1}$ we have
\[ 
\left\|Y_0\sum_{i=1}^{j-1}\xi_i f_{j-i,i}\right\| \le \sum_{i=1}^{j-1} |\xi_i| \|Y_0f_{j-i,i}\| \le \left(\sum_{i=1}^{j-1} |\xi_i|^2\right)^{1/2}\cdot \left(\sum_{i=1}^{j-1} \|Y_0f_{j-i,i}\|^2\right)^{1/2}, 
\]
where
\[ 
\sum_{i=1}^{j-1} \|Y_0f_{j-i,i}\|^2 = 1 + \sum_{i=2}^{j-1} \prod_{k=j-i+3}^{j+1} k^{-2}\le 1 + (j-2) (j+1)^{-2}\le 2.
\]
Since these inverse images linearly generate $\widehat{\irH_1}$, we obtain that $Y_0$ is indeed bounded.
Thus $Y_0$ can be uniquely extended to a transformation $Y\in\irB(\irH_1,\irH_0)$.
Simple computations show that $T_{00}Yf_{j,i}-YT_{11}f_{j,i}= -T_{00}Vf_{j,i}$ holds for every $j\in\N$ and $i\in\Z$.
We conclude that $T_{00}Y-YT_{11}=-T_{00}V$.
Hence $-T_{00}T_{01}= -\tfrac{1}{2}T_{00}V\in \ran \irT_{T_{00},T_{11}}$, what we wanted to prove.
\end{proof}

The complete characterization of injectivity of the commutant mapping remains open.

%---------------------------------------------------------------------------------------------------------------------------------------------

\newpage

\chapter{Cyclic properties of weighted shifts on directed trees} \label{tree_chap}

\section{Statements of the results}
The classes of the so-called weighted bilateral, unilateral or backward shift operators (\cite{Ni, Shields}) are very useful for an operator theorist. 
Besides normal operators these are the next natural classes on which conjectures could be tested. 
Recently Z. J. Jab\l onski, I. B. Jung and J. Stochel defined a natural generalization of these classes in \cite{JJS}. 
They were interested among others in hyponormality, co-hyponormality, subnormality etc., and they provided many examples for several unanswered questions.  

In this chapter we will study cyclic properties of bounded (mainly contractive) weighted shift operators on directed trees. 
Namely first we will explore their asymptotic behaviour, and as an application we will obtain some results concerning cyclicity.
These are the results of \cite{Ge_tree}.

We recall some definitions from \cite{JJS}. 
The pair $\irT=(V,E)$ is a directed graph if $V$ is an arbitrary (usually infinite) set and $E\subseteq (V\times V)\setminus \{(v,v)\colon v \in V\}$. 
We call an element of $V$ and $E$ a vertex and a (directed) edge of $\irT$, respectively. 
We say that $\irT$ is connected if for any two distinct vertices $u,v\in V$ there exists an undirected path between them, i.e. there are finitely many vertices: $u=v_0, v_1,\dots v_n=v\in V, n\in\N$ such that $(v_{j-1},v_j)$ or $(v_j,v_{j-1})\in E$ for every $1\leq j\leq n$. 
The finite sequence of distinct vertices $v_0,v_1,\dots v_n\in V, n\in\N$ is called a (directed) circuit if $(v_{j-1},v_j)\in E$ for all $1\leq j\leq n$ and $(v_n,v_0)\in E$. 
The directed graph $\irT = (V,E)$ is a directed tree if the following three conditions are satisfied:

\begin{itemize}
\item[\textup{(i)}] $\irT$ is connected,
\item[\textup{(ii)}] for each $v\in V$ there exists at most one $u\in V$ such that $(u,v)\in E$, and
\item[\textup{(iii)}] $\irT$ has no circuit.
\end{itemize}

From now on $\irT$ always denotes a directed tree. 
In the directed tree a vertex $v$ is called a child of $u\in V$ if $(u,v)\in E$. 
The set of all children of $u$ is denoted by $\Chi_\irT(u)=\Chi(u)$. 
Conversely, if for a given vertex $v$ we can find a (unique) vertex $u$ such that $(u,v)\in E$, then we shall say that $u$ is the parent of $v$. 
We denote $u$ by $\Par_\irT(v) = \Par(v)$. 
We will also use the notation $\Par^k(v) = \underbrace{\Par(\dots (\Par}_{k \text{-times}}(v))\dots)$ if it makes sense, and $\Par^0$ will be the identity map. 

If a vertex has no parent, then we call it a root of $\irT$. 
A directed tree is either rootless or has a unique root (see \cite[Proposition 2.1.1.]{JJS}). 
We will denote this unique root by $\roo_\irT = \roo$, if it exists. 
A subgraph of a directed tree which is itself a directed tree is called a subtree. 
We will use the notation $V^\circ = V\setminus \{\roo\}$. 
If a vertex has no children, then we call it a leaf, and $\irT$ is leafless if it has no leaves. 
The set of all leaves of $\irT$ will be denoted by $\Lea(\irT)$. 
Given a subset $W\subseteq V$ of vertices, we put $\Chi(W) = \cup_{v\in W}\Chi(v)$, $\Chi^0(W) = W$, $\Chi^{n+1}(W) = \Chi(\Chi^{n}(W))$ for all $n\in\N$ and $\Des_\irT(W) = \Des(W)=\bigcup_{n=0}^\infty \Chi^{n}(W)$, where $\Des(W)$ is called the descendants of the subset $W$, and if $W = \{u\}$, then we simply write $\Des(u)$.

If $n\in\N_0$, then the set $\Gen_{n,\irT}(u)=\Gen_n(u)=\bigcup_{j=0}^n\Chi^j(\Par^j(u))$ is called the $n$th generation of $u$ (i.e. we can go up at most $n$ levels and then down the same amount of levels) and $\Gen_\irT(u)=\Gen(u)=\bigcup_{n=0}^\infty\Gen_n(u)$ is the (whole) generation or the level of $u$. 
From the equation (see \cite[Proposition 2.1.6]{JJS})
\begin{equation}\label{level_eq}
V = \bigcup_{n=0}^\infty\Des(\Par^n(u))
\end{equation}
one can easily see that the different levels can be indexed by the integer numbers (or with a subset of the integers) in such a way that if a vertex $v$ is in the $k$th level, then the children of $v$ are in the $(k+1)$th level and $\Par(v)$ is in the $(k-1)$th level whenever $\Par(v)$ makes sense.

The complex Hilbert space $\ell^2(V)$ is the usual space of all square summable complex functions on $V$ with the standard innerproduct
\[ 
\langle f,g\rangle = \sum_{u\in V} f(u)\overline{g(u)}, \quad f,g\in\ell^2(V). 
\]
For $u\in V$ we define $e_u(v)=\delta_{u,v}\in\ell^2(V)$, where $\delta_{u,v}$ is the Kronecker-delta function. 
Obviously the set $\{e_u\colon u\in V\}$ is an orthonormal basis. 
We will refer to $\ell^2(W)$ as the subspace $\vee\{e_w\colon w\in W\}$ for any subset $W\subseteq V$.

Let $\underline{\lambda} = \{\lambda_v\colon v\in V^\circ\}\subseteq\C$ be a set of weights satisfying
\[
\sup\left\{\sqrt{\sum_{v\in\Chi(u)}|\lambda_v|^2}\colon u\in V\right\}<\infty.
\]
Then the weighted shift operator on the directed tree $\irT$ is the operator defined by
\[
\Sl\colon \ell^2(V)\to \ell^2(V), \quad e_u\mapsto \sum_{v\in\Chi(u)} \lambda_v e_v. 
\]
By \cite[Proposition 3.1.8.]{JJS} this defines a bounded linear operator with precise norm $\|\Sl\| = \sup\left\{\sqrt{\sum_{v\in\Chi(u)}|\lambda_v|^2}\colon u\in V\right\}$.

We will consider only bounded weighted shift operators on directed trees, especially contractions (i.e. $\|\Sl\|\leq 1$) in certain parts of the chapter. 
We recall that every $\Sl$ is unitarily equivalent to $S_{|\underline{\lambda}|}$ where $|\underline{\lambda}|:=\{|\lambda_v|\colon v\in V^\circ\}\subseteq[0,\infty[$. 
Moreover, the unitary operator $U$ with $S_{|\underline{\lambda}|} = U\Sl U^*$ can be chosen such that $e_u$ is an eigenvector of $U$ for every $u\in V$. 
It is also proposed that if a weight $\lambda_v$ is zero, then the weighted shift operator on this directed tree is a direct sum of two other weighted shift operators on directed trees (see \cite[Theorem 3.2.1 and Proposition 3.1.6]{JJS}. 
In view of these facts, this chapter will exclusively consider weighted shift operators on directed trees with positive weights.

The boundedness and the condition about weights together imply that every vertex has countably many children. 
Thus, by \eqref{level_eq}, $\ell^2(V)$ is separable.

An operator $T\in\irB(\irH)$ is cyclic if there is a vector such that 
\[ 
\irH_{T,h} = \irH_h := \vee\{T^n h\colon n\in\N_0\} = \{p(T) h \colon p\in \irP_\C\}^- = \irH, 
\]
where $\irP_\C$ denotes the set of all complex polynomials. 
Such an $h\in\irH$ vector is called a cyclic vector for $T$. 
The first theorem concerns a countable orthogonal sum of weighted backward shift operators.

\begin{theorem}\label{backw_cycl_thm}
Suppose that $\{e_{j,k}\colon j\in \irJ, k\in\N_0\}$ is an orthonormal basis in the Hilbert space $\irH$ where $\irJ \neq \emptyset$ is a countable set and $\{w_{j,k}\colon j\in \irJ, k\in\N_0\}\subset \C$ is a bounded set of weights. 
Consider the following operator:
\[ 
Be_{j,k} = \left\{ \begin{matrix}
0 & \text{ if } k=0 \\
w_{j,k-1}e_{j,k-1} & \text{ otherwise}
\end{matrix} \right.. 
\]
\begin{itemize}
\item[\textup{(i)}] If there is no zero weight, then $B$ is cyclic.
\item[\textup{(ii)}] Assume there is no zero weight and there exists a vector $g\in\cap_{n = 1}^\infty \ran(B^n)$ such that for every fixed $j\in\irJ$, $\langle g,e_{j,k}\rangle \neq 0$ is fulfilled for infinitely many $k\in\N_0$. 
Then we can find a cyclic vector from the linear manifold $\cap_{n = 1}^\infty \ran(B^n)$.
\item[\textup{(iii)}] The operator $B$ is cyclic if and only if $B$ has at most one zero weights.
\end{itemize}
\end{theorem}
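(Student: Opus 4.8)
The plan is to prove the two implications separately, exploiting the reduction of $B$ to backward shifts along the chains $\{e_{j,k}\}_{k\in\N_0}$ and, in the sufficiency part, reducing the one-zero-weight situation to part (i).

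For the necessity direction (cyclicity implies at most one zero weight), I would first record that the adjoint is the forward weighted shift $B^*e_{j,k}=\overline{w_{j,k}}\,e_{j,k+1}$, so that $\ker B^* = \vee\{e_{j,k}\colon w_{j,k}=0\}$ and hence $\dim\ker B^*$ equals the number of zero weights. Then I would invoke the classical obstruction to cyclicity: if $h$ is a cyclic vector for an operator $T$ and $u,v\in\ker(T^*-\overline\lambda I)$, then $\langle T^n h,u\rangle=\lambda^n\langle h,u\rangle$ and likewise for $v$; totality of $\{T^n h\}_{n\in\N_0}$ forces $\langle h,u\rangle\neq 0\neq\langle h,v\rangle$, and after normalizing one finds $\langle T^n h, u'-v'\rangle=0$ for all $n$, whence $u,v$ are proportional and $\dim\ker(T^*-\overline\lambda I)\le 1$ for every $\lambda$. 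Applying this at $\lambda=0$ to $T=B$ shows that cyclicity forces $\dim\ker B^*\le 1$, i.e. at most one zero weight.

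For sufficiency, the case of no zero weights is exactly part (i), so it remains to treat exactly one zero weight $w_{j_0,m_0}=0$. Here I would observe that $\irF:=\vee\{e_{j_0,0},\dots,e_{j_0,m_0}\}$ reduces $B$: it is $B$-invariant because the only cut weight is $w_{j_0,m_0}$, and $B^*\irF\subseteq\irF$ since $B^*e_{j_0,m_0}=\overline{w_{j_0,m_0}}\,e_{j_0,m_0+1}=0$. Thus $B=B_{\mathrm{fin}}\oplus B_{\mathrm{rest}}$, where $B_{\mathrm{fin}}:=B|\irF$ is a weighted nilpotent shift with nonzero weights (hence cyclic with cyclic vector $e_{j_0,m_0}$ and $B_{\mathrm{fin}}^{m_0+1}=0$), and $B_{\mathrm{rest}}:=B|\irF^\perp$ is again an orthogonal sum of backward shifts with no zero weights, so cyclic by part (i). Fixing a cyclic vector $h_0$ for $B_{\mathrm{rest}}$, I would set $h:=e_{j_0,m_0}\oplus h_0$ and use the identity $\vee\{B_{\mathrm{rest}}^n h_0\colon n\ge N\}=\overline{\ran B_{\mathrm{rest}}^{N}}$, valid for every $N$ because $\mathrm{span}\{B_{\mathrm{rest}}^n h_0\}$ is dense and $B_{\mathrm{rest}}^{N}$ is bounded. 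Since each $e_{j,k}$ is a nonzero multiple of $B_{\mathrm{rest}}^{N}e_{j,k+N}$, the power $B_{\mathrm{rest}}^{N}$ has dense range, so this span is all of $\irF^\perp$. Taking $N=m_0+1$, the vectors $B^n h=0\oplus B_{\mathrm{rest}}^n h_0$ $(n\ge m_0+1)$ already span $\irF^\perp$, giving $\irF^\perp\subseteq\irH_h$; subtracting these $\irF^\perp$-components from $B^n h=(B_{\mathrm{fin}}^n e_{j_0,m_0})\oplus(B_{\mathrm{rest}}^n h_0)$ for $0\le n\le m_0$ yields $B_{\mathrm{fin}}^n e_{j_0,m_0}\oplus 0\in\irH_h$, and these span $\irF$, so $\irH_h=\irF\oplus\irF^\perp=\irH$.

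The routine parts are the adjoint computation and the nilpotent bookkeeping; the conceptual heart, and the only place where something could go wrong, is the sufficiency case with one zero weight, where one must ensure that passing to a cyclic vector $h_0$ of the tail operator $B_{\mathrm{rest}}$ and discarding the first $m_0+1$ orbit vectors does not lose density. The identity $\vee\{B_{\mathrm{rest}}^n h_0\colon n\ge N\}=\overline{\ran B_{\mathrm{rest}}^N}$ combined with the dense range of the backward-shift powers resolves this, and verifying that dense range, which rests precisely on all remaining weights being nonzero, is the crucial link between the combinatorial hypothesis and the operator-theoretic conclusion. I do not expect to need part (ii) for this argument.
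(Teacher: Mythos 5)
There is a genuine gap: the statement consists of parts (i), (ii) and (iii), and you prove only (iii), explicitly treating (i) as a black box and discarding (ii) altogether. Since both branches of your sufficiency argument reduce to (i), the substantive content of the theorem is left unproved. Part (i) is not routine: for a countable orthogonal sum of backward weighted shifts (think of infinitely many copies), no obvious candidate vector is cyclic, and a cyclic vector must be engineered so that for every basis vector $e_{j,i}$ some suitably normalized power of $B$ applied to it is close to $e_{j,i}$, with the cross-terms from all other chains made small. The paper does this by taking $f=\sum_{l}\xi_{j_l,k_l}e_{j_l,k_l}$ with one coordinate per block, gaps $k_{l+1}-k_l\nearrow\infty$, every $j\in\irJ$ recurring infinitely often, and then iteratively shrinking the coordinates so that the worst cross-term mass $\Sigma_m$ over $k_{m-1}<k\le k_m$ is at most $2^{-m}$, which forces $e_{j_m,k_m-k}\in\irH_{B,f}$ for all admissible $k$ and hence totality. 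Part (ii) is a further refinement of that construction (placing the support inside the coordinates of the given $g$ and noting that shrinking coordinates does not leave $\ran(B^n)$), so it cannot be obtained from your text either. As a proof of the stated theorem, what you wrote is therefore incomplete in an essential way.

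For the part you did address, your argument is correct and close to the paper's, with two small differences worth noting. Your necessity step is slightly stronger than the paper's: the paper merely remarks that for a cyclic operator the co-dimension of $\ran(B)^-$ is at most one, whereas you prove $\dim\ker(B^*-\overline{\lambda}I)\le 1$ for \emph{every} $\lambda$ by the orthogonality trick, recovering the paper's observation at $\lambda=0$. Your sufficiency step — splitting off the finite reducing block $\irF=\vee\{e_{j_0,0},\dots,e_{j_0,m_0}\}$, writing $B=B_{\mathrm{fin}}\oplus B_{\mathrm{rest}}$ with $B_{\mathrm{fin}}$ a cyclic nilpotent and $B_{\mathrm{rest}}$ a zero-weight-free sum of backward shifts, and checking that $e_{j_0,m_0}\oplus h_0$ is cyclic via $\vee\{B_{\mathrm{rest}}^n h_0\colon n\ge N\}=\big(\ran B_{\mathrm{rest}}^N\big)^-=\irF^\perp$ — is in substance the paper's lemma that $T\oplus N$ is cyclic whenever $T$ is cyclic with dense range and $N$ is a cyclic nilpotent; you spell out the needed density through the dense range of the powers $B_{\mathrm{rest}}^N$ instead of iterating the fact that $Tf$ is cyclic when $f$ is cyclic and $T$ has dense range. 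So your route through (iii) matches the paper's; the theorem's core, parts (i) and (ii), is what is missing.
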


The proof of Theorem \ref{backw_cycl_thm} was motivated by the solution of \cite[Problem 160]{Halmos}. 
We would like to note that the case in (iii) of Theorem \ref{backw_cycl_thm} when $\#\irJ = 1$ was done by Z. G. Hua in \cite{Hu}. 
However this article was written in Chinese, therefore the author of the dissertation was not able to read the proof. 
The above theorem can be considered as a generalization of Hua's result.

Now we give our results concerning cyclicity of weighted shift operators on directed trees. 
The following quantity:
\[ 
\Br(\irT) = \sum_{u\in V\setminus\Lea(\irT)} (\#\Chi(u)-1) 
\]
is called the branching index of $\irT$. 
By (ii) of \cite[Proposition 3.5.1]{JJS} we have 
\begin{equation} \label{co-rank_eq} \dim(\ran(\Sl)^\perp) = \left\{ \begin{matrix}
1+\Br(\irT) & \text{if } \irT \text{ has a root,} \\
\Br(\irT) & \text{if } \irT \text{ has no root.} 
\end{matrix} \right.
\end{equation}
It is easy to see that for every cyclic operator $T\in\irB(\irH)$ we have $\dim(\ran(T)^\perp)\leq 1$. 
Therefore the only interesting case concerning the cyclicity of $\Sl$ is when $\irT$ has no root and $\Br(\irT) = 1$ (if the branching index is zero, we obtain a usual shift operator). 

\begin{figure}\label{tree_1-2Lea_fig}
\centering
\includegraphics[scale=0.5]{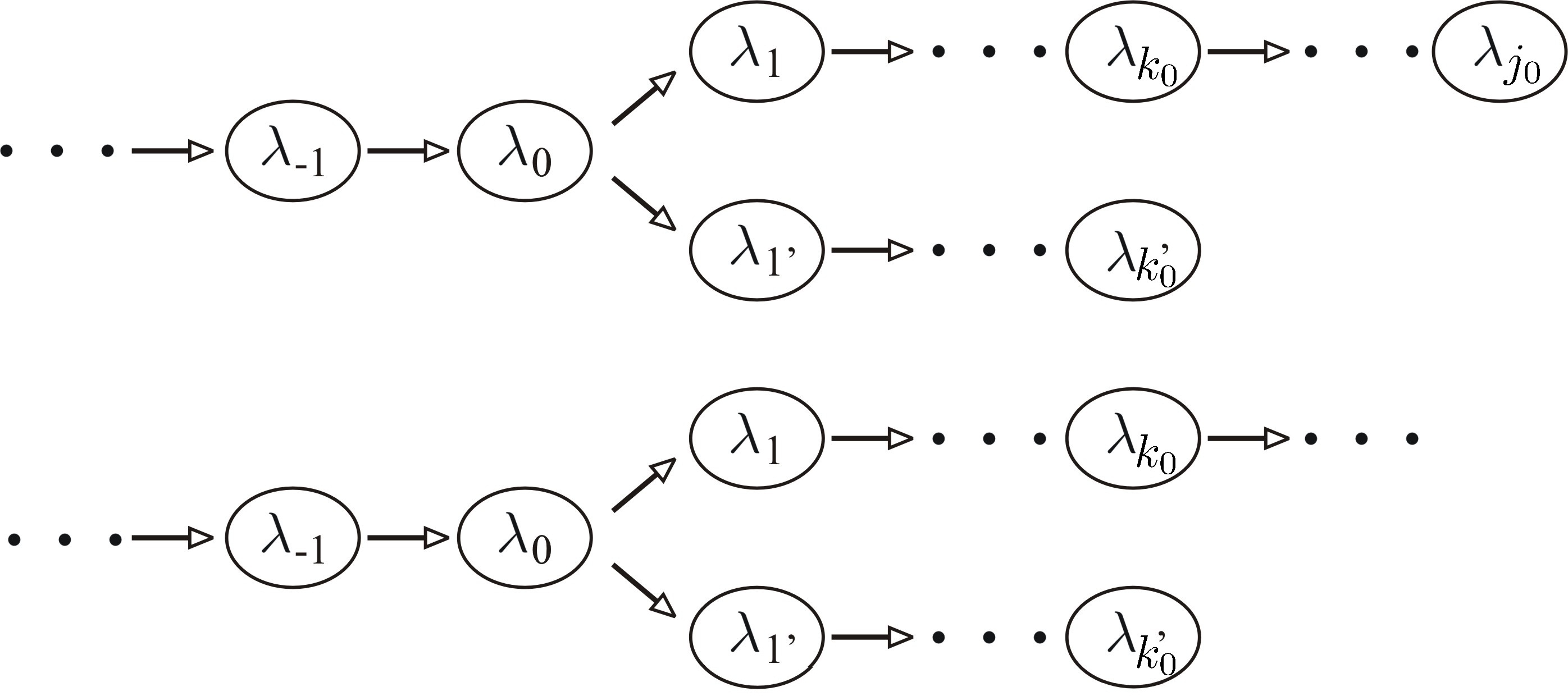}
\caption{When there is at least one leaf.}
\end{figure}

If $\#\Lea(\irT) = 2$, then $\irT$ will be represented by the graph $(V,E)$ with vertices $V = \{j\in\Z\colon j\leq j_0\}\cup\{k'\colon 1\leq k \leq k_0\}$ where we assume $1\leq k_0\leq j_0<\infty$, and edges $E = \{(j-1,j)\colon j\leq j_0\}\cup\{(0,1')\}\cup\{((j-1)',j')\colon 1<j\leq k_0\}$.
The corresponding weights will be $\underline{\lambda} = \{\lambda_v\colon v\in V\}$. 
If $\#\Lea(\irT) = 1$, then it will be represented by a very similar graph (see Figure \ref{tree_1-2Lea_fig}). 

\begin{figure}
\label{tree_noLea_fig}
\includegraphics[scale=0.5]{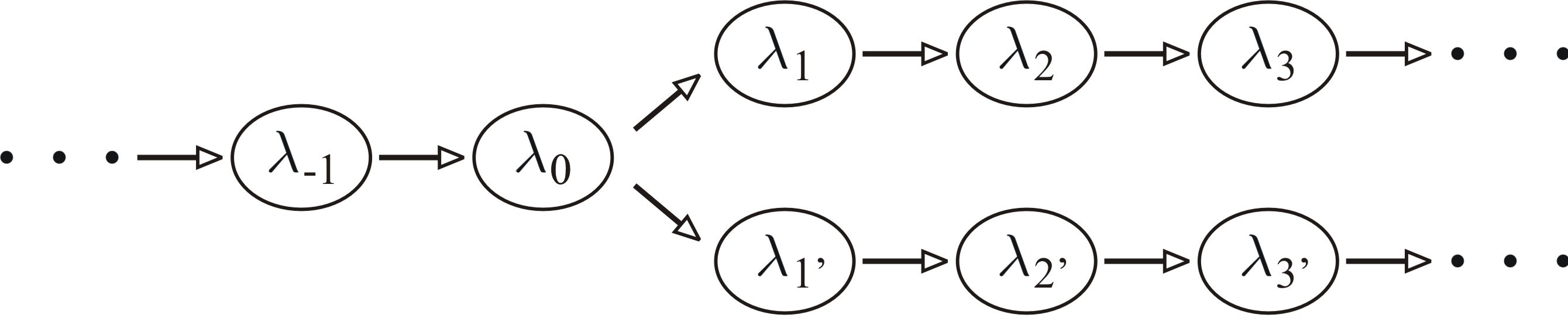}
\caption{$\widetilde{\irT}$}
\end{figure}

If there is no leaf, then usually the associated representative graph will be denoted by another symbol: $\widetilde{\irT}$ (Figure 2). We remind the reader that every weight is positive.

\begin{theorem}\label{tree_cycl_thm}
Suppose that the directed tree $\irT = (V,E)$ has no root and $\Br(\irT) = 1$.
\begin{itemize}
\item[\textup{(i)}] If $\#\Lea(\irT) = 2$, then every bounded weighted shift on $\irT$ is cyclic.
\item[\textup{(ii)}] Suppose $\irT = (V,E)$ has a unique leaf. 
A weighted shift $\Sl$ on $\irT$ is cyclic if and only if the bilateral shift operator $W$ on the subtree $\irT' := (\Z,E\cap(\Z\times\Z))$ with weights $\{\lambda_n\}_{n=-\infty}^\infty = \{\lambda_v\colon \in V\cap\Z\}$ is cyclic. 
In particular, if $\Sl\notin C_{\cdot 0}(\ell^2(V))$ is contractive, then $\Sl$ is cyclic.
\item[\textup{(iii)}] Assume that $\Sl$ is contractive and of class $C_{1\cdot}$. 
Then it has no cyclic vectors.
\item[\textup{(iv)}] There exists a cyclic weighted shift operator $\Sl$ on $\widetilde\irT$.
\end{itemize}
\end{theorem}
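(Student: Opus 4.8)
The plan is to organize everything around the three possible values of $\#\Lea(\irT)\in\{0,1,2\}$, which are forced by $\Br(\irT)=1$ together with the absence of a root: the unique branching vertex (call it $0$) has exactly two children, and the two forward branches emanating from it are each either finite (ending in a leaf) or infinite. Before splitting into cases I would record the key structural observation that the two forward branches, say with spans $\irH_A$ and $\irH_B$ (the closed spans of the basis vectors lying strictly past the branching in each branch), are invariant for $\Sl$ and that $\Sl$ restricted to each is an ordinary unilateral weighted shift, while any content fed in along the backward (parent) chain first reaches the branching vertex and only then splits into these two branches. This picture drives all four arguments.

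For part (iii) I would first note that a \emph{finite} branch produces a stable basis vector (it is annihilated by a finite power of $\Sl$), so a contractive $C_{1\cdot}$ shift can only live on the leafless tree $\widetilde{\irT}$, where both $\Sl|\irH_A$ and $\Sl|\irH_B$ are $C_{1\cdot}$ unilateral weighted shifts. The idea is then a multiplicity argument: if $\Sl$ were cyclic, then, since for a $C_{1\cdot}$ contraction the intertwiner $X_{\Sl}$ is a quasi-affinity, it would carry a cyclic vector of $\Sl$ to a cyclic vector of the unitary asymptote $W_{\Sl}$, making $W_{\Sl}$ a cyclic (hence multiplicity-one) unitary operator. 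But the two orthogonal $\Sl$-invariant subspaces $\irH_A,\irH_B$, each carrying a $C_{1\cdot}$ unilateral shift whose unitary asymptote has full spectrum $\T$, should force the multiplicity of $W_{\Sl}$ to be at least two, a contradiction. Turning ``cyclic'' into ``cyclic unitary asymptote'' I would borrow from the cyclicity/asymptote theory (e.g.\ \cite{Ke_cycl,Ke_isom_as}); making the comparison of multiplicities precise through the functorial behaviour of the asymptote on invariant subspaces is the step I expect to demand the most care.

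The construction parts proceed by exhibiting explicit cyclic vectors. In part (i), with $\#\Lea=2$ both forward branches are finite, so $\Sl$ is locally nilpotent and hence of class $C_{0\cdot}$, with $\dim(\ran\Sl)^\perp=1$; choosing $h=\sum_u c_u e_u$ with all $c_u\neq0$ and rapidly decaying, I would show $e_u\in\irH_h$ for every $u$ by descending from the \emph{deeper} of the two arms: a sufficiently high power of $\Sl$ applied to a fixed backward basis vector lands on the longer arm's leaf after the shorter arm's contribution has already been annihilated, which isolates one leaf vector, and the remaining basis vectors are then recovered inductively. The delicate case is when the two arms have equal length, where the two leaf-components appear simultaneously and must be separated by combining vectors started at two different backward positions.

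In part (ii), with a unique leaf the infinite forward branch together with the backward chain forms a bilateral $\Z$-chain carrying the bilateral weighted shift $W$, while the remaining finite arm is extra. I would prove the stated equivalence in both directions: compressing a cyclic vector of $\Sl$ to $\ell^2(\Z)$ yields a cyclic vector of $W$ (the $\Z$-chain subspace is semi-invariant), and conversely a cyclic vector of $W$ can be completed to one for $\Sl$ by exploiting the \emph{finiteness} of the extra arm to separate the arm basis vectors from the main chain, since after finitely many applications of $\Sl$ the arm contribution vanishes but the main-chain contribution survives. The ``in particular'' clause then follows because $\Sl\notin C_{\cdot0}$ is equivalent to $W\notin C_{\cdot0}$, and a contractive bilateral weighted shift that is not of class $C_{\cdot0}$ is cyclic, which I would derive from the known cyclicity criteria for bilateral shifts (cf.\ \cite{Shields}). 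Finally, for part (iv) I would produce a concrete contractive but non-$C_{1\cdot}$ weighted shift on $\widetilde{\irT}$ — for instance letting the primed-branch weights decay so fast that that branch is asymptotically negligible — and verify cyclicity by the same separation technique as in (i)--(ii); by part (iii) such an example must avoid the class $C_{1\cdot}$, and this restriction is exactly what guides the choice of weights.
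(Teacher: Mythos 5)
Your proposal has a genuine gap, and it is precisely at the step you flagged as delicate: the recurring ``separation'' heuristic --- that the finite arm's contribution is annihilated after finitely many applications of $\Sl$, or can be made negligible by weight decay --- is false, because the arm is re-fed at every step through the branching vertex. Writing $\irE=\ell^2(\Z\cap V)$ and $\irE'$ for the arm, the invariant subspace is $\irE'$, not $\irE$, and $P_{\irE'}\Sl^n h=(\Sl|\irE')^nP_{\irE'}h+\sum_{j=0}^{n-1}(\Sl|\irE')^j(P_{\irE'}\Sl P_{\irE})W^{n-1-j}P_{\irE}h$, so the coupling term never dies. Concretely, in the equal-arm case of (i), for every $N\geq k_0$ both leaf coordinates of $\Sl^N h$ descend from the \emph{same} ancestor coefficient $c_{j_0-N}$, so their ratio is a constant (a fixed quotient of weight products) independent of $N$ \emph{and of $h$}; hence ``combining vectors started at two different backward positions'' produces nothing new, and the two leaves can only be separated using the first $k_0$ powers, i.e.\ finite-dimensional data. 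This is exactly what the paper's Lemma \ref{br2_laef_similarity_lem} extracts: the explicit similarity built from the vectors $g_k=\big(\prod_{j=1}^k\lambda_j^{-1}\big)e_k-\big(\prod_{j=1}^k\lambda_{j'}^{-1}\big)e_{k'}$ decouples $\Sl$ into $W\oplus N$ with $N$ a cyclic nilpotent, after which Theorem \ref{backw_cycl_thm} and Lemma \ref{cyclic_denserenage_nilp_lem}(ii) settle both (i) and the sufficiency half of (ii). (Your necessity direction in (ii), compressing to the semi-invariant subspace $\ell^2(\Z)$, is correct as it stands.) The same flaw is fatal to your plan for (iv): on $\widetilde\irT$ the extra branch is \emph{infinite}, so no nilpotency is available, and fast decay of the primed weights does not by itself yield cyclicity. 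The paper instead proves the similarity $\Sl\sim\widetilde W$ (Proposition \ref{sim_br_1_prop}) and then chooses the bilateral summand \emph{hypercyclic}; the cyclic vector is $f\oplus e_{1'}$, and the density of the scaled orbit $\frac{1}{k}\widetilde W^{j_k}f$ is what recovers the vectors $\widetilde W^n e_{1'}$ --- that hypercyclicity input is the missing idea in your sketch.

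By contrast, your argument for (iii) is sound and genuinely different from the paper's. The paper identifies the isometric asymptote as $S\oplus S^+$ via Proposition \ref{isom_as_prop} and proves directly, by an elementary $L^2\oplus H^2$ estimate, that $S\oplus S^+$ has no cyclic vector, then invokes Lemma \ref{is_as_forcyclem}(ii). Your multiplicity route works once the flagged care is supplied: since orbits of vectors supported on the two branches have disjoint supports, $X_{\Sl}\irH_A\perp X_{\Sl}\irH_B$ and indeed $W_{\Sl}^nX_{\Sl}\irH_A\perp X_{\Sl}\irH_B$ for all $n\in\Z$, so the two isometric lifts of the branch asymptotes give two \emph{orthogonal} reducing copies of the bilateral shift inside $W_{\Sl}$, forcing multiplicity at least $2$; while if $h$ were cyclic for $\Sl$, then $X_{\Sl}^+h$ would be cyclic for $V_{\Sl}$ (as $X^+_{\Sl}=A^{1/2}$ has dense range for a $C_{1\cdot}$ contraction) and hence, by minimality of the unitary extension, $*$-cyclic for $W_{\Sl}$, giving multiplicity one --- a contradiction. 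This buys independence from the Hardy-space computation, at the cost of invoking spectral multiplicity theory where the paper stays elementary.
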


As far as we know complete characterization of cyclicity of bilateral weighted shift operators is open.
Our last result concerns cyclicity of the adjoint of weighted shift operators on directed trees.

\begin{theorem}\label{tree_adj_cycl_thm}\noindent
\begin{itemize}
\item[\textup{(i)}] If $\irT$ has a root and the contractive weighted shift operator $\Sl$ on $\irT$ is of class $C_{1\cdot}$, then $\Sl^*$ is cyclic.
\item[\textup{(ii)}] If $\irT$ is rootless, $\Br(\irT)<\infty$ and the weighted shift contraction $\Sl$ on $\irT$ is of class $C_{1\cdot}$, then $\Sl^*$ is cyclic.
\end{itemize}
\end{theorem}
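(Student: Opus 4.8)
The plan is to reduce the cyclicity of $\Sl^*$ to that of the adjoint of the isometric asymptote of $\Sl$, and then to recognize the latter as an orthogonal sum of elementary backward shifts covered by Theorem \ref{backw_cycl_thm}. Since $\Sl$ is a contraction of class $C_{1\cdot}$, its asymptotic limit $A := A_{\Sl}$ satisfies $\ker A = \irH_0(\Sl) = \{0\}$, so $X^+ := A^{1/2}$ is an injective positive operator with dense range, i.e. a quasiaffinity. The defining relation of the isometric asymptote reads $A^{1/2}\Sl = V A^{1/2}$ with an isometry $V$, and taking adjoints gives
\[
\Sl^* A^{1/2} = A^{1/2} V^*,
\]
so that $A^{1/2}$ intertwines $V^*$ into $\Sl^*$. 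If $h$ is a cyclic vector for $V^*$, then $A^{1/2}h$ is cyclic for $\Sl^*$: indeed $\Sl^{*n}A^{1/2}h = A^{1/2}V^{*n}h$, and $A^{1/2}$ carries the dense subspace $\mathrm{span}\{V^{*n}h : n\in\N_0\}$ to a dense subset of $\irH$ because it has dense range. Hence it is enough to prove that $V^*$ is cyclic.

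The heart of the matter is therefore the explicit identification of the isometric asymptote $V$, for which I would invoke the asymptotic description of weighted shifts on directed trees developed earlier in this chapter. In case (i), the presence of the root should force $V$ to be completely non-unitary; by the Wold decomposition $V$ is then a pure isometry, that is, unitarily equivalent to an orthogonal sum of simple (unweighted) unilateral shifts, of some countable multiplicity (the precise value, expected to be $1+\Br(\irT)$ by \eqref{co-rank_eq}, is irrelevant for cyclicity, only its countability matters, which follows from the separability of $\ell^2(V)$). Consequently $V^*$ is unitarily equivalent to an orthogonal sum of simple backward shifts, i.e. an operator of the form treated in Theorem \ref{backw_cycl_thm} with all weights equal to $1$; since there are no zero weights, part (i) of that theorem gives that $V^*$ is cyclic, and the reduction above completes case (i).

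For a rootless tree with $\Br(\irT)<\infty$ the same scheme applies, the only new feature being that $V$ may possess a unitary (bilateral shift) summand arising from those ``ends'' of the tree where no branching occurs. The finiteness of $\Br(\irT)$ should keep the total multiplicity of $V$ finite, so that $V^*$ splits as an orthogonal sum of finitely many backward shifts, each being either a simple unilateral backward shift or a multiplicity-one bilateral backward shift. The unilateral summands are handled by Theorem \ref{backw_cycl_thm}, while a multiplicity-one bilateral shift has a cyclic adjoint by the classical description of cyclic unitary operators; assembling a single cyclic vector for the finite sum and transporting it through $A^{1/2}$ yields the cyclicity of $\Sl^*$.

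The main obstacle will be precisely this identification of $V$, in particular the determination of its unitary part and of the multiplicities of its shift summands, since this is exactly where the hypotheses ``$\irT$ has a root'' and ``$\Br(\irT)<\infty$'' enter in an essential way. Controlling multiplicity is unavoidable, because a bilateral shift of multiplicity at least two has a \emph{non}-cyclic adjoint, so one must rule out such summands. A more computational alternative that sidesteps the Wold analysis would be to construct a cyclic vector for $\Sl^*$ directly, taking $h=\sum_{u\in V}c_u e_u$ with all $c_u\neq 0$ and exploiting that $\Sl^{*n}e_u$ is a positive multiple of $e_{\Par^n(u)}$ to run the descent-to-the-root argument underlying the proof of Theorem \ref{backw_cycl_thm}; in this route the $C_{1\cdot}$ hypothesis would serve to ensure the non-degeneracy and convergence of the weight products involved.
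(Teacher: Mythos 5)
Your reduction to the adjoint of the isometric asymptote is exactly the paper's route (Lemma \ref{is_as_forcyclem}(iii) with the quasiaffinity $A^{1/2}$), and your treatment of case (i) is sound: by Proposition \ref{isom_as_prop} the asymptote of a rooted-tree shift is a countable orthogonal sum of simple unilateral shifts, whose adjoint is cyclic by Theorem \ref{backw_cycl_thm}(i) -- countability of the multiplicity indeed suffices there, no finiteness of $\Br(\irT)$ is needed.

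There is, however, a genuine gap in case (ii), precisely at the phrase ``assembling a single cyclic vector for the finite sum.'' When the asymptote is not c.n.u.\ it has the form $S\oplus\sum_{j=1}^{\Br(\irT')}\oplus S^+$, and you propose to handle the bilateral summand and the backward-shift summands separately and then combine. But cyclicity is not stable under orthogonal sums: having a cyclic vector for each summand does not produce a cyclic vector for the sum, and the paper itself exhibits this failure in the same chapter by proving that $S\oplus S^+$ has \emph{no} cyclic vector even though $S$ and $S^+$ are both cyclic. Whether $S\oplus (S^+_k)^*$ is cyclic is therefore a real question, and answering it is the main new content behind part (ii): the paper proves the dedicated Theorem \ref{cyclic_ort_sum_thm} by intertwining $S\oplus S^+_k$ with $S$ through the injective operator $X(f\oplus g)= f\varphi+g$, where $\varphi\in L^\infty$ is nonzero a.e.\ with $\int_\T \log|\varphi|\,dm=-\infty$, and then dualizing so that the dense-range operator $X^*$ transports a cyclic vector of $S^*$ to one of $S^*\oplus (S^+_k)^*$. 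Your proposal contains no substitute for this argument. Relatedly, you misplace the role of the hypothesis $\Br(\irT)<\infty$: it is not there to make an ``assembly'' possible, but because the intertwining construction of Theorem \ref{cyclic_ort_sum_thm} works only for finite $k$ -- the cyclicity of $S\oplus (S^+_{\aleph_0})^*$ is explicitly left open in the paper, which is exactly why the theorem is stated with finite branching index.
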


In order to prove the above theorem we will verify that $S\oplus (S^+_{k})^*$ ($k\in\N$) is cyclic where $S$ denotes a simple bilateral shift operator and $S^+_{k}$ denotes the orthogonal sum of $k$ pieces of simple unilateral shift operators.

\section{Proofs}
First we would like to show how the asymptotic behaviour of a Hilbert space contraction can be applied in order to obtain cyclicity results. 
If $T$ is cyclic and has dense range, then $h$ is cyclic if and only if $Th$ is cyclic. 
This and a consequence are stated in the next lemma for Hilbert spaces, but we note that in Banach spaces the proof would be the same. 
This proves, in that case, that the set of cyclic vectors span the whole space. 
In fact, this is always true (see \cite{GeherL} for an elementary proof).

\begin{lem}\label{cyclic_denserenage_nilp_lem}\noindent
\begin{itemize}
\item[\textup{(i)}] If a dense range operator $T$ has a cyclic vector $f$, then $T f$ is also a cyclic vector.
\item[\textup{(ii)}] If $T$ is a cyclic operator which has dense range and $N\in\irB(\C^n)$ is a cyclic nilpotent operator ($n\in\N$), i.e. a 0-Jordan block, then $T\oplus N$ is also cyclic.
\end{itemize}
\end{lem}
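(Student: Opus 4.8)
The plan is to handle (i) first and then bootstrap it into (ii). For (i), I would rewrite the cyclic subspace generated by $Tf$ using the fact that polynomials in $T$ commute: since $p(T)Tf = Tp(T)f$, one has $\irH_{Tf} = \overline{\{p(T)Tf : p\in\irP_\C\}} = \overline{T M}$, where $M := \{p(T)f : p\in\irP_\C\}$ is dense in $\irH$ because $f$ is a cyclic vector. It then suffices to establish the elementary fact that $\overline{TM} = \overline{\ran T}$ whenever $M$ is dense: the inclusion $\overline{TM}\subseteq\overline{\ran T}$ is immediate, and conversely every $Tx\in\ran T$ is the limit of $Tm_k$ with $m_k\in M,\ m_k\to x$, by continuity of $T$, so $\ran T\subseteq\overline{TM}$. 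Since $\overline{\ran T}=\irH$ by hypothesis, this yields $\irH_{Tf}=\irH$, i.e. $Tf$ is cyclic.

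For (ii), I would first record that dense range propagates to all powers of $T$: an induction using the same approximation argument as above gives $\overline{\ran T^m}=\irH$ for every $m\in\N$, and more generally $\overline{T^m M}=\overline{\ran T^m}=\irH$ for any dense linear manifold $M$ and any $m$. I would then fix a cyclic vector $f$ of $T$ and a cyclic vector $g$ of $N$ (which exists since $N$ is a single $0$-Jordan block), set $M=\{p(T)f : p\in\irP_\C\}$, and claim that $(f,g)$ is cyclic for $T\oplus N$. Writing $\irM := \vee\{(T\oplus N)^k(f,g):k\in\N_0\} = \vee\{(T^k f, N^k g):k\in\N_0\}$, the key observation is that $N^k=0$ for $k\ge n$, so for such $k$ the second coordinate vanishes and $(T^k f,0)\in\irM$.

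The heart of the argument is then: $\vee\{T^k f:k\ge n\}=\overline{T^n M}=\irH$ by the power statement above, whence $\irH\oplus\{0\}\subseteq\irM$. Subtracting gives $(0,N^k g)=(T^k f,N^k g)-(T^k f,0)\in\irM$ for every $k\in\N_0$, and since $g$ is cyclic for $N$ these vectors span $\{0\}\oplus\C^n$, so $\{0\}\oplus\C^n\subseteq\irM$. Combining the two inclusions yields $\irM=\irH\oplus\C^n$, proving that $(f,g)$ is cyclic.

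The only genuinely non-formal point — and hence the step I would be most careful with — is the propagation of dense range to powers of $T$ together with the identity $\overline{T^m M}=\overline{\ran T^m}$; everything else is bookkeeping with cyclic subspaces. I would isolate this density fact as a short preliminary claim so that both parts can invoke it cleanly, taking care that it is the density of $M$ (not merely of $\ran T$) that is needed in order to reach all of $\overline{\ran T^m}$.
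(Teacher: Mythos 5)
Your proposal is correct and follows essentially the same route as the paper: part (i) via the identity $\{p(T)Tf\} = T\{p(T)f\}$ and the fact that a dense-range operator maps dense sets to dense sets, and part (ii) via the cyclic vector $f\oplus g$, using nilpotency so that $(T^kf,0)$ lies in the cyclic subspace for $k\ge n$ and then subtracting to recover $\{0\}\oplus\C^n$. Your explicit preliminary claim that $\overline{T^mM}=\overline{\ran T^m}=\irH$ is just the paper's iterated use of part (i) made more transparent, so there is no substantive difference.
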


\begin{proof}
(i) The set $\{p(T) f\colon p\in\irP_\C\}$ is dense. 
Since $\ran(T)$ is also dense, a dense subset has dense image under $T$, so $\{Tp(T) f = p(T) Tf \colon p\in\irP_\C\}$ is also dense.

(ii) Let us take a cyclic vector $f \in \irH$ for $T$ and a cyclic vector $e \in \C^n$ for $N$. 
We will show that $f\oplus e$ is cyclic for the orthogonal sum $T\oplus N$. 
Of course $\vee\{T^k f \oplus N^k e = T^k f \oplus 0 \colon k\geq n\} = \irH$. 
Thus $0 \oplus N^j e \in \vee\{T^k f \oplus N^k e \colon k \in \N_0\}$ for every $0 \leq j < n$, and hence $f\oplus e$ is a cyclic vector.
\end{proof}

The vector $h\in\irH$ is hypercyclic for $T$ if 
\[ 
\{T^n h\colon n\in\N_0\}^- = \irH. 
\]
Then the operator $T$ is said to be hypercyclic. 

The previous and the next lemma will be used many times throughout this chapter. 

\begin{lem} \label{is_as_forcyclem}\noindent
\begin{itemize}
\item[\textup{(i)}] if $T, Q, Y\in B(\irH)$, $Y$ has dense range, $YT = QY$ holds and $f$ is cyclic (or hypercyclic, resp.) for $T$, then $Yf$ is cyclic (or hypercyclic, resp.) for $Q$,
\item[\textup{(ii)}] if $T\in C_{1\cdot}(\irH)$ is a contraction and the isometric asymptote $V_T$ has no cyclic vectors, then neither has $T$,
\item[\textup{(iii)}] if $T\in C_{1\cdot}(\irH)$ is a contraction and $V_T^*$ has a cyclic vector $g$, then $A_T^{1/2} g$ is cyclic for $T^*$,
\item[\textup{(iv)}] if $T\in C_{\cdot 1}(\irH)$ is a contraction and $U_{T^*}^*$ has a cyclic vector $g$, then $A_{T^*}^{1/2} g$ is cyclic for $T$.
\end{itemize}
\end{lem}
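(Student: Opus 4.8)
The plan is to prove part (i) first, since it is the engine that drives the other three parts. The intertwining relation $YT=QY$ yields $YT^{n}=Q^{n}Y$ for every $n\in\N_0$ by an immediate induction, hence $Yp(T)=p(Q)Y$ for every polynomial $p\in\irP_\C$. Consequently $Y\{p(T)f\colon p\in\irP_\C\}=\{p(Q)Yf\colon p\in\irP_\C\}$, and likewise $Y\{T^{n}f\colon n\in\N_0\}=\{Q^{n}Yf\colon n\in\N_0\}$. If $f$ is cyclic for $T$, then $\{p(T)f\}$ is dense, so by continuity $\ran Y=Y(\irH)\subseteq Y\big(\overline{\{p(T)f\}}\big)\subseteq\overline{\{p(Q)Yf\}}$; since $\ran Y$ is dense, the set $\{p(Q)Yf\}$ is dense, i.e. $Yf$ is cyclic for $Q$. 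Replacing $\{p(T)f\}$ by $\{T^{n}f\}$ gives the hypercyclic case verbatim. This is the only genuinely topological step; the remaining parts are formal consequences obtained by feeding the (isometric or unitary) asymptote intertwiners, and their adjoints, into (i).

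For part (ii) I would argue by contraposition. Since $T\in C_{1\cdot}$ we have $\irH_0=\{0\}$, so $A_{T}$ is injective, $\irH_T^+=\irH_0^\perp=\irH$, and $X_T^+=A_T^{1/2}$ has dense range. The defining relation $X_T^+T=V_TX_T^+$ is exactly the hypothesis of (i) with $Y=X_T^+$ and $Q=V_T$. Hence a cyclic vector $f$ for $T$ would produce the cyclic vector $X_T^+f$ for $V_T$, contradicting the assumption that $V_T$ has none; therefore $T$ has no cyclic vector.

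For parts (iii) and (iv) the idea is to pass to adjoints so that $T^{*}$ (respectively $T$) becomes the \emph{target} operator in (i). For (iii), taking adjoints in $X_T^+T=V_TX_T^+$ gives $(X_T^+)^{*}V_T^{*}=T^{*}(X_T^+)^{*}$; as $T\in C_{1\cdot}$ we have $\irH_T^+=\irH$ and $(X_T^+)^{*}=A_T^{1/2}$, which is injective and hence has dense range. Applying (i) with source $V_T^{*}$, target $T^{*}$ and $Y=A_T^{1/2}$, a cyclic vector $g$ for $V_T^{*}$ yields that $A_T^{1/2}g$ is cyclic for $T^{*}$. For (iv) note that $T\in C_{\cdot 1}$ means $T^{*}\in C_{1\cdot}$; taking adjoints in the unitary-asymptote relation $X_{T^{*}}T^{*}=U_{T^{*}}X_{T^{*}}$ gives $(X_{T^{*}})^{*}U_{T^{*}}^{*}=T(X_{T^{*}})^{*}$, where $(X_{T^{*}})^{*}$ has dense range because its range is $\ran A_{T^{*}}^{1/2}$, dense by injectivity of $A_{T^{*}}$. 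Applying (i) with source $U_{T^{*}}^{*}$ and target $T$, a cyclic vector $g$ for $U_{T^{*}}^{*}$ gives that $(X_{T^{*}})^{*}g$ is cyclic for $T$.

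The main obstacle is purely bookkeeping, concentrated in (iv): the unitary asymptote $U_{T^{*}}$ acts on the dilation space $\irH_{T^{*},L}$, which properly contains $\irH_{T^{*}}^{+}=\irH$, so one must compute $(X_{T^{*}})^{*}$ with care. Since $X_{T^{*}}$ is the map $\irH\to\irH_{T^{*},L}$ given by $x\mapsto A_{T^{*}}^{1/2}x\in\irH_{T^{*}}^{+}$, its adjoint is $A_{T^{*}}^{1/2}$ composed with the orthogonal projection of $\irH_{T^{*},L}$ onto $\irH$; reading $g\in\irH$, as is implicit in writing $A_{T^{*}}^{1/2}g$, this projection acts trivially and $(X_{T^{*}})^{*}g=A_{T^{*}}^{1/2}g$, which is the stated cyclic vector. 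The only points to verify beyond this identification are the injectivity of $A_T$ and $A_{T^{*}}$ on the relevant subspaces, both of which follow at once from the class assumptions $C_{1\cdot}$ and $C_{\cdot 1}$.
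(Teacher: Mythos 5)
Your proposal is correct and follows essentially the same route as the paper: part (i) is proved via $Yp(T)=p(Q)Y$ together with the fact that an operator with dense range maps dense sets to dense sets, and (ii)--(iv) are then obtained as special cases of (i) by feeding in the asymptote intertwining relations and their adjoints. The paper dispatches (ii)--(iv) in one line ("special cases of (i)"), whereas you carry out the adjoint bookkeeping explicitly — including the identification $(X_{T^*})^*g=A_{T^*}^{1/2}g$ on $\irH$ — which is exactly the verification the paper leaves implicit.
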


\begin{proof}
(i): Of course $Y p(T) = p(Q) Y$ holds for all $p\in\irP_\C$. 
Let us assume that $f$ is cyclic for $T$, i.e. $\{p(T)f\colon p\in\irP_\C\}$ is dense in $\irH$. 
Then $\{Y p(T)f = p(Q) Y f \colon p\in\irP_\C\}$ is also dense since $Y$ has dense range, which means that $Y f$ is cyclic for $Q$. 
The hypercyclic case is very similar.

The other three points are special cases of (i).
\end{proof}

There is a consequence of the previous lemma.

\begin{corollary}
Suppose that the operator $T$ is hypercyclic. 
Then $T/\|T\|\notin C_{1\cdot}(\irH)$.
\end{corollary}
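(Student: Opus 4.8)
The plan is to argue by contradiction, so I would assume that $S:=T/\|T\|$ belongs to $C_{1\cdot}(\irH)$ and derive an impossibility from the hypercyclicity of $T$. The first step is to set up the isometric asymptote of the contraction $S$. Since $S\in C_{1\cdot}(\irH)$ means $\irH_0(S)=\ker A_S=\{0\}$, the operator $X^+_S\colon\irH\to\irH^+_S$, $X^+_Sx=A_S^{1/2}x$, is injective and has dense range, and in fact $\irH^+_S=(\ran A_S)^-=(\ker A_S)^\perp=\irH$. Recall that $V_S$ is an \emph{isometry} on $\irH^+_S$ intertwined with $S$ by $X^+_SS=V_SX^+_S$.

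The second step is to push a hypercyclic vector of $T$ through $X^+_S$. Let $h$ be a hypercyclic vector for $T$. Writing $T=\|T\|\,S$ and iterating the intertwining relation gives $X^+_ST^n=\|T\|^nV_S^nX^+_S$, so that, using that $V_S$ is an isometry,
\[
\|X^+_ST^nh\|=\|T\|^n\,\|X^+_Sh\|\qquad(n\in\N_0).
\]
Here $c:=\|X^+_Sh\|>0$, because $h\neq0$ (a hypercyclic vector is nonzero) and $X^+_S$ is injective on $\irH$.

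The third step produces the contradiction. Since the orbit $\{T^nh\colon n\in\N_0\}$ is dense in $\irH$ and $X^+_S$ is continuous with dense range, the set $\{X^+_ST^nh\colon n\in\N_0\}$ is dense in $\irH$; applying the continuous (and, on a nonzero space, surjective) norm map, the set of norms $\{\|T\|^n c\colon n\in\N_0\}$ must then be dense in $[0,\infty[$. This is impossible: a geometric progression with ratio $\|T\|$ and positive first term $c$ accumulates only at $0$ (if $\|T\|<1$), stays constant (if $\|T\|=1$), or tends to $\infty$ (if $\|T\|>1$), and in none of these cases is it dense in $[0,\infty[$. Hence $S\notin C_{1\cdot}(\irH)$.

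I expect the only genuinely delicate point to be the bookkeeping of the scaling factor. One cannot simply quote Lemma \ref{is_as_forcyclem}(i) to transfer hypercyclicity to $V_S$, since $h$ is hypercyclic for $T$ and not for the \emph{normalised} contraction $S$; the factor $\|T\|^n$ must be carried explicitly, and it is exactly this factor that forces the norms to form a geometric progression and thereby obstructs density. Two small facts should also be recorded along the way: that hypercyclicity of $T$ forces $\irH\neq\{0\}$ (so the norm map is onto $[0,\infty[$), and that $(\ran A_S)^-=(\ker A_S)^\perp$ guarantees the density of $\ran X^+_S$ used above.
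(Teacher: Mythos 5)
Your proof is correct and is essentially the paper's own argument: assume $T/\|T\|\in C_{1\cdot}(\irH)$, intertwine via the isometric asymptote to get $A^{1/2}T^n h=\|T\|^n V^n A^{1/2}h$, and note that the image of the dense orbit has norms forming the geometric progression $\|T\|^n\|A^{1/2}h\|$ with $\|A^{1/2}h\|>0$ by injectivity of $A^{1/2}$. Your trichotomy on $\|T\|$ is just a more explicit phrasing of the paper's observation that these norms are bounded or bounded from below, which contradicts density.
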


\begin{proof}
Obviously $T \neq 0$. 
Assume that $T/\|T\|\in C_{1\cdot}(\irH)$ and let us fix a hypercyclic vector $f\in\irH$ for $T$. 
Since $A^{1/2}T^k f = \|T\|^k V_T^kA^{1/2} f$, $\{\|T\|^k V_T^kA^{1/2} f\}_{k=0}^\infty$ should be dense in $\irH$. 
But $\big\|\|T\|^k V_T^kA^{1/2} f\big\| = \|T\|^k \|A^{1/2} f\|$ is bounded or bounded from below, which is a contradiction.
\end{proof}

It is easy to see that for the adjoint of a contractive weighted bilateral shift operator: $S_{\underline{w}}^* e_k = w_{k}e_{k-1}$, $(0 < )|w_k| \leq 1$, the asymptotic limit is defined by 
\[ 
A_* e_k = \Big(\prod_{j\leq k} |w_j|^2\Big) e_k. 
\]
This means that $S_{\underline{w}}^*$ is stable or $S_{\underline{w}} \in C_{\cdot 1}(\ell^2(V))$. 
If $S_{\underline{w}} \in C_{\cdot 1}(\ell^2(\Z))$, then the isometric asymptote $V_{S_{\underline{w}}^*} \in \irB(\ell^2(\Z))$ of $S_{\underline{w}}^*$ is the simple bilateral backward shift operator: $V_{S_{\underline{w}}^*}e_k = e_{k-1}$. 
Since $V_{S_{\underline{w}}^*}^*$ is cyclic, this means that all contractive $C_{\cdot 1}$ bilateral shift operators are cyclic. 

We point out that non-cyclic bilateral shift operators exist. 
The first example was given by B. Beauzamy in \cite{non-cyclic_bil_shift}. 
In \cite[Proposition 42]{Shields} sufficient conditions can be found for cyclicity and non-cyclicity. 
But there is no characterization for cyclic bilateral shift operators which is quite surprising since for other cyclic type properties such characterizations exist (see e.g. \cite{hypercyclic_bil} and \cite{supercyclic_bil}). 
Therefore it is a challenging problem to give this characterization for cyclicity.

We proceed with the proof of Theorem \ref{backw_cycl_thm}.

\begin{proof}[Proof of Theorem \ref{backw_cycl_thm}]
Throughout the proof we may always assume without loss of generality that $\|B\|\leq 1$ holds. 
This happens if and only if the weights are from $\D^-$.

\smallskip

(i): Take a vector of the following form:
\[ 
f = \sum_{l = 1}^\infty \xi_{j_l,k_l}\cdot e_{j_l,k_l} \in \irH, 
\]
such that $\xi_{j_l,k_l}>0$ for every $l\in\N$, $0 < k_{l+1}-k_l\nearrow\infty$ and for any $j\in\irJ$ infinitely many $l\in\N$ exist which satisfy $j_l=j$.

Our aim is to modify $f$ by decreasing its coordinates in a way that they remain positive and after the procedure we obtain a modification of $f$: $\tilde{f}\neq 0$ which is a cyclic vector of $B$. 
We have
\begin{equation} \label{for_cyclicity_eq} 
\begin{matrix}
\displaystyle \frac{1}{\xi_{j_m,k_m} w_{j_m,k_m-1}\dots w_{j_m,k_m-k}}B^k f \\
\\
\displaystyle = e_{j_m,k_m-k} + \sum_{l > m} \frac{\xi_{j_l,k_l}w_{j_l,k_l-1}\dots w_{j_l,k_l-k}}{\xi_{j_m,k_m} w_{j_m,k_m-1}\dots w_{j_m,k_m-k}}\cdot e_{j_l,k_l-k}
\end{matrix}
\end{equation}
for every $m\in\N, k_{m-1} < k \leq k_m$, where we set $k_0 = -1$. 
Let us consider the quantity 
\begin{equation} \label{cyclic_eq}
\begin{gathered}
\Sigma_m := \max\Bigg\{ \sum_{l > m} \bigg|\frac{1}{\xi_{j_m,k_m} w_{j_m,k_m-1}\dots w_{j_m,k_m-k}}B^k f - e_{j_m,k_m-k}\bigg|^2 \colon k_{m-1} < k \leq k_m \Bigg\}\\
= \max\Bigg\{ \sum_{l > m} \bigg|\frac{\xi_{j_l,k_l}w_{j_l,k_l-1}\dots w_{j_l,k_l-k}}{\xi_{j_m,k_m} w_{j_m,k_m-1}\dots w_{j_m,k_m-k}}\bigg|^2 \colon k_{m-1} < k \leq k_m \Bigg\}
\end{gathered}
\end{equation}
which is clearly finite. 
Let us suppose for a moment that $\Sigma_m\leq (1/2)^m$ is satisfied for all $m\in\N$. 
In this case $e_{j,k} \in \irH_{B,f}$ would hold for every $j\in\irJ, k\in\N_0$ and thus $f$ would be a cyclic vector for $B$. 
Therefore our aim during the modification process is that \eqref{cyclic_eq} will hold for the modified vector.

If $\Sigma_1 > 1/2$, then let us change every $\xi_{j_l,k_l}$ to $\frac{\xi_{j_l,k_l}}{\sqrt{2\Sigma_1}}$ for every $l > 1$, otherwise we do not do anything. 
Then with these modified coordinates $\Sigma_1 \leq 1/2$. 
If $\Sigma_2 > 1/4$, then we change every $\xi_{j_l,k_l}$ to $\frac{\xi_{j_l,k_l}}{\sqrt{4\Sigma_2}}$ for every $l > 2$, otherwise we do not modify anything. 
Then $\Sigma_1$ becomes less or equal to than before and $\Sigma_2 \leq 1/4$ \dots Suppose that we have already achieved $\Sigma_j \leq 1/2^j$ for every $1 \leq j \leq m-1$. 
In case when $\Sigma_m > 1/2^m$, we modify $\xi_{j_l,k_l}$ to $\frac{\xi_{j_l,k_l}}{\sqrt{2^m\Sigma_m}}$ for every $l > m$. 
Otherwise we do not change anything. 
Then $\Sigma_j$ becomes less or equal to than before for every $1 \leq j \leq m-1$ and $\Sigma_m \leq 1/2^m$ \dots and so on. 
We notice that every coordinate was modified only finitely many times. 
Therefore this procedure gives us a new vector $\tilde f$ with positive coordinates satisfying \eqref{cyclic_eq}. 
Therefore $\tilde f$ is cyclic for $B$.

\smallskip

(ii): For a vector $x := \sum_{l = 1}^\infty x_{j_l,k_l}\cdot e_{j_l,k_l} \in \irH$, $x \in \ran(B^n)$ holds if and only if $\sum_{l = 1}^\infty |x_{j_l,k_l}|^2/|w_{j_l,k_l}\dots w_{j_l,k_l+n-1}|^2 < \infty$. 
Thus if $x\in \ran(B^n)$ and $y := \sum_{l = 1}^\infty y_{j_l,k_l}\cdot e_{j_l,k_l} \in \irH$ with $|y_{j_l,k_l}| \leq |x_{j_l,k_l}|$ ($l\in\N$), then $y \in \ran(B^n)$ is also fulfilled. 
Assume that in the beginning we could choose a vector $g\in\cap_{n = 1}^\infty \ran(B^n)$ such that for every fixed $j\in\irJ$, $\langle g,e_{j,k}\rangle \neq 0$ is fulfilled for infinitely many $k\in\N_0$. 
Then $\hat{g} := \sum_{j\in\irJ}\sum_{k=0}^\infty \big|\langle g,e_{j,k}\rangle \big| e_{j,k}$ is also in $\cap_{n = 1}^\infty \ran(B^n)$, and by changing some coordinates to zero we can find a vector $f$ which has the form as in the beginning of the proof of point (i). 
Therefore, applying the process above we can find a cyclic vector for $B$ which is in $\cap_{n = 1}^\infty \ran(B^n)$.

\smallskip

(iii): For the sufficiency we may consider the operator $B\oplus N$ where $B\in\irB(\irH)$ is a backward shift operator with strictly positive weights and $N$ is a cyclic nilpotent operator acting on $\C^n$ (for some $n\in\N$) (i.e. a Jordan block with zero diagonals), because this is unitarily equivalent to the operator mentioned in the statement of the theorem. 
Since $B$ has a dense range and it is cyclic, (ii) of Lemma \ref{cyclic_denserenage_nilp_lem} gives us what we wanted.

The necessity is clear since the co-dimension of $\ran(B)^-$ is at most one whenever $B$ is cyclic.
\end{proof}

We proceed with the verification of several results concerning the asymptotic behaviour of a weighted shift operator on a directed tree. 
They will be crucial in the proof of Theorem \ref{tree_cycl_thm} and \ref{tree_adj_cycl_thm}. 
The first step is to calculate the powers of $\Sl$ and their adjoints which was done in \cite[Lemma 2.3.1]{BJJS1}.
Namely, for a weighted shift operator $\Sl$ on the directed tree $\irT$ and for any $n\in\N, u\in V$ the following holds:
\[ 
\Sl ^n e_u = \sum_{v\in\Chi^n(u)} \prod_{j=0}^{n-1}\lambda_{\Par^j(v)}\cdot e_v, 
\]
\[ 
\Sl ^{*n} e_u = \left\{\begin{matrix}
\prod_{j=0}^{n-1}\lambda_{\Par^j(u)}\cdot e_{\Par^n(v)}, & \textit{ if } \Par^n(u) \textit{ makes sense},\\
0, & \textit{ otherwise} 
\end{matrix}\right. 
\]

In this chapter the asymptotic limit of $\Sl$ and $\Sl^*$ will be denoted by $A$ and $A_*$, respectively, and $U$, $U_*$ will stand for the isometric asymptote of them.

We calculate the asymptotic limit $A$ of $\Sl$ as follows.

\begin{prop} \label{aslim_prop}
Let $\Sl$ be a weighted shift operator on $\irT$ which is a contraction. 
Then every $e_u$ is an eigenvector for $A$ 
\[A e_u = \alpha_u e_u \quad \forall\; u\in V,\] 
with the corresponding eigenvalues: $\alpha_u = \lim_{n\to\infty} \sum_{v\in \Chi^n(u)}\prod_{j=0}^{n-1}\lambda_{\Par^j(v)}^2$.
\end{prop}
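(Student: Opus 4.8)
The plan is to compute $\Sl^{*n}\Sl^n e_u$ explicitly for each basis vector $e_u$ and each $n$, using the formulas for the powers of $\Sl$ and $\Sl^*$ recorded just above (taken from \cite[Lemma 2.3.1]{BJJS1}), and then to pass to the strong limit. Since $\Sl$ is a contraction, the asymptotic limit is $A = \lim_{n\to\infty}\Sl^{*n}\Sl^n$ in the strong operator topology, the limit existing because $\{\Sl^{*n}\Sl^n\}_{n=1}^\infty$ is a decreasing sequence of positive contractions. So it suffices to understand the action of $\Sl^{*n}\Sl^n$ on the orthonormal basis $\{e_u\colon u\in V\}$.

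First I would apply the power formula to obtain $\Sl^n e_u = \sum_{v\in\Chi^n(u)} c_v e_v$, where I abbreviate $c_v := \prod_{j=0}^{n-1}\lambda_{\Par^j(v)}$; all weights being positive, these are nonnegative reals. Next I would apply $\Sl^{*n}$ term by term. The key observation is that for every $v\in\Chi^n(u)$ one has $\Par^n(v) = u$ and $\prod_{j=0}^{n-1}\lambda_{\Par^j(v)} = c_v$, so the adjoint power formula gives $\Sl^{*n} e_v = c_v e_u$. Hence all terms collapse onto the single basis vector $e_u$:
\[
\Sl^{*n}\Sl^n e_u = \sum_{v\in\Chi^n(u)} c_v \Sl^{*n} e_v = \Big(\sum_{v\in\Chi^n(u)} c_v^2\Big) e_u = \|\Sl^n e_u\|^2 e_u .
\]
Thus $\Sl^{*n}\Sl^n e_u$ is already a scalar multiple of $e_u$ for each fixed $n$, with coefficient $\sum_{v\in\Chi^n(u)}\prod_{j=0}^{n-1}\lambda_{\Par^j(v)}^2$.

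Finally I would note that the scalar sequence $\|\Sl^n e_u\|^2$ is nonincreasing (because $\Sl$ is a contraction) and bounded below by $0$, hence convergent, and its limit is exactly $\alpha_u$. Passing to the limit in the displayed identity yields $A e_u = \alpha_u e_u$, as claimed. I do not expect a genuine obstacle: the only points requiring care are the bookkeeping that $\Par^n(v)=u$ for $v\in\Chi^n(u)$, so that the adjoint formula applies and the two products cancel cleanly, and the remark that positivity of the weights permits replacing $|\lambda_{\Par^j(v)}|^2$ by $\lambda_{\Par^j(v)}^2$ throughout.
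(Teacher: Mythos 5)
Your proposal is correct and follows essentially the same route as the paper: expand $\Sl^n e_u$ over $\Chi^n(u)$, apply the adjoint power formula so all terms collapse back onto $e_u$, and pass to the limit of the resulting scalar coefficients. Your extra remarks (that $\Par^n(v)=u$ for $v\in\Chi^n(u)$, and that the scalar sequence $\|\Sl^n e_u\|^2$ is nonincreasing hence convergent) merely make explicit what the paper leaves implicit in its one-line computation.
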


\begin{proof}
For any $n\in\N$ and $u\in V$, since the asymptotic limit exists
\[ 
\Sl ^{*n}\Sl ^n e_u = \sum_{v\in\Chi^n(u)} \prod_{j=0}^{n-1}\lambda_{\Par^j(v)}\cdot \Sl ^{*n}e_v = \sum_{v\in\Chi^n(u)} \prod_{j=0}^{n-1}\lambda_{\Par^j(v)}^2 \cdot e_{u} \to \alpha_u e_u, 
\]
which implies $A e_u = \alpha_u e_u$.
\end{proof}

The stable subspace of $\Sl$ will be denoted by $\irH_0$. 
We proceed with obtaining some properties of $\irH_0^\perp = \ell^2(V)\ominus\irH_0$. 
Since $A$ is a diagonal operator, there exists a set $V'\subset V$ with the following properties: $\irH_0 = \ell^2(V\setminus V')$ and $\irH_0^\perp = \ell^2(V')$.

\begin{prop} \label{stable_subspc_prop}
The following implications are valid for every contractive weighted shift operator $\Sl$ on $\irT$ and for every vertex $u\in V$:
\begin{itemize}
\item[\textup{(i)}] if $e_u\in\irH_0$, then $\ell^2(\Des(u))\subseteq\irH_0$ (i.e. $u \notin V' \Longrightarrow \Des(u) \subseteq V\setminus V'$),
\item[\textup{(ii)}] $e_u\in\irH_0$ if and only if $\ell^2(\Chi(u))\subseteq\irH_0$ (i.e. $u \notin V' \iff \Chi(u) \subseteq V\setminus V'$); this is fulfilled in the special case when $u$ is a leaf,
\item[\textup{(iii)}] if $e_u\in\irH_0^\perp$, then $e_{\Par^k(u)}\in\irH_0^\perp$ for every $k\in\N_0$ (i.e. $u \in V' \Longrightarrow \Par^k(u) \in V', \; \forall \; k\in \N_0$),
\item[\textup{(iv)}] the subgraph $\irT'=(V',E'=E\cap (V'\times V'))$ is a leafless subtree,
\item[\textup{(v)}] if $\irT$ has no root, neither has $\irT'$, and
\item[\textup{(vi)}] if $\irT$ has a root, then either $\Sl\in C_{0\cdot}(\ell^2(V))$ or $\roo_\irT=\roo_{\irT'}$.
\end{itemize}
\end{prop}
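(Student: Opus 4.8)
The plan is to reduce the entire proposition to a single recursion for the eigenvalues $\alpha_u$ of the asymptotic limit $A$ supplied by Proposition \ref{aslim_prop}. Recall that $u\in V'$ exactly when $\alpha_u>0$ (i.e. $e_u\in\irH_0^\perp$), while $u\in V\setminus V'$ means $\alpha_u=0$. First I would establish the key identity
\[
\alpha_u=\sum_{v\in\Chi(u)}\lambda_v^2\,\alpha_v\qquad(u\in V).
\]
Since $\Sl^{n+1}e_u=\sum_{v\in\Chi(u)}\lambda_v\,\Sl^n e_v$ and the vectors $\{\Sl^n e_v:v\in\Chi(u)\}$ have pairwise disjoint supports (the descendants of distinct children are disjoint in a tree), we get $\|\Sl^{n+1}e_u\|^2=\sum_{v\in\Chi(u)}\lambda_v^2\|\Sl^n e_v\|^2$. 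As $\sum_{v\in\Chi(u)}\lambda_v^2=\|\Sl e_u\|^2\le 1$ and $\|\Sl^n e_v\|^2\le 1$, dominated convergence over the counting measure on $\Chi(u)$ lets me pass to the limit $n\to\infty$, using $\alpha_w=\lim_n\|\Sl^n e_w\|^2$, to obtain the recursion.

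With this identity in hand, and crucially with all weights \emph{strictly} positive, parts (i)--(iii) are immediate. For (i), $\alpha_u=0$ forces every nonnegative summand $\lambda_v^2\alpha_v$ to vanish, hence $\alpha_v=0$ for all $v\in\Chi(u)$; an induction on generations yields $\Des(u)\subseteq V\setminus V'$. Part (ii) is the same computation read in both directions, the leaf case being the empty sum, which gives $\alpha_u=0$ automatically. For (iii), if $\alpha_u>0$ then $\alpha_{\Par(u)}\ge\lambda_u^2\alpha_u>0$, so $V'$ is closed under taking parents (whenever the parent exists); iterating gives the claim. Thus the operative content of (i) and (iii) is that $V'$ is both \emph{downward absorbing} and \emph{ancestor closed}.

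The combinatorial heart is (iv)--(vi). In (iv), being a subgraph of a directed tree, $\irT'$ automatically inherits acyclicity and the at-most-one-parent property, so only connectedness and leaflessness need checking. For connectedness I would take $u,w\in V'$ and use \eqref{level_eq} to produce a common ancestor of the form $\Par^n(u)$; passing to the least common ancestor $z$, the unique undirected $\irT$-path from $u$ to $w$ runs up the ancestor chain to $z$ and back down to $w$, and by (iii) every vertex on it lies in $V'$, so the path survives in $\irT'$. Leaflessness is read straight off the recursion: $\alpha_u>0$ forces some child $v$ with $\alpha_v>0$, i.e. a child in $V'$. Part (v) follows because a root of $\irT'$ would, by (iii), have its $\irT$-parent in $V'$ (so in $\irT'$), a contradiction, unless it were already rootless in $\irT$, which is excluded. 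For (vi), if $\Sl\notin C_{0\cdot}$ I pick $u\in V'$; ancestor closure pushes $\roo_\irT=\Par^k(u)$ into $V'$, and since $\roo_\irT$ has no parent at all it is the unique root of $\irT'$, giving $\roo_{\irT'}=\roo_\irT$.

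I expect the two delicate points to be the limit interchange in establishing the recursion, where one must invoke the disjoint-support orthogonality together with summability of the weights, and pinning down the exact up-then-down shape of the undirected path in (iv), where minimality of the common ancestor is what guarantees the concatenated path is simple. Both are routine once \eqref{level_eq} and the disjoint-support observation are in place, but they are the places where care is genuinely required; the remaining implications are then short deductions from the recursion and from ancestor closure.
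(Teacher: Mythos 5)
Your proposal is correct and follows essentially the same route as the paper: your key recursion $\alpha_u=\sum_{v\in\Chi(u)}\lambda_v^2\,\alpha_v$ is exactly the displayed computation in the paper's proof of the necessity in (ii) (justified there by the monotone decrease of $\sum_{w\in\Chi^{n-1}(v)}\prod_{j=0}^{n-2}\lambda_{\Par^j(w)}^2\searrow\alpha_v$ rather than by dominated convergence), and (iv)--(vi) are handled identically, via a common ancestor obtained from \eqref{level_eq} together with the ancestor closure in (iii). The only cosmetic difference is that the paper deduces (i) from the $\Sl$-invariance of $\irH_0$ (plus diagonality of $A$ and positivity of the weights) and (iii) from (ii), whereas you read both directly off the recursion.
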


\begin{proof}
The facts that $\irH_0$ is invariant for $\Sl$ and that the weights are strictly positive imply (i). 

The sufficiency in (ii) is a part of (i). 
On the other hand, suppose that $\ell^2(\Chi(u))\subseteq\irH_0$. 
Then 
\[ 
\alpha_u = \lim_{n\to\infty} \sum_{w\in \Chi^n(u)}\prod_{j=0}^{n-1}\lambda_{\Par^j(w)}^2 
\]
\[ 
= \lim_{n\to\infty} \sum_{v\in\Chi(u)} \lambda_{v}^2 \sum_{w\in \Chi^{n-1}(v)}\prod_{j=0}^{n-2}\lambda_{\Par^j(w)}^2 = \sum_{v\in\Chi(u)} \lambda_{v}^2 \alpha_{v} =0, 
\]
since $\sum_{v\in\Chi(u)} \lambda_{v}^2 \sum_{w\in \Chi^{n-1}(v)}\prod_{j=0}^{n-2}\lambda_{\Par^j(w)}^2 \leq \sum_{v\in\Chi(u)} \lambda_{v}^2 \leq 1$ for all $n\in\N$ and $\sum_{w\in \Chi^{n-1}(v)}\prod_{j=0}^{n-2}\lambda_{\Par^j(w)}^2\searrow\alpha_v$. 
This proves the necessity in (ii). 
Point (iii) follows from (ii) immediately.

We have to check three conditions for $\irT'$ to be a subtree. 
Two of them is obvious since they were also true in $\irT$. 
In order to see the connectedness of $\irT'$, two distinct $u',v'\in V'$ are taken. 
Since $V = \cup_{j=0}^\infty \Des_\irT(\Par^j_\irT(u'))$, $\Par^k_\irT(u') = \Par^l_\irT(v')$ holds with some $k,l\in\N_0$. 
Then (iii) gives $\Par^i_\irT(u') = \Par^j_\irT(v') \in V'$ for every $i\leq k$ and $j\leq l$, which provides an undirected path in $\irT'$ connecting $u'$ and $v'$.

Finally via (ii) it is trivial that $\irT'$ is leafless, and the last two points immediately follow from (iii).
\end{proof}

In view of (v)-(vi), we have $\Par_{\irT}(u')=\Par_{\irT'}(u')$ for any $u'\in V'$, so we will simply write $\Par(u')$ in this case as well.

Let us take an arbitrary leafless subtree $\irT'=(V',E')$ of $\irT$ with the properties that if $\irT$ is rootless, then $\irT'$ is also rootless, and if $\irT$ has a root, then $\irT'$ has the same root. 
It is trivial that $\ell^2(V\setminus V')$ is invariant for $\Sl$. 
In the special case when it is the stable subspace, it is also hyperinvariant. 
But is it hyperinvariant for all weighted shift operators that are defined on $\irT$? 
The answer is negative as we will see from the next example.

\begin{exmpl}\label{leafless_Br1_exmpl}
\textup{Let us consider $\widetilde\irT$ and set all of the weights to be equal to 1, then $\Sl$ is a bounded weighted shift operator on $\widetilde{\irT}$. 
The unitary operator defined by the following equations: $Ue_0 = e_0$ and $U e_k = e_{k'}$, $U e_{k'} = e_k$, $U e_{-k} = e_{-k}$ for every $k\in\N$, obviously commutes with $\Sl$. 
But it is easy to see that $\ell^2(\N)$ is not invariant for $U$, hence it is not hyperinvariant for $\Sl$.}
\end{exmpl}

Now we identify the asymptotic limit $A_*$ of the adjoint $\Sl^*$. 
The stable subspace of $\Sl^*$ will be denoted by $\irH_0^*$. 
Since the weights are in the interval $]0,1]$ any infinite product is unconditionally convergent.

\begin{prop} \label{dual_aslim_prop}
If $\Sl$ is a contractive weighted shift operator on the directed tree $\irT$, then the following two points are satisfied:
\begin{itemize}
\item[\textup{(i)}] If $\irT$ has a root, then $\Sl \in C_{\cdot 0}(\ell^2(V))$.
\item[\textup{(ii)}] If $\irT$ is rootless, then $\irH_0^{*\perp} = \vee\{h_u\colon u\in V\}$ 
where 
\[ 
h_u = \sum_{v\in\Gen(u)}\prod_{j=0}^\infty\lambda_{\Par^j(v)}\cdot e_v \in\ell^2(V). 
\] 
The equality $h_u = h_v$ holds if $v\in\Gen(u)$ and every $h_u$ is an eigenvector:
\[ 
A_* h_u = a_u h_u \quad \forall u\in V 
\]
with the corresponding eigenvalue
\[ 
a_u = \|h_u\|^2 = \sum_{v\in\Gen(u)}\prod_{j=0}^\infty\lambda_{\Par^j(v)}^2. 
\] 
So, every level has one such $h_u$. 
Moreover, if $h_u$ is not zero for a vertex $u$, then it is not zero for every $u\in V$.
\end{itemize}
\end{prop}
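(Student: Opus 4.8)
The plan is to handle the two cases separately, in both relying on the explicit formulas for $\Sl^n e_u$ and $\Sl^{*n}e_u$ stated just before the proposition, and throughout I write $p_v:=\prod_{j=0}^\infty\lambda_{\Par^j(v)}\in[0,1]$, an unconditionally convergent product since every weight lies in $]0,1]$.

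For (i), suppose $\irT$ has a root $\roo_\irT$. For each fixed $u\in V$ the iterated parent $\Par^n(u)$ ceases to make sense once one reaches $\roo_\irT$, so the formula for $\Sl^{*n}e_u$ gives $\Sl^{*n}e_u=0$ for all sufficiently large $n$; in particular $\|\Sl^{*n}e_u\|\to 0$. Since $\{e_u\colon u\in V\}$ is an orthonormal basis and $\Sl^*$ is a contraction, an elementary $\varepsilon$-approximation of an arbitrary vector by finite linear combinations of the $e_u$ upgrades this to $\Sl^{*n}x\to 0$ for every $x\in\ell^2(V)$, that is $\Sl^*\in C_{0\cdot}$, which is exactly $\Sl\in C_{\cdot 0}(\ell^2(V))$.

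For (ii) I would first compute the action of the defining sequence of the asymptotic limit on a basis vector. Setting $P_v^{(n)}:=\prod_{j=0}^{n-1}\lambda_{\Par^j(v)}$ and composing the two power formulas yields
$$\Sl^n\Sl^{*n}e_u=P_u^{(n)}\sum_{w\in\Chi^n(\Par^n(u))}P_w^{(n)}\,e_w,$$
and one checks that $\Chi^n(\Par^n(u))=\{w\colon\Par^n(w)=\Par^n(u)\}=\Gen_n(u)$ increases to $\Gen(u)$ as $n\to\infty$, while each $P_v^{(n)}$ decreases to $p_v$. Hence the $e_w$-coefficient converges to $p_up_w$ for $w\in\Gen(u)$ and is $0$ otherwise. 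Because $\Sl^*$ is a contraction, $\Sl^n\Sl^{*n}=(\Sl^*)^{*n}(\Sl^*)^n$ decreases in the strong operator topology to $A_*$, so this coordinate-wise limit must coincide with $A_*e_u$; reading off coefficients gives $A_*e_u=p_u h_u$, whence in particular $h_u\in\ell^2(V)$ and $a_u=\|h_u\|^2<\infty$. The equality $h_u=h_v$ for $v\in\Gen(u)$ is immediate from $\Gen(v)=\Gen(u)$, and applying the bounded operator $A_*$ termwise to $h_u=\sum_{v\in\Gen(u)}p_v e_v$ together with $h_v=h_u$ gives $A_*h_u=\big(\sum_{v\in\Gen(u)}p_v^2\big)h_u=a_uh_u$.

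It then remains to identify $\irH_0^{*\perp}=(\ker A_*)^\perp=\overline{\ran A_*}$ with $\vee\{h_u\colon u\in V\}$. The relation $A_*e_u=p_uh_u$ gives $\overline{\ran A_*}\subseteq\vee\{h_u\colon u\in V\}$ immediately, and for the reverse inclusion I must show that $h_u\neq 0$ forces $p_u>0$, so that $h_u=p_u^{-1}A_*e_u\in\ran A_*$. This is the crux of the argument and the step I expect to be the main obstacle: I would establish the stronger dichotomy that $p_{v_0}>0$ for a single vertex $v_0$ forces $p_w>0$ for every $w\in V$. Given such $v_0$ and an arbitrary $w$, equation \eqref{level_eq} supplies a common ancestor $z=\Par^n(v_0)$ with $w\in\Des(z)$; the value $p_z$ is a tail of the convergent product defining $p_{v_0}$ and so is positive, and $p_w$ equals $p_z$ times the finite product of the (strictly positive) weights along the downward path from $z$ to $w$, hence is positive as well. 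This dichotomy simultaneously yields the closing "moreover" assertion and, together with the previous inclusion, completes the identification $\irH_0^{*\perp}=\vee\{h_u\colon u\in V\}$.
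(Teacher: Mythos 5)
Your proof is correct, and its core coincides with the paper's: the composition of the two power formulas giving $\Sl^n\Sl^{*n}e_u$ supported on $\Gen_n(u)$, entrywise passage to the SOT limit to obtain $A_*e_u=p_uh_u$ (in your notation $p_v=\prod_{j=0}^\infty\lambda_{\Par^j(v)}$), and the termwise computation $A_*h_u=a_uh_u$ are exactly the paper's steps. Where you genuinely diverge is in the identification $\irH_0^{*\perp}=\vee\{h_u\colon u\in V\}$ and in the closing dichotomy. The paper proves the inclusion $\irH_0^{*\perp}\subseteq\vee\{h_u\}$ levelwise, by checking that any $h\in\ell^2(\Gen(u))$ orthogonal to $h_u$ satisfies $A_*h=\big(\sum_{v\in\Gen(u)}\eta_vp_v\big)h_u=0$, the reverse inclusion being immediate from the eigenvector relation with $a_u=\|h_u\|^2$; and it proves the ``moreover'' assertion at the level of the vectors $h_u$, climbing upward via $\Sl^*$-invariance of $\irH_0^*$ and descending to children via the identity $a_{\tilde w}=\sum_{v\in\Gen(u)}p_v^2\sum_{w\in\Chi(v)}\lambda_w^2$, concluding by induction that $\irH_0^*=\ell^2(V)$. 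You instead write $\irH_0^{*\perp}=(\ker A_*)^\perp=\overline{\ran A_*}$ and extract both inclusions from $A_*e_u=p_uh_u$ together with a stronger, purely combinatorial dichotomy on the products themselves: $p_{v_0}>0$ at a single vertex forces $p_w>0$ at every vertex, via \eqref{level_eq}, positivity of tails of a convergent product in $]0,1]$, and the finitely many strictly positive weights on the downward path. This is more elementary than the paper's induction and yields the ``moreover'' claim in sharper form ($a_u\geq p_u^2>0$ for every $u$). One small ordering issue: the inference ``$A_*e_u=p_uh_u$, whence $h_u\in\ell^2(V)$'' is vacuous when $p_u=0$. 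It is harmless here — either invoke your product dichotomy first (if one $p_v$ vanishes they all do, so $h_u=0$ trivially), or read $h_u=p_{v_0}^{-1}A_*e_{v_0}$ off a vertex $v_0\in\Gen(u)$ with $p_{v_0}>0$ — but as written that sentence begs the question; the paper sidesteps it with the direct monotone bound $\sum_{v\in\Gen_n(u)}p_v^2\leq\|\Sl^ne_{\Par^n(u)}\|^2\leq1$, which has the added benefit of giving $a_u\leq1$ at once.
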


\begin{proof}
The first statement is clear, so we only deal with (ii).
We have 
\[
\sum_{v\in\Gen_n(u)}\prod_{j=0}^{\infty}\lambda_{\Par^j(v)}^2 \leq \sum_{v\in\Gen_n(u)}\prod_{j=0}^{n-1}\lambda_{\Par^j(v)}^2 = \|\Sl e_{\Par^n(u)}\|^2 \leq 1,
\]
for all $n\in\N$. Indeed, $0\leq \sum_{v\in\Gen(u)}\prod_{j=0}^\infty\lambda_{\Par^j(v)}^2\leq 1$ which means that $h_u$ is actually a vector of $\ell^2(V)$. For $n\in\N$ we have
\[ 
\Sl ^n\Sl ^{*n}e_u = \prod_{j=0}^{n-1}\lambda_{\Par^j(u)}\cdot \Sl ^n e_{\Par^n(u)} = \prod_{j=0}^{n-1}\lambda_{\Par^j(u)}\sum_{v\in\Gen_n(u)} \prod_{j=0}^{n-1}\lambda_{\Par^j(v)}\cdot e_v. 
\]
Since $\lim_{n\to\infty}\Sl ^n\Sl ^{*n}e_u = A_*e_u$,
\[ \langle A_*e_u,e_v\rangle = \left\{\begin{matrix}
\prod_{j=0}^{\infty}\lambda_{\Par^j(u)}\prod_{j=0}^{\infty}\lambda_{\Par^j(v)} & \text{if } v\in\Gen(u) \\
0 & \text{otherwise}
\end{matrix}\right., \]
which yields 
\[ 
A_*e_u = \prod_{j=0}^{\infty}\lambda_{\Par^j(u)}\sum_{v\in\Gen(u)} \prod_{j=0}^{\infty}\lambda_{\Par^j(v)}\cdot e_v = \prod_{j=0}^{\infty}\lambda_{\Par^j(u)}\cdot h_u.
\]
Now we get
\[ 
A_*h_u = \sum_{v\in\Gen(u)}\prod_{j=0}^\infty\lambda_{\Par^j(v)}\cdot A_*e_v = \bigg(\sum_{v\in\Gen(u)}\prod_{j=0}^\infty\lambda_{\Par^j(v)}^2\bigg) h_u
\]
since $\Gen(u)=\Gen(v)$ and thus $h_u=h_v$.

To conclude the relation $\irH_0^{*\perp}\subseteq \vee\{h_u\colon u\in V\}$, fix a vector $h\in\ell^2(\Gen(u))$, $h\perp h_u$. 
Using the notation $\eta_v=\langle h,e_v\rangle $ ($v\in\Gen(u)$), the equation
\[
A_* h = \sum_{v\in\Gen(u)} \eta_v A_* e_v = \bigg(\sum_{v\in\Gen(u)} \eta_v \cdot \prod_{j=0}^{\infty}\lambda_{\Par^j(v)}\bigg) h_u = 0
\]
follows by the orthogonality of $h$ and $h_u$. 
Therefore the equation $\irH_0^{*\perp} = \vee\{h_u\colon u\in V\}$ is trivial since $h_u=0$ if and only if $a_u=0$.

Finally let us suppose that $h_u = 0$ holds for a vertex $u\in V$. 
Then $\ell^2(\Gen(u))\subseteq\irH_0^*$, and since $\irH_0^*$ is invariant for $\Sl ^*$, $\ell^2(\Gen(\Par^k(u)))\subseteq\irH_0^*$ for every $k\in \N$. 
If we set a $\tilde{w}\in\Chi(u)$, then
\[ 
a_{\tilde{w}} = \sum_{w\in\Gen(\tilde{w})}\prod_{j=0}^\infty\lambda_{\Par^j(w)}^2 = \sum_{v\in\Gen(u)}\sum_{w\in\Chi(v)}\prod_{j=0}^\infty\lambda_{\Par^j(w)}^2 
\]
\[ 
= \sum_{v\in\Gen(u)}\prod_{j=0}^\infty \lambda_{\Par^j(v)}^2\sum_{w\in\Chi(v)}\lambda_{w}^2 = \sum_{v\in\Gen(u)}\prod_{j=0}^\infty |\langle h_u, e_v \rangle|^2 \sum_{w\in\Chi(v)}\lambda_{w}^2 = 0. 
\]
So $\ell^2(\Chi(\Gen(u)))\subseteq \irH_0^*$. 
By induction, we get $\irH_0^*=\ell^2(V)$.
\end{proof}

The next step is to compute the isometric asymptote. 
We call the vertex $u$ a branching vertex if $|\Chi(u)| > 1$ and the set of all branching vertices is denoted by $V_\prec$. 
In Proposition \ref{stable_subspc_prop} we used the notation $\irT' = (V',E')$ for the subtree such that $\ell^2(V') = \irH_0^\perp$. 

We will write $S\in\irB(\ell^2(\Z))$ and $S^+\in\irB(\ell^2(\N_0))$ for the simple bilateral and unilateral shift operators, i.e.: $Se_n = e_{n+1}$, $n\in\Z$ and $S^+e_k = e_{k+1}$, $k\in\N_0$. 
From the von Neumann--Wold decomposition it is clear that the c.n.u. isometries are exactly the orthogonal sums of some simple unilateral shift operators.

\begin{prop} \label{isom_as_prop}
For such a weighted shift operator $\Sl$ on $\irT$ that is a contraction and $\Sl \notin C_{0\cdot}(\ell^2(V))$, the isometric asymptote $U\in\irB(\ell^2(V'))$ is a weighted shift operator on the subtree $\irT' = (V',E')$: $U=S_{\underline{\beta}}$, with precise weights $\beta_{v'} = \frac{\lambda_{v'}\sqrt{\alpha_{v'}}}{\sqrt{\alpha_{\Par(v')}}}$, $v'\in (V')^\circ$. 
This isometry is unitarily equivalent to one of the followings:
\begin{itemize}
\item[\textup{(i)}] $\sum_{j=1}^{\Br(\irT')+1}\oplus S^+$, if $\irT$ has a root,
\item[\textup{(ii)}] $\sum_{j=1}^{\Br(\irT')}\oplus S^+$, if $\irT$ has no root and $U$ is a c.n.u. isometry, i.e.: \\ $\sum_{v'\in\Gen_{\irT'}(u')}\prod_{j=0}^\infty\beta_{\Par^j(v')}^2 = 0$ for some (and then for every) $u'\in V'$,
\item[\textup{(iii)}] $S\oplus\sum_{j=1}^{\Br(\irT')}\oplus S^+$, if $\irT$ has no root and $U$ is not a c.n.u. isometry.
\end{itemize}
\end{prop}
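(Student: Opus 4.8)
The plan is to read the isometric asymptote $U$ off from its defining intertwining relation $U A^{1/2}=A^{1/2}\Sl$ and then to invoke the von Neumann--Wold decomposition together with the co-rank formula \eqref{co-rank_eq}. By Proposition \ref{aslim_prop} the operator $A$ is diagonal, $A e_u=\alpha_u e_u$, so $A^{1/2}e_u=\sqrt{\alpha_u}\,e_u$ and the canonical realization of the isometric asymptote is $X^+=A^{1/2}\colon \ell^2(V)\to\ell^2(V')$. Evaluating the intertwining relation on $e_{u'}$ for $u'\in V'$ (where $\alpha_{u'}>0$) gives $\sqrt{\alpha_{u'}}\,U e_{u'}=\sum_{v\in\Chi(u')}\lambda_v\sqrt{\alpha_v}\,e_v$; the summands with $\alpha_v=0$ vanish, and by Proposition \ref{stable_subspc_prop} these are exactly the children lying outside $V'$. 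Hence $U e_{u'}=\sum_{v'\in\Chi_{\irT'}(u')}\beta_{v'}e_{v'}$ with $\beta_{v'}=\lambda_{v'}\sqrt{\alpha_{v'}}/\sqrt{\alpha_{\Par(v')}}$, which identifies $U$ as the weighted shift $S_{\underline{\beta}}$ on the leafless subtree $\irT'$ (Proposition \ref{stable_subspc_prop}(iv)). That these weights produce an isometry, $\sum_{v'\in\Chi_{\irT'}(u')}\beta_{v'}^2=1$, follows from the recursion $\alpha_{u'}=\sum_{v\in\Chi(u')}\lambda_v^2\alpha_v$ already used in the proof of Proposition \ref{stable_subspc_prop}(ii).

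Next I would apply Wold's theorem to the isometry $U$. Its pure part is an orthogonal sum of $\dim\ker U^{*}=\dim(\ran U)^{\perp}$ copies of $S^{+}$, and by \eqref{co-rank_eq} applied to $\irT'$ this multiplicity equals $1+\Br(\irT')$ when $\irT'$ has a root and $\Br(\irT')$ when it does not; by Proposition \ref{stable_subspc_prop}(v)--(vi) together with $\Sl\notin C_{0\cdot}$, the tree $\irT'$ has a root precisely when $\irT$ does. In the rooted case, Proposition \ref{dual_aslim_prop}(i) applied to $U$ on $\irT'$ gives $U\in C_{\cdot 0}$, i.e.\ $U^{*n}\to 0$ strongly, so $U$ has no unitary part and $U\cong\sum_{j=1}^{\Br(\irT')+1}\oplus S^{+}$, which is (i).

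Finally, for rootless $\irT'$ I would analyse the unitary part. Since $U$ is an isometry, $U^{n}U^{*n}$ is the orthogonal projection onto $\ran U^{n}$, so the strong operator limit $A_{U^{*}}=\lim_n U^nU^{*n}$ is the projection onto the unitary-part space $\irH_\infty=\bigcap_n\ran U^n$; by Proposition \ref{dual_aslim_prop}(ii) applied to $U$ this space is $\vee\{h_{u'}\colon u'\in V'\}$. Being a projection, $A_{U^{*}}$ has eigenvalues $a_{u'}=\|h_{u'}\|^2\in\{0,1\}$, and by the last assertion of Proposition \ref{dual_aslim_prop}(ii) they are either all $0$ or all $1$. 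The first alternative is exactly the displayed vanishing condition and means $U$ is c.n.u., giving (ii). In the second alternative each nonzero $h_{u'}$ is a unit vector, there is exactly one per level, the levels are indexed by $\Z$, and a direct computation using $\beta_{w'}\prod_{j\ge0}\beta_{\Par^{j}(\Par(w'))}=\prod_{j\ge0}\beta_{\Par^{j}(w')}$ shows $U h_{u'}=h_{w'}$ for any child $w'$ of $u'$; thus $\{h_k\}_{k\in\Z}$ is an orthonormal basis of $\irH_\infty$ shifted forward by $U$, whence the unitary part is a single simple bilateral shift $S$ and (iii) follows.

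The main obstacle I expect is this last step: pinning down that the unitary part has multiplicity exactly one. This rests on the twin facts that there is precisely one generating vector $h_{u'}$ per level of $\irT'$ and that $U$ carries it isometrically onto the generator of the next level; the observation that $A_{U^{*}}$ is a projection (so that the surviving $h_{u'}$ are automatically unit vectors) is what makes the basis $\{h_k\}_{k\in\Z}$ and the identification with $S$ clean. Everything else is bookkeeping built on Propositions \ref{aslim_prop}, \ref{stable_subspc_prop} and \ref{dual_aslim_prop} and on \eqref{co-rank_eq}.
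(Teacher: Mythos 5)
Your proposal is correct and follows essentially the same route as the paper's proof: reading the weights of $U$ off the intertwining relation $UA^{1/2}=A^{1/2}\Sl$, applying Proposition \ref{dual_aslim_prop} to the isometry $U$ on $\irT'$ together with the co-rank formula \eqref{co-rank_eq} and the Wold decomposition, and in the non-c.n.u.\ case showing that $U$ shifts the one-per-level vectors spanning the unitary part, so that it is a single simple bilateral shift. Your extra observation that $A_{U^*}$ is a projection (forcing the surviving $h_{u'}$ to be unit vectors) is a tidy way to handle the normalization that the paper obtains implicitly from the isometry property, but it is the same argument in substance.
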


\begin{proof}
Because of the condition $\Sl\notin C_{0\cdot}(\ell^2(V))$, $V' \neq \emptyset$. 
For any $u'\in V'$ we have
\[ 
U e_{u'} = \frac{1}{\sqrt{\alpha_{u'}}}\cdot UA^{1/2}e_{u'} = \frac{1}{\sqrt{\alpha_{u'}}}\cdot A^{1/2}\Sl e_{u'} 
\]
\[ 
= \frac{1}{\sqrt{\alpha_{u'}}}\cdot \sum_{v\in\Chi_\irT(u')}\lambda_{v} \cdot A^{1/2}e_{v} = \sum_{v'\in\Chi_{\irT'}(u')} \frac{\lambda_{'v} \sqrt{\alpha_{v'}}}{\sqrt{\alpha_{u'}}}\cdot e_{v'}. 
\]
This establishes that $U$ is a weighted shift operator on $\irT'$ with weights $\beta_{v'} = \frac{\lambda_{v'}\sqrt{\alpha_{v'}}}{\sqrt{\alpha_{\Par(v')}}}$ ($v'\in (V')^\circ$).

First, suppose that $\irT$ has a root. 
Then $\irT'$ has the same root as $\irT$. 
But contractive weighted shift operators on a directed tree which has a root are of class $C_{\cdot 0}$, so in this case $U$ is a unilateral shift operator.
Since the co-rank of $U$ is $\Br(\irT')+1$, we infer that $U$ and $\sum_{j=1}^{\Br(\irT')+1}\oplus S^+$ are unitarily equivalent.

Second, assume that $\irT$ has no root and $U$ is a c.n.u. isometry. 
The isometry $U$ is c.n.u. if and only if $U\in C_{\cdot 0}(\ell^2(V'))$ which happens if and only if $\sum_{v'\in\Gen_{\irT'}(u')}\prod_{j=0}^\infty\beta_{\Par^j(v')}^2 = 0$ for some (and then for every) $u'\in V'$, by Proposition \ref{dual_aslim_prop}. 
Again, the co-rank of $U$ is $\Br(\irT')$, and therefore $U$ is unitarily equivalent to $\sum_{j=1}^{\Br(\irT')}\oplus S^+$.

Finally, let us suppose that $\irT$ has no root and $\sum_{v'\in\Gen_{\irT'}(u')}\prod_{j=0}^\infty\beta_{\Par^j(v')}^2 > 0$ for every $u'\in V'$. 
Then the unitary part of $U$ acts on 
\[
(\irH_0^*(U))^\perp = \bigvee\bigg\{(0\neq) k_{u'} = \sum_{v'\in\Gen_{\irT'}(u')}\prod_{j=0}^\infty\beta_{\Par^j(v')}\cdot e_{v'}\colon u'\in V'\bigg\}.
\]
Set $u'\in V'$, then 
\[ 
U k_{u'} 
= \sum_{v'\in\Gen_{\irT'}(u')}\prod_{j=0}^\infty\beta_{\Par^j(v')}\cdot Ue_{v'}
= \sum_{v'\in\Gen_{\irT'}(u')}\sum_{w'\in\Chi_{\irT'}(v')} \prod_{j=0}^\infty\beta_{\Par^j(v')}\cdot\beta_{w'}e_{w'} 
\]
\[
= \sum_{w'\in\Gen_{\irT'}(\tilde{w}')} \prod_{j=0}^\infty\beta_{\Par^j(w')}\cdot e_{w'} = k_{\tilde{w}'}
\]
for some $\tilde{w}'\in\Chi_{\irT'}(u')$. 
Therefore we get that $U|\irH_0^*(U)$ is a simple bilateral shift operator. 
Since the co-rank of $U$ is $\Br(\irT')$, the operator $U$ is unitarily equivalent to $S\oplus\sum_{j=1}^{\Br(\irT')}\oplus S^+$.
\end{proof}

\begin{rem} \label{unitarily_eq_rem}
\textup{(i) If the directed tree $\irT$ has a root, then any isometric weighted shift operator on $\irT$ is of class $C_{\cdot 0}$, i.e.: an orthogonal sum of $\Br(\irT)$ pieces of unilateral shift operators.}

\textup{(ii) In general if we have an isometric weighted shift operator $U$ on a directed tree, then the set-up of the tree does not tell us whether $U$ is a c.n.u. isometry or not. 
To see this take a rootless binary tree, i.e.: $\#\Chi(u)=2$ $\forall\; u\in V$. 
If we set the weights $\beta_v=\frac{1}{\sqrt{2}}$ $(v\in V^\circ)$, then $S_{\underline{\beta}}$ is clearly an isometry with $\sum_{v\in\Gen_{\irT}(u)}\prod_{j=0}^\infty\beta_{\Par^j(v)}^2 = \sum_{v\in\Gen_{\irT}(u)}\prod_{j=0}^\infty \frac{1}{2} = 0$ for every $u\in V$. 
Therefore $U$ is a unilateral shift operator.}

\textup{On the other hand, if we fix a two-sided sequence of vertices: $\{u_l\}_{l=-\infty}^\infty\subseteq V$ such that $\Par(u_l)=u_{l-1}$ for every $l\in\Z$, and consider the weights}
\[ 
\beta_v = \left\{\begin{matrix}
\frac{1}{\sqrt{2}} & \text{if } v\in V\setminus \left(\cup_{l=-\infty}^\infty \Chi(u_l)\right), \\
\exp{\frac{-1}{(|l|+1)^2}} & \text{if } v = u_l \text{ for some } l\in\Z, \\
1-\beta_{u_l}^2 & \text{if } v \in \Chi(u_{l-1})\setminus\{u_l\} \text{ for some } l\in\Z,
\end{matrix} \right.
\]
\textup{we clearly get an isometric weighted shift operator on that directed tree which has non-trivial unitary part. 
In fact,}
\[ 
\sum_{v\in\Gen_{\irT}(u_l)}\prod_{j=0}^\infty\beta_{\Par^j(v)}^2 \geq \prod_{j=0}^\infty\beta_{u_{l-j}}^2 = \exp{\left(2\sum_{j=0}^\infty \frac{-1}{(|l-j|+1)^2}\right)} > 0 \quad (l\in\Z). 
\]

\textup{(iii) Suppose that the rootless directed tree $\irT$ has a vertex $u\in V$ which has the following property:}
\[ V = \big(\Des(u)\big)\bigcup\bigg(\bigcup_{k=1}^\infty \big\{\Par^k(u)\big\}\bigg). \]
\textup{If we take an isometric weighted shift operator on $\irT$ with weights $\{\beta_{v}\colon v\in V^\circ\}$, then $S_{\underline{\beta}}$ is not a c.n.u. isometry. 
Indeed, $\beta_{\Par^k(u)}=1$ for every $k\in\N_0$, and thus}
\[
\sum_{v\in\Gen(\Par^k(u))}\prod_{j=0}^\infty\beta_{\Par^{j+k}(v)}^2 = 1 > 0.
\]
\end{rem}

The above points show that two unitarily equivalent weighted shift operators on directed trees can be defined on a very different directed tree. 
Now we turn to the calculation of the isometric asymptote $U_*\in\irB((\irH_0^*)^\perp)$ of the adjoint $\Sl^*$.

\begin{prop} \label{dual_isom_as_prop}
Suppose that the contractive weighted shift operator $\Sl$ on $\irT$ does not belong to the class $C_{\cdot 0}$. 
Then $\irT$ has no root and we have
\[ 
U_* h_{u} = \frac{\sqrt{a_u}}{\sqrt{a_{\Par(u)}}} \cdot h_{\Par(u)} \quad (u\in V)
\]
where $h_u\neq 0$ for every $u\in V$. 
As a matter of fact, $U_*$ is a simple unilateral shift operator if there is a last level (i.e. $\Chi(\Gen(u))=\emptyset$ for some $u\in V$), and it is a bilateral shift operator elsewhere.
\end{prop}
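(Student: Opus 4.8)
The plan is to deduce the whole statement from the single intertwining identity $A_*^{1/2}\Sl^*=U_*A_*^{1/2}$ that defines the isometric asymptote, fed by the description of $A_*$ established in Proposition \ref{dual_aslim_prop}. First I would dispose of the preliminary assertions. Since $\Sl\notin C_{\cdot 0}(\ell^2(V))$ means $\Sl^*\notin C_{0\cdot}(\ell^2(V))$, Proposition \ref{dual_aslim_prop}(i) forces $\irT$ to be rootless, because a root would yield $\Sl\in C_{\cdot 0}$. The same hypothesis gives $\irH_0^{*\perp}=\vee\{h_u\colon u\in V\}\neq\{0\}$, so some $h_u$ is non-zero, and then the last sentence of Proposition \ref{dual_aslim_prop} guarantees $h_u\neq 0$, equivalently $a_u=\|h_u\|^2>0$, for every $u\in V$.

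Writing $p_u:=\prod_{j=0}^\infty\lambda_{\Par^j(u)}$, so that $h_u=\sum_{v\in\Gen(u)}p_v e_v$ and $a_u=\sum_{v\in\Gen(u)}p_v^2$, the first key step is to evaluate $A_*^{1/2}$ on basis vectors. For $v\in\Gen(u)$ one has $\Gen(v)=\Gen(u)$, hence $h_v=h_u$, so the identity $A_*e_v=p_v h_u$ from the proof of Proposition \ref{dual_aslim_prop} shows that the restriction of $A_*$ to $\ell^2(\Gen(u))$ is the rank-one map $x\mapsto\langle x,h_u\rangle h_u$, i.e. $a_u$ times the orthogonal projection onto $\C h_u$. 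Since the subspaces $\ell^2(\Gen(u))$ reduce $A_*$ and decompose $\ell^2(V)$ orthogonally, taking square roots level by level yields
\[
A_*^{1/2}e_v=\frac{p_v}{\sqrt{a_u}}\,h_u\qquad(v\in\Gen(u)).
\]
Now I would apply $A_*^{1/2}\Sl^*=U_*A_*^{1/2}$ to such an $e_v$. Using $\Sl^*e_v=\lambda_v e_{\Par(v)}$, the relation $p_v=\lambda_v p_{\Par(v)}$, and $\Par(v)\in\Gen(\Par(u))$ (so $h_{\Par(v)}=h_{\Par(u)}$), the left-hand side collapses to $\tfrac{p_v}{\sqrt{a_{\Par(u)}}}h_{\Par(u)}$, while the right-hand side equals $\tfrac{p_v}{\sqrt{a_u}}U_*h_u$. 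The main technical obstacle surfaces exactly here: an individual $p_u$ may vanish even though $h_u\neq 0$, so dividing by $p_u$ is illegitimate. I would circumvent this by noting that $a_u>0$ forces some $v\in\Gen(u)$ with $p_v>0$; choosing such a $v$ and cancelling the common factor $p_v$ gives
\[
U_*h_u=\frac{\sqrt{a_u}}{\sqrt{a_{\Par(u)}}}\,h_{\Par(u)}\qquad(u\in V).
\]

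For the structural conclusion I would normalize, setting $\hat h_u:=h_u/\sqrt{a_u}$. Vectors attached to different levels have disjoint supports and are therefore orthogonal, whereas $\hat h_v=\hat h_u$ for $v\in\Gen(u)$; hence $\{\hat h_u\}$, one unit vector per level, is an orthonormal basis of $\irH_0^{*\perp}$, and the displayed formula becomes simply $U_*\hat h_u=\hat h_{\Par(u)}$. Indexing the levels by integers as in \eqref{level_eq} (children of level $k$ lie in level $k+1$), the map $\Par$ sends level $k$ to level $k-1$, so $U_*$ acts on this basis as the pure index shift $k\mapsto k-1$; in particular it is an isometry. Rootlessness makes the set of occupied levels unbounded below, and it is bounded above precisely when some level has no children, i.e. when $\Chi(\Gen(u))=\emptyset$ for some $u$ (a last level). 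In that case the levels are order-isomorphic to $\N_0$ and $U_*$ is unitarily equivalent to the simple unilateral shift $S^+$; otherwise the levels are order-isomorphic to $\Z$ and $U_*$ is unitarily equivalent to the simple bilateral shift $S$. I expect the delicate point throughout to be solely the possible vanishing of individual products $p_u$, handled by always testing the intertwining relation on a level-vertex $v$ with $p_v>0$.
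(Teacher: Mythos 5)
Your proof is correct and takes essentially the same route as the paper: both rest on the intertwining identity $A_*^{1/2}\Sl^* = U_*A_*^{1/2}$ combined with the description of $A_*$ from Proposition \ref{dual_aslim_prop}, under which $A_*$ acts on each level $\ell^2(\Gen(u))$ as $a_u$ times the projection onto $\C h_u$. The only differences are organizational: the paper applies the identity to the whole vector $h_u$ and sums over the level (so the vanishing of individual products $p_v$ never forces a division, terms with $p_v=0$ simply contributing zero), whereas you test it on a single basis vector $e_v$ with $p_v>0$ and cancel, and you additionally spell out the final relabelling of levels behind the paper's ``one can easily see'' unitary equivalence with $S^+$ or $S$ --- that part of your argument is likewise correct.
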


\begin{proof}
If $\Sl\notin C_{\cdot 0}(\ell^2(V))$, then $h_u\neq 0$ and $a_u\neq 0$ for every $u\in V$. 
For a $u\in V$:
\[ 
U_* \frac{1}{\sqrt{a_u}}h_{u} = \frac{1}{a_u} U_*A_*^{1/2}h_u = \frac{1}{a_u} A_*^{1/2}\Sl^* h_u 
\]
\[ 
= \frac{1}{a_u} \sum_{v\in\Gen(u)}\prod_{j=0}^\infty\lambda_{\Par^j(v)} A_*^{1/2}\Sl^* e_v = \frac{1}{a_u} \sum_{v\in\Gen(u)}\prod_{j=0}^\infty\lambda_{\Par^j(v)} \lambda_v A_*^{1/2}e_{\Par(v)} 
\]
\[ 
= \frac{1}{a_u}\sum_{v\in\Gen(u)}\prod_{j=0}^\infty\lambda_{\Par^j(v)}\lambda_v \frac{\langle e_{\Par(v)},h_{\Par(v)}\rangle }{\|h_{\Par(v)}\|^2} A_*^{1/2} h_{\Par(v)} 
\]
\[ 
= \frac{1}{a_u \sqrt{a_{\Par(u)}}}\bigg(\sum_{v\in\Gen(u)}\prod_{j=0}^\infty\lambda_{\Par^j(v)}^2 \bigg) h_{\Par(u)} = \frac{1}{\sqrt{a_{\Par(u)}}} h_{\Par(u)}.
\]
One can easily see the unitary equivalence with the simple uni- or bilateral shift operator.
\end{proof}

As a consequence we obtain a characterization for contractive weighted shifts on directed trees that are similar to isometries or co-isometries. 
The related result reads as follows.

\begin{corollary}
Consider the contraction $\Sl$ which is a weighted shift operator on a directed tree. 
Then the following statements hold:
\begin{itemize}
\item[\textup{(i)}] $\Sl$ is similar to an isometry if and only if $\inf\{\alpha_u\colon u\in V\}>0$.

\item[\textup{(ii)}] $\Sl$ is similar to a co-isometry if and only if it is a bilateral weighted shift operator with $\prod_{j=-\infty}^\infty \lambda_j>0$, or it is a weighted backward shift operator with $\prod_{j=-\infty}^0 \lambda_j>0$. 
Then it is similar to the simple bilateral or the simple backward shift operator, respectively.
\end{itemize}
\end{corollary}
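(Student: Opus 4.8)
The plan is to derive both statements from Sz.-Nagy's reformulation (Theorem~\ref{Sz-N_ref_thm}), according to which a power bounded operator is similar to an isometry exactly when its asymptotic limit is invertible, combined with the explicit diagonalisations of $A$ and $A_*$ from Propositions~\ref{aslim_prop}, \ref{dual_aslim_prop} and \ref{dual_isom_as_prop}. For part (i) I would first recall that, by Proposition~\ref{aslim_prop}, the asymptotic limit $A=A_{\Sl}$ is the diagonal operator $Ae_u=\alpha_u e_u$ with $0\le A\le I$, and that $A_{\Sl,L}=A$ for every Banach limit since $\Sl$ is a contraction. The equivalence of (i') and (iii') in Theorem~\ref{Sz-N_ref_thm} then reduces the similarity of $\Sl$ to an isometry to the invertibility of $A$. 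A positive diagonal operator with spectrum in $[0,1]$ is invertible precisely when it is bounded below, i.e. when $\inf\{\alpha_u\colon u\in V\}>0$, which is exactly the claim.

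For part (ii) I would pass to the adjoint: $\Sl$ is similar to a co-isometry if and only if $\Sl^*$ is similar to an isometry, which by Theorem~\ref{Sz-N_ref_thm} holds if and only if $A_*=A_{\Sl^*}$ is invertible. Proposition~\ref{dual_aslim_prop}(i) immediately excludes the rooted case, where $\Sl\in C_{\cdot 0}$ forces $A_*=0$; hence $\irT$ must be rootless. In the rootless case Proposition~\ref{dual_aslim_prop}(ii) yields $\irH_0^{*\perp}=\bigvee\{h_u\colon u\in V\}$ with $A_* h_u=a_u h_u$, where the vectors $h_u$ lie one per level, are mutually orthogonal, satisfy $h_u\in\ell^2(\Gen(u))$, and have $a_u=\sum_{v\in\Gen(u)}\prod_{j=0}^\infty\lambda_{\Par^j(v)}^2$. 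Invertibility of the positive operator $A_*$ therefore splits into the two requirements $\ker A_*=\irH_0^*=\{0\}$ and $\inf_u a_u>0$.

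The structural heart of the argument is the first requirement. Since the $h_u$ are mutually orthogonal with $h_u\in\ell^2(\Gen(u))$, the span $\bigvee\{h_u\}$ fills $\ell^2(V)$ if and only if $\dim\ell^2(\Gen(u))=1$ for every level, i.e. if and only if every generation consists of a single vertex. A rootless tree with this property has no branching; ascending it is an infinite chain, while descending either every vertex has exactly one child, giving $V\cong\Z$ and a bilateral weighted shift, or exactly one vertex is a leaf, giving $V\cong\{j\le 0\}$ and, after reindexing, a weighted backward shift. In both cases $\Gen(u)=\{u\}$, so $a_u=\prod_{j=0}^\infty\lambda_{\Par^j(u)}^2$; as the weights lie in $]0,1]$ this is monotone along the chain, and a short computation identifies $\inf_u a_u$ with $\prod_{j=-\infty}^\infty\lambda_j^2$ in the bilateral case and with $\prod_{j=-\infty}^0\lambda_j^2$ in the backward case. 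Thus $A_*$ is invertible precisely under one of the stated product conditions.

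Finally, once $A_*$ is invertible the intertwining $U_*A_*^{1/2}=A_*^{1/2}\Sl^*$ gives $\Sl^*=A_*^{-1/2}U_*A_*^{1/2}$, so $\Sl^*$ is similar to its isometric asymptote $U_*$, which by Proposition~\ref{dual_isom_as_prop} is the simple unilateral shift when a last level exists (the backward case) and the simple bilateral shift otherwise; dualising shows $\Sl$ is similar to the simple backward, respectively bilateral, shift. I expect the main obstacle to be the bookkeeping of the third paragraph: verifying rigorously that ``one vertex per level'' forces exactly these two tree shapes, and then matching the infinite products $\prod_{-\infty}^\infty$ and $\prod_{-\infty}^0$ to $\inf_u a_u$ with the correct index ranges and monotonicity.
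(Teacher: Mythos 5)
Your proposal is correct and follows essentially the same route as the paper: similarity to an isometry is reduced to invertibility of the asymptotic limit (the paper cites \cite[Proposition 3.8]{Kubrusly}, you use the equivalent criterion (i')$\iff$(iii') of Theorem \ref{Sz-N_ref_thm}), part (ii) passes to the adjoint and uses Proposition \ref{dual_aslim_prop} to force the rootless chain structure ($\#\Chi(u)\le 1$), and the two cases (leafless vs.\ one leaf) yield the bilateral and backward shift with the stated product conditions. The only difference is cosmetic: where the paper again invokes Kubrusly's proposition for the product criterion and the similarity to $S$ or $S^+$, you derive these directly from $\inf_u a_u$ and the intertwining with the isometric asymptote $U_*$ of Proposition \ref{dual_isom_as_prop}, correctly filling in the details the paper leaves to the citation.
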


\begin{proof}
(i): This is a simple consequence of \cite[Proposition 3.8]{Kubrusly}, Proposition \ref{aslim_prop} and \ref{isom_as_prop}.

(ii): The similarity to a co-isometry implies the similarity of $\Sl^*$ to an isometry.
Then by Proposition \ref{dual_aslim_prop} we have that $\irT$ has no root and $\#\Chi(u)\leq 1$ for every $u\in V$.

First, suppose that $\irT$ has no leaves. 
Then clearly $\Sl$ is a weighted bilateral shift operator. 
By \cite[Proposition 3.8]{Kubrusly} we have $\prod_{j=-\infty}^\infty \lambda_j>0$ and therefore $\Sl$ is similar to $S$.

Second, assume that $\irT$ has a leaf. 
Then it has a unique leaf and trivially $\Sl^*$ is a weighted unilateral shift operator. 
Again by \cite[Proposition 3.8]{Kubrusly} we have $\prod_{j=0}^\infty \lambda_{-j}>0$ and that $\Sl^*$ is similar to $S^+$.
\end{proof}

We notice that a contractive weighted shift operator on a directed tree is similar to a unitary operator if and only if it is a bilateral shift operator of class $C_{11}$ and its asymptotic limit is invertible. 
This is a simple consequence of the previous corollary.

In what follows we prove some preliminary results in order to verify (i)--(iii) of Theorem \ref{tree_cycl_thm}. First we provide the following similarity lemma.

\begin{lem} \label{br2_laef_similarity_lem}
Consider a rootless directed tree $\irT$ with the properties $\Br(\irT) = 1$ and $\Lea(\irT) \neq \emptyset$.
Let $\Sl$ be a bounded weighted shift operator on $\irT$ (see Figure \ref{tree_1-2Lea_fig}).
 Then
$\Sl$ is similar to an orthogonal sum $W\oplus N$ where
\begin{itemize}
\item[\textup{(i)}] $W$ is a weighted backward shift operator and $N$ is a cyclic nilpotent operator acting on a finite dimensional space whenever $\#\Lea(\irT) = 2$,
\item[\textup{(ii)}] $W$ is a weighted bilateral shift operator and $N$ is a cyclic nilpotent operator acting on a finite dimensional space if $\#\Lea(\irT) = 1$ holds.
\end{itemize}
\end{lem}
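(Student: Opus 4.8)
The plan is to exploit the rigidity forced by $\Br(\irT)=1$ together with rootlessness: there is exactly one branching vertex, with exactly two children, so the tree necessarily consists of an infinite ascending ray $\cdots\to e_{-1}\to e_0$ feeding a branching vertex — which I place at $0$ — from which two descending chains emanate, the unprimed chain $0\to 1\to\cdots\to j_0$ and the primed chain $0\to 1'\to\cdots\to k_0'$. Since $\Lea(\irT)\ne\emptyset$ at least one chain is finite, and the standing hypothesis $k_0\le j_0$ makes the primed chain the shorter one. The first thing I would record is that away from the branching vertex the two descending chains are completely decoupled: for any $v\ne 0$ on either chain $\Chi(v)$ is a singleton inside the same chain, so the only relation mixing the chains is $\Sl e_0=\lambda_1 e_1+\lambda_{1'}e_{1'}$. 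Writing $\irR=\ell^2(\{j\le 0\})$, $\irD=\ell^2(\{1,\dots,j_0\})$ and $\irP=\ell^2(\{1',\dots,k_0'\})$, the operator $\Sl$ is block lower triangular in the ordering $\irR,\irD,\irP$: its compression to $\irR$ is a weighted backward shift, $\Sl|_\irD$ and $\Sl|_\irP$ are weighted nilpotent Jordan blocks, and the two couplings $\irR\to\irD$ and $\irR\to\irP$ are both rank one and supported on $e_0$.

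The heart of the argument is to peel off the shorter (primed) chain as the nilpotent summand by absorbing the coupling into the longer chain. The naive attempt — to split $\irP$ off via an $\Sl$-invariant complement of graph type over $\irR\oplus\irD$ — fails, because $\Sl e_0$ has the component $\lambda_{1'}e_{1'}\notin\ran(\Sl|_\irP)$; this is precisely why $\irP$ has no $\Sl$-invariant complement on its own. The remedy, and the place where $k_0\le j_0$ is used decisively, is to merge the tops of the two chains. For $1\le k\le k_0$ I would set
\[
f_k:=\Big(\prod_{i=1}^{k}\lambda_i\Big)e_k+\Big(\prod_{i=1}^{k}\lambda_{i'}\Big)e_{k'},
\]
so that $\Sl e_0=f_1$, $\Sl f_k=f_{k+1}$ for $1\le k<k_0$, and $\Sl f_{k_0}$ is a positive multiple of $e_{k_0+1}$ (read as $0$ when $k_0=j_0$). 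I then define
\[
\irN_1:=\vee\big(\{e_j:j\le 0\}\cup\{f_1,\dots,f_{k_0}\}\cup\{e_j:k_0<j\le j_0\}\big),\qquad \irN_2:=\irP .
\]

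A short verification then shows that $\irN_1$ is $\Sl$-invariant, its spanning vectors being permuted up to positive scalars along the single chain $\cdots\to e_{-1}\to e_0\to f_1\to\cdots\to f_{k_0}\to e_{k_0+1}\to\cdots$; that $\irN_2=\irP$ is invariant with $\Sl|_\irP$ cyclic nilpotent on $\C^{k_0}$; that $\irN_1\cap\irN_2=\{0\}$; and that $\irN_1+\irN_2$ contains every $e_v$ (recover $e_k$ from $f_k$ and $e_{k'}$), hence $\irN_1\dotplus\irN_2=\irH$. Because $\irN_2$ is finite dimensional and $\irN_1$ is closed with trivial intersection, the associated projection is automatically bounded, so this algebraic direct sum is topological and furnishes a bounded invertible intertwiner realizing $\Sl\cong \Sl|_{\irN_1}\oplus\Sl|_{\irN_2}$. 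It remains only to read off the summands. The spanning vectors of $\irN_1$ are mutually orthogonal — each $f_k$ sits in the slice $\vee\{e_k,e_{k'}\}$, orthogonal to the rest — so after normalization $\Sl|_{\irN_1}$ is a genuine weighted shift with positive weights along this chain. It is one-sided, i.e. a weighted backward shift, exactly when the chain terminates at the leaf $e_{j_0}$, which is the case $\#\Lea(\irT)=2$ (where $j_0<\infty$); it is two-sided, i.e. a weighted bilateral shift, when the unprimed chain is infinite, which is the case $\#\Lea(\irT)=1$. Setting $W:=\Sl|_{\irN_1}$ and $N:=\Sl|_{\irN_2}$ then gives both (i) and (ii).

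The main obstacle is the one flagged above: the short chain cannot be split off directly, and one must see that the long chain supplies exactly the room needed to absorb the single coupling term — this is where the length comparison $k_0\le j_0$ enters, since if the primed chain were the longer one the vector $f_{k_0}$ would have no unprimed continuation to be shifted onto. Once the vectors $f_k$ are in place, the remaining points (invariance, the direct-sum decomposition, boundedness of the projection, and the identification of $W$ and $N$) are routine.
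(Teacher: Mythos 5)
Your proposal is correct: the tree-structure analysis, the recursion $\Sl f_k=f_{k+1}$, the invariance of $\irN_1$ and $\irN_2$, the algebraic direct sum $\irH=\irN_1\dotplus\irN_2$ (closed because $\irN_2$ is finite dimensional), and the identification of $\Sl|\irN_1$ as a weighted backward, respectively bilateral, shift via the mutual orthogonality of the spanning chain are all sound, including the boundary case $k_0=j_0$. It is worth noting, though, that your route is the exact orthocomplement-dual of the paper's. The paper introduces the \emph{difference} vectors $g_k=\prod_{j=1}^k\lambda_j^{-1}e_k-\prod_{j=1}^k\lambda_{j'}^{-1}e_{k'}$, which satisfy $\Sl^*g_k=g_{k-1}$ and $\Sl^*g_1=0$, and writes down an explicit operator $X$ sending $e_{k'}\mapsto g_k/\|g_k\|$ and fixing each $e_n$ ($n\in\Z\cap V$); it then verifies the intertwining relation $X(W\oplus N)^*=\Sl^*X$ directly, with invertibility immediate because $X$ preserves the two-dimensional slices $\vee\{e_k,e_{k'}\}$. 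Since $\langle f_k,g_k\rangle=1-1=0$ slice by slice, your $\irN_1$ is precisely $(\vee\{g_k\})^\perp$ and your $\irN_2=\irP$ is the complement of the paper's $\irE=\ell^2(\Z\cap V)$, so the two decompositions carry the same information: you work forward with $\Sl$-invariant subspaces and sum vectors, the paper works backward with $\Sl^*$ and difference vectors. What your version buys is the avoidance of any adjoint computation, with boundedness and invertibility of the similarity coming for free from the closed-plus-finite-dimensional argument; what the paper's version buys is an explicit similarity operator that acts as the identity on the unprimed chain, so that $W$ sits on $\ell^2(\Z\cap V)$ with the original weights $\lambda_n$ and the weights of the nilpotent summand are visible as $\|g_{k-1}\|/\|g_k\|$, whereas your $W$ lives on the merged chain through the $f_k$ with correspondingly renormalized weights. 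Either way the summands are a positive-weight backward (or bilateral) shift and a cyclic nilpotent, which is all the subsequent cyclicity arguments need.
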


\begin{proof}
Let us define the following two subspaces: $\irE := \ell^2(\Z\cap V)$ and $\irE' := \irE^\perp = \ell^2(\{1',2',3'\dots\}\cap V)$. 
Clearly, the second subspace is finite dimensional. 
Set the vectors $g_k := \big(\prod_{j=1}^k \frac{1}{\lambda_j}\big) e_k - \big(\prod_{j=1}^k \frac{1}{\lambda_{j'}}\big) e_{k'}$ for every $2\leq k\leq k_0$. 
Now we define two operators on these subspaces as follows: 
\[ 
W\colon \irE\to\irE, \quad e_n\mapsto \left\{ \begin{matrix}
\lambda_{n+1}e_{n+1} & \text{ if } n \text{ is not a leaf} \\
0 & \text{ if } n \text{ is a leaf}
\end{matrix}\right., 
\] 
which is a weighted bilateral shift operator, and
\[ 
N\colon \irE'\to\irE', \quad e_{k'}\mapsto \left\{ \begin{matrix}
\frac{\|g_{k}\|}{\|g_{k+1}\|}e_{(k+1)'} & \text{ if } k < k_0\\
0 & \text{ if } k = k_0
\end{matrix}\right.. 
\]
These are clearly bounded operators. 
We state that the following operator
\[ 
X \colon \ell^2(V) \to \ell^2(V), \quad e_{k'} \mapsto \frac{1}{\|g_k\|}g_k \; (1\leq k\leq k_0), \; e_n\mapsto e_n \; (n\in \Z\cap V)
\]
is invertible and $X(W\oplus N)^* = \Sl^* X$ is satisfied.
The boundedness of $X$ is trivial since $\vee\{e_k, e_{k'}\}$ is invariant for all $1 \leq k \leq k_0$ and $e_n$ is an eigenvector for every $n\in\Z\cap V$, the invertiblity is also obvious. 
The following equations show that $X(W\oplus N)^* = \Sl^* X$ holds as well which ends the proof:
\[ 
X(W\oplus N)^* e_n = X \lambda_{n} e_{n-1} = \lambda_{n} e_{n-1} = \Sl^* e_n = \Sl^* X e_n \quad (n \in V\cap\Z)
\]
\[ 
X(W\oplus N)^* e_{1'} = 0 = \Sl^* \Big(\frac{1}{\|g_1\|}g_1\Big) = \Sl^* X e_{1'} 
\]
\[ 
X(W\oplus N)^* e_{k'} = X \frac{\|g_{k-1}\|}{\|g_{k}\|} e_{(k-1)'} = \frac{1}{\|g_{k}\|} g_{k-1} 
\]
\[ 
= \Sl^* \Big(\frac{1}{\|g_k\|}g_k\Big) = \Sl^* X e_{k'} \quad (2\leq k\leq k_0). 
\]
\end{proof}

Let $m$ be the normalized Lebesgue measure on $\T$ and $L^2$ the Lebesgue space $L^2 = L^2 (\T)$. 
The simple bilateral shift operator $S$ can be represented as the multiplication operator by the identity function $\chi(\zeta) = \zeta$ on $L^2$. 
It is a known fact that $g\in L^2$ is cyclic for $S$ if and only if $g(\zeta) \neq 0$ a.e. $\zeta\in\T$ and $\int_\T \log|g| dm =-\infty$. 
From Lemma \ref{cyclic_denserenage_nilp_lem} it is obvious that $g$ is cyclic if and only if $Sg = \chi g$ is cyclic, but this can be obtained from the previous characterization as well.

The simple unilateral shift operator $S^+$ can also be represented as a multiplication operator by $\chi$, but on the Hardy space $H^2 = H^2(\T)$. 
Before proving the first three points of Theorem \ref{tree_cycl_thm} we show that the orthogonal sum $S\oplus S^+$ has no cyclic vectors. 
The proof uses only elementary Hardy space techniques.

\begin{prop}
The operator $S\oplus S^+$ has no cyclic vectors.
\end{prop}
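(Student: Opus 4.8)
The plan is to realise both summands as multiplication by the coordinate function $\chi$, so that the pointwise ratio of the two components is preserved by the operator and survives under limits. Concretely, I represent $S$ as multiplication by $\chi$ on $L^2$ and $S^+$ as multiplication by $\chi$ on $H^2\subseteq L^2$; then $T:=S\oplus S^+$ acts on $L^2\oplus H^2$ by $(a,b)\mapsto(\chi a,\chi b)$, and for every $p\in\irP_\C$ one has $p(T)(f,g)=(pf,pg)$. Thus the cyclic subspace generated by an arbitrary $(f,g)$ is $M:=\{(pf,pg):p\in\irP_\C\}^-$, and the goal is to show $M\ne L^2\oplus H^2$ for every choice of $(f,g)$.

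First I would dispose of the degenerate case: if $g\equiv 0$, then $M\subseteq L^2\oplus\{0\}$, a proper subspace, so $(f,g)$ is not cyclic. Hence I may assume $g\not\equiv 0$, so the set $\{\zeta\in\T:g(\zeta)\ne 0\}$ has positive measure. The heart of the argument is that every element of $M$ inherits the pointwise proportionality $ag=bf$ from the generators $(pf,pg)$. Given $(a,b)\in M$, I choose polynomials $p_n$ with $p_nf\to a$ in $L^2$ and $p_ng\to b$ in $H^2$, and pass to a common subsequence along which both convergences hold almost everywhere on $\T$. On the positive-measure set $\{g\ne 0\}$ I may write $p_n=(p_ng)/g\to b/g$ a.e., whence $p_nf\to (b/g)f$ a.e. there; comparing with $p_nf\to a$ gives $a=bf/g$, i.e. $ag=bf$, almost everywhere on $\{g\ne 0\}$.

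Consequently $M$ is contained in the proper subspace $\{(a,b):ag=bf\text{ a.e. on }\{g\ne 0\}\}$; in particular the vector $(1,0)$ cannot lie in $M$, since $ag=bf$ would read $1\cdot g=0$ on a set of positive measure, which is impossible. Therefore $M\ne L^2\oplus H^2$, so $(f,g)$ is not cyclic, and as $(f,g)$ was arbitrary, $S\oplus S^+$ has no cyclic vectors. The only delicate point, which I expect to be the main obstacle, is the passage to almost-everywhere limits together with the division by $g$: I must extract a single subsequence along which both $\{p_nf\}$ and $\{p_ng\}$ converge a.e., and use that $g$ is nonzero off a set whose complement has positive measure (positivity of $|\{g\ne 0\}|$ already suffices, and a nonzero $H^2$-function in fact has nonvanishing boundary values a.e.). Everything else is formal.
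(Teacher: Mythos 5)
Your proof is correct, but it takes a genuinely different route from the dissertation's. You exploit the common multiplication-operator model to show that the whole cyclic subspace of $f\oplus g$ is trapped inside the proportionality set $\{(a,b): ag=bf \text{ a.e.\ on } \{g\neq 0\}\}$: from $p_nf\to a$ and $p_ng\to b$ in $L^2$ you pass to a common subsequence converging a.e., divide by $g$ on $\{g\neq 0\}$, and conclude $ag=bf$ there, so $(1,0)$ is never reached (the degenerate case $g\equiv 0$ being trivial). The paper instead first projects onto the two summands to see that a cyclic $f\oplus g$ would force $f$ to be cyclic for $S$ (so $f\neq 0$ a.e.) and $g$ to be cyclic for $S^+$ (so $g$ is outer, in particular $g\neq 0$ a.e.), and then shows the single vector $0\oplus g$ stays at a positive distance from the polynomial orbit: for each polynomial $p$, one of the sets $p^{-1}(\{\re z<1/2\})$, $p^{-1}(\{\re z\geq 1/2\})$ has measure at least $1/2$, and on it either $|p-1|\geq 1/2$ or $|p|\geq 1/2$, giving $\|pf\oplus pg - 0\oplus g\|^2 \geq \tfrac14\inf\{\int_E |f|^2\,dm \text{ or } \int_E |g|^2\,dm : m(E)\geq 1/2\}>0$. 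Your argument is softer and needs less: no characterization of cyclic vectors of $S$ or $S^+$, no outer functions, only $g\not\equiv 0$ and the standard extraction of a.e.-convergent subsequences from $L^2$-convergence; it also applies verbatim to other sums of two multiplications by $\chi$, such as $S\oplus S$. The paper's argument, in exchange, is quantitative — it produces an explicit uniform lower bound for the distance of the orbit from a concrete excluded vector — and identifies exactly which vector ($0\oplus g$) witnesses non-cyclicity. The one delicate point you flagged, extracting a single subsequence along which both $p_nf$ and $p_ng$ converge a.e. before dividing by $g$, is handled correctly by a two-step extraction, so there is no gap.
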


\begin{proof}
Suppose that $f\oplus g\in L^2\oplus H^2$ is a cyclic vector, and let us denote the orthogonal projection onto $L^2\oplus \{0\}$ and onto $\{0\}\oplus H^2$ by $P_1$ and $P_2$, respectively. 
Then $\vee\{\chi^nf\colon n\in\N_0\} = P_1(\vee\{\chi^nf\oplus\chi^ng\colon n\in\N_0\})$ is dense in $L^2$ i.e.: $f$ is cyclic for $S$, and similarly we get that $g$ is cyclic for $S^+$ as well. 
This implies that $f(\zeta)\neq 0$ is valid for a.e. $\zeta\in\T$ and $g$ is an outer function. 
We state that $0\oplus g\notin (L^2\oplus H^2)_{f\oplus g}$.
To see this consider an arbitrary complex polynomial $p$. 
Then
\[ 
\|(pf)\oplus (pg) - 0\oplus g\|^2 = \int_\T |pf|^2+|(p-1)g|^2 dm. 
\]

One of the sets $A=p^{-1}(\{z\in\C\colon\re{z}<1/2\})$ or $A^c=p^{-1}(\{z\in\C\colon\re{z}\geq 1/2\})$ has Lebesgue measure at least 1/2. 
If $m(A)\geq 1/2$, then we obtain
\[ 
\|(pf)\oplus (pg) - 0\oplus g\|^2 \geq \int_A |(p-1)g|^2 dm = \int_A |g|^2/4 dm 
\]
\[ 
\geq \frac{1}{4}\inf\left\{\int_E |g|^2 dm\colon E\in\irL, m(E)\geq 1/2 \right\} > 0.
\]
Similarly if $m(B)\geq 1/2$, then we calculate
\[ 
\|(pf)\oplus (pg) - 0\oplus g\|^2 \geq \frac{1}{4}\inf\left\{\int_E |f|^2 dm\colon E\in\irL, m(E)\geq 1/2 \right\} > 0.
\]
Thus $S\oplus S^+$ is not cyclic.
\end{proof}

Now we are in a position to prove (i)-(iii) of Theorem \ref{tree_cycl_thm}.

\begin{proof}[Proof of the points (i)-(iii) in Theorem \ref{tree_cycl_thm}]

(i) This is an easy consequence of Lemma \ref{br2_laef_similarity_lem} and Theorem \ref{backw_cycl_thm}.

(ii) By (ii) of Lemma \ref{br2_laef_similarity_lem}, $\Sl$ is similar to $W\oplus N$. 
If $W$ has no cyclic vectors then obviously neither has $\Sl$. 
If $W$ is cyclic, then by Lemma \ref{cyclic_denserenage_nilp_lem} we can obviously see that $\Sl$ has a cyclic vector.
Since $C_{\cdot 1}$ bilateral shifts are cyclic, the other statement follows immediately.

(iii) By Proposition \ref{isom_as_prop}, the isometric asymptote $U$ of $\Sl$ is unitarily equivalent to the orthogonal sum $S\oplus S^+$ which has no cyclic vectors. 
This implies - together with Lemma \ref{is_as_forcyclem} - that neither has $\Sl$.
\end{proof}

We proceed with the examination of the case when a weighted shift operator on $\widetilde{\irT}$ is similar to an orthogonal sum of a bi- and a unilateral shift operator. 
Then we will be able to verify the last point of Theorem \ref{tree_cycl_thm}.

Let us define another bounded operator $\widetilde{W}\in\ell^2(V)$ by the following equations:
\[ 
\widetilde{W}e_{n} = w_{n+1} e_{n+1}, \qquad \widetilde{W}e_{k}' = w_{(k+1)'} e_{(k+1)'} \quad (n\in\Z, k\geq 2) 
\]
where the weights $\{w_n\colon n\in\Z\}\cup\{w_{k'}\colon k\in\N\setminus\{1\}\}$ are bounded. 
Obviously $\widetilde{W}$ is an orthogonal sum of a bi- and a unilateral weighted shift operator.

Our aim is to find out whether there exists a $\widetilde{W}$ such that it is similar to $\Sl$. 
In order to do this, we will try to find a bounded, invertible $X\in\irB(\ell^2)$ which intertwines $\Sl$ with a $\widetilde{W}$: $X\Sl = \widetilde{W}X$. 
However, we found it easier to examine the adjoint equation: $\Sl^*X^* = X^*\widetilde{W}^*$. 
We shall use the following notations:
\[ 
g_k = \prod_{j=1}^k \frac{1}{\lambda_j}\cdot e_k - \prod_{j=1}^k \frac{1}{\lambda_{j'}}\cdot e_{k'} \qquad (k\in\N). 
\]
It is easy to see that 
\[
\Sl^* g_k 
= \left\{
\begin{matrix}
g_{k-1} & \text{if } k>1\\
0 & \text{if } k=1\\
\end{matrix}
\right.
\]
and that for $k\neq l$ the vectors $g_k$ and $g_l$ are orthogonal to each other. 
We also define the following subspaces
\[ 
\irE = \vee\{e_k\colon k\in\Z\}, \quad \irE' = \vee\{e_{k'}\colon k\in\N\}, \quad \irG = \vee\{g_k\colon k\in\N\}.
\]
In the next lemma, for technical reasons, we assume that $\Sl$ is contractive.

\begin{lem}
The following two conditions are equivalent for the contractive $\Sl$:
\begin{itemize}
\item[\textup{(i)}] the positive sequence $\big\{\prod_{j=1}^k \frac{\lambda_{j'}}{\lambda_j}\colon k\in\N\big\}$ is bounded,
\item[\textup{(ii)}] $\ell^2(V) = \irE\dotplus\irG$.
\end{itemize}
\end{lem}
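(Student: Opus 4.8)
The plan is to reduce condition (ii) to a statement about a single compression operator and then compute that operator explicitly. Write $a_k = \prod_{j=1}^k \lambda_j^{-1}$ and $b_k = \prod_{j=1}^k \lambda_{j'}^{-1}$, so that $g_k = a_k e_k - b_k e_{k'}$ and the sequence appearing in (i) is exactly $a_k/b_k = \prod_{j=1}^k \lambda_{j'}/\lambda_j$. Since $\ell^2(V) = \irE \oplus \irE'$ is an orthogonal decomposition and $\irE = \ker P$ for the orthogonal projection $P$ onto $\irE'$, I would first record that $\irG \cap \irE = \{0\}$ holds unconditionally: any element of $\irG$ whose $\irE'$-component vanishes has all its $e_{k'}$-coefficients equal to $0$, and because the weights are strictly positive this forces the vector to be $0$. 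Consequently the whole content of (ii) lies in whether $\irE + \irG = \ell^2(V)$.

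Next I would establish the clean reduction: $\ell^2(V) = \irE \dotplus \irG$ if and only if the compression $P|_\irG \colon \irG \to \irE'$ is onto. Indeed, solving $x = e + g$ with $e \in \irE = \ker P$ and $g \in \irG$ amounts to finding $g \in \irG$ with $Pg = Px$; surjectivity of $P|_\irG$ provides such a $g$, and then $x - g \in \ker P = \irE$. Conversely, hitting each $e_{k'}$ shows $P|_\irG$ is onto. Injectivity of $P|_\irG$ is automatic from $\irG \cap \irE = \{0\}$, and once $P|_\irG$ is a bounded bijection of the Banach spaces $\irG$ and $\irE'$ the open mapping theorem renders the resulting direct sum topological, so no further estimate is needed.

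The computation is then immediate. The $g_k$ are pairwise orthogonal (disjoint supports), so the normalized vectors $\hat g_k := g_k/\|g_k\|$, with $\|g_k\|^2 = a_k^2 + b_k^2$, form an orthonormal basis of $\irG$, while $\{e_{k'}\}$ is an orthonormal basis of $\irE'$. Since $P \hat g_k = -\frac{b_k}{\sqrt{a_k^2+b_k^2}}\, e_{k'}$, the operator $P|_\irG$ is diagonal in these two bases, with entries $d_k = -b_k/\sqrt{a_k^2+b_k^2}$. A diagonal operator with nonzero entries is surjective precisely when it is bounded below, i.e. when $\inf_k |d_k| > 0$.

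Finally I would translate this back: $\inf_k |d_k| > 0$ is equivalent to $\sup_k (a_k/b_k) < \infty$, since $|d_k|^{-2} = 1 + (a_k/b_k)^2$, and $\sup_k (a_k/b_k) = \sup_k \prod_{j=1}^k \lambda_{j'}/\lambda_j$ is exactly condition (i). The step I expect to require the most care is the ``only if'' half buried in the reduction: when $a_k/b_k$ is unbounded the entries $d_k$ tend to $0$ along a subsequence, so the range of $P|_\irG$ is a proper (typically dense but non-closed) subspace of $\irE'$; one must then exhibit a concrete vector of $\irE'$ that is not of the form $Pg$ with $g \in \irG$ in order to certify that $\irE + \irG \neq \ell^2(V)$, and hence that (ii) genuinely fails.
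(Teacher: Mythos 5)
Your proof is correct, and its computational core is the same as the paper's: both reduce the solvability of $x=e+g$ ($e\in\irE$, $g\in\irG$) to the summability condition $\sum_k (a_k/b_k)^2\,|\langle x,e_{k'}\rangle|^2<\infty$, where $a_k/b_k=\prod_{j=1}^k \lambda_{j'}/\lambda_j$ is exactly the sequence in (i). The difference is organizational. The paper works bare-handed: it solves the linear system for the coefficients $\mu_k,\nu_k$ of $g$ and $e$ explicitly, notes that one of the two resulting summability constraints holds automatically, and for (ii)$\Rightarrow$(i) constructs a concrete witness, choosing $\xi_{k_m'}=m^{-3/2}$ along a subsequence where the product exceeds $m$. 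You instead isolate the abstract reduction that $\ell^2(V)=\irE\dotplus\irG$ holds iff the compression $P|\irG$ maps onto $\irE'$, and then read everything off the fact that this compression is diagonal with entries $d_k=-b_k/\sqrt{a_k^2+b_k^2}$, so that $|d_k|^{-2}=1+(a_k/b_k)^2$. This buys you the failure direction for free: an injective diagonal operator that is onto is invertible by the open mapping theorem, hence bounded below, so when (i) fails no explicit witness vector is actually needed --- your closing worry is unfounded, and the paper builds one only because it argues directly with coefficients.

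Two small repairs. First, the phrase ``hitting each $e_{k'}$ shows $P|\irG$ is onto'' is not a proof of surjectivity: in this very setting $\ran(P|\irG)$ contains every $e_{k'}$ unconditionally, since $e_{k'}=P(\hat g_k/d_k)$, yet the range is proper whenever $\inf_k|d_k|=0$. The correct one-liner, already implicit in your preceding sentence, is that for $x\in\irE'$ a decomposition $x=e+g$ gives $x=Px=Pg$, so (ii) yields surjectivity for \emph{every} $x\in\irE'$, not merely the basis vectors. Second, your topological remark is harmless but unnecessary, as the paper's $\dotplus$ denotes the algebraic direct sum of the closed subspaces $\irE$ and $\irG$ (and, as you say, the decomposition is then automatically topological).
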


\begin{figure}
\includegraphics[scale=0.5]{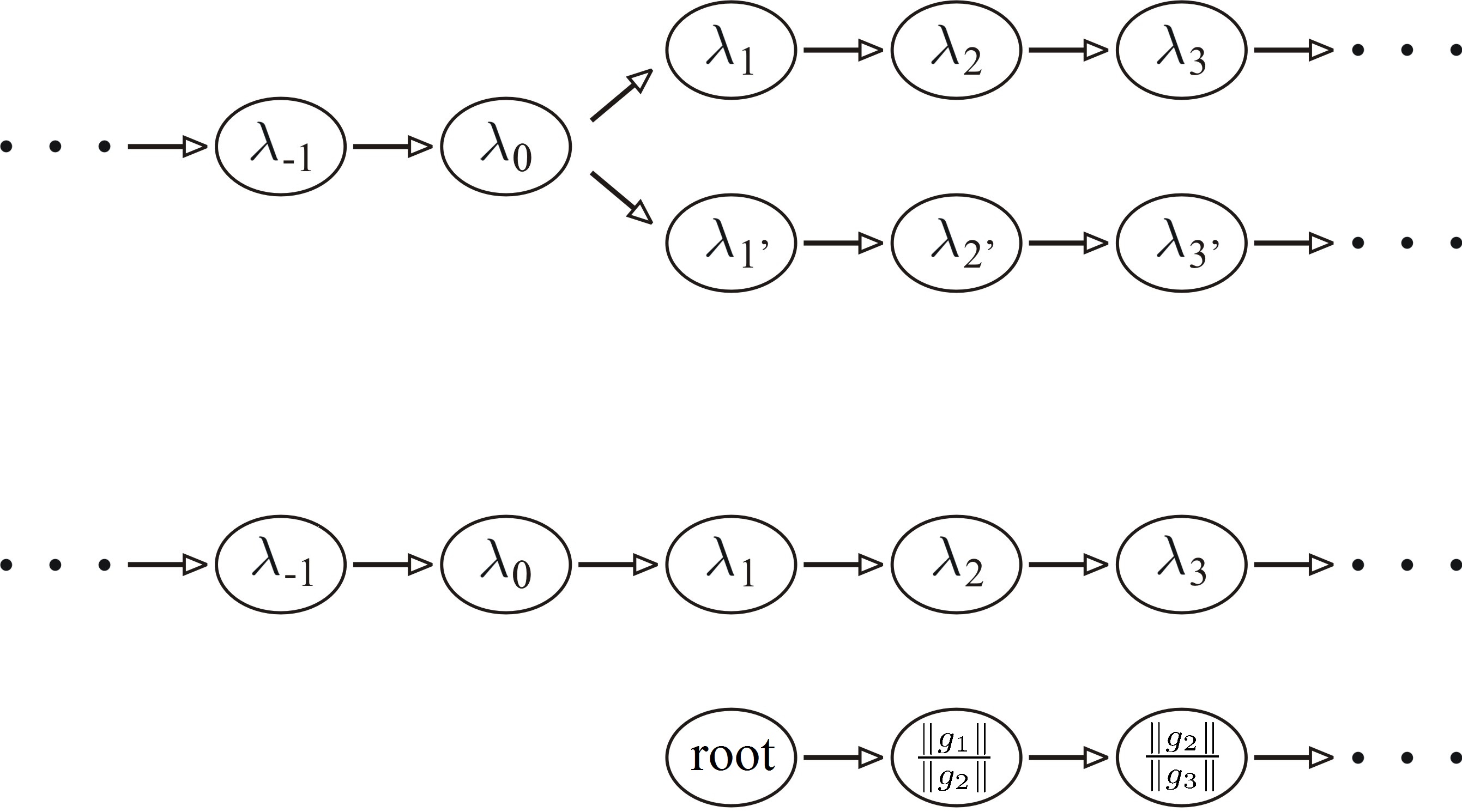}
\caption{Similarity of $\Sl$ on $\widetilde{\irT}$ to an orthogonal sum in some cases}
\end{figure}

\begin{proof}
(i)$\Longrightarrow$(ii): It is obvious that $\irE\cap\irG = \{0\}$, so we only have to prove the equation $\ell^2(V)=\irE +\irG$. 
To do this take an arbitrary vector $x\in\ell^2(V)$ and suppose that $x = e+g$ for some vectors $e\in\irE$ and $g\in\irG$. 
With the following notations
\[ 
\xi_{k'} = \langle x,e_{k'}\rangle , \quad \xi_n = \langle x,e_n\rangle , \quad \mu_k = \langle g,g_k\rangle , \quad \nu_n = \langle e,e_n\rangle , \qquad (k\in\N, n\in\Z) 
\]
we can write the following equations:
\[ 
\xi_{k'} = \langle g,e_{k'}\rangle = \Big\langle\frac{\mu_k}{\|g_k\|} g_k,e_{k'}\Big\rangle = \frac{-\mu_k}{\|g_k\|\cdot\lambda_{1'}\dots\lambda_{k'}}, 
\]
\[ 
\xi_k = \langle e,e_k\rangle + \langle g,e_k\rangle = \nu_k + \frac{\mu_k}{\|g_k\|\cdot\lambda_1\dots\lambda_k}, \quad (k\in\N) 
\]
and
\[
\xi_m=\nu_m \quad (m\in\N_0). 
\]
From them we infer that
\[ 
\mu_k = -\xi_{k'} \|g_k\| \cdot \lambda_{1'}\dots\lambda_{k'}, \qquad \nu_k = \xi_k + \frac{\lambda_{1'}\dots\lambda_{k'}}{\lambda_1\dots\lambda_k}\xi_{k'}, \qquad (k\in\N). 
\]
Therefore there exists an $e\in\irE$ and $g\in\irG$ such that $x = e+g$ holds if and only if $\sum_{k=1}^\infty |\mu_k|^2/\|g_k\|^2 < \infty$ and $\sum_{k=1}^\infty \nu_k^2 < \infty$ are fulfilled. 
But the first inequality always holds, since
\[ 
\sum_{k=1}^\infty \frac{|\mu_k|^2}{\|g_k\|^2} = \sum_{k=1}^\infty |\xi_{k'}|^2 (\lambda_{1'}\dots\lambda_{k'})^2 \leq \sum_{k=1}^\infty |\xi_{k'}|^2 \leq \|x\|^2 < \infty. 
\] 
The second one is equivalent to
\[ 
\sum_{k=1}^\infty \Big(\frac{\lambda_{1'}\dots\lambda_{k'}}{\lambda_1\dots\lambda_k}\Big)^2|\xi_{k'}|^2 < \infty, 
\]
and in particular this holds if the sequence $\big\{\prod_{j=1}^k \frac{\lambda_{j'}}{\lambda_j}\colon k\in\N\big\}$ is bounded.

(ii)$\Longrightarrow$(i): If the sequence is not bounded, then there exists a vector $x\in\ell^2(V)$ such that this sum is not finite. 
In fact, if 
\[ 
\prod_{j=1}^{k_m} \frac{\lambda_{j'}}{\lambda_j} > m \qquad (\forall \; m\in\N)
\]
hold with some increasing sequence $\{k_m\}_{m=1}^\infty$, then let $\xi_{k_m'} = m^{-3/2}$ and with this choice we have
\[ 
\sum_{k=1}^\infty \Big(\frac{\lambda_{1'}\dots\lambda_{k'}}{\lambda_1\dots\lambda_k}\Big)^2\xi_{k'}^2 > \sum_{m=1}^\infty \frac{1}{m} = \infty. 
\]
\end{proof}

Now, we are able to prove a similarity theorem. 
We say that the operator $T_1\in\irB(\irH)$ is a \textit{quasiaffine transform} of $T_2\in\irB(\irK)$ if there exists a quasiaffinity (i. e.: which is injective and has a dense range) $X\in\irI(T_1,T_2)$. 

\begin{prop}\label{sim_br_1_prop}
Let $\Sl$ be a weighted shift contraction on the directed tree $\widetilde{\irT}$ and set
\[ 
w_n = \lambda_n, (n\in\Z), \qquad w_{k'} = \frac{\|g_{k-1}\|}{\|g_k\|}, (k>1). 
\]
The following two points hold:
\begin{itemize}
\item[\textup{(i)}] $\Sl$ is always a quasiaffine transform of $\widetilde{W}$.
\item[\textup{(ii)}] If $\big\{\prod_{j=1}^k \frac{\lambda_{j'}}{\lambda_j}\colon k\in\N\big\}$ is bounded, then the weighted shift $\Sl$ is similar to $\widetilde{W}$.
\end{itemize}
\end{prop}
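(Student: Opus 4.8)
The plan is to follow the route suggested in the text and solve the adjoint intertwining equation $\Sl^* Y = Y\widetilde{W}^*$ for $Y = X^*$, exploiting two $\Sl^*$-invariant subspaces. First I would note that $\irE = \vee\{e_n : n\in\Z\}$ is invariant for $\Sl^*$, with $\Sl^* e_n = \lambda_n e_{n-1}$; this is exactly the adjoint of the bilateral weighted shift $\widetilde{W}$ on $\irE$ with weights $w_n = \lambda_n$, so $\Sl^*|\irE = \widetilde{W}^*|\irE$. Secondly, from $\Sl^* g_k = g_{k-1}$ $(k>1)$ and $\Sl^* g_1 = 0$ the subspace $\irG = \vee\{g_k : k\in\N\}$ is $\Sl^*$-invariant, and since the $g_k$ are mutually orthogonal the vectors $g_k/\|g_k\|$ form an orthonormal basis on which $\Sl^*$ acts as the weighted backward shift with weights $\|g_{k-1}\|/\|g_k\| = w_{k'}$. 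Hence the unitary identification $e_{k'}\mapsto g_k/\|g_k\|$ carries $\widetilde{W}^*|\irE'$ onto $\Sl^*|\irG$.

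Guided by this, I would define $Y\in\irB(\ell^2(V))$ on the orthonormal basis by $Y e_n = e_n$ $(n\in\Z)$ and $Y e_{k'} = g_k/\|g_k\|$ $(k\in\N)$, and check $\Sl^* Y = Y\widetilde{W}^*$ directly on each basis vector (the cases $e_n$, $e_{k'}$ with $k\geq2$, and $e_{1'}$), which is routine given the two computations above. The hard part will be the boundedness of $Y$: writing $g_k/\|g_k\| = a_k e_k + b_k e_{k'}$ with $a_k^2 + b_k^2 = 1$, the operator $Y$ is block diagonal with respect to the orthogonal decomposition of $\ell^2(V)$ into the subspaces $\vee\{e_k, e_{k'}\}$ $(k\in\N)$ and $\C e_n$ $(n\leq 0)$, and on each two-dimensional block it is the upper-triangular matrix $\left(\begin{smallmatrix} 1 & a_k \\ 0 & b_k\end{smallmatrix}\right)$ of norm at most $\sqrt{3}$, uniformly in $k$; hence $\|Y\|<\infty$. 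This is exactly where the normalization built into $w_{k'}$ is indispensable, since the unnormalized assignment $e_{k'}\mapsto g_k$ would be unbounded whenever the products $\prod 1/\lambda_j$ grow. Injectivity and density of the range follow at once: $b_k\neq0$ forces $Yx=0\Rightarrow x=0$, while $\ran Y$ contains every $e_n$ and, since $e_k\in\ran Y$ and $b_k\neq0$, every $e_{k'}$. Passing to adjoints, $X := Y^*$ is a quasiaffinity with $X\Sl = \widetilde{W}X$, proving (i).

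For (ii) it remains to upgrade dense range to surjectivity under the hypothesis that $\big\{\prod_{j=1}^k \lambda_{j'}/\lambda_j\big\}$ is bounded. Here I would invoke the preceding lemma, which in this case yields $\ell^2(V) = \irE\dotplus\irG$. Given $x\in\ell^2(V)$, I would write $x = e + g$ with $e\in\irE$, $g\in\irG$, expand $e = \sum_n d_n e_n$ and $g = \sum_k c_k\, g_k/\|g_k\|$ with $\{d_n\},\{c_k\}\in\ell^2$, and set $x_0 = \sum_n d_n e_n + \sum_k c_k e_{k'}\in\ell^2(V)$; then $Y x_0 = x$. Thus $Y$ is a bounded bijection, hence invertible by the open mapping theorem, and so is $X = Y^*$, giving $\Sl = X^{-1}\widetilde{W}X$, i.e. similarity. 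The single genuine obstacle is the boundedness/normalization step of the first paragraph; once $Y$ is constructed, part (i) is formal and part (ii) is a direct application of the direct-sum decomposition furnished by the lemma.
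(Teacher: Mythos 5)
Your proposal is correct and follows essentially the same route as the paper: you construct exactly the paper's intertwiner (the paper's $X^*$ is your $Y$, with $e_{k'}\mapsto g_k/\|g_k\|$ and $e_n\mapsto e_n$), verify the adjoint intertwining relation on basis vectors, obtain boundedness from the uniform norm bound on the two-dimensional blocks $\vee\{e_k,e_{k'}\}$, and deduce similarity in (ii) from the preceding lemma's decomposition $\ell^2(V)=\irE\dotplus\irG$. Your explicit open-mapping/surjectivity computation in (ii) just spells out what the paper phrases as $X^*|\irE$ and $X^*|\irE'$ being unitaries onto $\irE$ and $\irG$, so there is no substantive difference.
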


\begin{proof}
Since $\Sl$ is a contraction, $w_{k'} = \frac{\|g_{k-1}\|}{\|g_k\|}\leq 1$ and thus $\widetilde{W}$ is bounded. 
We define $X^*$ as follows:
\[ 
X^*e_{k'} = \frac{1}{\|g_k\|} g_k, \quad X^* e_n = e_n \qquad (k\in\N, n\in\Z). 
\]
The operator $X^*$ is bounded and quasiaffine because $\vee\{e_k,e_{k'}\}$ is invariant for $X$ for every $k\in\N$, and $\big\|X^*|\vee\{e_k,e_{k'}\}\big\|\leq 2$. 
The next equations show that $X^*$ intertwines $\widetilde{W}^*$ with $\Sl^*$:
\[ 
\Sl^* X^* e_n = \Sl^* e_n = \lambda_n e_{n-1} = \lambda_n X^* e_{n-1} = X^*\widetilde{W}^* e_n, \qquad (n\in\Z) 
\]
\[ 
\Sl^* X^* e_{k'} = \frac{1}{\|g_k\|} \Sl^* g_k = \left\{ \begin{matrix}
0 & k=1 \\
\frac{1}{\|g_k\|} g_{k-1} & k>1
\end{matrix} \right. = X^*\widetilde{W}^*e_{k'}. \qquad (k\in\N)
\]
This proves that $\Sl$ is a quasiaffine transform of $\widetilde{W}$.

If $\irE\dotplus\irG = \ell^2(V)$, then since $X^*|\irE'\in\irB(\irE',\irG)$ and $X^*|\irE\in\irB(\irE,\irE)$ are unitary transformations, therefore $X^*$ is an invertible bounded operator. 
This proves the similarity.
\end{proof}

We provide the following special case of the above proposition.

\begin{corollary}
If $\Sl \notin C_{0\cdot}(\ell^2(V))$ which is defined on $\widetilde{\irT}$, then it is similar to an orthogonal sum of a weighted bi- and a weighted unilateral shift operator.
\end{corollary}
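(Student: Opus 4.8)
The plan is to deduce the corollary from Proposition \ref{sim_br_1_prop}, whose part (ii) already asserts that $\Sl$ is similar to the decoupled orthogonal sum $\widetilde W$ (a weighted bilateral shift on the line $\Z$ together with a weighted unilateral shift on the primed ray) as soon as the sequence $\big\{\prod_{j=1}^k \tfrac{\lambda_{j'}}{\lambda_j}\colon k\in\N\big\}$ is bounded. Since all weights of a contractive $\Sl$ on $\widetilde\irT$ lie in $]0,1]$ and are strictly positive, the entire task reduces to translating the hypothesis $\Sl\notin C_{0\cdot}$ into boundedness of this product (or of a symmetric variant of it).

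First I would extract the relevant asymptotic data using Proposition \ref{aslim_prop}. Set $P:=\lim_{k\to\infty}\prod_{j=1}^k\lambda_j^2$ and $P':=\lim_{k\to\infty}\prod_{j=1}^k\lambda_{j'}^2$; both limits exist because the partial products decrease. A direct bookkeeping of the descendants $\Chi^n(u)$ in $\widetilde\irT$ yields $\alpha_n=P\big/\prod_{j=1}^n\lambda_j^2$ for a line vertex $n\ge1$, and $\alpha_{k'}=P'\big/\prod_{j=1}^k\lambda_{j'}^2$ on the ray, while at and above the branching vertex $0$ the orbit splits into a line--part contributing $P$ and a ray--part contributing $P'$, so that $\alpha_{-m}=\big(\prod_{i=-m+1}^0\lambda_i^2\big)(P+P')$ for every $m\ge0$. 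As all weights are positive, each eigenvalue $\alpha_u$ of the asymptotic limit $A$ is positive exactly when the corresponding limit $P$ or $P'$ is positive; hence $A\neq0$, i.e. $\Sl\notin C_{0\cdot}$, holds if and only if $P>0$ or $P'>0$.

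Next I would split into the two cases thus obtained. If $P>0$, then $\prod_{j=1}^k\lambda_j\ge\sqrt P$ while $\prod_{j=1}^k\lambda_{j'}\le1$, so $\prod_{j=1}^k\tfrac{\lambda_{j'}}{\lambda_j}\le 1/\sqrt P$ is bounded and Proposition \ref{sim_br_1_prop}(ii) applies verbatim, giving similarity of $\Sl$ to an orthogonal sum of a weighted bilateral shift (on $\Z$) and a weighted unilateral shift (on the primed ray). The delicate case is $P=0$ but $P'>0$, where this product diverges and the proposition cannot be invoked directly. Here I would exploit the symmetry of $\widetilde\irT$: the branching vertex $0$ has two interchangeable child--rays $\{1,2,\dots\}$ and $\{1',2',\dots\}$, so the basis permutation $e_n\leftrightarrow e_{n'}$ $(n\ge1)$ is a unitary conjugating $\Sl$ to the weighted shift on $\widetilde\irT$ with the two families of weights swapped. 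Applying Proposition \ref{sim_br_1_prop}(ii) to this relabelled operator, the relevant sequence becomes $\prod_{j=1}^k\tfrac{\lambda_j}{\lambda_{j'}}$, which is bounded by $1/\sqrt{P'}$ because now $\prod_{j=1}^k\lambda_{j'}\ge\sqrt{P'}$ and $\prod_{j=1}^k\lambda_j\le1$. Composing this similarity with the conjugating unitary again exhibits $\Sl$ as similar to an orthogonal sum of a weighted bilateral shift (now on $\{\dots,-1,0,1',2',\dots\}$) and a weighted unilateral shift (on $\{1,2,\dots\}$), which completes the proof.

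The main obstacle is exactly this asymmetry: the boundedness hypothesis of Proposition \ref{sim_br_1_prop} privileges one of the two child--rays and fails precisely when the ``wrong'' ray carries the surviving asymptotic mass. The conceptual point is that $\Sl\notin C_{0\cdot}$ forces at least one of $P,P'$ to be positive, and that the built--in symmetry of $\widetilde\irT$ always allows one to fold the non--vanishing ray into the bilateral part; once the correct orientation is chosen, the remaining estimates are the elementary monotonicity bounds indicated above.
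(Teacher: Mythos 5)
Your proof is correct and takes essentially the same route as the paper: the paper likewise observes that $\Sl\notin C_{0\cdot}(\ell^2(V))$ forces one of the products $\prod_{j=1}^\infty\lambda_j$, $\prod_{j=1}^\infty\lambda_{j'}$ to be positive, interchanges the two rays if necessary (your explicit tree-automorphism unitary makes this ``without loss of generality'' precise) so that the unprimed product is positive, notes that $\big\{\prod_{j=1}^k\lambda_{j'}/\lambda_j\colon k\in\N\big\}$ is then bounded, and applies Proposition \ref{sim_br_1_prop}. Your computation of the eigenvalues $\alpha_u$ via Proposition \ref{aslim_prop} simply spells out the equivalence the paper states without proof.
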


\begin{proof}
The condition $\Sl \notin C_{0\cdot}(\ell^2(V))$ is equivalent to the positivity of $\prod_{j=1}^\infty \lambda_j$ or $\prod_{j=1}^\infty \lambda_{j'}$. 
By interchanging $\lambda_{j'}$ and $\lambda_j$ for every $j\in\N$, if necessary, we may assume that the first one is not zero. 
Then the sequence $\big\{\prod_{j=1}^k \frac{\lambda_{j'}}{\lambda_j}\colon k\in\N\big\}$ is obviously bounded. 
Proposition \ref{sim_br_1_prop} implies the similarity.
\end{proof}

We continue with the verification of (iv) of Theorem \ref{tree_cycl_thm}.

\begin{proof}[Proof of (iv) in Theorem \ref{tree_cycl_thm}]
Let us consider an $\Sl$ such that it is similar to a $\widetilde{W}$ and the bilateral summand of $\widetilde{W}$ is hypercyclic. 
By decreasing $\lambda_{k'}$-s, we may also assume that the unilateral summand is contractive. 
Take a hypercyclic vector $f\in\irE$ for the bilateral summand. 
We will show that $f\oplus e_{1'}$ is cyclic for $\widetilde{W}$ and therefore $\Sl$ is cyclic. 

First, let us take an arbitrary vector $e\in\irE$, then there is a subsequence such that $\frac{1}{k}\widetilde{W}^{j_k}f \to e \; (k\to\infty)$.
Therefore $\frac{1}{k}\widetilde{W}^{j_k}(f\oplus e_{1'}) \to e \; (k\to\infty)$ also holds, since the unilateral summand is a contraction.

Second, fix an $n\in\N_0$. 
Our aim is to prove that $\widetilde{W}^n e_{1'} \in \vee\{ \widetilde{W}^k(f\oplus e_{1'}) \colon k\in\N_0\}$. 
Since $\{\widetilde{W}^kf \colon k > n\}$ is dense in $\irE$, there is a subsequence $\{\widetilde{W}^{j_k}f\}_{k=1}^\infty$ with the property $\frac{1}{k}\widetilde{W}^{j_k}f \to \widetilde{W}^n f \; (k\to\infty)$. 
This implies that $\frac{1}{k}\widetilde{W}^{j_k}(f\oplus e_{1'}) \to \widetilde{W}^n f \; (k\to\infty)$ and hence $\widetilde{W}^n e_{1'} \in \vee\{ \widetilde{W}^k(f\oplus e_{1'}) \colon k\in\N_0\}$. 
Therefore $f\oplus e_{1'}$ is indeed a cyclic vector of $\widetilde{W}$.
\end{proof}

Let us denote the operator $\underbrace{S^+\oplus \dots \oplus S^+}_{k \text{ times}}$ by $S^+_k$ ($k\in\N$) and the orthogonal sum of $\aleph_0$ piece of $S^+$ by $S^+_{\aleph_0}$. 

We close this chapter by proving Theorem \ref{tree_adj_cycl_thm}. 
In order to do that we will apply the following theorem.

\begin{thm}\label{cyclic_ort_sum_thm}
The operator $S\oplus (S^+_k)^*$ is cyclic for every $k\in\N$.
\end{thm}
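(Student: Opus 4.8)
The plan is to work in the standard function model. Represent $S$ as multiplication by $\chi$ on $L^2=L^2(\T)$, write $B:=(S^+)^*$ for the backward shift on $H^2=H^2(\T)$, and set $B_k:=(S^+_k)^*=B\oplus\dots\oplus B$ ($k$ copies) on $(H^2)^k$, so that $T:=S\oplus(S^+_k)^*$ acts on $\irH:=L^2\oplus(H^2)^k$. As a candidate cyclic vector I would take $x=f\oplus(g_1,\dots,g_k)$, where $f\in L^2$ is cyclic for $S$ (such $f$ exist by the stated criterion $f\ne0$ a.e. and $\int_\T\log|f|\,dm=-\infty$) and $g=(g_1,\dots,g_k)$ is cyclic for $B_k$; the latter exists, since one checks by induction on $k$ that if $\vec h$ is cyclic for $B_{k-1}$ then $(1,h_1,\dots,h_{k-1})$ is cyclic for $B_k$ (the coordinate $1$ is killed by $B$, and Lemma~\ref{cyclic_denserenage_nilp_lem}(i) applied to the dense-range operator $B_{k-1}$ recovers the remaining coordinates). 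Put $M:=\vee\{T^nx\colon n\in\N_0\}$.

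I would then pass to the orthogonal complement. For $y=u\oplus(v_1,\dots,v_k)$, using $\langle B^ng_i,v_i\rangle=\langle g_i,\chi^nv_i\rangle$, one computes
\[
\langle T^nx,y\rangle=\int_\T\chi^n f\overline{u}\,dm+\sum_{i=1}^k\int_\T\chi^{-n}g_i\overline{v_i}\,dm\qquad(n\in\N_0).
\]
Writing $\Phi:=f\overline u$ and $\Psi:=\sum_{i=1}^k g_i\overline{v_i}$ in $L^1(\T)$ and denoting by $\widehat h(m)=\int_\T\chi^{-m}h\,dm$ the Fourier coefficients, membership $y\in M^\perp$ is equivalent to
\[
\widehat\Phi(-n)+\widehat\Psi(n)=0\qquad(n\in\N_0).
\]
Now $\widehat\Psi(n)$ ($n\ge0$) are the Taylor coefficients of the analytic function $F(z)=\sum_i\langle(I-zB)^{-1}g_i,v_i\rangle=\int_\T\Psi(\zeta)(1-z\overline\zeta)^{-1}\,dm(\zeta)$ on $\D$, and $\widehat\Phi(-n)$ ($n\ge0$) are those of $G(z)=\int_\T\Phi(\zeta)(1-z\zeta)^{-1}\,dm(\zeta)$, so the condition reads $G\equiv-F$ on $\D$. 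The payoff of choosing $g$ cyclic for $B_k$ and $f$ cyclic for $S$ is that everything collapses once one knows $F\equiv0$: indeed $F\equiv0$ means $\langle B_k^ng,\vec v\rangle=0$ for all $n\in\N_0$, hence $\vec v\perp\vee\{B_k^ng\}=(H^2)^k$ and $\vec v=0$; and then $G\equiv0$ gives $\langle\chi^nf,u\rangle=0$ for all $n\in\N_0$, whence $u=0$ by cyclicity of $f$ for $S$. Thus $y=0$ and $x$ is cyclic.

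The whole proof therefore reduces to the single implication ``$G\equiv-F$ on $\D$ forces $G\equiv F\equiv0$'', and I expect this analytic rigidity to be the main obstacle. The point is that $G$ is assembled from the strictly negative Fourier frequencies of $f\overline u$, while $F$ is assembled from the non-negative frequencies of $\Psi=\sum_i g_i\overline{v_i}$; equivalently, setting $\check\Phi(\zeta):=\Phi(\overline\zeta)$, the relation says that $\check\Phi+\Psi$ has vanishing analytic projection, i.e. no non-negative frequencies. Here the stability of $(S^+_k)^*$ enters decisively: $B$ is onto with trivial cokernel and its only eigenvalues $\overline\lambda$ ($\lambda\in\D$) lie in the open disc, so $\Psi$ is a sum of products of $H^2$-functions and $F$ is a genuine one-sided Cauchy transform. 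I would finish by analysing the nontangential boundary values of $F$ and $G$ and invoking the F. and M. Riesz theorem together with the a.e. non-vanishing of the two-sided function $f$, to conclude that $\Phi$ cannot have a purely one-sided Cauchy transform matching $F$ unless both vanish. This surjectivity is exactly what is absent for $S\oplus S^+$, where the co-rank-one obstruction already prevents cyclicity; its presence here is precisely what makes $S\oplus(S^+_k)^*$ cyclic, and verifying the boundary-value step rigorously (possibly after a more careful explicit choice of $f$ and the $g_i$, in the spirit of the coordinate-adjustment argument used in Theorem~\ref{backw_cycl_thm}) is the hard part of the argument.
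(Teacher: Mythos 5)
Your reduction of $y\in M^\perp$ to the system $\widehat\Phi(-n)+\widehat\Psi(n)=0$ ($n\in\N_0$) is correct, but the proposal has two genuine defects. First, your inductive construction of a cyclic vector for $B_k=(S^+_k)^*$ fails: since $B1=0$, we have $p(B_k)(1,h_1,\dots,h_{k-1})=\bigl(p(0)\cdot 1,\,p(B_{k-1})\vec h\bigr)$ for every polynomial $p$, so the orbit span is contained in the proper closed invariant subspace $\C 1\oplus (H^2)^{k-1}$, and a vector with a constant in one slot is never cyclic for $B_k$ when $k\ge 2$. This particular point is repairable --- $B_k$ is cyclic by Theorem \ref{backw_cycl_thm}(i) applied with $\#\irJ=k$ and all weights equal to $1$ --- but note that the cyclic vectors produced there have infinitely many nonzero coordinates in each copy, which already signals that your candidate was too naive.

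Second, and decisively, the heart of your argument --- the rigidity ``$G\equiv -F$ on $\D$ forces $F\equiv G\equiv 0$'' --- is not proved, and the tools you invoke cannot prove it. The condition $\widehat\Phi(-n)+\widehat\Psi(n)=0$ for all $n\ge 0$ says exactly that the $L^1$ function $\zeta\mapsto\Phi(\overline\zeta)+\Psi(\zeta)$ has vanishing Fourier coefficients of non-negative index, i.e.\ it is the complex conjugate of a function in $\chi H^1$; this is an enormous space of functions, and the F.\ and M.\ Riesz theorem yields no contradiction from such membership. Nothing in your frequency bookkeeping distinguishes $S\oplus(S^+_k)^*$ from the non-cyclic $S\oplus S^+$ except the verbal appeal to surjectivity of $B$, and it is not even clear that an \emph{arbitrary} pair (cyclic $f$, cyclic $g$) works --- a coordinated choice seems necessary, and you defer precisely this (``the hard part''). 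The paper's proof circumvents boundary-value analysis entirely: it exhibits an injective $X\in\irB(L^2\oplus H^2,L^2)$, $X(f\oplus g)=f\varphi+g$, where $\varphi\in L^\infty$, $\varphi\ne 0$ a.e.\ and $\int_\T\log|\varphi|\,dm=-\infty$; injectivity is a one-line log-integral dichotomy ($f\varphi=-g\ne 0$ would give $-\infty<\int_\T\log|g|\,dm\le\int_\T|f|\,dm+\int_\T\log|\varphi|\,dm=-\infty$), the intertwining $SX=X(S\oplus S^+)$ is trivial, and dualizing makes $X^*$ a dense-range map transferring the cyclicity of $S^*$ to $S^*\oplus(S^+)^*$ via Lemma \ref{is_as_forcyclem}(i); the case $k>1$ follows by a short induction with the analogous map $Y$. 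The cyclic vector so obtained, $X^*h=(\overline\varphi h)\oplus(P_{H^2}h)$, has strongly correlated components, which suggests your ``any cyclic pair'' Ansatz is not merely unproven but likely needs replacing; as it stands, your proposal does not constitute a proof.
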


\begin{proof}
The method is the following: we intertwine $S\oplus S_k^+$ and $S$ with an injective operator $X\in\irB(L^2\oplus H^2,L^2)$: $SX = X(S\oplus S_k^+)$. 
Then taking the adjoint of both sides in the equation we get: $(S^*\oplus (S_k^+)^*)X^* = X^*S^*$.
Since $X^*$ has dense range and $S^*$ is cyclic, this implies the cyclicity of $S^*\oplus (S_k^+)^*$ for any $k \in \N$ by (i) of Lemma \ref{is_as_forcyclem}, which is unitarily equivalent to $S\oplus (S_k^+)^*$. 

For the $k=1$ case the definition of the operator $X$ is the following:
\[ 
X\colon L^2\oplus H^2 \to L^2, \quad f\oplus g\mapsto f\varphi + g, 
\]
where $\varphi\in L^\infty$, $\varphi(\zeta)\neq 0$ for a.e. $\zeta\in\T$ and $\int_\T \log|\varphi(\zeta)|d\zeta = -\infty$. 
An easy estimate shows that $X\in\irB(L^2\oplus H^2,L^2)$. 
Assume that $0 = f\varphi + g$.
If $f = 0$ ($g=0$, resp.), then $g = 0$ ($f=0$, resp.) follows immediately. 
On the other hand, taking logarithms of the absolute values and integrating over $\T$ we get 
\[
-\infty < \int_\T \log|g|dm = \int_\T \log|f| + \log|\varphi|dm \leq \int_\T |f|dm + \int_\T \log|\varphi|dm = -\infty, 
\]
which is a contradiction. 
Therefore $X$ is injective. 
The equation $SX = X(S\oplus S^+)$ is trivial, therefore $S\oplus (S^+)^*$ is indeed cyclic.

Now let us turn to the case when $k>1$. 
We will work with induction, so let us suppose that we have already proven the cyclicity of $S\oplus (S^+_{k-1})^*$ for a $k \geq 2$. 
Consider the following operator
\[ 
Y\colon L^2\oplus \underbrace{H^2 \oplus\dots\oplus H^2}_{k \text{ times}} \to L^2\oplus \underbrace{H^2 \oplus\dots\oplus H^2}_{k-1 \text{ times}}, 
\]
\[ 
f\oplus g_1\oplus \dots \oplus g_{k} \mapsto (f\varphi+g_1)\oplus g_2\oplus \dots \oplus g_{k},
\]
with the same $\varphi\in L^\infty$ as in the definition of $X$. 
Obviously $Y$ is bounded, linear and injective, and we have $Y(S\oplus S^+_{k}) = (S\oplus S^+_{k-1})Y$. 
This proves that $S\oplus (S^+_{k})^*$ is also cyclic.
\end{proof}

Of course, now a question arises naturally: Is the operator $S\oplus (S^+_{\aleph_0})^*$ cyclic? The previous thoughtline does not work for this case.
However, if the answer were positive, we could prove (ii) of Theorem \ref{tree_adj_cycl_thm} without the condition $\Br(\irT)<\infty$.

Now we are able to verify Theorem \ref{tree_adj_cycl_thm}.

\begin{proof}[Proof of Theorem \ref{tree_adj_cycl_thm}]
Obviously $\irT$ is leafless in both cases. 
The isometric asymptote $U$ of $\Sl$ is just taken. 
Since $U^*$ is cyclic, the operator $\Sl^*$ is also cyclic by Lemma \ref{is_as_forcyclem}, Proposition \ref{isom_as_prop} and Theorem \ref{backw_cycl_thm}.
\end{proof}

We would like to point out that if we take a weighted shift operator on a directed tree which is similar to a $\widetilde{W}$ and the adjoint of the bilateral summand has no cyclic vectors, then obviously we get an $\Sl$ on $\widetilde{\irT}$ such that $\Sl^*$ has no cyclic vectors. 
This shows that the adjoint of a weighted shift on a directed tree is usually not cyclic.

\newpage

\chapter*{Summary}

In this dissertation we presented results concerning asymptotic behaviour of Hilbert space power bounded operators and some applications. 
The dissertation consists of five papers. 
\cite{GeKe} has appeared in \emph{Periodica Mathematica Hungarica}, \cite{Ge_contr} has appeared in \emph{Acta Scientiarum Mathematicarum (Szeged)}, \cite{Ge_matrix} has accepted in \emph{Linear and Multilinear Algebra}, and \cite{Ge_SzN} has been accepted in \emph{Proceedings of the American Mathematical Society}.
The last one \cite{Ge_tree} is still under revision.
The papers \cite{Ge_contr,Ge_matrix,Ge_SzN,Ge_tree} were also uploaded to the arXiv.

First in Chapter \ref{A-s_chap}, we characterized all possible asymptotic limits of Hilbert space contractions. 
We proved that in a finite dimensional space the asymptotic limit of a contraction is always a projection. 
In the case when $\irH$ is separable and infinite dimensional we proved that a positive and contractive operator $A$ arises asymptotically from a contraction if and only if $A$ arises asymptotically from a contraction in uniform convergence. 
Moreover, these conditions are equivalent to the following: either $r_e(A) = 1$ or $A$ is a projection; which holds if and only if $\dim\irH(]0,1]) = \dim\irH(]\delta,1])$ is satisfied for every $0\leq\delta<1$ where $\irH(\omega)$ denotes the spectral subspace of $A$ associated with the Borel subset $\omega\subseteq \R$. 
If $\dim\irH>\aleph_0$, then we provided a similar result. 
We concluded Chapter \ref{A-s_chap} by investigating what conditions on two contractions have to be satisfied in order that their asymptotic limits coincide. 
We also examined the reverse question.

Then we turned to the investigation of $L$-asymptotic limits of power bounded matrices. 
We showed that in this case $A_{T,L}$ is independent of the particular choice of the Banach limit $L$, moreover, we have $A_{T,L} = A_{T,C} := \lim_{n\to\infty}\frac{1}{n}\sum_{j=1}^n T^{*j}T^j$. 
We called $A_{T,C}$ the Ces\`aro asymptotic limit of $T$.
In order to characterize the possible Ces\`aro asymptotic limits, we examined two separate cases. 
The first one is when $T$ is similar to a unitary operator, which is (in the matrix case) equivalent to the condition that $T$ is of class $C_{11}$. 
We proved that a positive definite matrix $A$ is the Ces\`aro asymptotic limit of a power bounded matrix $T$ if and only if the trace of $A^{-1}$ equals the dimension of the space which is equivalent to the condition that there is an invertible matrix $S$ with unit column vectors such that $A = S^{*-1}S^{-1} = (SS^*)^{-1}$ holds. 
Using the charaterization in the previous case we proved the description in the other case too, i.e. when $T$ is not similar to a unitary matrix. 
Namely, for a given singular positive semi-definite matrix $A\neq 0$ we can find a power bounded matrix $T$ such that $A = A_{T,C}$ is valid if and only if $\tr (A|(\ker A)^\perp)^{-1} \leq \dim(\ker A)^\perp$, which holds if and only if $A = S^{*-1}(I\oplus 0)S^{-1}$ is fulfilled with some invertible matrix $S$ with unit column vectors and an orthogonal projection of the form: $0\neq I\oplus 0 \neq I$.

Chapter \ref{SzN_chap} was devoted to a possible generalization of Sz.-Nagy's similarity theorem and a strengthening of it. 
We proved that similar power bounded operators $T$ and $S$ share the same asymptotic property, namely the powers of the operator $T$ and its adjoint $T^*$ are bounded from below on the orthogonal complement of the corresponding stable subspaces if and only if the same holds for $S$. 
In particular, this property is satisfied by any power bounded normal operator, and therefore by any power bounded $T$ that is similar to a normal operator.
This can be considered as a generalization of Sz.-Nagy's theorem, although only in the necessity part (the sufficiency part trivially cannot be generalized in this way).
Similarity to operators belonging to some other special classes was examined, too.
Theorem \ref{Sz-N_ref_thm} is one of the most important results concerning similarity to normal operators. 
Therefore we hope that our result will be useful when one wants to investigate similarity questions.
In the second part of Chapter \ref{SzN_chap} we described all possible asymptotic limits of those contractions that are similar to unitary operators and which act on a $\aleph_0$ dimensional space.
Namely, we proved that the positive contraction $A\in\irB(\irH)$ arises from such a contraction $T$ if and only if it is invertible, $r_e(A) = 1$ and $\dim\ker(A-\underline{r}(A)I)\in\{0,\aleph_0\}$.

Next, in Chapter \ref{comm_chap}, we examined the commutant mapping of contractions. This mapping belongs to those few links which relate the contraction to a well-understood operator (i.e. a unitary operator). 
The injectivity of this mapping was investigated. 
Of course when $T$ is of class $C_{1\cdot}$, the associated commutant mapping $\gamma_T$ is necessarily injective. 
We provided an example when the stable subspace of $T$ is non-trivial yet $\gamma_T$ is injective. 
Moreover, we provided four necessary conditions which has to be fulfilled whenever $\gamma_T$ is injective. 
One of these conditions is that $\sigma_p(T)\cap \overline{\sigma_p(T^*)} \cap\D = \emptyset$. We also proved that certain conditions on the stable component implies that the injectivity of $\gamma$ is equivalent to this previously mentioned condition. We verified that the commutant mapping of quasisimilar contractions are simultaneously injective or not. We examined the commutant mapping of orthogonal sums as well, and in particular it turned out that $\gamma_{T\oplus T}$ is injective if and only if $\gamma_T$ is injective.
Finally, we proved that the above mentioned four conditions together are still not enough to ensure that the commutant mapping is injective. Complete characterization was left open.

Finally in Chapter \ref{tree_chap}, we were interested in (contractive) weighted shifts on directed trees (with positive weights). 
This is a recently introduced class (\cite{JJS}) which is a very natural generalization of the classical weighted shift operators, and it turned out that they are very useful in testing certain problems. 
In our work first we proved that a countable orthogonal sum of backward shift operators with at most one zero weight is always cyclic. 
Our second aim was to provide cyclic properties of weighted shift operators on directed trees. 
In order to do this first, we examined their isometric asymptotes. 
Using cyclic properties of the isometric asymptote we were able to provide necessary and sufficient conditions for cyclicity in the case when the directed tree $\irT$ is rootless, its branching index is 1 and there is at least 1 leaf. 
Namely, when there are two leaves, the operator is always cyclic. 
If there is exactly one leaf, then the cyclicity of the weighted shift $\Sl$ is equivalent to the cyclicity of a bilateral shift operator which can be obtained from $\Sl$ very easily. 
There is one more case when the cyclicity of $\Sl$ is an interesting question, this is when $\Sl$ is defined on $\widetilde\irT$ which is a rootless and leafless directed tree such that the branching index is 1. 
On $\widetilde{\irT}$ we can find cyclic and non-cyclic shifts as well, but we could not provide necessary and sufficient conditions. 
In the special case when $\Sl$ is of class $C_{1\cdot}$, the weighted shift on $\widetilde\irT$ has no cyclic vectors. 
We were also interested in the cyclic properties of the adjoint of a weighted shift on a directed tree. 
We showed that not every $\Sl^*$ is cyclic. 
On the other hand, if $\Sl$ is of class $C_{1\cdot}$ and $\irT$ has a root, then $\Sl^*$ is cyclic. 
When $\irT$ is rootless and the branching index is finite, then the adjoint of every $C_{1\cdot}$ class contractive weighted shift on $\irT$ is cyclic.

\newpage

\chapter*{\"Osszefoglal\'as}

Disszert\'aci\'omban Hilbert t\'erbeli hatv\'anykorl\'atos oper\'atorok aszimptotikus viselked\'es\'et vizsg\'altam, \'es bemutattam ezen ter\"ulet n\'eh\'any lehets\'eges \'uj alkalmaz\'as\'at.
A dolgozat \"ot publik\'aci\'o eredm\'enyeit tartalmazza.
A \cite{GeKe} cikk m\'ar megjelent a \emph{Periodica Mathematica Hungarica} foly\'oiratban \'es a \cite{Ge_contr} pedig az \emph{Acta Scientiarum Mathematicarum (Szeged)} foly\'oiratban.
A \cite{Ge_matrix} publik\'aci\'om megjelen\'es alatt \'all a \emph{Linear and Multilinear Algebra} foly\'oiratban, a \cite{Ge_SzN} cikkem pedig a \emph{Proceedings of the American Mathematical Society} foly\'oiratba fogadt\'ak el.
\"Ot\"odik cikkem \cite{Ge_tree} egyel\H{o}re b\'ir\'alat alatt van.
A fenti \"ot cikk k\"oz\"ul n\'egyet \cite{Ge_contr,Ge_matrix,Ge_SzN,Ge_tree} felt\"olt\"ottem az arXiv-ra is.

A disszert\'aci\'om m\'asodik fejezet\'eben karakteriz\'altam azon pozit\'iv oper\'atorokat, melyek el\H{o}\'allnak kontrakci\'ok aszimptotikus hat\'ar\'ert\'ekeik\'ent.
Bel\'attam, hogy v\'eges dimenzi\'os t\'erben ez az aszimptotikus limesz csak ortogon\'alis projekci\'o lehet.
A szepar\'abilis, v\'egtelen dimenzi\'os esetben kider\"ult, hogy egy $A$ pozit\'iv kontrakci\'o pontosan akkor teljes\'iti a kiv\'ant felt\'etelt, ha l\'etezik olyan $T$ kontrakci\'o, melynek \"onadjung\'alt hatv\'anyai norm\'aban $A$-hoz konverg\'alnak.
Ezen felt\'etelek ekvivalensek azzal, hogy vagy $r_e(A) = 1$ vagy $A$ egy projekci\'o; \'es v\'eg\"ul ez akkor \'es csak akkor igaz, ha $\dim\irH(]0,1]) = \dim\irH(]\delta,1])$ teljes\"ul minden $0\leq\delta<1$ sz\'amra, ahol az $A$ oper\'ator $\omega\subseteq\R$ Borel halmazhoz tartoz\'o spektr\'al alter\'et $\irH(\omega)$ jel\"oli. 
Nem szepar\'abilis terek eset\'en is hasonl\'o ekvivalens felt\'etelek adhat\'oak.
A fejezet v\'eg\'en azt vizsg\'altam, hogy k\'et kontrakci\'ora milyen felt\'eteleket kell tenn\"unk ahhoz, hogy aszimptotikus hat\'ar\'ert\'ek\"uk megegyezzen. 
A ford\'itott k\'erd\'est is vizsg\'altam, azaz hogy az aszimptotikus limeszek egyenl\H{o}s\'ege mit implik\'al a kontrakci\'okra vonatkoz\'oan.

Ezut\'an hatv\'anykorl\'atos m\'atrixok $L$-aszimptotikus limesz\'et vizsg\'altam.
Az els\H{o} fontos l\'ep\'es az volt, hogy megmutattam, nem kell Banach limeszekkel foglalkoznunk, ugyanis az $A_{T,L}$ m\'atrix f\"uggetlen $L$ megv\'alaszt\'as\'at\'ol.
Kider\"ult, hogy: $A_{T,L} = A_{T,C} := \lim_{n\to\infty}\frac{1}{n}\sum_{j=1}^n T^{*j}T^j$ mindig tejes\"ul. 
Az $A_{T,C}$ oper\'atort a $T$ Ces\`aro aszimptotikus hat\'ar\'ert\'ek\'enek h\'ivjuk.
A lehets\'eges pozit\'iv szemidefinit m\'atrixok le\'ir\'as\'ahoz k\'et k\"ul\"on e\-set\-et vizsg\'altam.
Az els\H{o}, amikor a $T$ m\'atrix hasonl\'o egy unit\'er m\'atrixhoz, vagy m\'ask\'eppen mondva $C_{11}$ oszt\'aly\'u (ez az ekvivalencia term\'eszetesen csak m\'atrixok eset\'en \'all fenn).
Bebizony\'itottam, hogy egy $A$ pozit\'iv definit m\'atrix pontosan akkor Ces\`aro aszimptotikus limesze egy hatv\'anykorl\'atos $T$ m\'atrixnak, ha $\tr A^{-1}$ megyegyezik a t\'er dimenzi\'oj\'aval.
Ez pontosan akkor igaz, ha l\'etezik olyan invert\'alhat\'o $S$ m\'atrix, melynek oszlopvektorai egys\'egvektorok \'ugy, hogy $A = S^{*-1}S^{-1} = (SS^*)^{-1}$ teljes\"ul. 
Ebb\H{o}l a karakteriz\'aci\'ob\'ol vezettem le a m\'asik eset karakteriz\'aci\'oj\'at is, azaz amikor val\'odi a stabil alt\'er.
Nevezetesen egy $A\neq 0$ szingul\'aris, pozit\'iv szemidefinit m\'atrix pontosan akkor Ces\`aro aszimptotikus limesze egy hatv\'anykorl\'atos $T$-nek, ha $\tr (A|(\ker A)^\perp)^{-1} \leq \dim(\ker A)^\perp$.
Ez pedig pontosan akkor igaz, ha $A = S^{*-1}(I\oplus 0)S^{-1}$ teljes\"ul egy $0\neq I\oplus 0 \neq I$ alak\'u ortogon\'alis projekci\'oval \'es egy olyan invert\'alhat\'o $S$ m\'atrixszal, melynek oszlopvektorai mind egys\'egvektorok.

Az ezt k\"ovet\H{o} negyedik fejezetben az Sz.-Nagy f\'ele hasonl\'os\'agi t\'etel egy lehets\'eges \'altal\'anos\'it\'as\'at bizony\'itottam be, illetve bebizony\'itottam egy er\H{o}s\'it\'est is.
Bel\'attam, hogy egym\'ashoz hasonl\'o hatv\'anykorl\'atos oper\'atorok eset\'en az al\'abbi tulajdons\'ag ekvivalens: a stabil alt\'er ortogon\'alis komplementer\'en az adott oper\'ator hatv\'anyai alulr\'ol egyenletesen korl\'atosak. 
Speci\'alisan ez a tulajdons\'ag igaz norm\'alis hatv\'anykorl\'atos oper\'atorokra, s \'igy a hozz\'ajuk hasonl\'o oper\'atorokra is.
Ez tekinthet\H{o} az Sz.-Nagy t\'etel egy \'altal\'anos\'it\'as\'anak.
M\'as speci\'alis oper\'atorokhoz val\'o hasonl\'os\'agot is vizsg\'altam.
\'Ugy gondolom, ez az \'alal\'anos\'it\'as hasznos lehet k\"ul\"onf\'ele hasonl\'os\'agi probl\'em\'ak vizsg\'alata eset\'en.
A fejezet m\'asodik r\'esz\'eben le\'irtam azon pozit\'iv kontrakci\'okat, melyek egy olyan kontrakci\'ob\'ol \'allnak el\H{o} aszimptotikusan, mely hasonl\'o egy unit\'er oper\'atorhoz. 
Nevezetesen, egy $A\in\irB(\irH)$ pozit\'iv kontrakci\'o pontosan akkor \'all el\H{o} egy ilyen $T$ kontrakci\'ob\'ol, ha invert\'alhat\'o, $r_e(A) = 1$ \'es $\dim\ker(A-\underline{r}(A)I)\in\{0,\aleph_0\}$ teljes\"ul.

Az ezt k\"ovet\H{o} fejezetben kontrakci\'ok kommut\'ans lek\'epez\'es\'et vizsg\'altam. 
A $\gamma_T$ lek\'epez\'es kapcsolatot l\'etes\'it az adott $T$ kontrakci\'o kommut\'ansa \'es a $W_T$ unit\'er aszimptota kommut\'ansa k\"oz\"ott (mely j\'ol ismert). 
$\gamma_T$ injektivit\'as\'at vizsg\'altam. 
Term\'eszetesen ha $T$ a $C_{1\cdot}$ oszt\'alyhoz tartozik, akkor $\gamma_T$ sz\"uks\'egk\'eppen injekt\'iv. 
Megvizsg\'altam azt a k\'erd\'est, hogy a kommut\'ans lek\'epez\'es lehet-e injekt\'iv abban az e\-set\-ben, amikor a stabil alt\'er val\'odi.
El\H{o}sz\"or p\'eld\'at adtam ilyen kontrakci\'ora.
Ezen k\'iv\"ul n\'egy sz\"uks\'eges felt\'etelt adtam meg, melyet $T$-nek teljes\'iteni kell ahhoz, hogy $\gamma_T$ injekt\'iv legyen. 
E n\'egy felt\'etel k\"oz\"ul az egyik ekvivalens felt\'etelt ad abban az esetben, ha bizonyos egyszer\H{u} felt\'eteleket megk\"ovetel\"unk kontrakci\'onkr\'ol.
Ezen k\'iv\"ul megmutattam, hogy a kommut\'ans lek\'epez\'ese k\'et kv\'azihasonl\'o kontrakci\'onak egyszerre injekt\'iv vagy nem.
Kontrakci\'ok ortogon\'alis \"osszegeit is vizsg\'altam; p\'eld\'aul kider\"ult, hogy $\gamma_{T\oplus T}$ pontosan akkor injekt\'iv, ha $\gamma_T$ is az.
V\'eg\"ul konktr\'et p\'eld\'aval al\'at\'amasztva megmutattam, hogy a fent eml\'itett n\'egy felt\'etel egy\"utt sem ad elegend\H{o} felt\'etelt a kommut\'ans lek\'epez\'es injektivit\'as\'ara.
A teljes jellemz\'es nyitott probl\'ema maradt.

Az utols\'o fejezetben ir\'any\'itott f\'akon val\'o kontrakt\'iv eltol\'as oper\'atorokat viszg\'altam (ahol a s\'ulyok pozit\'ivak).
Ezt az oszt\'alyt 2012-ben vezett\'ek be (\cite{JJS}).
Tulajdonk\'eppen a szok\'asos s\'ulyozott eltol\'as oper\'atorok egy nagyon term\'eszetes \'altal\'anos\'it\'as\'at defini\'alt\'ak a szerz\H{o}k. 
Kider\"ult, hogy ezekkel az oper\'atorokkal bizonyos addig niytott k\'erd\'esekre egyszer\H{u} v\'alasz adhat\'o.
Legel\H{o}sz\"or bel\'attam, hogy egy szok\'asos csonk\'it\'o visszatol\'as oper\'ator pontosan akkor ciklikus, ha legfeljebb egy darab z\'er\'o s\'ulya van.
Ezt k\"ovet\H{o}en r\'at\'ertem az ir\'any\'itott f\'akon val\'o eltol\'as kontrakci\'ok izometrikus aszimptot\'ainak le\'ir\'as\'ara.
Ezt felhaszn\'alva pedig sz\"uks\'eges \'es elegend\H{o} felt\'eteleket adtam azon ir\'any\'itott f\'akon val\'o eltol\'as kontrakci\'ok ciklikuss\'ag\'ara, melyek eset\'en az adott f\'anak nincs gy\"okere, pontosan egy helyen \'agazik el, \'es ott k\'etfele, valamint van levele.
Nevezetesen, ha k\'et levele van a f\'anak, akkor azon minden eltol\'as ciklikus.
Ha pontosan egy lev\'el van, akkor a f\'an defini\'alt eltol\'as oper\'ator ciklikuss\'aga ekvivalens egy abb\'ol egyszer\H{u}en kaphat\'o k\'etir\'any\'u eltol\'as oper\'ator ciklikuss\'ag\'aval. 
M\'eg egy esetben \'erdekes a ciklikuss\'ag k\'erd\'ese, m\'egpedig amikor nincs gy\"ok\'er, nincs lev\'el, \'es az el\'agaz\'asi index pontosan 1.
Ez esetben tal\'alhat\'o olyan eltol\'as oper\'ator, mely ciklikus, \'es olyan is, mely nem ciklikus.
Azonban a ciklikuss\'ag teljes jellemz\'ese nyitott maradt.
Viszont siker\"ult bel\'atni, hogy ha ez az eltol\'as kontrakci\'o m\'eg $C_{1\cdot}$ oszt\'alybeli is, akkor nincs ciklikus vektora az adjung\'altnak.
Megvizsg\'altam az $\Sl^*$ oper\'atorok ciklikuss\'ag\'at is.
Kider\"ult, hogy nem minden $\Sl^*$ ciklikus.
Azonban ha $\Sl$ benne van a $C_{1\cdot}$ oszt\'alyban, \'es a f\'anak van gy\"okere, akkor $\Sl^*$ ciklikus. 
V\'eg\"ul bel\'attam, hogy amikor a f\'anak nincs gy\"okere, \'es az el\'agaz\'asi indexe v\'eges, akkor a rajta defini\'alt $C_{1\cdot}$-kontrakci\'ok adjung\'altjai mindig ciklikusak.

\newpage

\chapter*{Acknowledgements}

First, I would like to express my deep gratitude to my supervisor, professor L\'aszl\'o K\'erchy who gave me my first problem and during these years instructed me in developing my research.

I am also extremely grateful to professor Lajos Moln\'ar who introduced me to new research areas.

I am very grateful to my family for their everlasting support and trust in me and my work.
Without them this dissertation would not have been possible to write.
I would like to thank my fianc\'ee T\"unde Csilla Herczegh who have suggested many corrections concerning English grammar.
I owe my career choice to my father Dr. P\'al Geh\'er who started to lead me in the direction of Mathematics even in my elementary school years.
I also would like to express my deepest gratitude to my grandfather Dr. L\'aszl\'o Geh\'er who was my tutor during secondary school.
Our special relationship was formed by these lessons to a great extent, therefore his passing in 2011 meant a grievous loss to me.

Last, but not least, I would like to emphasize my thanks to my former Mathematics teacher in the secondary school P\'eter Zs\'iros and my former Biology teacher J\'ozsef Baranyai.

From July 2013, I have been supported by the "Lend\"ulet" Program (LP2012-46/2012) of the Hungarian Academy of Sciences. 

This research was also supported by the European Union and the State of Hungary, co-financed by the European Social Fund in the framework of T\'AMOP-4.2.4.A/ 2-11/1-2012-0001 'National Excellence Program'.

\newpage

\bibliographystyle{amsplain}

\newpage

\chapter*{List of symbols}

$
\begin{matrix}

%numbers
\Z & \text{the set of integers}\\
\N & \text{the set of positive integers: } \{1,2,3,\dots\}\\
\N_0 & \text{the set of non-negative integers: } \{0,1,2,3,\dots\}\\
\R & \text{the set of real numbers}\\
[a,b] & \text{closed interval}\\
]a,b[ & \text{open interval}\\
[a,b[; ]a,b] & \text{half-open half closed intervals}\\
\C & \text{the set of complex numbers}\\
\D & \text{the unit disk of } \C\colon \; \{z\in\C\colon |z|<1\}\\
\T & \text{the unit circle of } \C\colon \; \{z\in\C\colon |z|=1\}\\
\#H & \text{cardinality of a set}\\
H^- & \text{closure of a set}\\
\overline{H} & \text{conjugate of a set } H\subseteq\C\\
H^c & \text{complement of a set}\\
\Llim_{n\to\infty} & \text{Banach limit (}L\text{-limit) of a sequence}\\ %def
\irL & \text{the set of all Banach limits}\\
\irP_\C & \text{the set of complex polynomials}\\
%operators
\|\cdot\| & \text{norm of a vector or the operator norm of an operator}\\
\langle\cdot,\cdot\rangle & \text{inner product of two vectors}\\
T^* & \text{adjoint of an operator } T\\
\sigma(\cdot) & \text{spectrum of an operator}\\
\sigma_p(\cdot) & \text{point spectrum of an operator (the set of eigenvalues)}\\
\sigma_{ap}(\cdot) & \text{approximate point spectrum of an operator}\\
r(\cdot) & \text{spectral radius of an operator}\\
\sigma_e(\cdot) & \text{essential spectrum of an operator}\\
r_e(\cdot) & \text{essential spectral radius of an operator}\\
\end{matrix}
$
\newpage
$
\begin{matrix}
\irB(\irH,\irK) & \text{the set of bounded linear operators } T\colon\irH\to\irK\\
\irB(\irH) & \irB(\irH,\irH)\\
\oplus & \text{orthogonal sum of subspaces or operators}\\
\dotplus & \text{direct sum of subspaces}\\
T|\irM & \text{restriciton of an operator } T \text{ to a subspace } \irM\\
\gamma(\cdot) & \text{reduced minimum modulus of a non-zero operator}\\ %def
\gamma=\gamma_T & \text{commutant mapping of a contraction } T\\ %def
H^\perp & \text{orthogonal complement of a set } H \text{ in a Hilbert space}\\
\vee H & \text{the generated (closed) subspace of a set } H \text{ in a Hilbert space}\\
I_\irM & \text{the identity operator on the subspace } \irM\\
\ker & \text{kernel of an operator}\\
\ran & \text{range of an operator}\\
\irI(A,B) & \text{Intertwining set of two operators } A \text{ and } B\\ %def
\{T\}' & \text{commutant of }T\in\irB(\irH)\\ %def
\irH_0 = \irH_0(T) & \text{stable subspace of a power bounded }T\\ %def
\rank & \text{rank of an operator}\\
\tr & \text{trace of a matrix}\\
A_{T,L} & \text{the }L\text{-asymptotic limit of a power bounded operator } T\\
A_T & \text{the asymptotic limit of a contraction } T\\
(X^+_{T,L},V_{T,L}) & \text{isometric asymptote of a power bounded operator } T\\
(X_{T,L},W_{T,L}) & \text{unitary asymptote of a power bounded operator } T\\
\Chi(W) & \text{The set of children of a set } W \text{ in a directed tree } \irT\\
\par(u) & \text{The parent of a vertex } u \text{ in a directed tree }\irT\\
\roo & \text{The root of a directed tree}\\
V^\circ & \text{The set of all vertices in a directed tree exept for the root}\\
\Lea(\irT) & \text{The set of leaves in a directed tree }\irT\\
\Gen_n(u) & \text{The } n \text{th generation of a vertex } u \text{ in a directed tree }\irT\\
\Gen(u) & \text{The level of a vertex } u \text{ in a directed tree }\irT\\
H^\infty & \text{The Banach space of bounded analytic functions on } \D\\
\end{matrix}
$

\end{document}